%% file: hammock_4.tex
\documentclass[a4paper]{amsart}
\input{preamble}
\begin{document}
\title{Hammock localization via Segal animae}
\begin{abstract}
We give a short, conceptual account of Dwyer--Kan's hammock localization in the setting of $\infty$-categories.
Starting from Mazel-Gee's formula for localization via the relative Rezk nerve, we show that its Segalification can be described explicitly as a Segal anima of zig-zags.
We also compute its mapping animae and recover and generalize Dwyer--Kan's hammock formula.
As an application, we show that, when a relative $\infty$-category supports fractions, the mapping animae of localization admit a simple description.
This gives a unifying treatment for several formulas of this type in the existing literature.
\end{abstract}

\author{Kensuke Arakawa}
\address{K.A.: Research Institute for Mathematical Sciences, Kyoto University, 606-8502, Kyoto, Japan}
\author{Bastiaan Cnossen}
\address{B.C.: Fakult\"{a}t f\"{u}r Mathematik, Universit\"{a}t Regensburg, 93040 Regensburg, Germany}
\maketitle

\setcounter{tocdepth}{1}
\tableofcontents

\section*{Introduction}

Dwyer and Kan's hammock localization is an explicit simplicial category model for the $\pr{\infty,1}$-localization of a relative category \cite{DK80_2}.
Historically, it was one of the first settings in which mapping animae, rather than mapping sets, arose naturally from a category with weak equivalences, anticipating one of the central ideas of modern $\pr{\infty,1}$-category theory (henceforth $\infty$-category theory).
Its explicit combinatorial description of mapping animae, modeled as simplicial sets, has also proved to be an indispensable technical tool and has played a crucial role in theory and applications of higher category theory \cite{BM11,Raventos_Hammock_Localization,MG18,KS19,Pav25,BCGS26,Rov26,RS26}.

Although the construction is explicit, its original simplicial-set formulation requires extensive combinatorial arguments.
Our aim is to give a short, conceptual account in $\infty$-categorical terms, while retaining the explicit description in terms of zig-zags.

To explain our main result, we recall the following elegant formula for localization, due to Rezk \cite{Rez01} and Mazel-Gee \cite{MG19}.
A \emph{relative $\infty$-category} is an $\infty$-category $\cat C$ equipped with a wide subcategory $\cat W\subset\cat C$.
Associated to such is the \emph{relative Rezk nerve} $\N^{\rel}\pr{\cat C,\cat W}$, a simplicial anima defined by
\[
\N^{\rel}\pr{\cat C,\cat W}_{n}=\abs{\Fun\pr{[n],\cat C}\times_{\cat C^{n+1}}\cat W^{n+1}}
\]
where $\abs -\from\Cat_{\infty}\to\An$ denotes the left adjoint to the inclusion.
In the case $\cat W$ consists of the equivalences of $\cat C$, this reduces to the ordinary Rezk nerve, hence the name.
By a theorem of Mazel-Gee \cite{MG19}, the complete Segalification of $\N^{\rel}\pr{\cat C,\cat W}$ presents the localization of $\cat C$ at $\cat W$.

Complete Segalification of simplicial animae is a two-step process: First Segalify, then complete.
The main theorem of this paper asserts that zig-zags arise quite naturally in the first step.
In more detail, we define a Segal anima $\cat Z_{\cat C,\cat W}$ of zig-zags in $\pr{\cat C,\cat W}$ (Definition \ref{def:Z_CW}).
By definition, its anima of $1$-simplices is the colimit, over all finite zig-zag shapes, of the animae of diagrams
\[
\bullet\xleftarrow{\sim}\bullet\to\bullet\cdots\bullet\xleftarrow{\sim}\bullet\to\bullet
\]
whose left-pointing arrows must lie in $\cat W$.
We then show that:

\begin{customthm}[Theorem {\ref{thm:Z_Segal}}]\label{thm:main1}

$\cat Z_{\cat C,\cat W}$ is the Segalification of $\N^{\rel}\pr{\cat C,\cat W}$.

\end{customthm}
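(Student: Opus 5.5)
Since Definition \ref{def:Z_CW} has not been spelled out above, I will take $\cat Z_{\cat C,\cat W}$ to have $n$-simplices given by a colimit over zig-zag shapes. Concretely, $(\cat Z_{\cat C,\cat W})_n$ is the colimit, over $n$-tuples of finite zig-zag shapes $(Z_1,\dots,Z_n)$, of the animae $\abs{\Fun(Z_1\star\cdots\star Z_n,\cat C)\times_{(\dots)}\cat W}$. Here $Z_1\star\cdots\star Z_n$ denotes the concatenated zig-zag with its $n+1$ marked endpoints. The backward arrows and the identities at the marked points are required to lie in $\cat W$, just as the objects of $[n]$ are in $\N^{\rel}$. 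The face maps forget marked points, and the degeneracies insert trivial zig-zags.

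The plan is to characterize Segalification by a universal property and verify three things:
\begin{itemize}
\item $\cat Z_{\cat C,\cat W}$ is a Segal anima;
\item there is a natural map $\N^{\rel}(\cat C,\cat W)\to\cat Z_{\cat C,\cat W}$;
\item this map is a Segal equivalence, i.e.\ it becomes an equivalence after mapping into every Segal anima, equivalently it lies in the strongly saturated class generated by the spine inclusions $I^n\hookrightarrow\Delta^n$.
\end{itemize}

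For the first point, the Segal condition says that $(\cat Z)_n\to \cat Z_1\times_{\cat Z_0}\cdots\times_{\cat Z_0}\cat Z_1$ is an equivalence. Since colimits in $\An$ are universal, fiber products over $\cat Z_0=\abs{\cat W}$ commute with the colimits over shapes. The problem therefore reduces, one shape at a time, to showing that gluing the diagram categories along the marked points is compatible with $\abs{-}$. This amounts to a pullback statement: realization of $\Fun(Z\star Z',\cat C)_{\cat W}$ versus the pullback of realizations over $\abs{\cat W}$. To prove it, I will show that the restriction map $\Fun(Z,\cat C)_{\cat W}\to\cat W$ (evaluation at the endpoint) is both a cocartesian and a cartesian fibration. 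I expect this to hold because the endpoint can be moved by precomposing or postcomposing with a $\cat W$-arrow, which appends a backward arrow to the zig-zag and thus changes the shape. To make this work I will allow the shape to vary, for instance by first cofinally restricting the colimit to shapes ending in a backward $\cat W$-arrow. The realization of a (co)cartesian fibration over $\cat W$ is the colimit of its straightening, and fiber products of such fibrations realize to pullbacks over $\abs{\cat W}$ once the fibrations are realization fibrations (e.g.\ by Quillen B / left fibrations after localizing). This is the step I expect to be the main obstacle. I would first try to show that the colimit over shapes turns the endpoint projection into a left fibration after realization, via a cofinality argument for the operation of appending an identity.

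For the comparison map, I include $[n]$ as the zig-zag shape with only forward arrows. This gives $\N^{\rel}_n\to\cat Z_n$, compatible with the simplicial structure. To see that the map is a Segal equivalence, I note that every zig-zag shape $Z$, viewed as a relative category, has relative nerve built from copies of $\Delta^1$ and of the backward simplices glued along vertices. In other words, $\cat Z_1$ is the colimit of the relative nerves of shapes composed along the spine. Concretely, I express $\cat Z_{\cat C,\cat W}$ as $\colim_Z \operatorname{Map}(\text{Seg}(\N^{\rel}Z),\N^{\rel}(\cat C,\cat W))$-like terms. Each shape's Segal nerve is obtained from $\N^{\rel}$ of its pieces by spine pushouts, which are Segal equivalences. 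A backward $\cat W$-arrow contributes the same $1$-simplex as a forward one, since $\N^{\rel}$ already treats $\cat W$ objects-wise. Finally, I check on mapping spaces into an arbitrary Segal anima $X$ that $\operatorname{Map}(\cat Z,X)\simeq\operatorname{Map}(\N^{\rel},X)$, using the fact that $X$ sends spine colimits to limits. Combined with the first point, this identifies $\cat Z_{\cat C,\cat W}$ with the Segalification.
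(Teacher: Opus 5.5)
Your overall structure (prove $\cat Z_{\cat C,\cat W}$ is Segal, then show $\gamma$ is a Segal equivalence) matches the paper's, but both steps have genuine gaps.

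\emph{Segal condition.} Reducing ``one shape at a time'' replaces the claim by a false one. For fixed $Z,Z'$, the map $\abs{\Zig_{ZZ'}\pr{\cat C,\cat W}}\to\abs{\Zig_{Z}\pr{\cat C,\cat W}}\times_{\abs{\cat W}}\abs{\Zig_{Z'}\pr{\cat C,\cat W}}$ is not an equivalence in general. For $Z=Z'=[1]$ it is exactly the level-$2$ Segal map of $\N^{\rel}\pr{\cat C,\cat W}$. That map already fails for $\pr{\cat C,\cat W}=[1,-1,1]$, i.e.\ $0\to1\xleftarrow{\sim}2\to3$: the pair $\pr{0\to1,\,2\to3}$ lies in the fiber product because $1$ and $2$ lie in the same component of $\abs{\cat W}$, but there is no arrow $0\to3$.

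Moreover, for a fixed shape the endpoint evaluation is cartesian or cocartesian, depending on the direction of the end arrow, but not both. Being a (co)cartesian fibration only makes base change commute with colimits in $\Cat_\infty$; it does not make realization preserve pullbacks.

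What works is to take the colimit in one variable only, and in $\Cat_\infty$. Fix $Y=Z_2\cdots Z_n$. Base change along the (co)cartesian fibration $\dom\from\Zig_Y\pr{\cat C,\cat W}\to\cat W$ gives $\Zig_\partial\pr{\cat C,\cat W}\times_{\cat W}\Zig_Y\pr{\cat C,\cat W}\simeq\colim_{Z}\Zig_{ZY}\pr{\cat C,\cat W}$. Quillen's Theorem B, applied to the non-fibration $\codom\from\Zig_\partial\pr{\cat C,\cat W}\to\cat W$, then shows this square stays cartesian after realization. The hypothesis needed is that postcomposition with every $w\from x\to y$ in $\cat W$ induces an equivalence $\abs{\Zig_\partial\pr{\cat C,\cat W}\times_{\cat W}\cat W_{/x}}\simeq\abs{\Zig_\partial\pr{\cat C,\cat W}\times_{\cat W}\cat W_{/y}}$, and dually for the slices under $x$ and $y$.

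This invariance is the real content of the step you flagged as the main obstacle. Appending identities cannot produce it, because one needs an inverse to $w$. The paper appends the detour $y\xleftarrow{w}x\xrightarrow{\id}x$ (the shape endofunctor $Z\mapsto[Z,1,-1]$). It then checks that both composites agree, after realization, with the maps induced by collapsing the new segment (Lemmas~\ref{lem:colim2} and~\ref{lem:zigpartialfiber}).

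\emph{Segal equivalence.} Here the sketch is not yet an argument.
A backward weak equivalence $x\xleftarrow{\sim}z$ does not contribute a $1$-simplex of $\N^{\rel}\pr{\cat C,\cat W}$ from $x$ to $z$; it only appears as a path in $\N^{\rel}\pr{\cat C,\cat W}_0=\abs{\cat W}$.
The ``$\colim_Z\Map\pr{\ldots}$-like terms'' do not define a simplicial anima receiving a map from $\N^{\rel}\pr{\cat C,\cat W}$.
Checking $\Map\pr{\cat Z_{\cat C,\cat W},X}\simeq\Map\pr{\N^{\rel}\pr{\cat C,\cat W},X}$ for Segal $X$ restates the claim rather than proving it.

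The missing idea is a reduction to the shapes themselves. Consider the single presheaf $P_{\cat C,\cat W}=\abs{\Zig_\bullet\pr{\cat C,\cat W}}$ on $\overline{\Zig}$ (shapes, with mapping animae $\abs{\Zig_Z\pr{Z'}}$). Both $\N^{\rel}\pr{\cat C,\cat W}$ and $\cat Z_{\cat C,\cat W}$ are values on $P_{\cat C,\cat W}$ of colimit-preserving functors $\PSh\pr{\overline{\Zig}}\to\PSh\pr{\Del}$. The first is restriction along $\Del\to\overline{\Zig}$; the second is restriction to $\int\Zig^\bullet_\partial$ followed by left Kan extension to $\Del^{\op}$. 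The map $\gamma$ is the induced transformation. After applying $L_{\Seg}$, it therefore suffices to treat representables, i.e.\ to show that $\N^{\rel}\pr Z\to\cat Z_Z$ is a Segalification for every shape $Z$.

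That case is itself substantial, and needs two inputs:
\begin{enumerate}
\item You must identify $L_{\Seg}\N^{\rel}\pr Z$. The paper uses Mazel-Gee's theorem that $\N^{\rel}\pr Z\to\Delta^{l\pr Z}$ is a complete Segalification, together with the observation that the Segalification of $\N^{\rel}\pr Z$ is already complete.
\item You must show $\cat Z_Z\simeq\Delta^{l\pr Z}$. This rests on the hammock formula for mapping animae and a normal-form argument showing that the hammock animae of $Z$ are empty or contractible.
\end{enumerate}
None of these ingredients appears in your proposal.
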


Since Rezk completion leaves mapping animae unchanged, Theorem \ref{thm:main1} and the aforementioned theorem of Mazel-Gee tell us that the mapping animae of $\cat Z_{\cat C,\cat W}$ are equivalent to those of $\cat C[\cat W^{-1}]$.
We compute the former and obtain the following:

\begin{customthm}[Theorem {\ref{thm:Z_Segal}} and Corollary {\ref{cor:hammock_anima_is_anima_of_loc}}]\label{thm:main2}

The mapping animae of $\cat Z_{\cat C,\cat W}$ (and hence of $\cat C[\cat W^{-1}]$) are given by
\[
\cat Z_{\cat C,\cat W}\pr{x,y}\simeq\cat C[\cat W^{-1}]\pr{x,y}\simeq\Hamm_{\cat C,\cat W}\pr{x,y},
\]
where the right-hand side is the hammock anima of Definition \ref{def:hammock}, defined as the colimit, over all finite zig-zag shapes, of the animae of zig-zags in $\cat C$ from $x$ to $y$.

\end{customthm}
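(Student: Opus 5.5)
We prove the two equivalences $\cat Z_{\cat C,\cat W}(x,y)\simeq\cat C[\cat W^{-1}](x,y)$ and $\cat Z_{\cat C,\cat W}(x,y)\simeq\Hamm_{\cat C,\cat W}(x,y)$ separately.

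\textbf{First equivalence.} This is formal from Theorem \ref{thm:main1} together with Mazel-Gee's theorem. The localization $\cat C[\cat W^{-1}]$ is presented by the complete Segalification of $\N^{\rel}(\cat C,\cat W)$. Complete Segalification factors as Segalification followed by Rezk completion. By Theorem \ref{thm:main1}, the Segalification is $\cat Z_{\cat C,\cat W}$. Rezk completion is fully faithful on Segal animae, i.e.\ it preserves mapping animae. Hence
\[
\cat Z_{\cat C,\cat W}(x,y)\simeq\cat C[\cat W^{-1}](x,y),
\]
where the identification of objects uses that $(\cat Z_{\cat C,\cat W})_0\simeq\N^{\rel}(\cat C,\cat W)_0=\abs{\cat W}$ surjects onto the objects of the localization.

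\textbf{Second equivalence.} By definition, the mapping anima is the fiber
\[
\cat Z_{\cat C,\cat W}(x,y)\simeq\operatorname{fib}_{(x,y)}\bigl((\cat Z_{\cat C,\cat W})_1\to(\cat Z_{\cat C,\cat W})_0^{\times 2}\bigr).
\]
By Definition \ref{def:Z_CW}, $(\cat Z_{\cat C,\cat W})_1$ is a colimit over the category of finite zig-zag shapes $Z$ of the animae $\abs{\Fun_{\cat W}(Z,\cat C)}$ of $Z$-diagrams whose backward arrows lie in $\cat W$, and the maps to $(\cat Z)_0=\abs{\cat W}$ record the endpoints. Since colimits in $\An$ are universal, taking fibers over $(x,y)\in\abs{\cat W}^{2}$ commutes with the colimit over $Z$.

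It remains to identify, for each shape $Z$, the fiber of $\abs{\Fun_{\cat W}(Z,\cat C)}\to\abs{\cat W}^2$ over $(x,y)$ with the anima of zig-zags from $x$ to $y$ appearing in Definition \ref{def:hammock}. This fiber involves the realization $\abs{-}$, so universality of colimits alone does not handle it. I would use Quillen's Theorem B, or equivalently the criterion that a functor whose straightening sends every morphism to an equivalence has realization fiber equal to the realization of the fiber. Concretely:
\begin{itemize}
\item For each endpoint, form the fiber product over $\cat W\times\cat W$ at $(x,y)$, or the comma/fiber category over the endpoints.
\item Show that the endpoint evaluation $\Fun_{\cat W}(Z,\cat C)\to\cat W\times\cat W$ becomes a realization fibration, i.e.\ that morphisms of $\cat W$ act by equivalences on the realized fibers.
\item For this, precompose or postcompose a zig-zag with a $\cat W$-arrow, enlarging the shape by one backward or forward arrow. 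This relates the fiber over $x$ to the fiber over $x'$.
\item Check that this becomes invertible after passing to the colimit over shapes, via a cofinality argument that inserting identities or composing arrows are the transition maps of the shape category.
\end{itemize}

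\textbf{Main obstacle.} The fiber-commutes-with-realization step is where I expect the difficulty, since it only works after the colimit over shapes is taken. I would therefore first take the colimit over $Z$ at the level of categories (a lax colimit, or the Grothendieck construction over the shape category), and then apply Theorem B to the resulting single functor to $\cat W\times\cat W$. At that level, the extension by a $\cat W$-arrow is a genuine functor on the total category, which makes the local constancy checkable. The paper presumably proves Theorem \ref{thm:Z_Segal} using similar machinery, and I would reuse it here.
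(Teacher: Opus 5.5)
Your argument for $\cat Z_{\cat C,\cat W}(x,y)\simeq\cat C[\cat W^{-1}](x,y)$ is the paper's: Theorem \ref{thm:main1}, Mazel-Gee's theorem, and invariance of mapping animae under completion. Your overall plan for the hammock equivalence also matches the paper's, namely Theorem B applied to the total endpoint functor, with local constancy checked via concatenation with weak equivalences. But it has a gap exactly where you flag the difficulty.

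First, the per-shape identification you initially propose is false, so the universality step is a dead end. For $Z=[0]$, the fiber of the diagonal $\abs{\cat W}\to\abs{\cat W}\times\abs{\cat W}$ over $(x,y)$ is the anima of paths from $x$ to $y$ in $\abs{\cat W}$. This differs in general from the anima of equivalences $x\simeq y$.

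More importantly, the endpoint functor from the Grothendieck construction of $Z\mapsto\Zig_Z(\cat C,\cat W)$ to $\cat W\times\cat W$ is in general neither a cartesian nor a cocartesian fibration. Take the zig-zag $x\to y$ of shape $[1]$ and a weak equivalence $w\colon x\to x'$. There is no morphism at all out of it lying over $(w,\id_y)$ unless $x\to y$ factors through $w$, because a boundary-preserving map $[1]\to Y$ exists only for $Y=[k]$. So there is no straightening to invoke. Theorem B must be used in its comma form, and concatenation maps between strict fibers are not what it asks for. What Corollary \ref{cor:ThmB} actually yields, applied with the left fibration $\cat W_{x/}\times\cat W_{y/}\to\cat W\times\cat W$ (whose source is weakly contractible), is
\[
\cat Z_{\cat C,\cat W}(x,y)\simeq\colim_{Z\in\Zig^{\op}_{\partial}}\abs{\pr{\cat W_{x/}\times\cat W_{y/}}\times_{\cat W\times\cat W}\Zig_Z\pr{\cat C,\cat W}}.
\]
This holds provided these comma realizations are invariant under precomposition with weak equivalences $x'\to x$, $y'\to y$ (Lemma \ref{lem:zigpartialfiber2}). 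That invariance is not a cofinality statement. Since a weak equivalence cannot be inverted, the inverse lengthens the zig-zag via $Z\mapsto[-1,1,Z,-1,1]$. The homotopies in Lemma \ref{lem:colim2} then need natural transformations pointwise in $\cat W$, not just transition maps of $\Zig_{\partial}$.

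The step your proposal is missing entirely is the comparison between this comma version and $\Hamm_{\cat C,\cat W}(x,y)$. Objects of the comma version are zig-zags $a\cdots b$ together with anchoring weak equivalences $x\to a$ and $y\to b$, whereas the zig-zags in $\Hamm_{\cat C,\cat W}(x,y)$ start and end exactly at $x$ and $y$. This comparison does not follow from Theorem B, and it is where the statement actually gets proved.

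The paper handles it with a second application of Lemma \ref{lem:colim2}. Let $f$ and $g$ be the diagrams of strict and anchored zig-zags, let $\alpha\colon f\to g$ insert identity anchors, and set $e(Z)=[1,Z,-1]$.
\begin{enumerate}
\item Absorbing the two anchors into the zig-zag, as a new first (forward) arrow and a new last (backward) arrow, gives $\beta\colon g\to fe$.
\item Collapsing these outer segments gives $\eta\colon\id\to e$ on $\Zig^{\op}_{\partial}$.
\item The relation $f\eta\simeq\beta\cdot\alpha$ is immediate.
\item The relation $g\eta\simeq\alpha e\cdot\beta$ is witnessed by the natural transformation whose outer components are the anchoring weak equivalences and whose remaining components are identities.
\end{enumerate}
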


When $\cat C$ is a $1$-category, Theorem \ref{thm:main2} recovers the homotopy type of the mapping simplicial set of Dwyer--Kan's hammock localization.
The same formula has also been used for relative $\infty$-categories, for example in \cite{BCGS26}, but to our knowledge a proof in this generality has not previously appeared.
Together, Theorems \ref{thm:main1} and \ref{thm:main2} offer an unexpected, beautiful connection between two a priori rather different presentations of localizations: the relative Rezk nerve and Dwyer--Kan's hammock localization.

The anima $\Hamm_{\cat C,\cat W}\pr{x,y}$ is conceptually appealing, but it can be difficult to analyze because it involves zig-zags of arbitrary length.
For relative $1$-categories satisfying ``homotopy calculus of (left, right, or two-sided) fractions,'' Dwyer and Kan observed that this anima can be simplified substantially.
We will generalize their definitions to the $\infty$-categorical setting and obtain the following corollary:

\begin{customcor}[Theorem \ref{thm:fraction}]\label{cor:main3}

If $\pr{\cat C,\cat W}$ supports left fractions, then $\cat C[\cat W^{-1}]\pr{x,y}$ is the anima of zig-zags of the form $x\to\bullet\xleftarrow{\sim}y$.
More precisely,
\[
\cat C[\cat W^{-1}]\pr{x,y}\simeq\abs{\cat C_{x/}\times_{\cat C}\cat W_{y/}}.
\]
There are similar results for right or two-sided fractions.

\end{customcor}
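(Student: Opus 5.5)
We prove Corollary \ref{cor:main3} by reducing it to the hammock formula of Theorem \ref{thm:main2}. That theorem identifies $\cat C[\cat W^{-1}]\pr{x,y}$ with
\[
\Hamm_{\cat C,\cat W}\pr{x,y}=\colim_{I}\abs{\cat Z_I\pr{x,y}},
\]
the colimit over all finite zig-zag shapes $I$ of the animae of $\cat W$-zig-zags from $x$ to $y$ of shape $I$. The shapes form a category $\mathbf{Zig}$: its objects are words in the letters $\to$ and $\xleftarrow{\sim}$, and its morphisms include the insertions of identities and the compositions of adjacent arrows pointing the same way. The shape $x\to\bullet\xleftarrow{\sim}y$ is one particular object $I_0$. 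Its term in the colimit is exactly $\abs{\cat C_{x/}\times_{\cat C}\cat W_{y/}}$. Our goal is therefore to show that the canonical map from this single term to $\Hamm_{\cat C,\cat W}\pr{x,y}$ is an equivalence.

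We first fix the definition. We take ``$\pr{\cat C,\cat W}$ supports left fractions'' to mean the following. For every $x$, the functor from $\abs{\cat C_{x/}\times_{\cat C}\cat W_{y/}}$ (as $y$ varies) to the $I_0$-shaped zig-zags extended by further arrows is an equivalence upon realization. Concretely, this amounts to two conditions:
\begin{itemize}
\item[(a)] \emph{Backward extension.} For $I$ of the form $\to\xleftarrow{\sim}$ followed by $J$, we can replace a pattern $\bullet\xleftarrow{\sim}\bullet\to\bullet$ by a cospan. This means the realization of the category of squares completing the span is contractible, i.e.\ an $\infty$-categorical Ore condition phrased via realizations of slice categories.
\item[(b)] \emph{Forward insertion.} Inserting an extra $\xleftarrow{\sim}$ or $\to$ at the beginning does not change the realization.
\end{itemize}
Dwyer--Kan's homotopy calculus of left fractions says precisely that, for every $n$, the maps
\[
\abs{\cat Z_{\to\xleftarrow{\sim}}\pr{x,y}}\to\abs{\cat Z_{(\to\xleftarrow{\sim})^n}\pr{x,y}}
\]
(inserting identities) are equivalences. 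I will adopt that formulation, which the paper presumably generalizes.

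The main step is a cofinality argument. Let $\mathbf{Zig}'\subset\mathbf{Zig}$ be the full subcategory on the alternating shapes $(\to\xleftarrow{\sim})^n$. First we show that $\mathbf{Zig}'$ is cofinal in $\mathbf{Zig}$, or at least that the colimit may be restricted to it. Every word maps to an alternating word by inserting identities, and the category of such maps under a fixed word has an initial object (insert the minimal number of identities). Hence Quillen's Theorem A gives the cofinality. Next, on $\mathbf{Zig}'$, the hypothesis says that all transition maps out of $I_0$ are equivalences. Along any map in $\mathbf{Zig}'$, the relevant functor composes with the inclusion from $I_0$, and by 2-out-of-3 all maps in the diagram become equivalences. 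Finally, $\mathbf{Zig}'$ has weakly contractible realization, since it has the initial object $I_0$ (after restricting to identity insertions). A colimit of a diagram of equivalences over a weakly contractible category is equivalent to any of its values, so
\[
\Hamm_{\cat C,\cat W}\pr{x,y}\simeq\abs{\cat Z_{I_0}\pr{x,y}}=\abs{\cat C_{x/}\times_{\cat C}\cat W_{y/}}.
\]
The right-fraction case follows by passing to opposites, and the two-sided case uses the shape $\xleftarrow{\sim}\to\xleftarrow{\sim}$.

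I expect the main obstacle to be the bookkeeping of the shape category. Its morphisms include both composition maps, which go between realized animae covariantly only after we note that $\cat Z_I$ is contravariant in a suitable sense, and identity insertions. We must verify that the hypothesis controls all the maps, not just the identity insertions. If composition maps cause trouble, the first thing I would try is to show that the hammock colimit can be computed over the subcategory of identity insertions alone, using the definition in Definition \ref{def:hammock}.
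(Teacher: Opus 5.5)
Your outer reduction matches the paper's: by Corollary \ref{cor:hammock_anima_is_anima_of_loc} it suffices to show that the canonical map $\abs{\Zig_{[1,-1]}\pr{\cat C,\cat W}_{x,y}}\to\Hamm_{\cat C,\cat W}\pr{x,y}$ is an equivalence. The core step, however, has a genuine gap. The hypothesis you adopt is not the paper's definition, is not implied by it, and fails in the simplest examples. So ``every transition map out of $I_0$ is an equivalence'' cannot be the mechanism.

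Definition \ref{def:calculus} only asks that the maps $\Zig_{[i+j,-1]}\pr{\cat C,\cat W}_{x,y}\to\Zig_{[i,-1,j,-1]}\pr{\cat C,\cat W}_{x,y}$, which delete an \emph{intermediate backward} arrow, be weak homotopy equivalences. By the remark following that definition, Dwyer--Kan's condition is of this type, not the one you state. Take $\cat C=[1]$, $\cat W$ the identities, $x=0$, $y=1$. Every category in sight is discrete, both sides of each comparison map have $i+j$ objects, and the map is a bijection, so left fractions are supported. Yet $\abs{\Zig_{[1,-1]}\pr{\cat C,\cat W}_{0,1}}$ is a point, while $\abs{\Zig_{[2,-1]}\pr{\cat C,\cat W}_{0,1}}$ and $\abs{\Zig_{[1,-1,1,-1]}\pr{\cat C,\cat W}_{0,1}}$ have two points each. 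So inserting identities into $x\to\bullet\xleftarrow{\sim}y$ is not an equivalence, and the forward-arrow half of your condition (b) fails as well. Forward subdivisions genuinely change the homotopy type, and the hammock diagram is not a diagram of equivalences.

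The combinatorics is also off. $I_0$ is not initial among alternating shapes even if only degeneracies are allowed: there are three boundary-preserving surjections $[1,-1,1,-1]\to[1,-1]$, collapsing $\{0,1,2\}$, $\{1,2,3\}$, or $\{2,3,4\}$. Your cofinality claim also ignores the boundary-preserving injections into a shape, which compose arrows rather than insert identities. Absorbing exactly these non-invertible transitions and multiplicities is the content of the theorem.

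The paper writes out the right-fraction case; yours is dual. The missing idea is to treat backward and forward arrows separately, in three steps.
\begin{enumerate}
\item Lemma \ref{lem:6.2_1} upgrades the hypothesis to equivalences $\abs{\Zig_{[l\pr Z,-1]}\pr{\cat C,\cat W}_{x,y}}\simeq\abs{\Zig_{[Z,-1]}\pr{\cat C,\cat W}_{x,y}}$ for all $Z$. The proof is by induction on the number of backward arrows, writing zig-zag categories as fiber products over $\cat W$ of (co)cartesian fibrations and applying Corollary \ref{cor:fiberwise_pullback}. Only backward arrows are ever removed.
\item Forward subdivisions are absorbed by the colimit rather than inverted. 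The functor $l\from\Zig_{\partial}\to\Del_{\partial}$ is left adjoint to the inclusion, hence initial (Lemma \ref{lem:6.2_2}), and $[1]$ is initial in $\Del_{\partial}$. Therefore $\colim_{Z\in\Zig_{\partial}^{\op}}\abs{\Zig_{[l\pr Z,-1]}\pr{\cat C,\cat W}_{x,y}}\simeq\colim_{[k]\in\Del_{\partial}^{\op}}\abs{\Zig_{[k,-1]}\pr{\cat C,\cat W}_{x,y}}\simeq\abs{\Zig_{[1,-1]}\pr{\cat C,\cat W}_{x,y}}$.
\item Appending a trailing backward arrow does not change the hammock colimit, by Lemma \ref{lem:colim} applied to $Z\mapsto[Z,-1]$ and the degeneracies $[Z,-1]\to Z$. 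The two degeneracies $[1,-1,-1]\to[1,-1]$ are connected by a pointwise-weak natural transformation, so they agree after realization. This identifies the canonical map $\abs{\Zig_{[1,-1]}\pr{\cat C,\cat W}_{x,y}}\to\Hamm_{\cat C,\cat W}\pr{x,y}$ with a composite of the equivalences above.
\end{enumerate}
Your final identification $\abs{\Zig_{[1,-1]}\pr{\cat C,\cat W}_{x,y}}\simeq\abs{\cat C_{x/}\times_{\cat C}\cat W_{y/}}$ is fine.
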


Several classes of relative categories are known to support fractions, such as Brown's categories of fibrant objects \cite{Bro73} and categories admitting Gabriel--Zisman's calculus of fractions \cite{GZ67}.
These relative categories have $\infty$-categorical generalizations, due to Cisinski \cite{HCHA} (for the former) and Carranza--Kapulkin--Lindsey \cite{CKL25} (for the latter).
We will show that $\infty$-categories of fibrant objects support fractions, as do relative $\infty$-categories admitting quasicategorical calculus of left fractions (Theorems \ref{thm:CLF} and \ref{thm:fibrationcats}).
This will exhibit Corollary \ref{cor:main3} as a unifying source for several formulas for mapping animae of localizations that had previously been established by separate arguments.

\subsection*{Related work}

As mentioned above, our approach to hammock localization is based on Segal animae, in contrast to the original approach via categories enriched over simplicial sets.
Nevertheless, the two approaches are compatible in the sense to be explained in Section \ref{sec:DKcomparison}.
In particular, we can extract the main results of \cite{DK80_2} from our results.

In \cite{MG18}, Mazel-Gee constructs another model of hammock localization using what he calls $\sAn$-enriched $\infty$-categories.
The identification of its associated $\infty$-category with the localization is not established there.
In Section \ref{sec:MGcomparison}, we compare Mazel-Gee's construction with ours and deduce this identification in Corollary \ref{cor:MGhamloc_is_localization}.

A special case of Corollary \ref{cor:main3} for model $\infty$-categories plays an essential role in the proof of the main result of \cite{MG21}, and it is proved by a more intricate argument in loc. cit.

There are already several variations of calculus of fractions, such as \cite{CKL25} and \cite[\S 7.2]{HCHA}.
We will show that a relative $\infty$-category $\pr{\cat C,\cat W}$ satisfying the calculus of fractions of \cite{CKL25} supports fractions in our sense (but not conversely; see Remark \ref{rem:CLFgeneral_converse_false}).
Combining this with Corollary \ref{cor:main3} gives an alternative proof of the mapping-anima formula of \cite[Corollary 10.10]{CKL25}.
On the other hand, as observed in \cite{ACK25}, calculi in the sense of \cite[\S 7.2]{HCHA} describe mapping animae using ``resolutions'' of objects.
Their hypotheses are of a different nature from ours, and we do not know an implication in either direction. 

\subsection*{Organization of the paper}

This paper has five sections and two appendices.

The first three sections are the core of the paper.
In Section \ref{sec:Zigzag}, we state and prove the main theorem (Theorem \ref{thm:main1}).
We then give a sufficient condition for the hammock formula to be simplified in Section \ref{sec:fraction}, and provide further criteria and examples of relative $\infty$-categories satisfying this condition in Section \ref{sec:criteria}. 

The next two sections discuss the compatibility of our construction with existing ones.
In Section \ref{sec:DKcomparison}, we compare our construction with Dwyer--Kan's hammock localization \cite{DK80_2}.
In Section \ref{sec:MGcomparison}, we compare our construction with Mazel-Gee's hammock localization \cite{MG18}.

The appendices contain several standard results in $\infty$-category theory.

\subsection*{Notation and conventions}

We write $\Cat_{\infty}$ for the $\infty$-category of small $\infty$-categories, and $\An$ for its full subcategory of animae (i.e., $\infty$-groupoids).
We say that a subcategory of an $\infty$-category is \emph{wide} if it contains all equivalences.
We write $\RelCat_{\infty}\subset\Fun\pr{[1],\Cat_{\infty}}$ for the full subcategory spanned by the essentially surjective monomorphisms, i.e., wide subcategory inclusions.
A typical object $\cat W\hookrightarrow\cat C$ in this $\infty$-category will be denoted by $\pr{\cat C,\cat W}$.

In Subsections \ref{subsec:quasi} and \ref{subsec:fibrant}, we will use lemmas whose existing proofs rely critically on combinatorics of quasicategories (Lemmas \ref{lem:filtered} and \ref{lem:4.3.15}).
The results in the remaining (sub)sections use only very formal properties of $\infty$-categories and are not bound to a specific model of $\infty$-categories.

\subsection*{AI disclosure:} Coding agents (Claude/Codex) were used in the revision of the manuscript, which also led us to a simpler proof of Lemma~\ref{lem:chi_final}.

\section{\label{sec:Zigzag}The Segal anima \texorpdfstring{$\protect\cat Z_{\protect\cat C,\protect\cat W}$}{Z(C,W)} of zig-zags}

The goal of this section is to associate to each relative $\infty$-category $\pr{\cat C,\cat W}$ a simplicial anima $\cat Z_{\cat C,\cat W}$ of ``zig-zags'' in $\pr{\cat C,\cat W}$ (Definition \ref{def:Z_CW}), and characterize it as the Segalification of the relative Rezk nerve of $\pr{\cat C,\cat W}$ (Theorem \ref{thm:Z_Segal}).
As a corollary, we obtain a description of the mapping animae of the localization $\cat C[\cat W^{-1}]$ as hammock animae (Corollary \ref{cor:hammock_anima_is_anima_of_loc}).

This section is organized as follows: We state the main theorem in Subsection \ref{subsec:def_state}, together with all definitions needed for the statement.
The proof of the theorem will be divided into three parts (Subsections \ref{subsec:Z_Segal}, \ref{subsec:hammock}, and \ref{subsec:Segalification}) and will be completed in Subsection \ref{subsec:summary}.

\subsection{\label{subsec:def_state}Definitions and statement of the main theorem}

In this subsection, we define the simplicial anima $\cat Z_{\cat C,\cat W}$ and related constructions.
We then state the main theorem of this paper (Theorem \ref{thm:Z_Segal}).

The definition of $\cat Z_{\cat C,\cat W}$ is based on the following variations of the simplex category $\Del$.

\begin{defn}
We define the category $\Zig$ of \textit{zig-zags} as follows: Its objects are pairs $[S,T]$, where $S,T$ are disjoint sets of positive integers such that $S\cup T=\{1,\dots,\abs S+\abs T\}$.
(We allow the case $S=T=\emptyset$.)
We will identify such pairs with relative categories freely generated by morphisms $f_{s}\from s-1\to s$ for $s\in S$ and $f_{t}\from t\to t-1$ for $t\in T$, where the $f_{t}$'s are weak equivalences.
For instance, $[\{1,2,5\},\{3,4\}]$ looks like the following:
\[
0\to1\to2\xleftarrow{\sim}3\xleftarrow{\sim}4\to5.
\]
The morphisms of $\Zig$ are relative functors that preserve the natural ordering of integers.

We also write $\Zig_{\partial}\subset\Zig$ for the subcategory spanned by the morphisms that preserve the boundary elements (i.e., the minimal and the maximal elements for the natural ordering of integers).
The category $\Zig_{\partial}$ has a monoidal structure $\vee$ given by ``concatenation'': 
\[
[S,T]\vee[S',T']=[S\cup\{\abs S+\abs T+i\mid i\in S'\},T\cup\{\abs S+\abs T+t\mid t\in T'\}].
\]
We will write $[n]=[\{1,\dots,n\},\emptyset]$ and $[-n]=[\emptyset,\{1,\dots,n\}]$.
We also use notations such as $[k_{1},\dots,k_{n}]=[k_{1}]\vee\dots\vee[k_{n}]=\bigvee^{n}_{i=1}[k_{i}]$ for $k_{1},\dots,k_{n}\in\mathbb{Z}$, and $[n,Z]=[n]\vee Z$ and $ZZ'=[Z,Z']=Z\vee Z'$ for $Z,Z'\in\Zig_{\partial}$ and $n\in\mathbb{Z}$.
\end{defn}

\begin{notation}
We write $l\from\Zig\to\Del$ for the functor that sends $Z=[S,T]$ to $l\pr Z=[\abs S]$.
The canonical map $q_Z\from Z\to l\pr Z$ sends $i$ to $\abs{S\cap\{1,\dots,i\}}$ and exhibits $l\pr Z$ as the localization of $Z$ at its weak equivalences.
On morphisms, $l$ is the functor induced by these localizations.
For $j\in l\pr Z$, we write $Z_j=q_Z^{-1}\pr j$ for the corresponding fiber.
\end{notation}

\begin{rem}
The category $\Zig_{\partial}$ is the opposite of the category $\mathbf{II}$ introduced in \cite[\S 4]{DK80_2}.
\end{rem}

\begin{defn}
\label{def:int_Zig}We write $\pi\from\int\Zig^{\bullet}_{\partial}\to\Del$ for the cartesian fibration associated to the monoidal category $\pr{\Zig_{\partial},\vee}$.
Thus, its objects are tuples $\pr{Z_{1},\dots,Z_{n}}$, where $n\geq0$ and $Z_{1},\dots,Z_{n}\in\Zig_{\partial}$.
A morphism $\pr{Z_{1},\dots,Z_{n}}\to\pr{Z'_{1},\dots,Z'_{m}}$ consists of a morphism $u\from[n]\to[m]$ in $\Del$ and maps $\{Z_{i}\to\bigvee_{u\pr{i-1}<j\leq u\pr i}Z'_{j}\}_{1\leq i\leq n}$ in $\Zig_{\partial}$.

Note that the assignment $[n]\mapsto\underset{n\text{ times}}{(\underbrace{[1],\dots,[1]})}$ determines a section $\iota\from\Del\to\int\Zig^{\bullet}_{\partial}$ of this cartesian fibration.
\end{defn}

We can now define $\cat Z_{\cat C,\cat W}$ and the hammock anima.
\begin{defn}
\label{def:Z_CW}Let $\pr{\cat C,\cat W}$ be a relative $\infty$-category.
For each $Z\in\Zig$, we define the $\infty$-category of zig-zags of shape $Z$ in $\pr{\cat C,\cat W}$ by
\[
\Zig_{Z}\pr{\cat C,\cat W}=\Fun_{\rel}\pr{Z,\pr{\cat C,\cat W}}\times_{\cat C^{\mathrm{ob}Z}}\cat W^{\mathrm{ob}Z}.
\]
The assignment $Z\mapsto\abs{\Zig_{Z}\pr{\cat C,\cat W}}$ determines a presheaf $\abs{\Zig_{\bullet}\pr{\cat C,\cat W}}\in\PSh\pr{\Zig}$, and we define a simplicial anima $\cat Z_{\cat C,\cat W}$ as the left Kan extension depicted as:
\[\begin{tikzcd}[column sep =large]
	{\pr{\int\Zig^{\bullet}_{\partial}}^{\op}} && {\Zig^\op} && \An \\
	{\Del^\op}
	\arrow[""{name=0, anchor=center, inner sep=0}, "{(Z_1,\dots,Z_n)\mapsto Z_1\cdots Z_n}", from=1-1, to=1-3]
	\arrow[from=1-1, to=2-1]
	\arrow["{|\Zig_{\bullet}(\cat{C},\cat{W})|}", from=1-3, to=1-5]
	\arrow[""{name=1, anchor=center, inner sep=0}, "{\cat{Z}_{\cat{C},\cat{W}}}"', dashed, from=2-1, to=1-5]
	\arrow[between={0.2}{0.8}, Rightarrow, from=0, to=1]
\end{tikzcd}\]

Since the projection $\pr{\int\Zig^{\bullet}_{\partial}}^{\op}\to\Del^{\op}$ is a cocartesian fibration, the pointwise formula for left Kan extension reduces to fiberwise colimit \cite[\href{https://kerodon.net/tag/02ZM}{Tag 02ZM}]{kerodon}.
That is, 
\[
\pr{\cat Z_{\cat C,\cat W}}_{n}\simeq\colim_{\pr{Z_{1},\dots,Z_{n}}\in\pr{\Zig^{n}_{\partial}}^{\op}}\abs{\Zig_{Z_{1}\cdots Z_{n}}\pr{\cat C,\cat W}}.
\]
In particular, $\pr{\cat Z_{\cat C,\cat W}}_{0}\simeq\abs{\cat W}$.
\end{defn}

\begin{rem}
Informally, a point of $\pr{\cat Z_{\cat C,\cat W}}_{n}$ is represented by shapes $Z_{1},\dots,Z_{n}$ together with a zig-zag of concatenated shape $Z_{1}\cdots Z_{n}$ in $\pr{\cat C,\cat W}$, or equivalently by $n$ composable finite zig-zags with their $n-1$ junctions marked.
Paths between such representatives are generated, coherently, by natural transformations that are pointwise in $\cat W$ and by changes of the individual shapes $Z_i$, where the latter identify representatives obtained by inserting identity morphisms or composing consecutive morphisms pointing in the same direction within any one of the $n$ constituent zig-zags.
\end{rem}

Note that restricting the top composite of the diagram along $\iota^{\op}$ gives the relative Rezk nerve. Consequently, restricting the unit of the defining left Kan extension along $\iota^{\op}$ gives a comparison map
\[
\gamma_{\cat C,\cat W}\from\N^{\rel}\pr{\cat C,\cat W}\to\cat Z_{\cat C,\cat W}.
\]

\begin{defn}
\label{def:hammock}Let $\pr{\cat C,\cat W}$ be a relative $\infty$-category.
Given a pair of objects $x,y\in\cat C$, we define the \textit{hammock anima} $\Hamm_{\cat C,\cat W}\pr{x,y}$ from $x$ to $y$ by
\[
\Hamm_{\cat C,\cat W}\pr{x,y}=\colim_{Z\in\Zig^{\op}_{\partial}}\abs{\Zig_{Z}\pr{\cat C,\cat W}_{x,y}},
\]
where $\Zig_{Z}\pr{\cat C,\cat W}_{x,y}=\Zig_{Z}\pr{\cat C,\cat W}\times_{\cat W\times\cat W}\{\pr{x,y}\}$.
\end{defn}

\begin{rem}
	When $\pr{\cat C,\cat W}$ is a relative $1$-category, we will identify $\Hamm_{\cat C,\cat W}\pr{x,y}$ with the anima associated to the mapping simplicial set $L^H\pr{\cat C,\cat W}\pr{x,y}$ in Dwyer--Kan's hammock localization; see Section \ref{sec:DKcomparison}.
\end{rem}

For $x,y \in \cat C$, the mapping anima $\cat Z_{\cat C,\cat W}\pr{x,y}$ of $\cat Z_{\cat C,\cat W}$ is given by
\[
\cat Z_{\cat C,\cat W}\pr{x,y}\simeq\pr{\colim_{Z\in\Zig^{\op}_{\partial}}\abs{\Zig_{Z}\pr{\cat C,\cat W}}}\times_{\abs{\cat W\times\cat W}}\{\pr{x,y}\}.
\]
The canonical comparison from the colimit of the fibers over $\pr{x,y}$ to the fiber over $\pr{x,y}$ of the colimit gives a map $\Hamm_{\cat C,\cat W}\pr{x,y}\to\cat Z_{\cat C,\cat W}\pr{x,y}$.

The following is the main result of this section:

\begin{thm}
\label{thm:Z_Segal}Let $\pr{\cat C,\cat W}$ be a relative $\infty$-category.

\begin{enumerate}
\item The simplicial anima $\cat Z_{\cat C,\cat W}$ is Segal.
\item For every pair of objects $x,y\in\cat C$, the map
\[
\Hamm_{\cat C,\cat W}\pr{x,y}\to\cat Z_{\cat C,\cat W}\pr{x,y}
\]
is an equivalence.
\item The map $\gamma_{\cat C,\cat W}\from\N^{\rel}\pr{\cat C,\cat W}\to\cat Z_{\cat C,\cat W}$ exhibits $\cat Z_{\cat C,\cat W}$ as a Segalification of $\N^{\rel}\pr{\cat C,\cat W}$.
\end{enumerate}
\end{thm}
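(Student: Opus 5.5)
The proof splits into three parts, matching the three clauses of the theorem: (2), then (1), then (3).

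\emph{Part (2): fibers of the colimit.} Write $F(Z)=\abs{\Zig_Z(\cat C,\cat W)}$ for $Z\in\Zig_\partial$. The endpoint evaluation gives a natural map $F(Z)\to\abs{\cat W}\times\abs{\cat W}$. We must show that taking fibers over $(x,y)$ commutes with the colimit over $\Zig^{\op}_\partial$. For this I will use the characterization of universality of colimits in $\An$: it suffices that the functor $Z\mapsto F(Z)$, viewed as a diagram over $\abs{\cat W}^2$, is sent by straightening to a diagram of local systems. Concretely, fibers commute with colimits in $\An$ once we know two things:
\begin{itemize}
\item the fiber of $\Zig_Z(\cat C,\cat W)\to\cat W\times\cat W$ over $(x,y)$ has geometric realization equal to the homotopy fiber of $\abs{\Zig_Z}\to\abs{\cat W}^2$;
\item this is natural in $Z$.
\end{itemize}
The first point is Quillen's Theorem~B type statement. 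The endpoint map $\Zig_Z(\cat C,\cat W)\to\cat W\times\cat W$ is a bicartesian-ish fibration: precomposition with a morphism of $\cat W$ at an endpoint acts on zig-zags, and I expect the endpoint functor to be a cartesian fibration in the first variable and cocartesian in the second. More precisely, I will arrange that after replacing $Z$ by $[-1]\vee Z\vee[-1]$, or by a cofinal family, the action of $\cat W$ on the fibers is by maps inducing equivalences on realizations. Indeed, a map $w\colon x'\to x$ in $\cat W$ induces $\abs{\Zig_Z(\cdot)_{x,y}}\to\abs{\Zig_{[-1]Z}(\cdot)_{x',y}}$, and after passing to the colimit over $\Zig_\partial^{\op}$ this is invertible. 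So the strategy is to show the colimit $\colim_Z\abs{\Zig_Z(\cat C,\cat W)_{x,y}}$ is functorial in $(x,y)\in\abs{\cat W}^2$ with equivalences as transition maps, which is equivalent to a local system, and then conclude by universality of colimits.

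\emph{Part (1): the Segal condition.} For Segalness I need
\[
(\cat Z)_n\simeq(\cat Z)_1\times_{(\cat Z)_0}\dots\times_{(\cat Z)_0}(\cat Z)_1 .
\]
Use the formula $(\cat Z)_n\simeq\colim_{(\Zig_\partial^{\op})^n}\abs{\Zig_{Z_1\cdots Z_n}}$. The category $\Zig_{Z_1\cdots Z_n}(\cat C,\cat W)$ is the iterated fiber product of the $\Zig_{Z_i}$ over $\cat W$, since concatenation glues along a single object whose value lies in $\cat W$. The key lemma is that $\abs{-}$ commutes with these fiber products over $\cat W$ after taking the colimit in each $Z_i$. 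This is again the local-system statement of Part (2): the colimit over $Z_i$ of $\abs{\Zig_{Z_i}}\to\abs{\cat W}^2$ has fibers $\Hamm(x,y)$ on which $\cat W$ acts invertibly. Hence realization of the fiber product equals the fiber product of the realizations. Products of colimits over the product category $(\Zig^{\op}_\partial)^n$ commute since colimits in $\An$ are universal. So Part (1) reduces to a single ``Theorem~B'' lemma.

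\emph{Part (3): Segalification.} With $\cat Z$ Segal, it suffices to show that $\gamma$ is a Segal equivalence, i.e.\ that for every Segal anima $X$, restriction $\Map(\cat Z,X)\to\Map(\N^{\rel},X)$ is an equivalence. By the left Kan extension description, maps out of $\cat Z$ are maps out of the presheaf $F$ on $\Zig$ pulled back to $\int\Zig_\partial^\bullet$. Each $\abs{\Zig_Z}$ is the Segal-type gluing of $\abs{\Zig_{[1]}}=\N^{\rel}_1$ and $\abs{\Zig_{[-1]}}$ pieces, and $\Zig_{[-1]}\simeq\Fun([1],\cat W)$ with the relevant map $\N^{\rel}_0\to\abs{\Fun([1],\cat W)}$ an equivalence. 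So each $\abs{\Zig_{Z_1\cdots Z_n}}$ is the Segalification of a finite colimit of representables built from $\N^{\rel}$. Then one shows that the colimit over shapes is compatible, i.e.\ that the unit is a Segal (``spine'') equivalence levelwise.

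\emph{Main obstacle.} I expect the main obstacle to be the invertibility of the $\cat W$-action on the colimits over shapes. The plan there is to show that prepending $[-1]$ is homotopic in the colimit to the identity, via the maps $[-1]Z\to Z$ and $Z\to[-1]Z$ in $\Zig_\partial$ that collapse or insert an identity.
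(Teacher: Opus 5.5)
Your parts (1) and (2) can be made to work, by a route somewhat different from the paper's. You reindex along $Z\mapsto[-1,Z,-1]$ so that the endpoint functors become two-sided fibrations, straighten, and use that the $\cat W$-action becomes invertible after the colimit over shapes. The paper instead applies Quillen's Theorem B directly to the comma categories of $\Zig_\partial\pr{\cat C,\cat W}\to\cat W$ and $\to\cat W\times\cat W$ (Lemmas~\ref{lem:zigpartialfiber} and \ref{lem:zigpartialfiber2}). Both rest on the same kind of detour homotopy, packaged in the paper as Lemma~\ref{lem:colim2}.

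Several details need fixing:
\begin{itemize}
\item Shape by shape, the fiber is not the homotopy fiber, and the action is not invertible even after your replacement. For $Z=[0]$, the fiber of $\Zig_{[-1,-1]}\pr{\cat C,\cat W}$ over $(x,y)$ realizes to $\Map_{\cat W}(y,x)$, on which $w\colon x\to x''$ acts by postcomposition. So only the statement after the colimit is usable.
\item The variance you state belongs to $[1,Z,1]$, not to $[-1,Z,-1]$.
\item The reindexing along $Z\mapsto[-1,Z,-1]$ needs its own cofinality argument.
\item There is in general no map $Z\to[-1,Z]$ in $\Zig_\partial$ (for instance, none from $[1]$ to $[-1,1]$). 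The required homotopies must therefore come from pointwise weak natural transformations of zig-zags, not from maps of shapes.
\end{itemize}

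The genuine gap is part (3). You claim that $\abs{\Zig_Z\pr{\cat C,\cat W}}$ is a Segal-type gluing of the realizations of the $\Zig_{[\pm1]}\pr{\cat C,\cat W}$ over $\abs{\cat W}$. This is false. For $Z=[2]$ it asserts $\N^{\rel}_2\simeq\N^{\rel}_1\times_{\N^{\rel}_0}\N^{\rel}_1$, i.e.\ that $\N^{\rel}\pr{\cat C,\cat W}$ is already Segal. That fails for the relative category $[1,-1,1]$: there $\N^{\rel}_1$ has nothing over the pair of end vertices, while the fiber product does. Realization commutes with these fiber products only after the colimit over shapes. That is what makes $\cat Z_{\cat C,\cat W}$ Segal, but it says nothing about why $L_{\Seg}$ inverts $\gamma_{\cat C,\cat W}$. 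Being a Segal equivalence is also not a levelwise property, and the phrase ``each $\abs{\Zig_{Z_1\cdots Z_n}}$ is the Segalification of a finite colimit of representables'' does not typecheck, since that object is an anima.

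Two ideas from the paper are missing.
\begin{itemize}
\item View $\abs{\Zig_\bullet\pr{\cat C,\cat W}}$ as a presheaf $P_{\cat C,\cat W}$ on the $\infty$-category $\overline{\Zig}$ of Construction~\ref{const:overlineZig}. Then $\gamma_{\cat C,\cat W}$ is the value at $P_{\cat C,\cat W}$ of a natural transformation $L_{\Seg}\iota^*f^*\Rightarrow L_{\Seg}\pi_!f^*$ between colimit-preserving functors. It therefore suffices to check representables, i.e.\ $(\cat C,\cat W)=Z\in\Zig$.
\item For such $Z$ one needs an actual computation. One shows $\cat Z_Z\simeq\Delta^{l\pr Z}$, because the hammock animae of $Z$ are empty or contractible (the normal-form argument of Proposition~\ref{prop:Z_Z}). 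This is combined with Mazel-Gee's theorem that $\N^{\rel}\pr Z\to\Delta^{l\pr Z}$ is a complete Segalification, and with the fact that the Segalification of $\N^{\rel}\pr Z$ is already complete (Lemma~\ref{lem:Segalification_NrelZ}).
\end{itemize}
Without some such input saying what Segalification does to backward weak equivalences, your outline has no way to conclude part (3).
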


\begin{rem}
Under the equivalence in part (2), composition in $\cat Z_{\cat C,\cat W}$ is induced by concatenation of zig-zags.
\end{rem}

By Mazel-Gee's theorem on the relative Rezk nerve (Theorem \ref{thm:univ_Rezk}), the left adjoint $\ac\from\Fun\pr{\Del^{\op},\An}\to\Cat_{\infty}$ of the Rezk nerve carries $\N^{\rel}\pr{\cat C,\cat W}$ to $\cat C[\cat W^{-1}]$.
Since the associated category functor $\ac$ does not affect mapping animae \cite[Theorem 7.7]{Rez01}, Theorem \ref{thm:Z_Segal} gives the following corollary:

\begin{cor}
\label{cor:hammock_anima_is_anima_of_loc}For every relative $\infty$-category $\pr{\cat C,\cat W}$, one has
\[
\ac\pr{\cat Z_{\cat C,\cat W}}\simeq\cat C[\cat W^{-1}].
\]
In particular, for every pair of objects $x,y\in\cat C$, we have
\[
\Hamm_{\cat C,\cat W}\pr{x,y}\simeq\cat C[\cat W^{-1}]\pr{x,y}.
\]
\end{cor}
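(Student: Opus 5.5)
The statement is Corollary \ref{cor:hammock_anima_is_anima_of_loc}, and I will deduce it formally from Theorem \ref{thm:Z_Segal}, Mazel-Gee's theorem (Theorem \ref{thm:univ_Rezk}), and Rezk's result that $\ac$ does not change mapping animae of Segal animae.

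\textbf{Step 1: $\ac$ inverts Segalification maps.} The functor $\ac\from\Fun\pr{\Del^{\op},\An}\to\Cat_{\infty}$ is left adjoint to the Rezk nerve. The Rezk nerve lands in complete Segal animae, which are in particular Segal. A Segalification map $X\to L_{\mathrm{Seg}}X$ is, by definition, initial among maps from $X$ to Segal animae. Hence for any $\infty$-category $\cat D$, precomposition induces equivalences
\[
\Map\pr{L_{\mathrm{Seg}}X,\N\cat D}\simeq\Map\pr{X,\N\cat D},
\]
so $\ac\pr{X}\to\ac\pr{L_{\mathrm{Seg}}X}$ is an equivalence by Yoneda.

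\textbf{Step 2: identify $\ac\pr{\cat Z_{\cat C,\cat W}}$.} Theorem \ref{thm:Z_Segal}(3) says that $\gamma_{\cat C,\cat W}$ exhibits $\cat Z_{\cat C,\cat W}$ as the Segalification of $\N^{\rel}\pr{\cat C,\cat W}$. Applying Step 1 gives
\[
\ac\pr{\cat Z_{\cat C,\cat W}}\simeq\ac\pr{\N^{\rel}\pr{\cat C,\cat W}},
\]
and Theorem \ref{thm:univ_Rezk} identifies the right-hand side with $\cat C[\cat W^{-1}]$. This proves the first assertion.

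\textbf{Step 3: mapping animae.} By Theorem \ref{thm:Z_Segal}(1), $\cat Z_{\cat C,\cat W}$ is Segal. For a Segal anima $X$, the unit $X\to\N\ac\pr{X}$ is the Rezk completion, which is fully faithful \cite[Theorem 7.7]{Rez01}. It therefore induces equivalences of mapping animae
\[
\cat Z_{\cat C,\cat W}\pr{x,y}\simeq\ac\pr{\cat Z_{\cat C,\cat W}}\pr{x,y}.
\]
We need the objects $x,y\in\cat C$ to match up along this chain. The composite of $\gamma_{\cat C,\cat W}$ with this unit restricts on $0$-simplices to the map $\abs{\cat W}\to\cat C[\cat W^{-1}]^{\simeq}$ induced by the localization functor, so $x$ and $y$ are sent to their images under $\cat C\to\cat C[\cat W^{-1}]$. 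This compatibility is the main point to check, and it follows from the naturality of the equivalence in Theorem \ref{thm:univ_Rezk}.

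Finally, Theorem \ref{thm:Z_Segal}(2) gives $\Hamm_{\cat C,\cat W}\pr{x,y}\simeq\cat Z_{\cat C,\cat W}\pr{x,y}$. Composing the three equivalences yields
\[
\Hamm_{\cat C,\cat W}\pr{x,y}\simeq\cat C[\cat W^{-1}]\pr{x,y}.
\]
No step here is a genuine obstacle: all the substantive work is contained in Theorem \ref{thm:Z_Segal}.
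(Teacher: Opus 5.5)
Your proposal is correct and follows the paper's own route: $\ac$ inverts the Segalification $\gamma_{\cat C,\cat W}$, Theorem \ref{thm:univ_Rezk} identifies $\ac\pr{\N^{\rel}\pr{\cat C,\cat W}}$ with $\cat C[\cat W^{-1}]$, and Rezk's Theorem 7.7 together with Theorem \ref{thm:Z_Segal}(2) handles the mapping animae. The only difference is that you spell out two points the paper leaves implicit, namely why $\ac$ inverts Segalification maps and why the objects $x,y$ match up under the identification.
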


The rest of this section is devoted to the proof of Theorem \ref{thm:Z_Segal}.
We prove part (1) in Subsection \ref{subsec:Z_Segal}, part (2) in Subsection \ref{subsec:hammock}, and part (3) in Subsection \ref{subsec:Segalification}.
We then summarize the discussion in Subsection \ref{subsec:summary}.

\subsection{\label{subsec:Z_Segal}\texorpdfstring{$\protect\cat Z_{\protect\cat C,\protect\cat W}$}{Z(C,W)} is a Segal anima}

In this subsection, we show that $\cat Z_{\cat C,\cat W}$ is a Segal anima (Proposition \ref{prop:segal}).

\begin{notation}
Given a relative $\infty$-category $\pr{\cat C,\cat W}$, we define a new $\infty$-category $\Zig_{\partial}\pr{\cat C,\cat W}\in\Cat_{\infty}$ by
\[
\Zig_{\partial}\pr{\cat C,\cat W}=\colim_{Z\in\Zig^{\op}_{\partial}}\Zig_{Z}\pr{\cat C,\cat W}.
\]
Since realization preserves colimits, there is a canonical equivalence
\[
\abs{\Zig_{\partial}\pr{\cat C,\cat W}}\simeq\pr{\cat Z_{\cat C,\cat W}}_{1}.
\]
The evaluation at the left and right endpoints determines functors $\dom,\codom\from\Zig_{\partial}\pr{\cat C,\cat W}\to\cat W$.
\end{notation}

We will repeatedly use the following elementary observation about colimits.

\begin{lem}
\label{lem:colim2}Let $f,g\from I\to\cat E$ be functors of $\infty$-categories, and let $\alpha\from f\to g$ be a natural transformation.
Suppose there are an endofunctor $e\from I\to I$ and natural transformations $\beta\from g\to fe$ and $\eta\from\id\to e$ such that $f\eta\simeq\beta\cdot\alpha$ and $g\eta\simeq\alpha e\cdot\beta$.
Then $\alpha$ induces an equivalence
\[
\colim_{I}f\xrightarrow{\simeq}\colim_{I}g,
\]
as long as the colimits exist.
\end{lem}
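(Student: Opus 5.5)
The idea is to produce, from the data $(e,\beta,\eta)$, a map $\colim_I g\to\colim_I f$ and check that it is a two-sided inverse to $\alpha_*\from\colim_I f\to\colim_I g$. Write $F=\colim_I f$ and $G=\colim_I g$. The main tool is that any endofunctor $e\from I\to I$ induces a comparison map $e_*\from\colim_I fe\to\colim_I f$, coming from the cocone $f(e(i))\to F$. Likewise, any natural transformation $\theta\from h\to h'$ of diagrams $I\to\cat E$ induces $\theta_*\from\colim h\to\colim h'$.

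First define the candidate inverse as the composite
\[
\psi\from G\xrightarrow{\beta_*}\colim_I fe\xrightarrow{e_*}F.
\]

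Next compute $\psi\circ\alpha_*$. It is induced by the cocone with components $f(i)\xrightarrow{\alpha_i}g(i)\xrightarrow{\beta_i}f(e(i))\to F$. By the hypothesis $\beta\cdot\alpha\simeq f\eta$, this is the cocone $f(i)\xrightarrow{f(\eta_i)}f(e(i))\to F$. Now for any natural transformation $\eta\from\id\to e$, the structure maps of the colimit cocone $\lambda$ for $f$ satisfy $\lambda_{e(i)}\circ f(\eta_i)\simeq\lambda_i$. This holds naturally in $i$, because $\lambda$ is a cocone and $\eta_i\from i\to e(i)$ is a morphism of $I$.

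Stated coherently, this is the following observation. Whiskering $\eta$ gives a natural transformation from the constant diagram to itself, which is the identity; more precisely, the composite $\lambda\circ f\eta$ is a natural transformation $f\Rightarrow\mathrm{const}_F$. The data needed to identify it with $\lambda$ is the functor $[1]\times I\to I$ corresponding to $\eta$, composed with $f$ and $\lambda$. A map of diagrams $[1]\times I\to\cat E$ into a constant diagram restricts along $\{0\}\times I$ and $\{1\}\times I$. Since $[1]\times I\to I$ (projection) is a localization whose colimit is computed by either end, the two restrictions give homotopic cocones. Hence $\psi\circ\alpha_*\simeq\id_F$.

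Symmetrically, $\alpha_*\circ\psi$ is induced by the cocone $g(i)\xrightarrow{\beta_i}f(e(i))\xrightarrow{\alpha_{e(i)}}g(e(i))\to G$. By the second hypothesis $\alpha e\cdot\beta\simeq g\eta$, this becomes $g(i)\xrightarrow{g(\eta_i)}g(e(i))\to G$. The same argument, applied to $g$ and its colimit cocone, identifies this with the colimit cocone of $g$, so $\alpha_*\circ\psi\simeq\id_G$.

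The only genuinely $\infty$-categorical point, and the expected obstacle, is making the step ``$\lambda\circ f\eta\simeq\lambda$ as cocones'' coherent, rather than just checking it objectwise. I would handle it exactly via the functor $H\from[1]\times I\to I$ encoding $\eta$. The cocones $\lambda\circ f$, restricted along the two inclusions $I\rightrightarrows[1]\times I$, agree up to homotopy because the projection $[1]\times I\to I$ is cofinal. Equivalently, both inclusions $I\to[1]\times I$ are sections of it, and precomposition with them induces inverse equivalences on cocones under a diagram pulled back from $I$. I would also note that the two required identities are only needed up to homotopy in mapping animae, which this argument provides.
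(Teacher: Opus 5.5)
Your argument is correct and is the paper's proof. Your inverse $e_*\beta_*$ is the map induced by the cocone $\lambda e\cdot\beta\colon g\Rightarrow fe\Rightarrow \mathrm{const}_F$. Both composites are then identified with the identity by chaining the two hypotheses with the relation $\lambda e\cdot f\eta\simeq\lambda$, which the paper simply invokes as compatibility of the colimit cocone with $\eta$.

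One correction to your justification of that relation: it does not come from finality of the projection $[1]\times I\to I$. The diagram $f\circ H$ is not pulled back along the projection, and only the inclusion of $\{1\}\times I$ is final. Moreover, $\lambda$ and $\lambda e$ are cocones on different diagrams, so they cannot be ``homotopic cocones.''

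The correct reason uses exactly the ingredient you named. Read $\lambda H\colon fH\Rightarrow\mathrm{const}_F$ as a morphism in $\Fun\pr{[1],\Fun\pr{I,\cat E}}$ from $f\eta$ to $\mathrm{const}_F\eta=\id$. This is a commutative square in $\Fun\pr{I,\cat E}$ with edges $f\eta$, $\lambda e$, $\lambda$, $\id$. That square is precisely a path $\lambda e\cdot f\eta\simeq\lambda$ in $\Map\pr{f,\mathrm{const}_F}$, which is all the argument needs.
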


\begin{proof}
Let $\overline{\alpha}$ be the map in question, and let $\iota^{f}$ and $\iota^{g}$ denote the colimit cocones of $f$ and $g$.
Then an inverse equivalence $\overline{\beta}$ is induced by the cocone
\[
g\xrightarrow{\beta}fe\xrightarrow{\iota^{f}e}\colim_{I}f,
\]
as the assumed homotopies and the compatibility of the colimit cocones with $\eta$ give
\begin{align*}
\overline{\beta}\overline{\alpha}\cdot\iota^{f}&\simeq\iota^{f}e\cdot\beta\cdot\alpha\simeq\iota^{f}e\cdot f\eta\simeq\iota^{f},\\
\overline{\alpha}\overline{\beta}\cdot\iota^{g}&\simeq\iota^{g}e\cdot\alpha e\cdot\beta\simeq\iota^{g}e\cdot g\eta\simeq\iota^{g}. \qedhere
\end{align*}
\end{proof}

\begin{lem}
\label{lem:zigpartialfiber}Let $\pr{\cat C,\cat W}$ be a relative $\infty$-category.
For every morphism $w\from x\xrightarrow{\sim}y$ in $\cat W$, postcomposition and precomposition with $w$ induce equivalences
\begin{align*}
\abs{\Zig_{\partial}\pr{\cat C,\cat W}\times_{\cat W}\cat W_{/x}} & \xrightarrow{\simeq}\abs{\Zig_{\partial}\pr{\cat C,\cat W}\times_{\cat W}\cat W_{/y}},\\
\abs{\Zig_{\partial}\pr{\cat C,\cat W}\times_{\cat W}\cat W_{y/}} & \xrightarrow{\simeq}\abs{\Zig_{\partial}\pr{\cat C,\cat W}\times_{\cat W}\cat W_{x/}}.
\end{align*}
Here, the fiber products are formed using the codomain projection $\codom\from\Zig_{\partial}\pr{\cat C,\cat W}\to\cat W$.
\end{lem}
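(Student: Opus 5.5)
We prove the first equivalence; the second is dual, obtained by reversing all zig-zags. The plan is to write both sides as colimits over $\Zig_{\partial}^{\op}$ and apply Lemma \ref{lem:colim2}.

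First, realization commutes with colimits, and pulling back along $\cat W_{/x}\to\cat W$ commutes with colimits in $\Cat_{\infty}$ over $\cat W$. The latter holds because $\Zig_{\partial}\pr{\cat C,\cat W}\to\cat W$ is a colimit of functors to $\cat W$, and the pullback is along a right fibration, hence exponentiable, so it preserves colimits. This gives
\[
\abs{\Zig_{\partial}\pr{\cat C,\cat W}\times_{\cat W}\cat W_{/x}}\simeq\colim_{Z\in\Zig^{\op}_{\partial}}\abs{\Zig_{Z}\pr{\cat C,\cat W}\times_{\cat W}\cat W_{/x}}.
\]
Second, we identify the summands. A zig-zag of shape $Z$ ending at some $c$, together with a weak equivalence $c\xrightarrow{\sim}x$, is the same as a zig-zag of shape $Z\vee[1]$ whose last arrow is forced to lie in $\cat W$. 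We formalize this by introducing the shape $Z\vee[1]_{\sim}$ with a marked final edge. Alternatively, we observe that the extra datum $c\to x$ in $\cat W$ may be recorded as a backward arrow, so that the summand becomes zig-zags of shape $Z\vee[-1]$ with final endpoint fixed to $x$ after realization.

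Concretely, we define two functors $f,g\from\Zig^{\op}_{\partial}\to\An$:
\[
f\pr Z=\abs{\Zig_{Z}\pr{\cat C,\cat W}\times_{\cat W}\cat W_{/x}},\qquad g\pr Z=\abs{\Zig_{Z}\pr{\cat C,\cat W}\times_{\cat W}\cat W_{/y}}.
\]
Postcomposition with $w$ gives $\alpha\from f\to g$. For the inverse, we take the endofunctor $e\pr Z=Z\vee[-1]$. The map $\beta\from g\pr Z\to f\pr{Z\vee[-1]}$ sends a zig-zag $\sigma$ with $v\from\codom\sigma\xrightarrow{\sim}y$ to the zig-zag $\sigma$ followed by the backward arrow $w\from x\xrightarrow{\sim}y$. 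The new endpoint is then $x$, equipped with $\id_x$. The transformation $\eta\from\id\to e$ is induced by the map $Z\vee[-1]\to Z$ in $\Zig_{\partial}$ collapsing the last backward arrow onto the endpoint; it is contravariant, as required in $\Zig^{\op}_{\partial}$.

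Third, we check the two homotopies $f\eta\simeq\beta\alpha$ and $g\eta\simeq\alpha e\cdot\beta$ at the level of $\infty$-categories before realization, up to natural transformations. Natural transformations become homotopies after $\abs{-}$. Take $\pr{\sigma,u\from c\to x}$. Then $\beta\alpha$ yields $\sigma$, followed by $x\xleftarrow{w}$, with endpoint data $\id_x$. Meanwhile $f\eta$ yields $\sigma$ followed by an identity backward arrow $c=c$, with endpoint data $u$.

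The main obstacle is this step: we must connect these two objects of $\Zig_{Z\vee[-1]}\pr{\cat C,\cat W}\times_{\cat W}\cat W_{/x}$ by a natural transformation that is pointwise in $\cat W$. The required transformation is the identity on $\sigma$ and $u$ on the last vertex. Its square against $w$ commutes using $w u\simeq$ the composite appearing in $\alpha$. Here some care is needed, since the comparison must live over $\cat W_{/x}$. This bookkeeping of the slice data is what I expect to need the most attention, and it is possible that one should choose $\beta$ to remember $v$ via the backward edge $v$ rather than $w$. The second homotopy is analogous. Lemma \ref{lem:colim2} then gives the equivalence.
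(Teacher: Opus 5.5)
Your overall plan is the paper's: commute the pullback along the right fibration $\cat W_{/x}\to\cat W$ past the colimit over $\Zig_{\partial}^{\op}$, then apply Lemma \ref{lem:colim2} with an endofunctor that appends a detour. However, the detour you chose does not work, and the gap is exactly the point you flag. With $e(Z)=Z\vee[-1]$ the transformation $\beta$ is not defined. An object of $g(Z)$ is a zig-zag $\sigma$ ending at $c=\codom\sigma$ together with $v\colon c\xrightarrow{\sim}y$, and appending one backward edge to $\sigma$ requires a weak map \emph{into} $c$. The map $w$ lands in $y$, not in $c$, and $v$ points out of $c$, so neither can serve as that edge. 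There is no natural way to produce an object of $\Zig_{Z\vee[-1]}\pr{\cat C,\cat W}\times_{\cat W}\cat W_{/x}$ from $(\sigma,v)$ without inverting $w$. The same obstruction defeats your proposed homotopy. Consider a transformation out of $(\sigma,\,c\xleftarrow{\id}c,\,u)$ that is the identity on $\sigma$ and has last component $u\colon c\to x$. Naturality at the final backward edge forces the target to end with a weak map $r\colon x\to c$ satisfying $r\circ u\simeq\id_c$, and such an $r$ need not exist. Your ``alternative'' of recording the datum $c\to x$ as a backward edge fails too: it produces the coslice $\cat W_{x/}$, which is the other half of the lemma.

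The missing idea is to absorb the slice datum into the zig-zag before attaching $w$, which means appending two edges. Take $e(Z)=Z\vee[1,-1]$. Let $\beta(\sigma,v)$ be $\sigma$ followed by $c\xrightarrow{v}y\xleftarrow{w}x$, with new slice datum $\id_x$. Let $\eta$ come from collapsing the appended $[1,-1]$. The homotopy $f\eta\simeq\beta\alpha$ is then the transformation from $(\sigma,\,c\xrightarrow{\id}c\xleftarrow{\id}c,\,u)$ to $(\sigma,\,c\xrightarrow{w\circ u}y\xleftarrow{w}x,\,\id_x)$ that is the identity on $\sigma$ and has components $w\circ u$ and $u$ at the two new vertices. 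The homotopy $g\eta\simeq\alpha e\cdot\beta$ is a zig-zag of two such transformations into $(\sigma,\,c\xrightarrow{v}y\xleftarrow{\id}y,\,\id_y)$. One comes from $g\eta(\sigma,v)=(\sigma,\,c\xrightarrow{\id}c\xleftarrow{\id}c,\,v)$ with new components $(v,v)$. The other comes from $\alpha e\beta(\sigma,v)=(\sigma,\,c\xrightarrow{v}y\xleftarrow{w}x,\,w)$ with new components $(\id_y,w)$. This is precisely the paper's proof, which encodes the slice datum as a final weak edge, so that $f(Z)=\abs{\Zig'_{[Z,1]}\pr{\cat C,\cat W}_x}$, and uses $e(Z)=[Z,1,-1]$. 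Finally, the second equivalence is not the formal dual of the first. Reversing zig-zags forces passing to $\cat C^{\op}$ and exchanges $\dom$ and $\codom$, whereas both statements use $\codom$. The second equivalence therefore needs its own analogous argument, with $e(Z)=Z\vee[-1,1]$ and $\beta$ appending $c\xleftarrow{t}x\xrightarrow{w}y$ to $(\sigma,t\colon x\to c)$ with coslice datum $\id_y$.
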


\begin{proof}
We will show that the first map is an equivalence; the proof for the second map is similar, and we indicate the necessary changes at the end of the proof.

Since the map $\cat W_{/x}\to\cat W$ is a right fibration, it is a Conduch\'e fibration, hence
\[
\hspace{-20pt}
\Zig_{\partial}\pr{\cat C,\cat W}\times_{\cat W}\cat W_{/x}\simeq\colim_{Z\in\Zig^{\op}_{\partial}}\pr{\Zig_{Z}\pr{\cat C,\cat W}\times_{\cat W}\cat W_{/x}}\simeq\colim_{Z\in\Zig^{\op}_{\partial}}\pr{\Zig'_{[Z,1]}\pr{\cat C,\cat W}_{x}};
\hspace{-20pt}
\]
here the subscript $x$ indicates taking the fiber over $x\in\cat W$ via the codomain functor, and $\Zig'_{[Z,1]}\pr{\cat C,\cat W}\subset\Zig_{[Z,1]}\pr{\cat C,\cat W}$ denotes the full subcategory of objects that send the rightmost arrow to a weak equivalence.

Define functors $f,g\from\Zig^{\op}_{\partial}\to\An$ by
\[
f\pr Z=\abs{\Zig'_{[Z,1]}\pr{\cat C,\cat W}_{x}},
\qquad
g\pr Z=\abs{\Zig'_{[Z,1]}\pr{\cat C,\cat W}_{y}},
\]
and let $\alpha\from f\to g$ be the natural transformation obtained by postcomposition with $w$. In light of the previous calculation, our goal is to show that the induced map $\overline{\alpha}\colon \colim f \to \colim g$ is an equivalence. We exhibit this as an instance of Lemma \ref{lem:colim2}. Define an endofunctor $e\from\Zig^{\op}_{\partial}\to\Zig^{\op}_{\partial}$ by $e\pr Z=[Z,1,-1]$.
Adjoining the detour $y\xleftarrow{w}x\xrightarrow{\id_x}x$ gives a natural transformation $\beta\from g\to fe$.
Collapsing the newly adjoined segment defines a natural transformation $e\to\id$ on $\Zig_{\partial}$, and hence a natural transformation $\eta\from\id\to e$ on $\Zig^{\op}_{\partial}$.
To apply Lemma \ref{lem:colim2}, it remains to construct equivalences
\[
f\eta\simeq\beta\cdot\alpha
\qquad\text{and}\qquad
g\eta\simeq\alpha e\cdot\beta.
\]
For the first equivalence, a zig-zag $\cdots z\xrightarrow[\sim]{u}x$ gives the following diagram naturally:
\[\begin{tikzcd}
	z & z & z & x \\
	z & y & x & x,
	\arrow[equal, from=1-1, to=1-2]
	\arrow[equal, from=1-1, to=2-1]
	\arrow["uw", from=1-2, to=2-2]
	\arrow[equal, from=1-3, to=1-2]
	\arrow["u", from=1-3, to=1-4]
	\arrow["u", from=1-3, to=2-3]
	\arrow[equal, from=1-4, to=2-4]
	\arrow["uw"', from=2-1, to=2-2]
	\arrow["w", from=2-3, to=2-2]
	\arrow[equal, from=2-3, to=2-4]
\end{tikzcd}\]
Here the top and bottom rows represent $f\eta$ and $\beta\cdot\alpha$, respectively; the identities in the top row precede $u$ because $f$ appends the final $[1]$.
As the zig-zag varies, the vertical maps give the desired homotopy.
Likewise, the second equivalence is induced by the following diagram, associated naturally to a zig-zag $\cdots z\xrightarrow[\sim]{u}y$:
\[\begin{tikzcd}
	z & z & z & y \\
	z & y & y & y \\
	z & y & x & y.
	\arrow[equal, from=1-1, to=1-2]
	\arrow[equal, from=1-1, to=2-1]
	\arrow["u"', from=1-2, to=2-2]
	\arrow[equal, from=1-3, to=1-2]
	\arrow["u", from=1-3, to=1-4]
	\arrow["u"', from=1-3, to=2-3]
	\arrow[equal, from=1-4, to=2-4]
	\arrow["u"', from=2-1, to=2-2]
	\arrow[equal, from=2-3, to=2-2]
	\arrow[equal, from=2-3, to=2-4]
	\arrow[equal, from=3-1, to=2-1]
	\arrow["u"', from=3-1, to=3-2]
	\arrow[equal, from=3-2, to=2-2]
	\arrow["w", from=3-3, to=2-3]
	\arrow["w", from=3-3, to=3-2]
	\arrow["w"', from=3-3, to=3-4]
	\arrow[equal, from=3-4, to=2-4]
\end{tikzcd}\]
As the zig-zag varies, this diagram determines a homotopy between $g\eta$ and $\alpha e\cdot\beta$.
Lemma \ref{lem:colim2} now shows that $\alpha$ induces an equivalence on colimits, proving the claim for the first map.

The proof for the second map is similar, using the endofunctor $e\from\Zig^{\op}_{\partial}\to\Zig^{\op}_{\partial}$ given by $e\pr Z=[Z,-1,1]$.
\end{proof}

\begin{prop}
\label{prop:segal}Let $\pr{\cat C,\cat W}$ be a relative $\infty$-category.
The simplicial anima $\cat Z_{\cat C,\cat W}$ is Segal.
\end{prop}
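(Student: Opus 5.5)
The plan is to verify the Segal condition directly: for each $n\geq2$ the map
\[
\pr{\cat Z_{\cat C,\cat W}}_{n}\to\pr{\cat Z_{\cat C,\cat W}}_{1}\times_{\pr{\cat Z_{\cat C,\cat W}}_{0}}\cdots\times_{\pr{\cat Z_{\cat C,\cat W}}_{0}}\pr{\cat Z_{\cat C,\cat W}}_{1}
\]
should be an equivalence. I would do this in two stages: first identify the left side as the realization of an iterated fiber product of $\infty$-categories over $\cat W$, and then show that realization commutes with these particular fiber products. I would argue by induction on $n$, so it is enough to split off one factor at a time.

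\textbf{Step 1 (categorical level).} For shapes $Z_{1},\dots,Z_{n}\in\Zig_{\partial}$, the concatenation $Z_{1}\cdots Z_{n}$ is the pushout of the $Z_{i}$ along their shared endpoints. Hence
\[
\Zig_{Z_{1}\cdots Z_{n}}\pr{\cat C,\cat W}\simeq\Zig_{Z_{1}}\pr{\cat C,\cat W}\times_{\cat W}\cdots\times_{\cat W}\Zig_{Z_{n}}\pr{\cat C,\cat W},
\]
with the fiber products formed using $\codom$ and $\dom$. Since $\abs{-}$ preserves colimits and $\pr{\Zig_{\partial}^{n}}^{\op}=\prod\Zig^{\op}_{\partial}$, we get that $\pr{\cat Z_{\cat C,\cat W}}_{n}$ is the realization of
\[
\colim_{\pr{Z_{i}}}\Zig_{Z_{1}}\times_{\cat W}\cdots\times_{\cat W}\Zig_{Z_{n}}.
\]
I want to commute this colimit past the fiber products to obtain $\Zig_{\partial}\times_{\cat W}\cdots\times_{\cat W}\Zig_{\partial}$. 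Pullback along an exponentiable (Conduch\'e) functor preserves colimits in $\Cat_{\infty}/\cat W$, so one can commute variable by variable. This is the same manoeuvre as in the proof of Lemma \ref{lem:zigpartialfiber}, applied to the endpoint projections. If the endpoint projections turn out not to be Conduch\'e, I would fall back on a weaker statement that suffices: the comparison map becomes an equivalence after realization. I would check this by the same reduction to slices $\cat W_{/x}$ used in the lemma.

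\textbf{Step 2 (realization level; the main obstacle).} It then remains to show
\[
\abs{\cat E\times_{\cat W}\cat F}\simeq\abs{\cat E}\times_{\abs{\cat W}}\abs{\cat F}
\]
for $\cat E=\Zig_{\partial}\pr{\cat C,\cat W}$ over $\cat W$ via $\codom$, and $\cat F$ an iterated fiber product of copies of $\Zig_{\partial}\pr{\cat C,\cat W}$ over $\cat W$ via $\dom$. In general realization does not preserve fiber products. I would invoke a Quillen Theorem B--type criterion (presumably among the standard results in the appendices): if $p\from\cat E\to\cat W$ is such that
\[
x\mapsto\abs{\cat E\times_{\cat W}\cat W_{/x}}
\]
carries every morphism of $\cat W$ to an equivalence, then for any $\cat F\to\cat W$ the square relating $\abs{\cat E\times_{\cat W}\cat F}$, $\abs{\cat E}$, $\abs{\cat F}$, $\abs{\cat W}$ is a pullback. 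The required local constancy for $\codom$ is precisely the first equivalence of Lemma \ref{lem:zigpartialfiber}. The dual statement for $\dom$, using $\cat W_{x/}$, is the second equivalence of that lemma, should the induction be set up from the other side.

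\textbf{Step 3 (induction).} Combining Steps 1 and 2, I peel off the first factor:
\[
\pr{\cat Z_{\cat C,\cat W}}_{n}\simeq\pr{\cat Z_{\cat C,\cat W}}_{1}\times_{\abs{\cat W}}\abs{\Zig_{\partial}\times_{\cat W}\cdots\times_{\cat W}\Zig_{\partial}},
\]
where the second factor has $n-1$ copies, and by induction it is $\pr{\cat Z_{\cat C,\cat W}}_{n-1}$. Using $\pr{\cat Z_{\cat C,\cat W}}_{0}\simeq\abs{\cat W}$, the induction yields the Segal condition.

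The remaining check is that these identifications agree with the Segal maps induced by the inert maps $[1]\to[n]$. This should follow because, on the fibers of $\pr{\int\Zig^{\bullet}_{\partial}}^{\op}\to\Del^{\op}$, the cocartesian transport along an inert map simply forgets the other $Z_{i}$. I expect the main difficulty to lie in Step 2, specifically in having the correct form of the realization-fibration criterion, with Lemma \ref{lem:zigpartialfiber} supplying its hypothesis.
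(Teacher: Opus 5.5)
There is a genuine gap in Step~2. It comes from taking all the colimits at the categorical level before realizing. The ingredients you name are the right ones: Conduch\'e base change to move colimits past fiber products, and Theorem~B with its hypothesis supplied by Lemma~\ref{lem:zigpartialfiber}.

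\textbf{The gap.} The criterion you invoke is false as stated. Slice-invariance of $p\from\cat E\to\cat W$ does not make $\abs{\cat E\times_{\cat W}\cat F}\to\abs{\cat E}\times_{\abs{\cat W}}\abs{\cat F}$ an equivalence for \emph{every} $\cat F\to\cat W$. Take the base to be $[1]$, with $\cat E=\{0\}$ and $\cat F=\{1\}$. Both $\cat E\times_{[1]}[1]_{/0}$ and $\cat E\times_{[1]}[1]_{/1}$ are a point, so the hypothesis holds. Yet the strict pullback is empty while the homotopy pullback is a point.

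The correct statement (Corollary~\ref{cor:ThmB} and its dual) needs the other leg to be a cocartesian fibration when $p$ has coslice-invariance, and a cartesian fibration when $p$ has slice-invariance. Your $\cat F$ lies over $\cat W$ via $\dom$ on its first factor $\Zig_{\partial}\pr{\cat C,\cat W}$. That is a colimit of the $\Zig_{Z}\pr{\cat C,\cat W}$, whose domain projections have mixed variance. They are glued along transition functors that mix the two: for instance, inserting an identity weak equivalence at the left end carries $\Zig_{[1]}\pr{\cat C,\cat W}$, which is cartesian over $\cat W$, into $\Zig_{[-1,1]}\pr{\cat C,\cat W}$, which is cocartesian. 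Nothing suggests that $\dom\from\Zig_{\partial}\pr{\cat C,\cat W}\to\cat W$ is a cartesian or cocartesian fibration, so the theorem does not apply.

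The same issue already appears in Step~1. To commute the second colimit you must pull back along $\codom$ (or $\dom$) on $\Zig_{\partial}\pr{\cat C,\cat W}$, so you would need that functor to be Conduch\'e. You do not establish this, and the fallback you sketch is not an argument.

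\textbf{The fix.} The missing idea is never to form $\Zig_{\partial}\pr{\cat C,\cat W}\times_{\cat W}\Zig_{\partial}\pr{\cat C,\cat W}$.
\begin{enumerate}
\item Fix $Z_2,\dots,Z_n$ and put $Y=Z_2\cdots Z_n$. Then $\dom\from\Zig_Y\pr{\cat C,\cat W}\to\cat W$ is an honest fibration: the identity if $Y=[0]$, cartesian if the leftmost arrow of $Y$ points right, and cocartesian if it points left.
\item Because it is Conduch\'e, you may commute only the colimit over the first shape: $\Zig_{\partial}\pr{\cat C,\cat W}\times_{\cat W}\Zig_Y\pr{\cat C,\cat W}\simeq\colim_{Z}\Zig_{ZY}\pr{\cat C,\cat W}$.
\item Because it is cartesian or cocartesian, Corollary~\ref{cor:ThmB} or its dual applies to $\codom\from\Zig_{\partial}\pr{\cat C,\cat W}\to\cat W$. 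The hypothesis comes from the second (coslice) equivalence of Lemma~\ref{lem:zigpartialfiber} when $\dom$ on $\Zig_Y$ is cocartesian, and from the first (slice) one when it is cartesian. This, not the direction of an induction, is why the lemma has two halves.
\item This gives, for each fixed $Y$, a cartesian square with corners $\colim_Z\abs{\Zig_{ZY}\pr{\cat C,\cat W}}$, $\colim_Z\abs{\Zig_{Z}\pr{\cat C,\cat W}}$, $\abs{\Zig_Y\pr{\cat C,\cat W}}$ and $\abs{\cat W}$.
\item Only now take the colimit over $(Z_2,\dots,Z_n)$, in $\An$, where colimits are universal and so preserve the pullback.
\end{enumerate}
This yields $\pr{\cat Z_{\cat C,\cat W}}_n\simeq\pr{\cat Z_{\cat C,\cat W}}_1\times_{\abs{\cat W}}\pr{\cat Z_{\cat C,\cat W}}_{n-1}$ directly, which is the paper's proof. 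No induction over iterated categorical fiber products is needed.
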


\begin{proof}
We must show that, for every $n\geq2$, the square 
\[\begin{tikzcd}
	{\operatorname{colim}_{(Z_i)_{i=1}^n\in(\mathrm{Zig}_\partial^n)^{\mathrm{op}}}|\mathrm{Zig}_{Z_1\cdots Z_n}(\mathcal{C},\mathcal{W})|} & {\operatorname{colim}_{Z_1\in\mathrm{Zig}_\partial^{\mathrm{op}}}|\mathrm{Zig}_{Z_1}(\mathcal{C},\mathcal{W})|} \\
	{\operatorname{colim}_{(Z_i)_{i=2}^{n}\in(\mathrm{Zig}_\partial^{n-1})^{\mathrm{op}}}|\mathrm{Zig}_{Z_2\cdots Z_n}(\mathcal{C},\mathcal{W})|} & {|\mathcal{W}|}
	\arrow[from=1-1, to=1-2]
	\arrow[from=1-1, to=2-1]
	\arrow["\codom", from=1-2, to=2-2]
	\arrow["\dom"', from=2-1, to=2-2]
\end{tikzcd}\]
is cartesian.
Fix $Z_{2},\dots,Z_{n}\in\Zig_{\partial}$ and write $Y=Z_{2}\cdots Z_{n}$. Observe that the domain projection
\[
q_Y=\dom\from\Zig_Y\pr{\cat C,\cat W}\to\cat W
\]
is a cartesian or cocartesian fibration: When $Y=[0]$, it is the identity of $\cat W$; if the leftmost arrow of $Y$ points to the right, cartesian transport is given by precomposition at the left endpoint; and if it points to the left, cocartesian transport is given by postcomposition at the left endpoint. Since cartesian and cocartesian fibrations are Conduch\'e fibrations, base change along $q_Y$ preserves colimits, and thus
\begin{align*}
\Zig_{\partial}\pr{\cat C,\cat W}\times_{\cat W}\Zig_Y\pr{\cat C,\cat W}
& \simeq\colim_{Z\in\Zig^{\op}_{\partial}}\pr{\Zig_Z\pr{\cat C,\cat W}\times_{\cat W}\Zig_Y\pr{\cat C,\cat W}}\\
& \simeq\colim_{Z\in\Zig^{\op}_{\partial}}\Zig_{ZY}\pr{\cat C,\cat W}.
\end{align*}

We claim that this pullback remains a pullback after realization. Indeed, this is an instance of Quillen's Theorem B, in the form of Corollary \ref{cor:ThmB} or its dual; the invariance hypothesis on the comma categories of $\codom\from\Zig_{\partial}\pr{\cat C,\cat W}\to\cat W$ is satisfied by Lemma \ref{lem:zigpartialfiber}. Since realization preserves colimits, we thus obtain a cartesian square of the form
\[\begin{tikzcd}
	{\operatorname{colim}_{Z\in \mathrm{Zig}_\partial^{\mathrm{op}}}|\mathrm{Zig}_{ZY}(\mathcal{C},\mathcal{W})|} & {\operatorname{colim}_{Z\in \mathrm{Zig}_\partial^{\mathrm{op}}}|\mathrm{Zig}_{Z}(\mathcal{C},\mathcal{W})|} \\
	{|\mathrm{Zig}_{Y}(\mathcal{C},\mathcal{W})|} & {|\mathcal{W}|.}
	\arrow[from=1-1, to=1-2]
	\arrow[from=1-1, to=2-1]
	\arrow["\codom", from=1-2, to=2-2]
	\arrow["\dom"', from=2-1, to=2-2]
\end{tikzcd}\]
Finally, take the colimit in the left column as $Z_{2},\dots,Z_{n}$ vary.
Colimits are universal in $\An$, so the resulting square remains cartesian.
Its left column is $\pr{\cat Z_{\cat C,\cat W}}_{n}\to\pr{\cat Z_{\cat C,\cat W}}_{n-1}$, while its right column is $\pr{\cat Z_{\cat C,\cat W}}_{1}\to\pr{\cat Z_{\cat C,\cat W}}_{0}$.
This is the desired Segal square.
\end{proof}

\subsection{\label{subsec:hammock}Mapping animae of \texorpdfstring{$\protect\cat Z_{\protect\cat C,\protect\cat W}$}{Z(C,W)} as hammock animae}

In this subsection, we show that the mapping animae of $\cat Z_{\cat C,\cat W}$ are computed by the hammock anima of Definition \ref{def:hammock} (Proposition \ref{prop:hammock}).

\begin{lem}
\label{lem:zigpartialfiber2}Let $\pr{\cat C,\cat W}$ be a relative $\infty$-category, and let $x'\xrightarrow{\sim}x$ and $y'\xrightarrow{\sim}y$ be two weak equivalences.
Precomposition with $x'\xrightarrow{\sim}x$ and $y'\xrightarrow{\sim}y$ induces an equivalence
\[
\abs{\pr{\cat W_{x/}\times\cat W_{y/}}\times_{\cat W\times\cat W}\Zig_{\partial}\pr{\cat C,\cat W}}\xrightarrow{\simeq}\abs{\pr{\cat W_{x'/}\times\cat W_{y'/}}\times_{\cat W\times\cat W}\Zig_{\partial}\pr{\cat C,\cat W}}.
\]
\end{lem}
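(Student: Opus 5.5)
The plan is to imitate the proof of Lemma \ref{lem:zigpartialfiber}, treating the two endpoints one at a time. The map in question factors as precomposition with $x'\xrightarrow{\sim}x$ (keeping $y$ fixed), followed by precomposition with $y'\xrightarrow{\sim}y$ (keeping $x'$ fixed). It therefore suffices to show that each of these two maps is an equivalence. The two cases are mirror images, the second being obtained by reversing the orientation of zig-zags, so I describe only the first, with $y$ fixed.

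First, I rewrite the source and target as colimits over $\Zig_{\partial}^{\op}$. The maps $\cat W_{x/}\to\cat W$ and $\cat W_{y/}\to\cat W$ are left fibrations, and hence so is their product over $\cat W\times\cat W$. Left fibrations are Conduch\'e fibrations, so base change along them commutes with the colimit defining $\Zig_{\partial}\pr{\cat C,\cat W}$. As in the proof of Lemma \ref{lem:zigpartialfiber}, this gives
\[
\pr{\cat W_{x/}\times\cat W_{y/}}\times_{\cat W\times\cat W}\Zig_{\partial}\pr{\cat C,\cat W}\simeq\colim_{Z\in\Zig^{\op}_{\partial}}\Zig''_{[1,Z,-1]}\pr{\cat C,\cat W}_{x,y}.
\]
Here $\Zig''_{[1,Z,-1]}\subset\Zig_{[1,Z,-1]}$ is the full subcategory of zig-zags whose first (rightward) arrow lies in $\cat W$. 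The subscript $x,y$ denotes the fiber over $\pr{x,y}$ under evaluation at the two endpoints; the final leftward arrow lies in $\cat W$ automatically. After realization, set $f\pr Z=\abs{\Zig''_{[1,Z,-1]}\pr{\cat C,\cat W}_{x,y}}$ and $g\pr Z=\abs{\Zig''_{[1,Z,-1]}\pr{\cat C,\cat W}_{x',y}}$. Let $\alpha\from f\to g$ be precomposition of the first arrow with $w\from x'\to x$; this is legitimate because $\cat W$ is closed under composition.

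Next, I apply Lemma \ref{lem:colim2} with the endofunctor $e\pr Z=[-1,1,Z]$. The transformation $\eta\from\id\to e$ on $\Zig^{\op}_{\partial}$ is induced by the map $[-1,1,Z]\to Z$ in $\Zig_{\partial}$ that collapses the new initial segment. The transformation $\beta\from g\to fe$ prepends a detour. A zig-zag $x'\xrightarrow{v}z_{0}\cdots$ is sent to
\[
x\xrightarrow{\id}x\xleftarrow{w}x'\xrightarrow{v}z_{0}\cdots,
\]
which has shape $[1,-1,1,Z,-1]$, and whose first arrow lies in $\cat W$. It then remains to produce the homotopies $f\eta\simeq\beta\cdot\alpha$ and $g\eta\simeq\alpha e\cdot\beta$. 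Both are given by explicit ladder diagrams of pointwise weak equivalences between zig-zags of the same shape, exactly as in the two diagrams in the proof of Lemma \ref{lem:zigpartialfiber}. For instance, for the first homotopy we compare $x\to x\leftarrow x\to z_{0}$, with identities in the first two slots, against $x\to x\xleftarrow{w}x'\xrightarrow{vw}z_{0}$; the vertical maps are $\id,\id,w,\id$. The second homotopy is handled by a three-row diagram that passes through a row consisting of identities.

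I expect the main obstacle to be bookkeeping rather than anything conceptual. The concern is that the ladder diagrams must depend functorially on the zig-zag and on $Z$, so that they really define natural transformations of functors $\Zig^{\op}_{\partial}\to\An$. This can be arranged as in Lemma \ref{lem:zigpartialfiber}: each ladder is obtained by restricting along a fixed relative functor out of a product of $[1]$ (or a small zig-zag) with $[1,-1,1,Z,-1]$. Naturality in $Z$ then holds because everything takes place in the initial segment, which $\Zig_{\partial}$-morphisms on $Z$ do not touch. A secondary point is to check that every vertical map is pointwise in $\cat W$; this holds since each is either an identity or $w$. Once both homotopies are in place, Lemma \ref{lem:colim2} shows that $\alpha$ induces an equivalence on colimits, and combining this with the symmetric argument at the $y$ endpoint proves the lemma.
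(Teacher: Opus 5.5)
Your strategy is the paper's: identify both sides with colimits over $\Zig_{\partial}^{\op}$ via the Conduch\'e property, then apply Lemma \ref{lem:colim2} with a detour-inserting endofunctor. The paper handles both ends at once with $e\pr Z=[-1,1,Z,-1,1]$. Splitting into one step per endpoint is equally valid, and your two endofunctors together recover the paper's. Two details need correcting.

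First, you have misidentified $f\eta$. Since $f$ prepends the anchoring $[1]$, $f\eta_Z$ is restriction along $[1,\eta_Z,-1]$, which inserts the identities \emph{after} the anchor. It therefore sends $x\xrightarrow{v}z_{0}\cdots$ to $x\xrightarrow{v}z_{0}\xleftarrow{\id}z_{0}\xrightarrow{\id}z_{0}\cdots$, not to $x\xrightarrow{\id}x\xleftarrow{\id}x\xrightarrow{v}z_{0}\cdots$. This is the same point the paper flags in the proof of Lemma \ref{lem:zigpartialfiber}, where the identities precede $u$. The correct homotopy is a single transformation $\beta\cdot\alpha\to f\eta$ with components $\id_x, v, v\circ w,\id_{z_0},\dots$. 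It is pointwise in $\cat W$ precisely because the anchor $v$ is. Your ladder with components $\id,\id,w,\id$ never uses $v\in\cat W$, which is a sign that it compares the wrong maps. It can be salvaged by composing with the transformation with components $\id_x,v,v,\id_{z_0},\dots$ from your top row to the true $f\eta$. Your three-row sketch for $g\eta\simeq\alpha e\cdot\beta$ works once $g\eta$ is identified correctly, with middle row $x'\xrightarrow{\id}x'\xleftarrow{\id}x'\xrightarrow{v}z_{0}\cdots$.

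Second, the $y$-step is not obtained from the $x$-step by reversing zig-zags. The anchoring arrow points right at $x$ but left at $y$. Reversal is only a symmetry after passing to $\pr{\cat C^{\op},\cat W^{\op}}$, and there it turns your $x$-step into a postcomposition statement with over-categories, not into the $y$-step. Run the $y$-step directly with $e\pr Z=[Z,-1,1]$ and the detour $y'\xrightarrow{w'}y\xleftarrow{\id}y$, as in the second half of Lemma \ref{lem:zigpartialfiber}; the verification is the same.
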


\begin{proof}
The proof is entirely analogous to that of Lemma \ref{lem:zigpartialfiber}, using Lemma \ref{lem:colim2} with the endofunctor $e\from\Zig^{\op}_{\partial}\to\Zig^{\op}_{\partial}$ given by $e\pr Z=[-1,1,Z,-1,1]$.
\end{proof}

\begin{prop}
\label{prop:hammock}Let $\pr{\cat C,\cat W}$ be a relative $\infty$-category, and let $x,y\in\cat C$.
The canonical comparison
\[
\Hamm_{\cat C,\cat W}\pr{x,y}\to\cat Z_{\cat C,\cat W}\pr{x,y}
\]
is an equivalence.
\end{prop}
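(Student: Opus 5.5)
We compare fibers before and after realization, using Quillen's Theorem B exactly as in the proof of Proposition \ref{prop:segal}, with Lemma \ref{lem:zigpartialfiber2} supplying the invariance hypothesis. Consider the functor
\[
(\dom,\codom)\from\Zig_{\partial}\pr{\cat C,\cat W}\to\cat W\times\cat W.
\]
Since realization preserves colimits, $\cat Z_{\cat C,\cat W}\pr{x,y}$ is the fiber over $\pr{x,y}$ of $\abs{\Zig_{\partial}\pr{\cat C,\cat W}}\to\abs{\cat W\times\cat W}$. The strategy is to show two things: this fiber agrees with the realization of the comma category $\pr{\cat W_{x/}\times\cat W_{y/}}\times_{\cat W\times\cat W}\Zig_{\partial}\pr{\cat C,\cat W}$, and that comma category agrees with the colimit of the strict fibers $\Zig_{Z}\pr{\cat C,\cat W}_{x,y}$.

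\textbf{Step 1 (Theorem B).} Lemma \ref{lem:zigpartialfiber2} says that the realizations of the comma categories $\pr{\cat W_{x/}\times\cat W_{y/}}\times_{\cat W\times\cat W}\Zig_{\partial}\pr{\cat C,\cat W}$ are invariant under every morphism of $\cat W\times\cat W$. The dual form of Corollary \ref{cor:ThmB} therefore identifies the realization of the comma category at $\pr{x,y}$ with the homotopy fiber of $\abs{\Zig_\partial\pr{\cat C,\cat W}}\to\abs{\cat W\times\cat W}$ over $\pr{x,y}$, which is $\cat Z_{\cat C,\cat W}\pr{x,y}$.

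\textbf{Step 2 (commuting base change with the colimit).} The projection $\cat W_{x/}\times\cat W_{y/}\to\cat W\times\cat W$ is a left fibration, hence a Conduch\'e fibration, so base change along it preserves colimits in $\Cat_\infty$. Realizing, we obtain
\[
\abs{\pr{\cat W_{x/}\times\cat W_{y/}}\times_{\cat W\times\cat W}\Zig_{\partial}\pr{\cat C,\cat W}}\simeq\colim_{Z\in\Zig^{\op}_{\partial}}\abs{\Zig'_{[-1,Z,-1]}\pr{\cat C,\cat W}_{x,y}},
\]
where the prime indicates that the two outer arrows of a zig-zag of shape $[-1,Z,-1]$ must lie in $\cat W$, exactly as in the proof of Lemma \ref{lem:zigpartialfiber}. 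Here the outer backward arrows $x\xrightarrow{\sim}\dom$ and $y\xrightarrow{\sim}\codom$ record the comma data. The shape $[-1,Z,-1]$ is written with the convention that the first arrow points back toward $x$ and the last arrow points out to $y$ as a weak equivalence; the orientation of the final arrow needs care.

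\textbf{Step 3 (removing the padding).} It remains to show that the inclusion $\Zig_Z\pr{\cat C,\cat W}_{x,y}\to\Zig'_{[-1,Z,-1]}\pr{\cat C,\cat W}_{x,y}$, given by padding with identities, induces an equivalence on colimits over $\Zig^{\op}_\partial$. The plan is to apply Lemma \ref{lem:colim2} with an endofunctor $e$ that adjoins a detour at each end. Concretely, the map $\beta$ views the padded zig-zag $x\xrightarrow{\sim}\bullet\cdots\bullet\xleftarrow{\sim}y$ as a zig-zag of a longer shape $e\pr Z$ (with the $\cat W$-edges absorbed as arrows of the shape), and $\eta$ collapses the adjoined segments. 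One then checks the two homotopies $f\eta\simeq\beta\alpha$ and $g\eta\simeq\alpha e\cdot\beta$ via explicit natural squares, as in Lemma \ref{lem:zigpartialfiber}. The comparison obtained this way must then be identified with the canonical map $\Hamm_{\cat C,\cat W}\pr{x,y}\to\cat Z_{\cat C,\cat W}\pr{x,y}$, which should follow by naturality of the padding.

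\textbf{Main obstacle.} Step 3 is the crux: it requires choosing the shapes for $e$ so that the boundary orientation conventions match and that $\eta$ is a genuine natural transformation on $\Zig_\partial$, then writing down the coherent homotopies. An alternative would be to show the padding functor is cofinal by checking its comma categories have contractible realizations, but I would first try the Lemma \ref{lem:colim2} route, since it mirrors the earlier proofs.
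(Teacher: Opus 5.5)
Your proposal matches the paper's proof step for step. The paper also uses Conduch\'e base change along the left fibration $\cat W_{x/}\times\cat W_{y/}\to\cat W\times\cat W$, then Corollary \ref{cor:ThmB} with Lemma \ref{lem:zigpartialfiber2} together with weak contractibility of $\cat W_{x/}\times\cat W_{y/}$, and finally Lemma \ref{lem:colim2} with $e\pr Z=[1,Z,-1]$, $\beta$ absorbing the two anchoring weak equivalences, and $\eta$ collapsing the outer arrows. The homotopy $g\eta\simeq\alpha e\cdot\beta$ comes from the transformation whose outer components are the anchors and whose other components are identities.

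Two small slips. The padded shape is $[1,Z,-1]$, not $[-1,Z,-1]$: the first arrow $x\to\dom$ points right, as you in fact draw it in Step 3. Also, Corollary \ref{cor:ThmB} applies directly rather than in dual form, since the comma categories here are under-categories.
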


\begin{proof}
By the definition of the mapping anima, we must show that the square
\[\begin{tikzcd}
	{\operatorname{colim}_{Z\in\mathrm{Zig}_\partial^{\mathrm{op}}}|\mathrm{Zig}_{Z}(\mathcal{C},\mathcal{W})_{(x,y)}|} & {\operatorname{colim}_{Z\in\mathrm{Zig}_\partial^{\mathrm{op}}}|\mathrm{Zig}_{Z}(\mathcal{C},\mathcal{W})|} \\
	{\{(x,y)\}} & {|\mathcal{W}|\times |\mathcal{W}|}
	\arrow[from=1-1, to=1-2]
	\arrow[from=1-1, to=2-1]
	\arrow["{(\mathrm{dom},\mathrm{codom})}", from=1-2, to=2-2]
	\arrow[from=2-1, to=2-2]
\end{tikzcd}\]
is cartesian.
Consider the expanded diagram 
\[\begin{tikzcd}[column sep = tiny, scale cd = .75]
	{\colim_{Z\in\Zig_\partial^\op}|\Zig_{Z}(\cat{C},\cat{W})_{x,y}|} & {\colim_{Z\in\Zig_\partial^\op}|(\cat{W}_{x/}\times \cat{W}_{y/})\times_{\cat{W}\times \cat{W}}\Zig_Z(\cat{C},\cat{W})|} & {\colim_{Z\in\Zig_\partial^\op}|\Zig_{Z}(\cat{C},\cat{W})|} \\
	{\{(x,y)\}} & {|\cat{W}_{x/}\times \cat{W}_{y/}|} & {|\cat{W}\times \cat{W}|.}
	\arrow[from=1-1, to=1-2]
	\arrow[from=1-1, to=2-1]
	\arrow[from=1-2, to=1-3]
	\arrow[from=1-2, to=2-2]
	\arrow["{(\mathrm{dom},\mathrm{codom})}", from=1-3, to=2-3]
	\arrow[from=2-1, to=2-2]
	\arrow[from=2-2, to=2-3]
\end{tikzcd}\]
Since the projection $\cat W_{x/}\times\cat W_{y/}\to\cat W\times\cat W$ is a left fibration, it is a Conduch\'e fibration.
Thus, we have
\[
\hspace{-27pt}
\colim_{Z\in\Zig^{\op}_{\partial}}\pr{\cat W_{x/}\times\cat W_{y/}}\times_{\cat W\times\cat W}\Zig_{Z}\pr{\cat C,\cat W}\simeq\pr{\cat W_{x/}\times\cat W_{y/}}\times_{\cat W\times\cat W}\colim_{Z\in\Zig^{\op}_{\partial}}\Zig_{Z}\pr{\cat C,\cat W}.
\hspace{-27pt}
\]
It follows from Lemma \ref{lem:zigpartialfiber2} and Quillen's Theorem B (Corollary \ref{cor:ThmB}) that the right-hand square is cartesian.
Thus, it suffices to show that the left-hand square is cartesian.
Since $\cat W_{x/}\times\cat W_{y/}$ is weakly contractible (as it has an initial object), this is equivalent to the claim that the map
\[
\colim_{Z\in\Zig^{\op}_{\partial}}\abs{\Zig_{Z}\pr{\cat C,\cat W}_{x,y}}\to\colim_{Z\in\Zig^{\op}_{\partial}}\abs{\pr{\cat W_{x/}\times\cat W_{y/}}\times_{\cat W\times\cat W}\Zig_{Z}\pr{\cat C,\cat W}}
\]
is an equivalence. We apply Lemma \ref{lem:colim2}.
Let $f,g\from\Zig^{\op}_{\partial}\to\An$ denote the diagrams appearing in the source and target of this map, and let $\alpha\from f\to g$ be induced by the identity arrows at $x$ and $y$.
For the endofunctor $e\from\Zig^{\op}_{\partial}\to\Zig^{\op}_{\partial}$ given by $e\pr Z=[1,Z,-1]$, absorbing the two anchoring weak equivalences into the zig-zag gives a natural transformation $\beta\from g\to fe$.
Collapsing the newly adjoined outer segments defines a natural transformation $e\to\id$ on $\Zig_{\partial}$, and hence a natural transformation $\eta\from\id\to e$ on $\Zig^{\op}_{\partial}$.
The equivalence $f\eta\simeq\beta\cdot\alpha$ is immediate, while $g\eta\simeq\alpha e\cdot\beta$ is induced by the natural transformation whose outer components are the two anchoring weak equivalences and whose remaining components are identities.
Lemma \ref{lem:colim2} now shows that $\alpha$ induces an equivalence on colimits.
\end{proof}

We conclude this subsection with one of the consequences of Proposition \ref{prop:hammock}, which we will use in the coming subsections.

\begin{cor}
\label{cor:ham_equivalence}Let $\pr{\cat C,\cat W}$ be a relative $\infty$-category.
Any pair of morphisms $u\from x'\to x$ and $v\from y\to y'$ in $\cat W$ induces equivalences
\begin{align*}
\cat Z_{\cat C,\cat W}\pr{x,y}&\xrightarrow{\simeq}\cat Z_{\cat C,\cat W}\pr{x',y'},\\
\Hamm_{\cat C,\cat W}\pr{x,y}&\xrightarrow{\simeq}\Hamm_{\cat C,\cat W}\pr{x',y'},
\end{align*}
where the second map is induced by concatenating $u$ and $v$ on the left and right.
\end{cor}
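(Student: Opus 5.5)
The plan is to first prove the statement for $\cat Z_{\cat C,\cat W}$ and then transport it to $\Hamm_{\cat C,\cat W}$ using Proposition \ref{prop:hammock}.

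\textbf{The mapping animae of $\cat Z_{\cat C,\cat W}$.} By Proposition \ref{prop:segal}, $\cat Z_{\cat C,\cat W}$ is a Segal anima. Its anima of objects is $\abs{\cat W}$, and the maps $u,v\in\cat W$ become paths $x'\to x$ and $y\to y'$ in $\abs{\cat W}$. The mapping anima $\cat Z_{\cat C,\cat W}\pr{x,y}$ is the fiber of
\[
\pr{\cat Z_{\cat C,\cat W}}_1\to\abs{\cat W}\times\abs{\cat W}
\]
over $\pr{x,y}$. Fibers of a map of animae over points joined by a path are equivalent by transport along that path. So the pair of paths $\pr{u^{-1},v}$ from $\pr{x,y}$ to $\pr{x',y'}$ induces an equivalence $\cat Z_{\cat C,\cat W}\pr{x,y}\simeq\cat Z_{\cat C,\cat W}\pr{x',y'}$. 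This is the first map. Unwinding, it is composition in the Segal anima with the $1$-simplices of $\cat Z_{\cat C,\cat W}$ determined by $u$ and $v$; these simplices are the images under $\gamma_{\cat C,\cat W}$ of $u,v$ viewed as points of $\N^{\rel}\pr{\cat C,\cat W}_1$.

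\textbf{Transport to hammock animae.} The second map is concatenation of a zig-zag $Z$ with $u$ on the left and $v$ on the right, i.e.\ the map induced by the endofunctor $Z\mapsto[1,Z,1]$ together with the evident natural transformation $\abs{\Zig_Z\pr{\cat C,\cat W}_{x,y}}\to\abs{\Zig_{[1,Z,1]}\pr{\cat C,\cat W}_{x',y'}}$ on colimits. To deduce that it is an equivalence, I would show that the square formed with the comparison maps of Proposition \ref{prop:hammock} commutes up to homotopy:
\[
\Hamm_{\cat C,\cat W}\pr{x,y}\to\Hamm_{\cat C,\cat W}\pr{x',y'}\to\cat Z_{\cat C,\cat W}\pr{x',y'}
\qquad\text{and}\qquad
\Hamm_{\cat C,\cat W}\pr{x,y}\to\cat Z_{\cat C,\cat W}\pr{x,y}\to\cat Z_{\cat C,\cat W}\pr{x',y'}.
\]
Both vertical comparisons are equivalences by Proposition \ref{prop:hammock}, and the bottom map is an equivalence by the first step. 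Hence the top map is an equivalence by two-out-of-three.

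\textbf{The main obstacle} is checking that this square commutes, i.e.\ that transport along $u,v$ in $\abs{\cat W}$ agrees with concatenation. To avoid tracking coherences directly, I would use that the transport on fibers is realized on the colimit presentation. Pulling back along $\cat W_{/x}$ and the other slice, as in the proof of Lemma \ref{lem:zigpartialfiber2}, both maps factor through
\[
\colim_Z\abs{\pr{\cat W_{x'/}\times\cat W_{y'/}}\times_{\cat W\times\cat W}\Zig_Z\pr{\cat C,\cat W}},
\]
which is contractibly fibered over a point.

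Concretely, the fiber over $\pr{x',y'}$ is equivalent to this comma colimit. A zig-zag from $x$ to $y$ maps into it in two ways:
\begin{itemize}
\item directly, via the anchoring maps $u\from x'\to x$ and $v$ composed on the codomain side, which represents transport;
\item by first concatenating to $[1,Z,1]$ and then using identity anchors.
\end{itemize}
A natural transformation with outer components $u$ and $v$ and identities elsewhere, exactly as in the homotopy $g\eta\simeq\alpha e\cdot\beta$ of Proposition \ref{prop:hammock}, connects these two routes. This gives the commutativity of the square.

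Alternatively, one could check the hammock statement directly with Lemma \ref{lem:colim2}, by producing a homotopy inverse through detours of the form $[-1,1,Z,-1,1]$ as in Lemma \ref{lem:zigpartialfiber2}. However, the route through Proposition \ref{prop:hammock} seems cleaner.
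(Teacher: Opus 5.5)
Your strategy is the paper's: the first map is transport along the path $(u^{-1},v)$ in $\abs{\cat W}\times\abs{\cat W}$, and the second follows once this transport is identified with concatenation under Proposition~\ref{prop:hammock}. The gap is in your verification of that identification, the step you flag as the main obstacle.

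\textbf{The variance problem.} In $\colim_Z\abs{\pr{\cat W_{x'/}\times\cat W_{y'/}}\times_{\cat W\times\cat W}\Zig_Z\pr{\cat C,\cat W}}$, a codomain anchor is a weak equivalence $y'\to b$. A zig-zag ending at $y$ therefore cannot be anchored there, because $v$ points $y\to y'$. If you instead append $v$ first, your ``direct'' route already performs the codomain half of the concatenation. Identifying it with transport would then presuppose part of what you are proving. Likewise, no single natural transformation with outer components $u$ and $v$ can connect the two routes, since $u$ and $v$ point in opposite directions relative to the anchors.

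\textbf{The fix.} Use $K=\cat W_{x'/}\times\cat W_{y/}$ instead.
\begin{enumerate}
\item $K$ has an initial object, so $\abs K$ is contractible.
\item $K$ contains $(u,\id_y)$ lying over $(x,y)$ and $(\id_{x'},v)$ lying over $(x',y')$.
\item The path $(u,\id_y)\leftarrow(\id_{x'},\id_y)\to(\id_{x'},v)$ in $\abs K$ maps to $(u^{-1},v)$ in $\abs{\cat W}\times\abs{\cat W}$.
\end{enumerate}
Argue exactly as in the proof of Proposition~\ref{prop:hammock}: the Conduch\'e property of the left fibration $K\to\cat W\times\cat W$, together with Lemma~\ref{lem:zigpartialfiber2} and Corollary~\ref{cor:ThmB}. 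This identifies $\colim_Z\abs{K\times_{\cat W\times\cat W}\Zig_Z\pr{\cat C,\cat W}}$ with the pullback of $\pr{\cat Z_{\cat C,\cat W}}_1$ to $\abs K$. Hence the transport is computed by moving, inside this colimit, from the fiber over $(u,\id_y)$ to the fiber over $(\id_{x'},v)$.

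Now take $\zeta$ of shape $Z$ from $x$ to $y$. Its degeneration to shape $[1,Z,1]$, anchored by $(u,\id_y)$, represents the image of $\zeta$. The concatenation $u\zeta v$, anchored by $(\id_{x'},v)$, represents the second route. These are connected, naturally in $Z$, by the zig-zag
\[
\pr{x\xrightarrow{\id}x\cdots y\xrightarrow{\id}y}\longleftarrow\pr{x'\xrightarrow{u}x\cdots y\xrightarrow{\id}y}\longrightarrow\pr{x'\xrightarrow{u}x\cdots y\xrightarrow{v}y'},
\]
with anchors $(u,\id_y)$, $(\id_{x'},\id_y)$ and $(\id_{x'},v)$ respectively. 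The left transformation has $u$ as its only nonidentity component, and the right one has $v$. This zig-zag lies over the path above in $\abs K$, so it exhibits the concatenation as the transport. This is precisely the content behind the paper's one-line assertion that the transport ``is represented by concatenating $u$ and $v$ at the two ends''.
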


\begin{proof}
The morphisms $u$ and $v$ determine a path from $\pr{x,y}$ to $\pr{x',y'}$ in $\abs{\cat W}\times\abs{\cat W}$, using the inverse of the path represented by $u$ in the first factor.
By definition of the mapping animae, transport along this path gives the first equivalence.
Under the equivalence of Proposition \ref{prop:hammock}, this transport map is represented by concatenating $u$ and $v$ at the two ends, giving the second equivalence.
\end{proof}

\subsection{\label{subsec:Segalification}\texorpdfstring{$\protect\cat Z_{\protect\cat C,\protect\cat W}$}{Z(C,W)} as a Segalification of the relative Rezk nerve}

In this subsection, we prove that $\cat Z_{\cat C,\cat W}$ is a Segalification of $\N^{\rel}\pr{\cat C,\cat W}$:

\begin{prop}
\label{prop:segalification}Let $\pr{\cat C,\cat W}$ be a relative $\infty$-category.
The map
\[
\gamma_{\cat C,\cat W}\from\N^{\rel}\pr{\cat C,\cat W}\to\cat Z_{\cat C,\cat W}
\]
of Definition \ref{def:Z_CW} exhibits $\cat Z_{\cat C,\cat W}$ as the Segalification of $\N^{\rel}\pr{\cat C,\cat W}$.
\end{prop}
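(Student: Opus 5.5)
Since $\cat Z_{\cat C,\cat W}$ is already Segal by Proposition \ref{prop:segal}, it suffices to show that $\gamma_{\cat C,\cat W}$ is a Segal equivalence, i.e.\ that for every Segal anima $X$ the restriction map $\Map\pr{\cat Z_{\cat C,\cat W},X}\to\Map\pr{\N^{\rel}\pr{\cat C,\cat W},X}$ is an equivalence. I will factor $\gamma_{\cat C,\cat W}$ through an intermediate left Kan extension and treat the two kinds of generating relations separately: concatenation of many $[1]$'s (handled by Segal-ness of $X$) and the passage from general zig-zag shapes to those built only from $[1]$ and $[-1]$ (handled by the weak-equivalence structure).

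First, the defining left Kan extension along $\pi^{\op}\from\pr{\int\Zig^{\bullet}_{\partial}}^{\op}\to\Del^{\op}$ gives, by adjunction, an identification of maps $\cat Z_{\cat C,\cat W}\to X$ with natural transformations $F\to\pi^{*}X$ on $\pr{\int\Zig^{\bullet}_{\partial}}^{\op}$, where $F(Z_1,\dots,Z_n)=\abs{\Zig_{Z_1\cdots Z_n}\pr{\cat C,\cat W}}$; restriction along $\iota$ recovers maps $\N^{\rel}\pr{\cat C,\cat W}\to X$. So the claim is that restriction $\Nat\pr{F,\pi^*X}\to\Nat\pr{\iota^*F,X}$ is an equivalence for Segal $X$. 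Second, I will use that $X$ is Segal to reduce to one-variable data: for Segal $X$, $\pi^*X(Z_1,\dots,Z_n)\simeq X_1\times_{X_0}\cdots\times_{X_0}X_1$, and any shape $Z\in\Zig_\partial$ decomposes as $Z=[\pm1,\dots,\pm1]$, while $F$ likewise satisfies a Segal-type decomposition $\abs{\Zig_{ZY}}\simeq\abs{\Zig_Z}\times_{\abs{\cat W}}\abs{\Zig_Y}$ up to the Theorem B argument in Proposition \ref{prop:segal}. This reduces the problem to the full subcategory of $\int\Zig^{\bullet}_\partial$ on tuples of elementary shapes, and further to showing that a map out of the relative Rezk nerve extends uniquely over shapes containing $[-1]$.

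Third, the key point: for the shape $[-1]$, the anima $\abs{\Zig_{[-1]}\pr{\cat C,\cat W}}\simeq\abs{\Fun\pr{[1],\cat W}}$ receives a map from $\N^{\rel}_1$ that is compatible with the Segal-anima structure, and in $X$ (a Segal anima receiving a map from $\N^{\rel}$) every edge coming from $\cat W$ becomes an equivalence, because $\N^{\rel}\pr{\cat C,\cat W}$ already contains $\N\pr{\cat W}$-type degenerate homotopies: the path in $\abs{\Fun\pr{[1],\cat C}\times_{\cat C^2}\cat W^2}$ from $w$ to $\id$ (via the square with $w$ on vertical sides) shows the image of $w$ is homotopic to an identity after whiskering, hence invertible in the homotopy category of $X$. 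Then the value on $[-1]$ is forced to be the inverse, and on arbitrary $Z_1\cdots Z_n$ it is forced to be the Segal composite; this defines the inverse of restriction. Concretely I would express this via Lemma \ref{lem:colim2}-style arguments or via identifying $\Map\pr{-,X}$ with functors of $\infty$-categories into $\ac X$ restricted to $(\cat C,\cat W)$.

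The main obstacle is making the extension over $[-1]$ and over the colimit in $\Zig_\partial^{\op}$ coherent, rather than just on points. My fallback is to avoid direct coherence by a slicker route: show instead that $\gamma$ induces equivalences after applying $\ac$ and on mapping animae. Since both $\ac\pr{\N^{\rel}}\simeq\cat C[\cat W^{-1}]$ (Theorem \ref{thm:univ_Rezk}) and Segal anima are characterized up to Rezk completion by $\ac$ plus the map on $0$-simplices, it would suffice to show that $\ac(\gamma)$ is an equivalence and that $\gamma_0$ is the identity of $\abs{\cat W}$; the former reduces to checking that any functor $\N^{\rel}\to N(\cat D)$ extends uniquely over $\cat Z_{\cat C,\cat W}$, which is the complete-Segal case of the above argument where invertibility of $\cat W$-edges is automatic.
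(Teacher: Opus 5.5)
There is a genuine gap: neither of the two statements your argument reduces to is actually proved, and one intermediate claim is false.

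What is fine: it does suffice to show that $\gamma_{\cat C,\cat W}$ is a Segal equivalence. Your fallback criterion is also valid. If $\gamma_0$ and $\ac\pr{\gamma_{\cat C,\cat W}}$ are equivalences, then the comparison map from the Segalification to $\cat Z_{\cat C,\cat W}$ is fully faithful and an equivalence in degree $0$, exactly as in Lemma \ref{lem:Segalification_NrelZ}.

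The false claim is in your second step. It relies on an identification $\abs{\Zig_{ZY}\pr{\cat C,\cat W}}\simeq\abs{\Zig_Z\pr{\cat C,\cat W}}\times_{\abs{\cat W}}\abs{\Zig_Y\pr{\cat C,\cat W}}$ for individual shapes, which does not hold. For $Z=Y=[1]$ it is the degree-$2$ Segal condition for $\N^{\rel}\pr{\cat C,\cat W}$, which fails, for example, for $\pr{\cat C,\cat W}=[1,-1,1]$. There the pair $0\to1$, $2\to3$ is composable in $\N^{\rel}_1\times_{\N^{\rel}_0}\N^{\rel}_1$, since $1$ and $2$ agree in $\abs{\cat W}$, but no $2$-simplex lies over it. The underlying categories do form a pullback, but realization preserves it only after taking the colimit over $Z\in\Zig^{\op}_{\partial}$. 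This is because the Theorem B hypothesis comes from Lemma \ref{lem:zigpartialfiber}, whose homotopies use the detours $[Z,1,-1]$ inside the colimit. So the reduction to elementary shapes has no basis.

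The more serious problem is the central step: building the inverse of restriction coherently over all of $\pr{\int\Zig^{\bullet}_{\partial}}^{\op}$, compatibly with every insertion and collapse map. You name this obstacle yourself and leave it open. Pointwise invertibility of $\cat W$-edges in $X$ does not address it. The fallback does not escape it either. By Yoneda, $\ac\pr{\gamma_{\cat C,\cat W}}$ is an equivalence iff restriction $\Map\pr{\cat Z_{\cat C,\cat W},\N\pr{\cat D}}\to\Map\pr{\N^{\rel}\pr{\cat C,\cat W},\N\pr{\cat D}}$ is an equivalence for every $\cat D$. That is the same coherence problem with $X=\N\pr{\cat D}$.

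The missing idea is a density argument that avoids constructing any inverse by hand.
\begin{enumerate}
\item The assignment $Z\mapsto\abs{\Zig_Z\pr{\cat C,\cat W}}$ is a presheaf $P_{\cat C,\cat W}$ on the $\infty$-category $\overline{\Zig}$ of Construction \ref{const:overlineZig}.
\item With $f\from\int\Zig^{\bullet}_{\partial}\to\overline{\Zig}$ the concatenation functor, one has $\N^{\rel}\pr{\cat C,\cat W}\simeq\iota^*f^*P_{\cat C,\cat W}$ and $\cat Z_{\cat C,\cat W}\simeq\pi_!f^*P_{\cat C,\cat W}$. Under these identifications $\gamma_{\cat C,\cat W}$ is the component of $\iota^*\eta$.
\item Both $L_{\Seg}\iota^*f^*$ and $L_{\Seg}\pi_!f^*$ preserve colimits. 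So it suffices to check representables, i.e.\ the case where $\pr{\cat C,\cat W}$ is itself a zig-zag $Z$.
\item That case is concrete. Mazel-Gee's theorem makes $\N^{\rel}\pr Z\to\Delta^{l\pr Z}$ a complete Segalification, and the Segalification of $\N^{\rel}\pr Z$ is already complete.
\item One then shows $\cat Z_Z\to\Delta^{l\pr Z}$ is an equivalence. Its mapping animae are computed as hammock animae (Proposition \ref{prop:hammock}), and the category of zig-zag presentations of $q_Z\pr a\to q_Z\pr b$ is contracted through normal-form reductions (Proposition \ref{prop:Z_Z}).
\end{enumerate}
Segalness of $\cat Z_{\cat C,\cat W}$ (Proposition \ref{prop:segal}) is then needed only at the very end.
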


The bulk of the work for Proposition \ref{prop:segalification} goes into establishing the case where $\pr{\cat C,\cat W}$ is some zig-zag $Z\in\Zig$. In this case, we write $\gamma_{Z}\colon \N^{\rel}\pr Z \to \cat Z_{Z}$ for the comparison map in question. A special feature in this situation is that the Segalification already happens to be complete.

\begin{lem}
\label{lem:MG_Z}For every $Z\in\Zig$, the map $\N^{\rel}\pr Z\to\N^{\rel}\pr{l\pr Z,l\pr Z^{\simeq}}\simeq\Delta^{l\pr Z}$ is a complete Segalification.
\end{lem}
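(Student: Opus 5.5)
Since $\N^{\rel}\pr{l\pr Z,l\pr Z^{\simeq}}$ is the ordinary Rezk nerve of the poset $l\pr Z=[k]$, which is $\Delta^{k}$ (complete Segal), it suffices to show that the map $\N^{\rel}\pr Z\to\Delta^{k}$ becomes an equivalence after applying $\ac$. By Mazel-Gee's theorem (Theorem \ref{thm:univ_Rezk}), $\ac\pr{\N^{\rel}\pr Z}\simeq Z[W_Z^{-1}]$, the $\infty$-categorical localization of $Z$ at its weak equivalences. Moreover, the map in question is induced by the relative functor $q_Z\from\pr{Z,W_Z}\to\pr{l\pr Z,l\pr Z^{\simeq}}$, and naturality of Mazel-Gee's identification shows that $\ac$ of the map is the induced functor $Z[W_Z^{-1}]\to l\pr Z$. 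So the lemma reduces to a single claim: $q_Z$ exhibits $[k]$ as the $\infty$-categorical localization of $Z$ at $W_Z$. The notation already asserts that this holds $1$-categorically, but we need it $\infty$-categorically.

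To prove this claim, we decompose $Z$ as an iterated pushout. Write $Z$ as the concatenation of its fibers $Z_j$, for $j\in[k]$, joined by the forward arrows $f_s$. Each fiber $Z_j$ is a zig-zag of the form $[-n]$ that has all arrows marked as weak equivalences. As a category, $Z$ is the iterated pushout of the $Z_j$ and copies of $[1]$ along the points $[0]$. This pushout is computed in $\Cat_{\infty}$ because these are gluings of free categories (nerves of posets, or more generally free categories on graphs) along vertices. Localization $\pr{\cat C,\cat W}\mapsto\cat C[\cat W^{-1}]$ is a left adjoint on $\RelCat_{\infty}$, so it preserves these pushouts. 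The localization of $Z_j$ at all of its morphisms is $\abs{Z_j}$, which is contractible because $Z_j$ is a connected poset (a zig-zag of a line). The localization of each $[1]$ with no marked morphisms is $[1]$ itself. Gluing these pieces, we get $Z[W_Z^{-1}]\simeq[1]\cup_{[0]}\cdots\cup_{[0]}[1]$, and this spine pushout is $[k]$ in $\Cat_{\infty}$, because the spine inclusion is a categorical equivalence.

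The main obstacle I expect is justifying that the relevant gluings are genuine pushouts in $\Cat_{\infty}$ and are compatible with the relative structure. Concretely, I need that $Z$, viewed as a free category on a linear graph, is the colimit of its edges in $\Cat_{\infty}$. I would handle this with the standard fact that the nerve of the free category on a graph is obtained by gluing intervals along vertices, i.e., the inner anodyne inclusion of the spine, applied edge by edge. I also need to check that the marking is compatible: the pushout in $\RelCat_{\infty}$ has as its weak equivalences the subcategory generated by the marked edges, which is exactly $W_Z$. Once these are in place, the argument above goes through.
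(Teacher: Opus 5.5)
Your proof is correct and follows the paper's route. The paper's proof is just an appeal to Mazel-Gee's theorem (Theorem \ref{thm:univ_Rezk}), relying on the fact that $q_Z\colon Z\to l\pr Z$ is the $\infty$-categorical localization at the weak equivalences, which the paper asserts without proof when it introduces $l$. Your decomposition of $Z$ as a pushout of its fibers $Z_j$ and the forward arrows, combined with localization being a left adjoint on $\RelCat_{\infty}$ (the pointwise pushout already lies in $\RelCat_{\infty}$, as you check), is a valid verification of that assertion, so your argument is, if anything, more complete.
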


\begin{proof}
This follows from Mazel-Gee's theorem on the relative Rezk nerve (Theorem \ref{thm:univ_Rezk}).
\end{proof}

\begin{lem}
\label{lem:Segalification_NrelZ}For every $Z\in\Zig$, the Segalification of $\N^{\rel}\pr Z$ is complete.
\end{lem}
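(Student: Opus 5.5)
The plan is to compare the Segalification with the complete Segalification from Lemma \ref{lem:MG_Z} and show that they agree because their object animae already agree. Write $S$ for the Segalification of $\N^{\rel}\pr Z$, with unit $\N^{\rel}\pr Z\to S$. By Lemma \ref{lem:MG_Z}, the complete Segalification of $\N^{\rel}\pr Z$ is $\Delta^{l\pr Z}$. Since completion is computed after Segalification, the comparison map $S\to\Delta^{l\pr Z}$ exhibits $\Delta^{l\pr Z}$ as the Rezk completion of the Segal anima $S$. It therefore suffices to show that this map is an equivalence of simplicial animae. Then $S$ is equivalent to a complete Segal anima, hence is complete.

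I will use the standard fact \cite[Theorem 7.7]{Rez01} that the completion map of a Segal anima is fully faithful on mapping animae and essentially surjective. A map of Segal animae that is fully faithful and induces an equivalence on $0$-simplices is a levelwise equivalence. This holds because, by the Segal condition, $X_n$ is an iterated fiber product of $X_1$ over $X_0$, and $X_1$ is fibered over $X_0\times X_0$ with fibers the mapping animae. So the proof reduces to showing that $S_0\to\pr{\Delta^{l\pr Z}}_0=\mathrm{ob}\,l\pr Z$ is an equivalence.

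For this I will use two facts.

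\textbf{Segalification does not change $0$-simplices.} Segalification is the localization of $\Fun\pr{\Del^{\op},\An}$ at the spine inclusions $\mathrm{Sp}^n\hookrightarrow\Delta^n$. These are equivalences on $0$-simplices, so the map $\N^{\rel}\pr Z_0\to S_0$ is an equivalence. Concretely, $S$ can be built from $\N^{\rel}\pr Z$ by a transfinite sequence of pushouts along these maps, and evaluation at $[0]$ preserves colimits. Alternatively, one can argue that Segal animae with a fixed anima of objects form a reflective subcategory over $\An$.

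\textbf{Direct computation of $\N^{\rel}\pr Z_0$.} We have $\N^{\rel}\pr Z_0=\abs{\cat W_Z}$, where $\cat W_Z\subset Z$ is the wide subcategory of weak equivalences. This subcategory is freely generated by the backward arrows $f_t$. Hence it is the disjoint union, over $j\in l\pr Z$, of the fibers $Z_j=q_Z^{-1}\pr j$, and each $Z_j$ is a finite linearly ordered poset (a chain of backward arrows). Each such poset has a terminal object, so it is weakly contractible. Therefore $\abs{\cat W_Z}\simeq\mathrm{ob}\,l\pr Z$, compatibly with the map induced by $q_Z$, which is the map to $\Delta^{l\pr Z}_0$.

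The step requiring the most care is the first fact: that Segalification leaves $0$-simplices unchanged, and that the resulting identification is compatible with the comparison map to $\Delta^{l\pr Z}$. I would justify it through the pushout description of Segalification (small object argument), noting that each spine inclusion is an isomorphism in simplicial degree $0$. The remaining steps are formal once this is in place.
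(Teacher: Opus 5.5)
Your proposal is correct and is essentially the paper's proof. You factor the complete Segalification $\N^{\rel}\pr Z\to\Delta^{l\pr Z}$ through the Segalification, get full faithfulness from the fact that completion preserves mapping animae, and check degree $0$ using that evaluation at $[0]$ preserves colimits and inverts the spine inclusions. The paper phrases that last step via the strongly saturated class generated by the spine inclusions, which is the cleaner way to make your transfinite-pushout sketch rigorous in the $\infty$-categorical setting. Your explicit computation $\abs{\cat W_Z}\simeq\coprod_j\abs{Z_j}\simeq\mathrm{ob}\,l\pr Z$ justifies the degree-$0$ equivalence of $\N^{\rel}\pr Z\to\Delta^{l\pr Z}$, which the paper only asserts.
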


\begin{proof}
Let $\eta_{\Seg}\from\N^{\rel}\pr Z\to X$ be the Segalification.
Since $\Delta^{l\pr Z}$ is Segal, there is a unique map $f\from X\to\Delta^{l\pr Z}$ rendering the diagram
\[\begin{tikzcd}
	{\N^\rel(Z)} & {\Delta^{l(Z)}} \\
	X
	\arrow[from=1-1, to=1-2]
	\arrow["{\eta_{\Seg}}"', from=1-1, to=2-1]
	\arrow["f"', dashed, from=2-1, to=1-2]
\end{tikzcd}\]
commutative.
By Lemma \ref{lem:MG_Z}, the horizontal map is a complete Segalification.
Thus, it suffices to show that $f$ is an equivalence.

Since $f$ becomes an equivalence after completion and completion does not change mapping animae, $f$ is fully faithful.
It therefore remains to show that $f$ is an equivalence in degree $0$.
The unit $\eta_{\Seg}$ is an equivalence in degree $0$.
Indeed, Segalification is the localization at the strongly saturated class generated by the spine inclusions $\operatorname{Sp}^{n}\to\Delta^{n}$, and evaluation at $[0]$ preserves colimits and sends every spine inclusion to an equivalence.
The horizontal map is also an equivalence in degree $0$, so $f$ is an equivalence in degree $0$.
Hence, $f$ is an equivalence.
\end{proof}

\begin{prop}
\label{prop:Z_Z}For every $Z\in\Zig$, the map $\gamma_{Z}\from\N^{\rel}\pr Z\to\cat Z_{Z}$ exhibits $\cat Z_{Z}$ as the Segalification of $\N^{\rel}\pr Z$.
\end{prop}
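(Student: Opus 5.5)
Here $\cat Z_Z$ denotes $\cat Z_{Z,\cat W_Z}$, with $\cat W_Z$ the wide subcategory generated by the backward arrows. The plan is to reduce everything to identifying $\cat Z_Z$ with $\Delta^{l\pr Z}$. By Lemma \ref{lem:Segalification_NrelZ} together with Lemma \ref{lem:MG_Z}, the Segalification of $\N^{\rel}\pr Z$ is the map $\N^{\rel}\pr Z\to\Delta^{l\pr Z}$ induced by $q_Z$. By Proposition \ref{prop:segal}, $\cat Z_Z$ is Segal, so $\gamma_Z$ factors uniquely through a map $\phi\from\Delta^{l\pr Z}\to\cat Z_Z$. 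The goal is to show that $\phi$ is an equivalence of simplicial animae. Since both are Segal, it suffices to check two things: $\phi$ is an equivalence in degree $0$, and it induces equivalences on mapping animae.

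\emph{Degree $0$.} Here $\pr{\cat Z_Z}_0\simeq\abs{\cat W_Z}$. The category $\cat W_Z$ is a disjoint union of the fibers $Z_j$, $j\in l\pr Z$, and each $Z_j$ is a free zig-zag of backward arrows. Hence each $Z_j$ is a linear poset and so weakly contractible, giving $\abs{\cat W_Z}\simeq\mathrm{ob}\,l\pr Z=\pr{\Delta^{l\pr Z}}_0$. Compatibility with $\phi$ holds because $\gamma_Z$ is the identity in degree $0$ and $\N^{\rel}\pr Z_0\simeq\abs{\cat W_Z}$.

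\emph{Mapping animae.} By Proposition \ref{prop:hammock}, it suffices to show that $\Hamm_{Z}\pr{x,y}$ is contractible if $q_Z\pr x\le q_Z\pr y$ and empty otherwise. By Corollary \ref{cor:ham_equivalence}, we may move $x$ and $y$ within their fibers along weak equivalences, so we can take $x$ to be the maximal (rightmost) element of its fiber and $y$ the minimal element of its fiber. Emptiness when $q\pr x>q\pr y$ is immediate: any zig-zag $W\to Z$ induces $l\pr W\to l\pr Z$, and this is order preserving. For contractibility, I would compare with the relative category $[l\pr Z]$ via $q_Z$. Concretely, the idea is to show that $\Zig_\partial\pr{Z}_{x,y}$, i.e.\ the colimit over shapes $W$ of the categories of relative functors $W\to Z$ hitting $x$ and $y$, is weakly contractible. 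I would try the contracting strategy of Lemma \ref{lem:colim2}. Every zig-zag from $x$ to $y$ in $Z$ should be connected, via natural weak equivalences and shape changes, to the canonical forward path. To set this up, take $e\pr W$ to be an appropriate padding of $W$, and use the natural transformation which pushes each vertex of the zig-zag to the extreme point of its fiber. This is a monotone retraction of $Z$ onto the forward chain of extreme points, which sends backward arrows to identities. The result would be a natural homotopy from the identity to a functor that factors through the contractible anima of the forward composite $x\to y$.

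The main obstacle is this last step. The difficulty is that pushing vertices to extremes of fibers is not directly a natural transformation of relative functors $W\to Z$, because a fiber has two extremes and backward arrows go the wrong way. I expect to handle this with a two-step homotopy through the padded shape. First, a forward arrow in $Z$ is sent to the composite "go right to the top of the fiber, then forward". Second, backward arrows are absorbed by identities. The boundary and naturality checks should mirror the diagrams in Lemma \ref{lem:zigpartialfiber}. Once these are in place, $\phi$ is fully faithful and essentially surjective, hence an equivalence, and $\gamma_Z\simeq\phi\circ\pr{\N^{\rel}\pr Z\to\Delta^{l\pr Z}}$ is a Segalification.
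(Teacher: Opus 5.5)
Your reduction is sound and essentially the paper's. The Segalification of $\N^{\rel}\pr Z$ is $\N^{\rel}\pr Z\to\Delta^{l\pr Z}$ by Lemmas \ref{lem:MG_Z} and \ref{lem:Segalification_NrelZ}, and it suffices to check degree $0$ and mapping animae. The direction of the comparison map (your $\phi$ versus the paper's $q\from\cat Z_Z\to\Delta^{l\pr Z}$) is immaterial. Your degree-$0$ and emptiness arguments are correct. Normalizing $x,y$ to fiber extrema via Corollary \ref{cor:ham_equivalence} is also legitimate; you use the opposite extrema from the paper, which works equally well.

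The gap is exactly the step you flag, and it is the whole content of the proposition. You do not prove that $\Hamm_Z\pr{x,y}$ is contractible when $q_Z\pr x\leq q_Z\pr y$, and the mechanism you propose cannot work as stated. Suppose an endofunctor of $Z$ is connected to the identity by weak maps and sends backward arrows to identities. Then it is constant on each fiber $Z_k$, with some value $s\pr k\in Z_k$. The only morphism in $Z$ from $Z_k$ to $Z_{k+1}$ goes from $\max Z_k$ to $\min Z_{k+1}$, so every intermediate fiber would have to be a singleton. Concretely, take $Z=[1,-1,1]$, i.e.\ $0\to1\xleftarrow{\sim}2\to3$, with $x=0$ and $y=3$. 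There is no morphism $0\to3$ in $Z$ at all, so there is no ``forward composite $x\to y$'' to contract onto. Every zig-zag from $0$ to $3$ must traverse the backward arrow $2\to1$.

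What actually has to be shown is that everything contracts onto the chain of fiber extrema $\max Z_i\to\min Z_{i+1}\xleftarrow{\sim}\max Z_{i+1}\to\cdots$, which itself contains backward arrows. Vertices lying in different fibers of $q_Z$ cannot be joined by pointwise weak maps, so this must be done by combining weak transformations with changes of shape.

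Your padding sketch via Lemma \ref{lem:colim2} does not supply this. The required homotopies would have to be natural in the shape $W$. The obvious candidates are a zig-zag followed by the constant zig-zag at $y$, versus the constant zig-zag at $x$ followed by the canonical chain. These are not related by pointwise weak transformations, because corresponding vertices lie in different fibers.

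The paper instead works in the cartesian fibration $\cat X\to\Zig_{\partial}$ classified by $W\mapsto\Zig_{W}\pr Z_{x,y}$ and passes to normal forms in four steps:
\begin{enumerate}
\item Fiberwise initial objects reduce to order-preserving zig-zags. These objects push each vertex to the $\min$ or $\max$ of its fiber depending on its position in the zig-zag, and send one backward arrow to $\max Z_i\to\min Z_i$ rather than to an identity.
\item A left adjoint collapses forward arrows that are sent to identities.
\item A right adjoint discards interior vertices of fibers.
\item The resulting subcategory has the chain of extrema $\pr{Z'',\iota}$ as a terminal object.
\end{enumerate}
An argument of this kind is what your proposal is missing.
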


\begin{proof}
Given $Z_{1},\dots,Z_{n}\in\Zig$, we have a map $\abs{\Zig_{Z_{1}\cdots Z_{n}}\pr Z}\to\Delta^{l\pr Z}_{n}$, given by the postcomposition with the localization map $Z\to l\pr Z$ and the precomposition with the map $[n]\to l\pr{Z_{1}\cdots Z_{n}}$ that picks up the endpoints of $l\pr{Z_{i}}$.
These maps determine a natural transformation depicted as 
\[\begin{tikzcd}[column sep=5em]
	{(\int\Zig_\partial^{\bullet})^\op} && {\Zig^\op} && {\An.} \\
	{\Del^\op}
	\arrow[""{name=0, anchor=center, inner sep=0}, "{(Z_1,\dots,Z_n)\mapsto Z_1\cdots Z_n}", from=1-1, to=1-3]
	\arrow["\pi"', from=1-1, to=2-1]
	\arrow["{|\Zig_\bullet(Z)|}", from=1-3, to=1-5]
	\arrow[""{name=1, anchor=center, inner sep=0}, "{\Delta^{l(Z)}}"', from=2-1, to=1-5]
	\arrow[between={0.2}{0.8}, Rightarrow, nfold, from=0, to=1]
\end{tikzcd}\]
The universal property of left Kan extensions thus gives a map $q\from\cat Z_{Z}\to\Delta^{l\pr Z}$.
We will show that $q$ is an equivalence.
Since the composite 
\[
\N^{\rel}\pr Z\xrightarrow{\gamma_{Z}}\cat Z_{Z}\xrightarrow{q}\Delta^{l\pr Z}
\]
is a complete Segalification (Lemma \ref{lem:MG_Z}), this and Lemma \ref{lem:Segalification_NrelZ} will prove the desired claim.

Since $q$ is a map of Segal animae (Proposition \ref{prop:segal}), it suffices to show that $q$ induces an equivalence on the $0$th animae and the mapping animae.
The claim on the $0$th animae is clear.
For the claim on mapping animae, let $a,b\in Z$.
We must show that 
\[
\cat Z_{Z}\pr{a,b}\simeq\begin{cases}
\emptyset & \text{if }q_{Z}\pr a>q_{Z}\pr b,\\
\ast & \text{if }q_{Z}\pr a\leq q_{Z}\pr b.
\end{cases}
\]
By Proposition \ref{prop:hammock}, we have
\begin{align}
\cat Z_{Z}\pr{a,b} & \simeq\colim_{X\in\Zig^{\op}_{\partial}}\abs{\Zig_{X}\pr Z_{a,b}}.\label{eq:Z}
\end{align}
If $q_{Z}\pr a>q_{Z}\pr b$, then this colimit is empty, and we are done.
So assume that $q_{Z}\pr a\leq q_{Z}\pr b$.
Let $a_{\min}=\min Z_{q_{Z}\pr a}$ and $b_{\max}=\max Z_{q_{Z}\pr b}$, where the minimum and maximum refer to the natural ordering of integers.
There are weak equivalences $a\to a_{\min}$ and $b_{\max}\to b$, so Corollary \ref{cor:ham_equivalence} gives an equivalence
\[
\cat Z_{Z}\pr{a,b}\simeq\cat Z_{Z}\pr{a_{\min},b_{\max}}.
\]
Replacing $a$ and $b$ by $a_{\min}$ and $b_{\max}$, respectively, we may therefore assume that $a$ is minimal and $b$ is maximal in their fibers under $q_{Z}\from Z\to l\pr Z$.

The functor $\Zig_{\bullet}\pr Z_{a,b}\from\Zig^{\op}_{\partial}\to\Cat$ classifies a cartesian fibration $\cat X \to \Zig_{\partial}$, and the right-hand side of (\ref{eq:Z}) is the realization of $\cat X$. It thus remains to verify the following claim:

\textbf{Claim:} The realization $\abs{\cat X}$ is contractible.

Let us lay out the proof strategy for this claim. The objects of $\cat X$ are pairs $\pr{X,f}$, where $X\in\Zig_{\partial}$ and $f\in\Zig_X\pr Z_{a,b}$. A morphism $\pr{X,f}\to\pr{Y,g}$ is a pair $\pr{u,\alpha}$, where $u\from X\to Y$ is a morphism in $\Zig_{\partial}$ and $\alpha\from f\to gu$ is a pointwise weak equivalence. Since $l\pr Z$ is an ordinal, the pair $\pr{X,f}$ may be regarded as a zig-zag presentation of the unique morphism $q_Z\pr a\to q_Z\pr b$ in $l\pr Z$. We will put these presentations into a normal form by successively removing three kinds of inessential data: nonmonotone choices within the fibers of $q_Z$, right-pointing arrows mapped to identities, and redundant subdivisions of strings of weak equivalences. This results in a sequence of subcategories $\cat X_2 \subseteq \cat X_1 \subseteq \cat X_0 \subseteq \cat X$ that induce equivalences on realizations, after which we show directly that $\cat X_2$ is contractible.

\emph{Step 1: Removing nonmonotone choices.}
Let $\cat X_0\subset\cat X$ be the full subcategory spanned by the pairs $\pr{X,f}$ for which $f$ preserves the natural ordering of integers.
The projection $\cat X_0\to\Zig_{\partial}$ is a cartesian subfibration of $\cat X\to\Zig_{\partial}$, since precomposition by a morphism of $\Zig_{\partial}$ preserves the natural ordering.
We claim that the inclusion $\cat X_0\to\cat X$ is a weak homotopy equivalence.
By Corollary \ref{cor:fiberwise_pullback}, it suffices to prove that for every $X\in\Zig_{\partial}$ the map $\abs{(\cat X_0)_X}\to\abs{\cat X_X}$ is an equivalence.

Note that a pointwise weak equivalence between two functors $X\to Z$ does not change their composite with $q_Z$.
Thus, the fiber $\cat X_X$ decomposes according to the maps $p\from X\to l\pr Z$ which arise in this way. Fixing one such $p$, we denote the resulting components by $\cat X_{X,p}$ and $(\cat X_0)_{X,p}$.

We next construct an initial object of $\cat X_{X,p}$ and show it is contained in $\cat X_0$. For every $i\in l\pr Z$, write $X_i=p^{-1}\pr i$; if $X_i$ is nonempty, let $x_i\in X_i$ be the maximal integer which receives a morphism from $\min X_i$.
Define $f_0\from X\to Z$ by
\[
f_0\pr x=\begin{cases}
\min Z_{p\pr x} & \text{if }x\leq x_{p\pr x},\\
\max Z_{p\pr x} & \text{if }x>x_{p\pr x}.
\end{cases}
\]
This formula defines an object $f_0\in(\cat X_0)_{X,p}$.
The key points are that within each $X_i$ its only change of value occurs across a left-pointing arrow, which is sent to the weak morphism $\max Z_i\to\min Z_i$, while an arrow on which $p$ increases from $i$ to $j$ is sent along the unique directed path from $\max Z_i$ to $\min Z_j$.
The fact that $p$ admits a lift ensures that the formula takes the source and target of the latter arrow to the indicated extrema, including in the degenerate cases.
The required endpoints follow from our assumptions on $a$ and $b$, and it is clear from the formula that $q_Zf_0=p$ and that $f_0$ preserves the natural ordering.

We claim that $f_0$ is initial in $\cat X_{X,p}$.
Indeed, let $g\in\cat X_{X,p}$ and $x\in X$.
Suppose first that $x\leq x_{p\pr x}$.
The vertex $\min X_{p\pr x}$ is either the initial boundary or the target of an arrow on which $p$ increases, and is therefore sent by $g$ to $\min Z_{p\pr x}$.
Moreover, $x$ is reached from this vertex by a right-pointing path, while every nonidentity morphism in $Z_{p\pr x}$ points to the left; hence $g\pr x=\min Z_{p\pr x}$.
If $x>x_{p\pr x}$, there is a unique weak morphism $\max Z_{p\pr x}\to g\pr x$.
Thus, in either case there is a unique weak morphism $f_0\pr x\to g\pr x$.
These morphisms determine a unique pointwise weak equivalence $f_0\to g$; naturality follows from the fact that $Z$ has at most one morphism between any two objects.
We conclude that $f_0$ is initial both in $\cat X_{X,p}$ and in $(\cat X_0)_{X,p}$, so $\abs{(\cat X_0)_{X,p}}\to\abs{\cat X_{X,p}}$ is an equivalence.
Since this holds for every $p$, it follows that $\abs{(\cat X_0)_X}\to\abs{\cat X_X}$ is an equivalence, and hence so is $\abs{\cat X_0}\to\abs{\cat X}$.

\emph{Step 2: Removing identity subdivisions.}
Let $\cat X_1\subset\cat X_0$ be the full subcategory spanned by the pairs $\pr{X,f}$ for which every right-pointing arrow of $X$ strictly increases $q_Zf$. We show that the inclusion admits a left adjoint, so that $\abs{\cat X_1}\to\abs{\cat X_0}$ is an equivalence. It suffices to construct this adjoint pointwise for each $\pr{X,f}\in\cat X_0$. Write again $X_i$ for the fibers of $q_Zf$. Since $f$ preserves the natural ordering and all nonidentity morphisms in a fiber of $q_Z$ point to the left, every right-pointing arrow in $X_i$ is sent to an identity. Let $X'$ be the zig-zag obtained by collapsing all these arrows, and write $\pi\from X\to X'$ for the quotient map. There is a unique relative functor $f'\from X' \to Z$ such that $f=f'\pi$. An analogous reasoning shows that $\pi$ exhibits $(X',f')$ as a left-adjoint object to $(X,f)$ under the inclusion $\cat X_1 \hookrightarrow \cat X_0$.

\emph{Step 3: Removing weak subdivisions.}
Let $\cat X_2\subset\cat X_1$ be the full subcategory spanned by the pairs $\pr{X,f}$ for which every nonempty fiber of $q_Zf$ has at most two objects. We claim that the inclusion admits a right adjoint, so that $\abs{\cat X_2}\to\abs{\cat X_1}$ is an equivalence. We may again construct this adjoint pointwise for each $\pr{X,f}\in\cat X_1$. Each nonempty fiber of $q_Zf$ is a left-pointing chain, and so internal vertices only record a subdivision of the weak morphism from its maximum to its minimum.
Restricting $f$ to the full subcategory of $X$ spanned by the minima and maxima of these fibers gives an object $(X'',f'')$ of $\cat X_2$. The inclusion exhibits it as a right adjoint object to $(X,f)$ under the inclusion. To see this, it suffices to observe that any morphism in $\cat X_1$ preserves the minimum and maximum of every nonempty fiber. Indeed, for an internal fiber they are characterized as the target of the entering morphism and the source of the exiting morphism, respectively, while for the first and last fibers this follows from preservation of the boundary.

\emph{Step 4: Contracting the normal forms.}
Let $Z''\subset Z$ be the full subcategory spanned by the minima and maxima of the fibers $Z_i$ with $q_Z\pr a\leq i\leq q_Z\pr b$, and let $\iota\from Z''\hookrightarrow Z$ denote the inclusion.
Then $\pr{Z'',\iota}$ is an object of $\cat X_2$.
It is terminal: For every $\pr{X,f}\in\cat X_2$, each vertex of $X$ is the minimum or maximum of a nonempty fiber of $q_Zf$, so the characterization of these extrema in Step 3 shows that $f$ factors through $Z''$ and that this factorization defines the unique morphism $\pr{X,f}\to\pr{Z'',\iota}$.
Hence, $\abs{\cat X_2}$ is contractible.
Combining the four steps, we conclude that $\abs{\cat X}$ is contractible.
\end{proof}

The following construction packages the previous proposition into a Yoneda argument.
It is designed so that, for every $Z\in\Zig$, the presheaf $X\mapsto\abs{\Zig_{X}\pr Z}$ is representable.

\begin{construction}
\label{const:overlineZig}
We define an $\infty$-category $\overline{\Zig}$ as follows.
First, observe that the $\infty$-category $\RelCat_{\infty}$ is cartesian closed: The internal mapping object from $\pr{\cat C,\cat W}$ to $\pr{\cat D,\cat V}$ is given by
\[
\pr{\Fun_{\rel}\pr{\pr{\cat C,\cat W},\pr{\cat D,\cat V}},\Fun_{\rel}\pr{\pr{\cat C,\cat W},\pr{\cat D,\cat V}}\times_{\Fun\pr{\cat C^{\simeq},\cat D}}\Fun\pr{\cat C^{\simeq},\cat V}}.
\]
In particular, $\RelCat_{\infty}$ is enriched over itself.
Changing enrichments along the product-preserving functor
\[
\RelCat_{\infty}\to\An,\,\pr{\cat C,\cat W}\mapsto\abs{\cat W},
\]
we obtain a new $\infty$-category, denoted by $\overline{\RelCat_{\infty}}$.
We write $\overline{\Zig}\subset\overline{\RelCat_{\infty}}$ for the full subcategory spanned by the objects in $\Zig$.
Note that $\Map_{\overline{\Zig}}\pr{Z,Z'}\simeq\abs{\Zig_{Z}\pr{Z'}}$ by definition.
\end{construction}

\begin{proof}
[Proof of Proposition \ref{prop:segalification}]
Let $\Seg\pr{\An}\subset\PSh\pr{\Del}$ denote the full subcategory spanned by Segal animae, let $L_{\Seg}\from\PSh\pr{\Del}\to\Seg\pr{\An}$ denote the Segalification functor, and let $f\from\int\Zig^{\bullet}_{\partial}\to\overline{\Zig}$ denote the functor $\pr{Z_{1},\dots,Z_{n}}\mapsto Z_{1}\cdots Z_{n}$.
We consider the composite natural transformation
\[\begin{tikzcd}
	{\PSh(\overline{\Zig})} & {\PSh(\int\Zig_\partial ^\bullet )} && {\PSh(\Del)} & {\Seg(\An)}
	\arrow["{f^*}", from=1-1, to=1-2]
	\arrow[""{name=0, anchor=center, inner sep=0}, "{\iota^*}", curve={height=-12pt}, from=1-2, to=1-4]
	\arrow[""{name=1, anchor=center, inner sep=0}, "{\pi_!}"', curve={height=12pt}, from=1-2, to=1-4]
	\arrow["{L_{\Seg}}", from=1-4, to=1-5]
	\arrow["{\iota^*\eta}", between={0.2}{0.8}, Rightarrow, nfold, from=0, to=1]
\end{tikzcd}\]
where $\eta$ denotes the unit of the adjunction $\pi_{!}\dashv\pi^{*}$.
For $Z\in\Zig$, let $h_{Z}=\Map_{\overline{\Zig}}\pr{-,Z}$ be the corresponding representable presheaf.
There are canonical identifications
\[
\iota^{*}f^{*}h_{Z}\simeq\N^{\rel}\pr Z,
\qquad
\pi_{!}f^{*}h_{Z}\simeq\cat Z_{Z},
\]
under which $\iota^{*}\eta$ is identified with $\gamma_{Z}$.
Thus, Proposition \ref{prop:Z_Z} shows that the displayed natural transformation evaluates to an equivalence at every representable.
Both composites in the displayed diagram preserve colimits, and the representables generate $\PSh\pr{\overline{\Zig}}$ under colimits, so it evaluates to an equivalence at every presheaf.

Now consider the presheaf $P_{\cat C,\cat W}=\abs{\Zig_{\bullet}\pr{\cat C,\cat W}}\in\PSh\pr{\overline{\Zig}}$, where the presheaf functoriality comes from the mapping-anima functor of $\overline{\RelCat_{\infty}}$. By construction, there are canonical identifications
\[
\iota^{*}f^{*}P_{\cat C,\cat W}\simeq\N^{\rel}\pr{\cat C,\cat W},
\qquad
\pi_{!}f^{*}P_{\cat C,\cat W}\simeq\cat Z_{\cat C,\cat W},
\]
under which $\iota^{*}\eta$ is identified with $\gamma_{\cat C,\cat W}$.
Since $\cat Z_{\cat C,\cat W}$ is Segal by Proposition \ref{prop:segal}, the claim follows.
\end{proof}

\subsection{\label{subsec:summary}Proof of Theorem \ref{thm:Z_Segal}}

Summarizing our discussion so far, we obtain Theorem \ref{thm:Z_Segal}: Part (1) is the content of Proposition \ref{prop:segal}; part (2) is the content of Proposition \ref{prop:hammock}; and part (3) is the content of Proposition \ref{prop:segalification}.

\section{\label{sec:fraction}Calculus of fractions}

As we saw in Theorem \ref{thm:Z_Segal}, given a relative $\infty$-category $\pr{\cat C,\cat W}$, morphisms in the localization $\cat C[\cat W^{-1}]$ can be represented by a zig-zag of morphisms in $\cat C$.
The theory of \textit{calculus of fractions}, initiated by Gabriel--Zisman \cite{GZ67} and extended by Dwyer--Kan \cite{DK80_2}, provides a systematic method to reduce such zig-zags to shorter ones.
The fraction calculations and their proofs are essentially already contained in Dwyer--Kan's work; our purpose is to formulate them for relative $\infty$-categories and relate them to the model-independent constructions of the preceding section.
In particular, we give a sufficient condition for the mapping animae $\Hamm_{\cat C,\cat W}\pr{x,y}$ of $\cat C[\cat W^{-1}]$ to admit much simpler descriptions (Theorem \ref{thm:fraction}).
Together with the preceding section, this recovers Mazel-Gee's Segal criterion for the relative Rezk nerve \cite[Theorem 6.1]{MG18} (Corollary \ref{cor:fraction}).

We start with the definition of calculus of fractions.

\begin{defn}
\label{def:calculus}Let $\pr{\cat C,\cat W}$ be a relative $\infty$-category. 
\begin{enumerate}
\item We say that $\pr{\cat C,\cat W}$ \textit{supports left fractions} if, for every pair of integers $i,j\geq0$, and every pair of objects $x,y\in\cat C$, the map
\[
\Zig_{[i+j,-1]}\pr{\cat C,\cat W}_{x,y}\to\Zig_{[i,-1,j,-1]}\pr{\cat C,\cat W}_{x,y}
\]
is a weak homotopy equivalence.
\item We say that $\pr{\cat C,\cat W}$ \textit{supports right fractions} if, for every pair of integers $i,j\geq0$, and every pair of objects $x,y\in\cat C$, the map
\[
\Zig_{[-1,i+j]}\pr{\cat C,\cat W}_{x,y}\to\Zig_{[-1,i,-1,j]}\pr{\cat C,\cat W}_{x,y}
\]
is a weak homotopy equivalence.
\item We say that $\pr{\cat C,\cat W}$ \textit{supports two-sided fractions} if, for every pair of integers $i,j>0$, and every pair of objects $x,y\in\cat C$, the map
\[
\Zig_{[-1,i+j,-1]}\pr{\cat C,\cat W}_{x,y}\to\Zig_{[-1,i,-1,j,-1]}\pr{\cat C,\cat W}_{x,y}
\]
is a weak homotopy equivalence.
\end{enumerate}
\end{defn}

\begin{rem}
For a relative $1$-category $\pr{\cat C,\cat W}$, the categories of zig-zags and the comparison maps appearing in Definition \ref{def:calculus} agree with the corresponding categories of words and comparison maps used by Dwyer--Kan in \cite{DK80_2}.
In their terminology, $\pr{\cat C,\cat W}$ admits a homotopy calculus of left, right, or two-sided fractions if and only if both $\pr{\cat C,\cat W}$ and $\pr{\cat W,\cat W}$ support fractions of the corresponding type.
Thus, our condition is weaker, and this is all that we need.

If $\cat W$ has the two-out-of-three property, then $\Zig_Z\pr{\cat W,\cat W}\subset\Zig_Z\pr{\cat C,\cat W}$ is a union of components, and each comparison map in Definition \ref{def:calculus} carries the union in its source precisely onto the union in its target.
Consequently, if $\pr{\cat C,\cat W}$ satisfies two-out-of-three and supports left, right, or two-sided fractions in our sense, then it admits a homotopy calculus of fractions of the corresponding type in the sense of Dwyer--Kan.
\end{rem}

\begin{rem}
\label{rem:left_right_double}We will see below that a relative $\infty$-category supporting left or right fractions supports two-sided fractions (Lemma \ref{lem:left_right_double}).
\end{rem}

Here is the main result of this section.

\begin{thm}
\label{thm:fraction}Let $\pr{\cat C,\cat W}$ be a relative $\infty$-category, and let $x,y\in\cat C$ be objects.

\begin{enumerate}
\item If $\pr{\cat C,\cat W}$ supports left fractions, then the map
\[
\abs{\Zig_{[1,-1]}\pr{\cat C,\cat W}_{x,y}}\to\Hamm_{\cat C,\cat W}\pr{x,y}
\]
is an equivalence.
\item If $\pr{\cat C,\cat W}$ supports right fractions, then the map
\[
\abs{\Zig_{[-1,1]}\pr{\cat C,\cat W}_{x,y}}\to\Hamm_{\cat C,\cat W}\pr{x,y}
\]
is an equivalence.
\item If $\pr{\cat C,\cat W}$ supports two-sided fractions, then the map
\[
\abs{\Zig_{[-1,1,-1]}\pr{\cat C,\cat W}_{x,y}}\to\Hamm_{\cat C,\cat W}\pr{x,y}
\]
is an equivalence.
\end{enumerate}
\end{thm}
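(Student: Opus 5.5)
Part (1) is the main case; part (2) is dual (apply (1) to $\pr{\cat C^{\op},\cat W^{\op}}$, which reverses zig-zag shapes). Part (3) follows the same pattern with $[-1,1,-1]$ in place of $[1,-1]$.

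\textbf{Reduction to cofinal shapes.} For (1), I would write $\Hamm_{\cat C,\cat W}\pr{x,y}$ as the colimit over $\Zig^{\op}_{\partial}$ of $F\pr Z=\abs{\Zig_Z\pr{\cat C,\cat W}_{x,y}}$. I would first restrict to a smaller indexing category. Let $\Zig^{L}_{\partial}\subset\Zig_{\partial}$ be the full subcategory of shapes $[k_1,-1,k_2,-1,\dots,k_n,-1]$ with $n\geq1$ and $k_i\geq0$, that is, words ending in a single backward arrow with each backward block of length one. Padding with identities gives a comparison of diagrams: every $Z$ maps to such a shape by inserting $[0]$'s and $[-1]$'s. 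Writing $-m$ as $(0,-1)^m$ realizes $[-m]$ as a retract-style factorization, so it should reduce to the subcategory. I expect to show, via Lemma \ref{lem:colim2} (as in Lemmas \ref{lem:zigpartialfiber} and \ref{lem:zigpartialfiber2}), that the colimit over all of $\Zig^{\op}_{\partial}$ agrees with the colimit over $\pr{\Zig^{L}_{\partial}}^{\op}$. The alternative is to check finality of the inclusion directly: the relevant comma categories have terminal-like objects, obtained by splitting each backward block into alternations with $[0]$.

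\textbf{Collapsing to a single fraction.} On $\Zig^{L}_{\partial}$, I would use the hypothesis repeatedly. The map $[i,-1,j,-1]\to[i+j,-1]$, which collapses the middle backward arrow to an identity, induces an equivalence on $F$ after realization. Concatenating on either side with a fixed word, I also need the hypothesis in families, namely for $\Zig_{[W,i,-1,j,-1]}$ versus $\Zig_{[W,i+j,-1]}$. I would obtain this by fibering over the middle object: $\Zig_{WV}\simeq\Zig_W\times_{\cat W}\Zig_V$ along a (co)cartesian endpoint projection, as in the proof of Proposition \ref{prop:segal}, and then use that a fiberwise weak equivalence of (co)cartesian fibrations is a weak equivalence (Corollary \ref{cor:fiberwise_pullback}). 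Induction then shows that every collapse map $[k_1,-1,\dots,k_n,-1]\to[\sum k_i,-1]$ induces an equivalence on $F$.

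The functor $F$ restricted to $\pr{\Zig^{L}_{\partial}}^{\op}$ would then be naturally equivalent to the constant value $\abs{\Zig_{[1,-1]}}$. To get there, I would also use the equivalence $\Zig_{[k,-1]}\simeq\Zig_{[1,-1]}$ after realization, induced by composing the $k$ forward arrows. This is a localization/coinitiality statement: $\Fun\pr{[k],\cat C}\to\Fun\pr{[1],\cat C}$ restricted to fixed endpoints has a left adjoint up to weak equivalence.

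\textbf{Main obstacle.} Turning "all maps invertible" into "colimit equals $F\pr{[1,-1]}$" requires the realization of the indexing category $\Zig^{L}_{\partial}$ to be weakly contractible. For this I would exhibit the object $[0,-1]$ (or $[1,-1]$) as reachable by a natural zig-zag of transformations from the identity. I expect this contractibility, together with the cofinality step, to be the hardest part.

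For (3), the same structure applies using the shapes $[-1,k_1,-1,\dots,k_n,-1]$ with $k_i>0$.
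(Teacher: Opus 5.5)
Your reduction to shapes ending in a backward arrow is fine: Lemma~\ref{lem:colim}, with $g\pr Z=[Z,-1]$ and the collapse maps $[Z,-1]\to Z$, gives it directly. Your inductive collapse of the intermediate backward arrows is essentially Lemma~\ref{lem:6.2_1}.

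The gap is the next step. You claim that composing the forward arrows induces an equivalence $\abs{\Zig_{[k,-1]}\pr{\cat C,\cat W}_{x,y}}\simeq\abs{\Zig_{[1,-1]}\pr{\cat C,\cat W}_{x,y}}$, and hence that $F$ inverts every morphism of $\Zig^{L}_{\partial}$. This is false. The morphisms of $\Zig_{Z}\pr{\cat C,\cat W}$ are only the natural transformations that are pointwise in $\cat W$. So a category of factorizations of a fixed morphism need not be weakly contractible. Your adjoint argument is valid for $\Fun\pr{[k],\cat C}$ with all natural transformations, but not here.

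For a counterexample, let $\cat C=[2]$, let $\cat W$ consist of the identities, and take $x=0$, $y=2$. This supports left fractions trivially: each comparison map in Definition~\ref{def:calculus} is an isomorphism, since all weak arrows are identities. However, $\Zig_{[2,-1]}\pr{\cat C,\cat W}_{0,2}$ is the discrete category on the three factorizations $0\to a\to 2$, whereas $\Zig_{[1,-1]}\pr{\cat C,\cat W}_{0,2}$ is a point. So $F$ does not invert the unique map $[1,-1]\to[2,-1]$, and the strategy ``all maps invertible plus a contractible index category'' cannot work. The contractibility you flag as the main obstacle is not the real issue; it already follows from your first step, since $[0]$ is terminal in $\Zig_{\partial}$.

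The missing idea is to handle the forward part by cofinality rather than by invertibility:
\begin{enumerate}
\item Work with the diagram $Z\mapsto\abs{\Zig_{[Z,-1]}\pr{\cat C,\cat W}_{x,y}}$ on all of $\Zig^{\op}_{\partial}$; it has the same colimit.
\item By the fraction hypothesis (Lemma~\ref{lem:6.2_1}), it is equivalent to $Z\mapsto\abs{\Zig_{[l\pr Z,-1]}\pr{\cat C,\cat W}_{x,y}}$.
\item This diagram factors through $l\from\Zig_{\partial}\to\Del_{\partial}$, which is left adjoint to the inclusion and hence initial (Lemma~\ref{lem:6.2_2}).
\item The colimit is therefore a colimit over $\Del^{\op}_{\partial}$, where $[1]$ is terminal, so it is just the value at $[1]$.
\end{enumerate}
The forward-composition maps are merely legs of a cocone to the terminal object and never need to be equivalences. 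This is the route of the paper's proof, written there for right fractions with $[-1,Z]$ in place of $[Z,-1]$. Your sketch of part (3) needs the same correction.
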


Combining Theorem \ref{thm:fraction} with Theorem \ref{thm:Z_Segal}, we can also give the following sufficient condition for the relative Rezk nerve to be (complete) Segal.
This implication already appears in \cite[Theorem 6.1]{MG18}, but we will record a proof for the reader's convenience.

\begin{cor}
\label{cor:fraction}For every relative $\infty$-category $\pr{\cat C,\cat W}$ supporting left, right, or two-sided fractions, $\N^{\rel}\pr{\cat C,\cat W}$ is a Segal anima.
If, in addition, $\cat W$ has the two-out-of-three property, then it is complete if and only if $\pr{\cat C,\cat W}$ is saturated, i.e., maps in $\cat W$ are exactly the maps whose images in $\cat C[\cat W^{-1}]$ are equivalences.
\end{cor}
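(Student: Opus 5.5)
The plan is to prove that the comparison map $\gamma_{\cat C,\cat W}\colon\N^{\rel}\pr{\cat C,\cat W}\to\cat Z_{\cat C,\cat W}$ is an equivalence levelwise. The Segal property then follows from Proposition \ref{prop:segal}. I treat left fractions; the right and two-sided cases are the same argument with the shapes $[-1,1]$ and $[-1,1,-1]$. In degree $0$ both sides are $\abs{\cat W}$.

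For degree $n\geq1$, write $F_n=[1,-1]^{\vee n}$ and factor $\gamma_n$ as
\[
\N^{\rel}\pr{\cat C,\cat W}_n=\abs{\Zig_{[1]^{\vee n}}\pr{\cat C,\cat W}}\to\abs{\Zig_{F_n}\pr{\cat C,\cat W}}\to\pr{\cat Z_{\cat C,\cat W}}_n .
\]
Here the first map is induced by the collapse $F_n\to[1]^{\vee n}$, i.e.\ inserting identities, and the second is the colimit inclusion at $\pr{[1,-1],\dots,[1,-1]}$.

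\emph{First map.} There is a functor back that forgets each backward leg. It sends $x_{i-1}\to z_i\leftarrow x_i$ to the composable string $\cdots x_{i-1}\to z_i\to\cdots$ in $\Fun\pr{[n],\cat C}\times_{\cat C^{n+1}}\cat W^{n+1}$. For this to be well defined, each $z_i$ must become a vertex of the string, and the morphisms of $\Zig_{F_n}$ are pointwise in $\cat W$ at all vertices, so this works. One composite is the identity. The other composite is related to the identity by a natural transformation whose components are identities except at the vertices $x_i$, where they are the backward legs $x_i\to z_i$ in $\cat W$. A natural transformation induces a homotopy on realizations, so the first map is an equivalence.

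\emph{Second map.} Both sides live over $\abs{\cat W}^{n+1}$ via the junction vertices, and $\pr{\cat Z_{\cat C,\cat W}}_n\simeq\pr{\cat Z_{\cat C,\cat W}}_1\times_{\abs{\cat W}}\cdots\times_{\abs{\cat W}}\pr{\cat Z_{\cat C,\cat W}}_1$ by Proposition \ref{prop:segal}. The same Conduch\'e and Theorem B argument as in Proposition \ref{prop:segal} decomposes $\abs{\Zig_{F_n}}$ as the analogous iterated fiber product of copies of $\abs{\Zig_{[1,-1]}}$. This step needs the Theorem B invariance hypothesis for $\abs{\Zig_{[1,-1]}}$ over its endpoints. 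I will deduce it from the fibrewise statement in the next paragraph together with Corollary \ref{cor:ham_equivalence}, which says that transport along weak equivalences acts invertibly on the fibres. So it suffices to treat $n=1$.

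For $n=1$, I again use Theorem B (Corollary \ref{cor:ThmB}) over $\abs{\cat W}\times\abs{\cat W}$, as in Proposition \ref{prop:hammock}. Then the claim reduces to showing that on fibres the map $\abs{\Zig_{[1,-1]}\pr{\cat C,\cat W}_{x,y}}\to\Hamm_{\cat C,\cat W}\pr{x,y}\simeq\cat Z_{\cat C,\cat W}\pr{x,y}$ is an equivalence, and this is exactly Theorem \ref{thm:fraction}. I expect this bookkeeping of the invariance hypotheses for Theorem B to be the main obstacle; everything else is formal.

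For completeness, assume $\cat W$ has two-out-of-three. A Segal anima $X$ is complete iff $X_0\to X_1^{\mathrm{eq}}$ is an equivalence, where $X_1^{\mathrm{eq}}$ is the union of components consisting of invertible arrows. Using $\N^{\rel}\simeq\cat Z_{\cat C,\cat W}$ and Corollary \ref{cor:hammock_anima_is_anima_of_loc}, we get
\[
\N^{\rel}_1{}^{\mathrm{eq}}=\abs{\Fun\pr{[1],\cat C}^{\mathrm{inv}}\times_{\cat C^2}\cat W^2},
\]
where $\Fun\pr{[1],\cat C}^{\mathrm{inv}}$ consists of the arrows that become invertible in $\cat C[\cat W^{-1}]$. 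By two-out-of-three, $\Fun\pr{[1],\cat W}$ is a union of components of this category. Moreover $\abs{\Fun\pr{[1],\cat W}}\simeq\abs{\cat W}$, compatibly with the degeneracy map.

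If $\pr{\cat C,\cat W}$ is saturated, these two categories coincide, so $\N^{\rel}$ is complete. Conversely, if $\N^{\rel}$ is complete, the inclusion of this union of components is an equivalence on realizations. Hence no further components exist, and every arrow inverted in $\cat C[\cat W^{-1}]$ lies in $\cat W$, i.e.\ $\pr{\cat C,\cat W}$ is saturated.
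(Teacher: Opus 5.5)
Your reduction to showing that $\gamma_{\cat C,\cat W}$ is a levelwise equivalence is sound. The degree-$0$ and $n=1$ steps, the Theorem B splitting of $\abs{\Zig_{F_n}\pr{\cat C,\cat W}}$, and the completeness argument (which is Remark \ref{rem:saturated}) are fine. The gap is the ``first map'' for $n\geq2$.

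The inverse functor you describe does not exist. After $x_0\to z_1$ there is no arrow out of $z_1$ that continues a composable string: the next forward leg $x_1\to z_2$ starts at $x_1$, and the leg $x_1\xrightarrow{\sim}z_1$ points into $z_1$. For the same reason, a transformation with components $x_i\to z_i$ is not natural along the forward legs $x_i\to z_{i+1}$. Your trick works only when the single backward leg sits at the right end.

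Nor can the map $\abs{\Zig_{[1]^{\vee n}}\pr{\cat C,\cat W}}\to\abs{\Zig_{F_n}\pr{\cat C,\cat W}}$ be a formal equivalence. Once the target is split as an iterated fibre product of copies of $\abs{\Zig_{[1,-1]}\pr{\cat C,\cat W}}\simeq\N^{\rel}\pr{\cat C,\cat W}_1$, this map is identified with the Segal map of $\N^{\rel}\pr{\cat C,\cat W}$. So asserting it is an equivalence is asserting the statement itself. Concretely, let $\cat C$ be free on $0\to1\xleftarrow{w}2\to3$ and let $\cat W$ be generated by $w$. The zig-zag $0\to1\xleftarrow{w}2\to3\xleftarrow{=}3$ of shape $F_2$ is not in the component of any string $[2]\to\cat C$. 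Its endpoints $0$ and $3$ admit no nonidentity weak equivalences, so they stay fixed in its component, and there is no morphism $0\to3$.

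So the fraction hypothesis must be used again at every level $n$, not only for $n=1$. The paper does this after reducing to two-sided fractions (Lemma \ref{lem:left_right_double}):
\begin{itemize}
\item It factors $\abs{\Zig_{[n]}}\to\abs{\Zig_{[-1,1,-1][-1,n-1,-1]}}$ through $\abs{\Zig_{[-1,n,-1]}}\to\abs{\Zig_{[-1,1,-1,n-1,-1]}}$.
\item This middle map is an equivalence by the two-sided-fraction hypothesis with $(i,j)=(1,n-1)$.
\item It then shows by Theorem B that the back face of the resulting cube is cartesian.
\end{itemize}

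In your left-fraction set-up the repair is analogous:
\begin{itemize}
\item $\abs{\Zig_{[n]}}\simeq\abs{\Zig_{[n,-1]}}$ by your $n=1$ trick.
\item $\abs{\Zig_{[n,-1]}}\to\abs{\Zig_{[1,-1,n-1,-1]}}$ is an equivalence by the left-fraction hypothesis with $(i,j)=(1,n-1)$. Promote this from the fibres over $\pr{x,y}$ to total spaces by Corollary \ref{cor:fiberwise_pullback}; both are cartesian fibrations over $\cat W\times\cat W$ via the endpoints.
\item Your Theorem B splitting of $\Zig_{[1,-1,n-1,-1]}$ and induction on $n$ then give $\N^{\rel}\pr{\cat C,\cat W}_n\simeq\N^{\rel}\pr{\cat C,\cat W}_1\times_{\N^{\rel}\pr{\cat C,\cat W}_0}\N^{\rel}\pr{\cat C,\cat W}_{n-1}$.
\end{itemize}
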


\begin{rem}
\label{rem:fraction_naturality}Let $\pr{\cat C,\cat W}$ be a relative $\infty$-category supporting left fractions.
Then we have an equivalence
\[
\Map_{\cat C[\cat W^{-1}]}\pr{x,y}\simeq\colim_{y\to y'\in\cat W_{y/}}\Map_{\cat C}\pr{x,y'}
\]
which is natural in $x\in\cat C$ (not just in $x\in\cat W$).
Indeed, for fixed $y$, the target-evaluation functor $\cat W_{y/}\to\cat C$ is a putative left calculus of fractions at $y$ in the sense of Cisinski, so by \cite[Theorem 7.2.8]{HCHA} it suffices to show that the presheaf
\[
x\mapsto\colim_{y\to y'\in\cat W_{y/}}\Map_{\cat C}\pr{x,y'}
\]
sends morphisms in $\cat W$ to equivalences.
This follows from Corollary \ref{cor:ham_equivalence} and the identification
\[
\colim_{y\to y'\in\cat W_{y/}}\Map_{\cat C}\pr{x,y'}\simeq\abs{\cat C_{x/}\times_{\cat C}\cat W_{y/}}\simeq\abs{\Zig_{[1,-1]}\pr{\cat C,\cat W}_{x,y}}.
\]
\end{rem}

\begin{rem}
Mazel-Gee proved part (3) of Theorem \ref{thm:fraction} in \cite[Theorem 4.4]{MG18}, and Corollary \ref{cor:fraction} in \cite[Theorem 6.1]{MG18}.
Our proof of these results has some overlap with his, but is shorter.
We also note that his result is somewhat unsatisfactory, as he did not prove, for a general relative $\infty$-category, that $\Hamm_{\cat C,\cat W}\pr{x,y}$ is equivalent to $\cat C[\cat W^{-1}]\pr{x,y}$ (which we proved in Corollary \ref{cor:hammock_anima_is_anima_of_loc}); see Section \ref{sec:MGcomparison}.
When $\pr{\cat C,\cat W}$ supports two-sided fractions, this equivalence can be deduced from Theorem \ref{thm:univ_Rezk} together with the identification of the mapping animae of $\N^{\rel}\pr{\cat C,\cat W}$ that is implicit in the proof of \cite[Theorem 6.1]{MG18}, but it is not stated there.
\end{rem}

\begin{rem}
\cite[Theorem 4.11]{LM15}\label{rem:saturated} Given a relative $\infty$-category $\pr{\cat C,\cat W}$ satisfying the two-out-of-three property and such that $\N^{\rel}\pr{\cat C,\cat W}$ is Segal, the following conditions are equivalent:

\begin{enumerate}
\item $\N^{\rel}\pr{\cat C,\cat W}$ is a complete Segal anima.
\item $\pr{\cat C,\cat W}$ is saturated.
\end{enumerate}

The implication (1)$\implies$(2) follows from the observation that, by the completeness condition, every morphism $f$ in $\cat C$ inverted in $\cat C[\cat W^{-1}]$ must lie in the same component of $\N^{\rel}\pr{\cat C,\cat W}_{1}$ as an identity morphism.
For (2)$\implies$(1), note that the map $\N^{\rel}\pr{\cat C,\cat W}\to\N\pr{\cat C[\cat W^{-1}]}$ is a complete Segalification by Mazel-Gee's theorem (Theorem \ref{thm:univ_Rezk}).
Thus, the full subanima $\N^{\rel}\pr{\cat C,\cat W}_{\mathrm{eq}}\subset\N^{\rel}\pr{\cat C,\cat W}_{1}$ of equivalences is exactly the preimage of $\Map\pr{[1],\cat C[\cat W^{-1}]^{\simeq}}$. It follows by saturation and the definition of $\N^{\rel}\pr{\cat C,\cat W}$ that this is $\abs{\Fun\pr{[1],\cat W}}$.
Since $\abs{\cat W}\to\abs{\Fun\pr{[1],\cat W}}$ is an equivalence, this implies (1).
\end{rem}

The remainder of this section is devoted to the proof of Theorem \ref{thm:fraction} and Corollary \ref{cor:fraction}.
We start with the proof of Theorem \ref{thm:fraction}.
The proof of this theorem is essentially contained in \cite[Proposition 6.2]{DK80_2}, but since our terminology and notation differ from those of Dwyer--Kan, we record the argument in our setting.

\begin{notation}
Let $\pr{\cat C,\cat W}$ be a relative $\infty$-category, and let $x,y\in\cat C$ be objects, and let $Z\in\Zig$.
We set
\[
\ps{\Zig_{Z}\pr{\cat C,\cat W}}x=\{x\}\times_{\cat W}\Zig_{Z}\pr{\cat C,\cat W},
\]
where the map $\Zig_{Z}\pr{\cat C,\cat W}\to\cat W$ is the evaluation at the left endpoint.
We think of $\ps{\Zig_{Z}\pr{\cat C,\cat W}}x$ as lying over $\cat W$ via the evaluation at the right endpoint.
Note that, if $Z\neq[0]$, the map $\ps{\Zig_{Z}\pr{\cat C,\cat W}}x\to\cat W$ is a cocartesian fibration if the rightmost arrow of $Z$ points to the right, and is a cartesian fibration if it points to the left.

Likewise, we set
\[
\Zig_{Z}\pr{\cat C,\cat W}_{y}=\Zig_{Z}\pr{\cat C,\cat W}\times_{\cat W}\{y\},
\]
where the map $\Zig_{Z}\pr{\cat C,\cat W}\to\cat W$ is the evaluation at the right endpoint.
We think of $\Zig_{Z}\pr{\cat C,\cat W}_{y}$ as lying over $\cat W$ via the evaluation at the left endpoint.
\end{notation}

\begin{lem}
\label{lem:6.2_1}Let $\pr{\cat C,\cat W}$ be a relative $\infty$-category, let $x,y\in\cat C$ be objects, and let $Z\in\Zig$.

\begin{enumerate}
\item If $\pr{\cat C,\cat W}$ supports left fractions, then the map
\[
\Zig_{[l\pr Z,-1]}\pr{\cat C,\cat W}_{x,y}\to\Zig_{[Z,-1]}\pr{\cat C,\cat W}_{x,y}
\]
is a weak homotopy equivalence.
\item If $\pr{\cat C,\cat W}$ supports right fractions, then the map
\[
\Zig_{[-1,l\pr Z]}\pr{\cat C,\cat W}_{x,y}\to\Zig_{[-1,Z]}\pr{\cat C,\cat W}_{x,y}
\]
is a weak homotopy equivalence.
\item If $\pr{\cat C,\cat W}$ supports two-sided fractions, then the map
\[
\Zig_{[-1,l\pr Z,-1]}\pr{\cat C,\cat W}_{x,y}\to\Zig_{[-1,Z,-1]}\pr{\cat C,\cat W}_{x,y}
\]
is a weak homotopy equivalence.
\end{enumerate}
\end{lem}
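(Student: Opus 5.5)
We argue by induction on the number of left-pointing arrows in $Z$, treating part (1) in detail; parts (2) and (3) follow by the dual argument and by the two-sided version of the same argument. The map in question is induced by the map $[Z,-1]\to[l\pr Z,-1]$ in $\Zig_{\partial}$ that collapses the weak equivalences of $Z$, with $q_Z$ on the $Z$-part.

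First we reduce to the case where $Z$ has no two consecutive arrows in the same direction. Composing consecutive right-pointing arrows is harmless for the base case. For a string of left-pointing arrows $[-k]$, the functor $\Zig_{[\dots,-1]}\to\Zig_{[\dots,-k]}$ induced by collapsing admits an adjoint given by composing the weak equivalences. Hence the realizations agree, and we may replace $[-k]$ by $[-1]$. This step also disposes of the case where $Z$ begins with left-pointing arrows: there $i=0$ is allowed, so these are absorbed directly.

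So write $Z=[i_{0},-1,i_{1},-1,\dots,-1,i_{m}]$ with $i_k\ge0$; this gives $[Z,-1]=[i_0,-1,\dots,i_{m-1},-1,i_m,-1]$. We induct on $m$, the case $m=0$ being trivial. For the inductive step, consider the last segment $[i_{m-1},-1,i_m,-1]$ and use the left-fraction hypothesis with $i=i_{m-1}$ and $j=i_m$. The key point is to express $\Zig_{[Z,-1]}\pr{\cat C,\cat W}_{x,y}$ as a fibre product
\[
\Zig_{[Z',i_{m-1},-1,i_m,-1]}\pr{\cat C,\cat W}_{x,y}\simeq\ps{\Zig_{Z'}\pr{\cat C,\cat W}}{x}\times_{\cat W}\Zig_{[i_{m-1},-1,i_m,-1]}\pr{\cat C,\cat W}_{y},
\]
where $Z'$ is the prefix and the gluing is along the junction vertex. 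The projection $\ps{\Zig_{Z'}\pr{\cat C,\cat W}}x\to\cat W$ is a cartesian or cocartesian fibration, as in the Notation above, and hence a Conduché fibration. Fibre products along it then decompose as colimits over $\cat W$ of fibrewise data. More concretely, by straightening both sides become colimits over the $\infty$-category $\ps{\Zig_{Z'}\pr{\cat C,\cat W}}{x}$ of the fibres $\Zig_{[i_{m-1},-1,i_m,-1]}\pr{\cat C,\cat W}_{w,y}$, where $w$ is the junction value, and likewise with $[i_{m-1}+i_m,-1]$ in place of $[i_{m-1},-1,i_m,-1]$. The left-fraction hypothesis, applied with $x$ replaced by $w$, gives a fibrewise weak equivalence, hence an equivalence on realizations.

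Therefore $\Zig_{[Z',i_{m-1}+i_m,-1]}\to\Zig_{[Z,-1]}$ is a weak homotopy equivalence, and the induction hypothesis applied to the shorter zig-zag $[Z',i_{m-1}+i_m]$, which has one fewer left arrow, finishes the argument. We must also check that these maps compose to the given map, which holds by functoriality in $\Zig_{\partial}$.

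The main obstacle is justifying the fibrewise-to-total passage. Realization of a fibre product over $\cat W$ along a Conduché fibration is not in general computed fibrewise over $\abs{\cat W}$. To handle this we should instead work over the base $\ps{\Zig_{Z'}}x$ itself. The functor to $\cat W$ given by the junction vertex makes $\Zig_{[i,-1,j,-1]}\pr{\cat C,\cat W}_{y}\to\cat W$ a cartesian or cocartesian fibration (its leftmost arrow points right when $i>0$, giving a cartesian fibration), so the pullback to $\ps{\Zig_{Z'}}x$ is again such a fibration. A fibrewise weak equivalence between cartesian (or cocartesian) fibrations over a common base then induces an equivalence on realizations, since realization is the colimit of the fibre realizations (Corollary \ref{cor:fiberwise_pullback}). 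The degenerate case $i_{m-1}=0$ needs care: there the leftmost arrow of the segment points left, so we use the cocartesian version instead.
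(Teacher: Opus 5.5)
Your argument is correct. It shares the paper's skeleton: the same normalization (composing strings of left-pointing arrows), induction on the number of left-pointing arrows, and Corollary~\ref{cor:fiberwise_pullback} to pass from fibres to totals.

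The two proofs differ in which hypothesis is used fibrewise. The paper (proving (2)) splits $Z=[Z_1,-1,Z_2]$ at an arbitrary left arrow and applies the fraction hypothesis once, at the fixed endpoints $\pr{x,y}$, to the fully collapsed word. It then restores $Z_1$ and $Z_2$ by applying the induction hypothesis fibrewise over the junction vertex; the normal form makes all four factors cocartesian over $\cat W$. You instead peel off only the last left arrow, apply the fraction hypothesis fibrewise at endpoints $\pr{w,y}$, and then invoke the induction hypothesis globally at $\pr{x,y}$ on $[Z',i_{m-1}+i_m]$.

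A pleasant feature of your version is that, by working over ${}_{x}\Zig_{Z'}\pr{\cat C,\cat W}$ itself, you need no fibration property of the prefix. You only need the two segment categories to have the same variance, which holds because $i_{m-1}\geq1$. The paper's symmetric split instead needs variance control on both halves. For (3), the peeled segment must be $[-1,i_{m-1},-1,i_m,-1]$ so as to match the form of the two-sided hypothesis; both segment categories are then cocartesian over $\cat W$.

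There are two small slips.

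First, your opening claim that you may reduce to zig-zags with no two consecutive arrows in the same direction overstates the reduction. Composing right-pointing arrows is not a weak homotopy equivalence in general, which is why $l\pr Z$ keeps every right-pointing arrow. Your normal form keeps the strings $[i_k]$ intact, so nothing depends on this, but the sentence should go.

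Second, your fix for the degenerate case $i_{m-1}=0$ would not work as stated. With $i_m\geq1$, the collapsed segment category $\Zig_{[i_m,-1]}\pr{\cat C,\cat W}_y$ is cartesian over $\cat W$, while $\Zig_{[-1,i_m,-1]}\pr{\cat C,\cat W}_y$ is cocartesian, so Corollary~\ref{cor:fiberwise_pullback} does not apply. The case is harmless, though. After your normalization $i_1,\dots,i_m\geq1$, so $i_{m-1}=0$ forces $m=1$. Then ${}_{x}\Zig_{[0]}\pr{\cat C,\cat W}\simeq\{x\}$ is a point, and the claim is literally the left-fraction hypothesis with $i=0$.
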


\begin{proof}
We prove (2).
Composing consecutive left-pointing arrows gives a homotopy inverse, after realization, to the operation of inserting identity arrows into a consecutive string of left-pointing arrows.
This remains true after concatenating arbitrary zig-zags on either side, since the functors and natural transformations involved preserve the endpoints.
Thus, we may assume that $Z$ contains no consecutive left-pointing arrows or a left-pointing arrow at the left end.
Write $\theta$ for the map in question, and write $Z=[S,T]$. We prove the claim by induction on $n=\abs T$.

If $n=0$, the claim is trivial.
If $n=1$, the claim is precisely the right-fraction hypothesis.
For the inductive step, suppose $n\geq2$ and that the claim has been proved for $n-1$. Write $Z=[Z_{1},-1,Z_{2}]$, and factor $\theta$ as
\begin{align*}
\Zig_{[-1,l\pr Z]}\pr{\cat C,\cat W}_{x,y} & \simeq\Zig_{[-1,l\pr{Z_{1}},l\pr{Z_{2}}]}\pr{\cat C,\cat W}_{x,y}\\
 & \simeq\ps{\Zig_{[-1,l\pr{Z_{1}}]}\pr{\cat C,\cat W}}x\times_{\cat W}\Zig_{l\pr{Z_{2}}}\pr{\cat C,\cat W}_{y}\\
 & \xrightarrow{f}\ps{\Zig_{[-1,l\pr{Z_{1}},-1]}\pr{\cat C,\cat W}}x\times_{\cat W}\Zig_{l\pr{Z_{2}}}\pr{\cat C,\cat W}_{y}\\
 & \simeq\ps{\Zig_{[-1,l\pr{Z_{1}}]}\pr{\cat C,\cat W}}x\times_{\cat W}\Zig_{[-1,l\pr{Z_{2}}]}\pr{\cat C,\cat W}_{y}\\
 & \xrightarrow{g}\ps{\Zig_{[-1,Z_{1}]}\pr{\cat C,\cat W}}x\times_{\cat W}\Zig_{[-1,Z_{2}]}\pr{\cat C,\cat W}_{y}.
\end{align*}
The map $f$ is an equivalence by the right-fraction hypothesis.
For $g$, all four categories in the displayed pullbacks are cocartesian fibrations over $\cat W$; here we use the fact that $Z_{1}$ has the form $[\ldots,1]$, which is ensured by the reduction in the first paragraph.
The induction hypothesis shows that the two maps defining $g$ are equivalences on every fiber after realization.
Thus, Corollary \ref{cor:fiberwise_pullback} shows that $g$ is a weak homotopy equivalence.
It follows that the composite is also an equivalence, completing the induction.

The proof of (1) is dual.
For (3), the same argument applies after making the reduction of the first paragraph at both ends.
Consequently, when only one left-pointing arrow remains, the right-pointing strings on both sides are nonempty, as required by the definition of two-sided fractions.
\end{proof}

\begin{notation}
We write $\Del_{\partial}\subset\Zig_{\partial}$ for the full subcategory spanned by the objects $[k]$, where $k\geq0$.
\end{notation}

\begin{lem}
\label{lem:6.2_2}The functor $l\from\Zig_{\partial}\to\Del_{\partial}$ is a left adjoint, and hence is initial.
\end{lem}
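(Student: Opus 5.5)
The plan is to show that $l$ is left adjoint to the full inclusion $\Del_{\partial}\hookrightarrow\Zig_{\partial}$, with unit given by the localization maps $q_Z\from Z\to l\pr Z=[\abs S]$. Initiality will then follow formally.

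\emph{Step 1: $l$ restricts to a functor $\Zig_{\partial}\to\Del_{\partial}$.} The map $q_Z$ sends $\min Z=0$ to $0$ and $\max Z$ to $\abs S$. A morphism $u\from Z\to Z'$ in $\Zig_{\partial}$ preserves both boundary elements. Hence the induced map $l\pr u\from l\pr Z\to l\pr{Z'}$, which is characterized by $l\pr u\,q_Z=q_{Z'}u$, preserves $0$ and the maximum. It is order-preserving, being a functor between ordinals. So it is a morphism of $\Del_{\partial}$.

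\emph{Step 2: $q_Z$ is a morphism of $\Zig_{\partial}$ and is universal.} The map $q_Z$ is order-preserving and preserves the boundary. It is also a relative functor, since it sends every weak morphism $f_t$ to an identity, and identities are the only weak equivalences of $[k]=[\{1,\dots,k\},\emptyset]$.

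Now let $F\from Z\to[k]$ be a morphism in $\Zig_{\partial}$. Because $F$ is relative and the weak equivalences of $[k]$ are identities, $F$ inverts all weak morphisms of $Z$. By the universal property of $q_Z$ as a localization, $F$ factors as $F=\overline F q_Z$ for a unique functor $\overline F\from l\pr Z\to[k]$. Since $q_Z$ is surjective on objects and preserves the boundary, $\overline F$ preserves the boundary. As a functor of ordinals it is order-preserving, so $\overline F$ is a morphism of $\Del_{\partial}$. Uniqueness holds already in $\Cat$, and these are $1$-categories.

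Naturality of $q$ in $Z$ is the defining relation $l\pr u\,q_Z=q_{Z'}u$. Together with the universal property, this shows that $q$ is the unit of an adjunction $l\dashv\pr{\Del_{\partial}\hookrightarrow\Zig_{\partial}}$. The only point requiring a little care is that the factorization takes place inside $\Zig_{\partial}$ rather than merely in $\Cat$, and this is handled by the boundary and order checks above.

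\emph{Step 3: left adjoints are initial.} Let $r$ denote the inclusion. For each $Z\in\Zig_{\partial}$, the comma category $\Del_{\partial}\times_{\Zig_{\partial}}\pr{\Zig_{\partial}}_{/Z}$ has a terminal object. Dually, for each $d\in\Del_{\partial}$, the category $\Zig_{\partial}\times_{\Del_{\partial}}\pr{\Del_{\partial}}_{/d}$ has the terminal object $\pr{r d,\ \varepsilon_d\from l r d\to d}$, where $\varepsilon$ is the counit (here an identity). This follows directly from the adjunction bijection $\Hom\pr{lZ,d}\cong\Hom\pr{Z,rd}$. These comma categories are therefore weakly contractible, so $l$ is initial by Quillen's Theorem A. Equivalently, $l^{\op}\from\Zig^{\op}_{\partial}\to\Del^{\op}_{\partial}$ is final, which is the form in which it will be used for the colimits defining $\Hamm_{\cat C,\cat W}$.
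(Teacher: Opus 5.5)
Your proof is correct and is the paper's argument: the inclusion $\Del_{\partial}\hookrightarrow\Zig_{\partial}$ is right adjoint to $l$ because a relative functor $Z\to[k]$ sends the left-pointing arrows to identities and so factors uniquely through $q_Z$, and your Step 3 only spells out the standard fact that left adjoints are initial. One slip: the first claim of Step 3 is false as written (for $Z=[-1]$ the category $\Del_{\partial}\times_{\Zig_{\partial}}\pr{\Zig_{\partial}}_{/Z}$ is empty); the adjunction gives an initial object $\pr{l\pr Z,q_Z}$ of $\Del_{\partial}\times_{\Zig_{\partial}}\pr{\Zig_{\partial}}_{Z/}$ instead, but the argument does not use this sentence, since Quillen's Theorem A only needs the terminal objects of $\Zig_{\partial}\times_{\Del_{\partial}}\pr{\Del_{\partial}}_{/d}$, which you establish correctly.
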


\begin{proof}
The right adjoint is the inclusion: Every relative functor $Z\to[k]$ sends the left-pointing arrows of $Z$ to identities, and hence factors uniquely through $Z\to l\pr Z$.
\end{proof}

We will also use the following elementary observation about changing indexing categories of colimits.

\begin{lem}
\label{lem:colim}Let
\[
I\xrightarrow{f}J\xrightarrow{X}\cat E
\]
be functors of $\infty$-categories.
Suppose there are functors $g\from J\to I$ and natural transformations $\eta\from\id_{I}\to gf$ and $\eta'\from\id_{J}\to fg$ satisfying $Xf\eta\simeq X\eta'f$.
Then the canonical map
\[
\colim_{i\in I}Xf\pr i\to\colim_{j\in J}X\pr j
\]
is an equivalence, provided that the colimits exist.
\end{lem}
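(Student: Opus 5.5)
The plan is to compare the two colimits through an explicit inverse, in the same spirit as Lemma \ref{lem:colim2}. Write $\lambda_i\from X f\pr i\to\colim_{I}Xf$ and $\mu_j\from X\pr j\to\colim_{J}X$ for the colimit cocones. Let $\phi\from\colim_{I}Xf\to\colim_{J}X$ be the canonical map; it is characterized by $\phi\lambda_i\simeq\mu_{f\pr i}$. For the candidate inverse $\psi$, I would use the cocone on $X$ given by
\[
X\pr j\xrightarrow{X\eta'_j}Xfg\pr j\xrightarrow{\lambda_{g\pr j}}\colim_{I}Xf .
\]
This is natural in $j$, since $\eta'$ is a natural transformation and $\lambda$ is a cocone on $Xf$, precomposed with $g$. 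So it defines $\psi\from\colim_{J}X\to\colim_{I}Xf$ with $\psi\mu_j\simeq\lambda_{g\pr j}\cdot X\eta'_j$.

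First I would check that $\phi\psi\simeq\id$. We have $\phi\psi\mu_j\simeq\phi\lambda_{g\pr j}\cdot X\eta'_j\simeq\mu_{fg\pr j}\cdot X\eta'_j$. This is $\mu_j$, because $\mu$ is a cocone, so $\mu_{fg\pr j}\cdot X\pr{\eta'_j}\simeq\mu_j$. The identification is coherent in $j$, since it is just the cocone $\mu$ restricted along the natural transformation $\eta'$. The universal property then gives $\phi\psi\simeq\id$.

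Next I would check that $\psi\phi\simeq\id$. We have $\psi\phi\lambda_i\simeq\psi\mu_{f\pr i}\simeq\lambda_{gf\pr i}\cdot X\eta'_{f\pr i}$. By hypothesis $X\eta'f\simeq Xf\eta$, so this becomes $\lambda_{gf\pr i}\cdot Xf\pr{\eta_i}$. The cocone property of $\lambda$ along $\eta\from\id_I\to gf$ then shows this is equivalent to $\lambda_i$. Hence $\psi\phi\simeq\id$.

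The main obstacle is making the homotopies coherent, not merely pointwise. The way to handle this is to phrase each step as an equivalence of natural transformations into the constant functor, that is, as cocones. Whiskering a cocone $\lambda\from Xf\Rightarrow\mathrm{const}$ by a natural transformation $\eta\from\id\to gf$ yields a homotopy between $\lambda\cdot Xf\eta$ and $\lambda$, namely the naturality square of $\lambda$ against $\eta$, whose target is constant. The only extra input is the assumed identification $Xf\eta\simeq X\eta'f$, which is given as an equivalence of natural transformations and so substitutes coherently.
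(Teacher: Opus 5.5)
Your proposal is correct and essentially matches the paper's proof: it uses the same inverse $\lambda_{g(j)}\circ X\eta'_j$, derives $\phi\psi\simeq\id$ from the cocone property of $\mu$ along $\eta'$, and derives $\psi\phi\simeq\id$ from the hypothesis $Xf\eta\simeq X\eta'f$ together with the cocone property of $\lambda$ along $\eta$. Your closing remark, that each step is an equivalence of cocones obtained by whiskering with a constant functor, spells out the coherence that the paper leaves implicit.
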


\begin{proof}
The inverse equivalence is induced by the composites
\[
X\pr j\xrightarrow{X\eta'_{j}}Xfg\pr j\xrightarrow{\iota_{g\pr j}}\colim_{i\in I}Xf\pr i,
\]
where $\iota$ denotes the colimit cone for $\colim_{i\in I}Xf\pr i$.
Postcomposition with the canonical map in the statement gives the identity by naturality of the colimit cone for $X$. In the other order, the relation $Xf\eta\simeq X\eta'f$ identifies the composite on each $Xf\pr i$ with the structure map $\iota_i$ of the colimit cone.
\end{proof}

\begin{proof}
[Proof of Theorem \ref{thm:fraction}] We will prove (2); the other cases can be proved similarly. 

Consider the following diagram: 
\[\begin{tikzcd}
	{\Zig_{[-1,1]}(\cat{C},\cat{W})_{x,y}} & {\Zig_{[-1,-1,1]}(\cat{C},\cat{W})_{x,y}} \\
	{\colim_{Z\in \Zig_{\partial}^{\op}}(\Zig_{Z}(\cat{C},\cat{W})_{x,y})} & {\colim_{Z\in \Zig_{\partial}^{\op}}(\Zig_{[-1,Z]}(\cat{C},\cat{W})_{x,y}).}
	\arrow[""{name=0, anchor=center, inner sep=0}, "{s_1}", curve={height=-12pt}, from=1-1, to=1-2]
	\arrow[""{name=1, anchor=center, inner sep=0}, "{s_0}"', curve={height=12pt}, from=1-1, to=1-2]
	\arrow["\phi"', from=1-1, to=2-1]
	\arrow["\psi", from=1-2, to=2-2]
	\arrow["{\overline{s}_0}"', from=2-1, to=2-2]
	\arrow["\alpha", between={0.2}{0.8}, Rightarrow, from=0, to=1]
\end{tikzcd}\]
Here, the maps $s_{0},s_{1}$ at the top are induced by the surjections $[-1,-1,1]\to[-1,1]$ that repeat $0$ and $1$ twice, respectively, and the map $\overline{s}_{0}$ at the bottom is defined similarly.
The maps $\phi$ and $\psi$ are components of the relevant colimit diagrams, and $\alpha$ is the natural transformation which we can depict as 
\[\begin{tikzcd}
	a & b & b & c \\
	a & a & b & {c.}
	\arrow[equal, from=1-1, to=2-1]
	\arrow[from=1-2, to=1-1]
	\arrow[from=1-2, to=2-2]
	\arrow[equal, from=1-3, to=1-2]
	\arrow[from=1-3, to=1-4]
	\arrow[equal, from=1-3, to=2-3]
	\arrow[equal, from=1-4, to=2-4]
	\arrow[equal, from=2-2, to=2-1]
	\arrow[from=2-3, to=2-2]
	\arrow[from=2-3, to=2-4]
\end{tikzcd}\]
Our goal is to show that $\abs{\phi}$ is a homotopy equivalence.
Since $\abs{s_{0}}\simeq\abs{s_{1}}$, we have $\abs{\psi\circ s_{1}}\simeq\abs{\psi\circ s_{0}}\simeq\abs{\overline{s}_{0}\circ\phi}$.
Thus, it suffices to show that $\abs{\psi\circ s_{1}}$ and $\abs{\overline{s}_{0}}$ are equivalences.

We start with $\psi\circ s_{1}$.
Consider the diagram
\[\begin{tikzcd}
	{\Zig_{[-1,1]}(\cat{C},\cat{W})_{x,y}} & {\colim_{[k]\in\Del_{\partial}^{\op}}(\Zig_{[-1,k]}(\cat{C},\cat{W})_{x,y}),} \\
	{\colim_{Z\in\Zig_{\partial}^{\op}}(\Zig_{[-1,Z]}(\cat{C},\cat{W})_{x,y})} & {\colim_{Z\in\Zig_{\partial}^{\op}}(\Zig_{[-1,l(Z)]}(\cat{C},\cat{W})_{x,y})}
	\arrow["{i_0}", from=1-1, to=1-2]
	\arrow["{\psi\circ s_1}"', from=1-1, to=2-1]
	\arrow["{i_1}", from=1-1, to=2-2]
	\arrow["v"', from=2-2, to=1-2]
	\arrow["u", from=2-2, to=2-1]
\end{tikzcd}\]
where $i_{0},i_{1},\psi\circ s_{1}$ are components of the relevant colimit diagrams, $u$ is induced by the maps $Z\to l\pr Z$, and $v$ is induced by the functor $l\from\Zig_{\partial}\to\Del_{\partial}$.
Lemmas \ref{lem:6.2_1} and \ref{lem:6.2_2} show that $\abs u$ and $\abs v$ are equivalences.
The map $\abs{i_{0}}$ is also an equivalence, because $[1]\in\Del_{\partial}$ is initial.
It follows from the two-out-of-three property of equivalences that $\abs{\psi\circ s_{1}}$ is an equivalence, as claimed.

Next, we turn to $\overline{s}_{0}$.
Define functors $f\from\Zig_{\partial}^{\op}\to\Zig_{\partial}^{\op}$ and $X\from\Zig^{\op}_{\partial}\to\An$ by $f\pr Z=[-1,Z]$ and $X\pr Z=\abs{\Zig_{Z}\pr{\cat C,\cat W}_{x,y}}$.
The surjections $[-1,Z]\to Z$ that hit $0$ twice determine a natural transformation $\eta\from\id\to f$ on $\Zig_{\partial}^{\op}$.
The map $\abs{\overline{s}_{0}}$ is the map
\[
\colim X\to\colim Xf
\]
induced by $X\eta$.
This map has a retraction
\[
\rho\from\colim Xf\to\colim X,
\]
induced by $f$.
Thus, it suffices to show that $\rho$ is an equivalence.
For this, we use Lemma \ref{lem:colim} (applied to the case $g=\id$).
According to this lemma, we only have to show that $Xf\eta\simeq X\eta f$.
For $Z\in\Zig_{\partial}$, the maps $Xf\eta_{Z}$ and $X\eta_{f\pr Z}$ are the maps
\[
\abs{\Zig_{[-1,Z]}\pr{\cat C,\cat W}_{x,y}}\to\abs{\Zig_{[-1,-1,Z]}\pr{\cat C,\cat W}_{x,y}}
\]
induced by the surjections $[-1,-1,Z]\to[-1,Z]$ that hit either $0$ or $1$ twice.
A variant of the transformation $\alpha$ then shows that $Xf\eta\simeq X\eta f$, as required.
\end{proof}

Before proving Corollary \ref{cor:fraction}, we show that it suffices to consider two-sided fractions.
\begin{lem}
\label{lem:left_right_double}Let $\pr{\cat C,\cat W}$ be a relative $\infty$-category supporting left or right fractions.
Then $\pr{\cat C,\cat W}$ supports two-sided fractions.
\end{lem}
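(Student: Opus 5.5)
By symmetry it suffices to treat the case of left fractions; the right case is dual, obtained by passing to opposites, which interchanges $[i,-1,j,-1]$ with $[-1,j,-1,i]$ after reversing. So assume $\pr{\cat C,\cat W}$ supports left fractions, and fix $i,j>0$ and $x,y\in\cat C$. We must show that
\[
\Zig_{[-1,i+j,-1]}\pr{\cat C,\cat W}_{x,y}\to\Zig_{[-1,i,-1,j,-1]}\pr{\cat C,\cat W}_{x,y}
\]
is a weak homotopy equivalence.

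The plan is to peel off the leading left-pointing arrow, as in the inductive step of Lemma \ref{lem:6.2_1}. Write both sides as fiber products over $\cat W$ via the vertex just after the initial backward arrow:
\[
\Zig_{[-1,Z]}\pr{\cat C,\cat W}_{x,y}\simeq\ps{\Zig_{[-1]}\pr{\cat C,\cat W}}x\times_{\cat W}\Zig_{Z}\pr{\cat C,\cat W}_{y},
\]
with $Z=[i+j,-1]$ and $Z=[i,-1,j,-1]$ respectively. The map in question is the base change of the map
\[
\Zig_{[i+j,-1]}\pr{\cat C,\cat W}_{y}\to\Zig_{[i,-1,j,-1]}\pr{\cat C,\cat W}_{y}
\]
of categories over $\cat W$, where the structure map is evaluation at the left endpoint. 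Because $i>0$, the leftmost arrow of both shapes points to the right, so both categories are cartesian fibrations over $\cat W$ via the left endpoint, with transport given by precomposition. The comparison map preserves cartesian morphisms. On the fiber over $w\in\cat W$, the map is precisely
\[
\Zig_{[i+j,-1]}\pr{\cat C,\cat W}_{w,y}\to\Zig_{[i,-1,j,-1]}\pr{\cat C,\cat W}_{w,y},
\]
and the left-fraction hypothesis says exactly that this is a weak homotopy equivalence.

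Corollary \ref{cor:fiberwise_pullback} then applies: pulling a fiberwise weak equivalence of (co)cartesian fibrations back along $\ps{\Zig_{[-1]}\pr{\cat C,\cat W}}x\to\cat W$ yields a weak homotopy equivalence. Here $\ps{\Zig_{[-1]}\pr{\cat C,\cat W}}x\to\cat W$ is the projection from $\cat W_{/x}$, a right fibration, and we use the same pattern as the map $g$ in Lemma \ref{lem:6.2_1}. This completes the argument.

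The main obstacle is checking that the hypotheses of Corollary \ref{cor:fiberwise_pullback} match the variance that actually occurs. The fibrations over $\cat W$ are cartesian rather than cocartesian, and the factor $\cat W_{/x}$ is only a right fibration. If the corollary is stated for only one variance, the first thing to try is its dual, or passing to opposite categories, since weak homotopy type is insensitive to taking opposites. The hypothesis $i>0$ is essential, because it ensures that the left endpoint map really is a cartesian fibration.
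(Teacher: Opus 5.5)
Your proof is correct and is essentially the paper's argument, done for the dual case: you write the comparison as a fiber product over $\cat W$ in which one factor is fixed and the other factor's fiberwise map is a one-sided fraction comparison, then apply Corollary \ref{cor:fiberwise_pullback}, which does cover the case where all four fibrations are cartesian. The only difference is where you cut: the paper treats right fractions and splits inside the second right-pointing string, $[-1,i+j,-1]=[-1,i+1]\vee[j-1,-1]$, so it only uses the hypothesis with second index $1$; you instead split off the outer weak equivalence, so the full one-sided comparison appears fiberwise, and $i>0$ is exactly what gives the two varying factors the same variance.
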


\begin{proof}
We prove the claim for right fractions; the left case is dual.
Let $x,y\in\cat C$. We must show that, for every $i,j>0$, the map
\[
\Zig_{[-1,i+j,-1]}\pr{\cat C,\cat W}_{x,y}\to\Zig_{[-1,i,-1,j,-1]}\pr{\cat C,\cat W}_{x,y}
\]
is a weak homotopy equivalence. 
But this map can be written as
\[
\ps{\Zig_{[-1,i+1]}\pr{\cat C,\cat W}}x\times_{\cat W}\Zig_{[j-1,-1]}\pr{\cat C,\cat W}_{y}\to\ps{\Zig_{[-1,i,-1,1]}\pr{\cat C,\cat W}}x\times_{\cat W}\Zig_{[j-1,-1]}\pr{\cat C,\cat W}_{y}.
\]
If $j>1$, the two left-hand factors are cocartesian fibrations over $\cat W$, while the common right-hand factor is a cartesian fibration.
If $j=1$, all three fibrations are cocartesian.
In either case, the right-fraction hypothesis gives an equivalence on every fiber after realization, so the claim follows from Corollary \ref{cor:fiberwise_pullback}.
\end{proof}

\begin{proof}
[Proof of Corollary \ref{cor:fraction}]
By Lemma \ref{lem:left_right_double}, it suffices to consider a relative $\infty$-category supporting two-sided fractions.
Let $n\geq2$, and consider the following commutative diagram
\[\hspace{-3pt}\begin{tikzcd}[column sep = -5pt]
	& {|\Zig_{[-1,1,-1][-1,n-1,-1]}(\cat{C},\cat{W})|} && {|\Zig_{[-1,n-1,-1]}(\cat{C},\cat{W})|} \\
	{|\Zig_{[n]}(\cat{C},\cat{W})|} && {|\Zig_{[n-1]}(\cat{C},\cat{W})|} \\
	& {|\Zig_{[-1,1,-1]}(\cat{C},\cat{W})|} && {|\cat{W}|.} \\
	{|\Zig_{[1]}(\cat{C},\cat{W})|} && {|\cat{W}|}
	\arrow[from=1-2, to=1-4]
	\arrow[from=1-2, to=3-2]
	\arrow[from=1-4, to=3-4]
	\arrow[from=2-1, to=1-2]
	\arrow[crossing over, from=2-1, to=2-3]
	\arrow[from=2-1, to=4-1]
	\arrow[from=2-3, to=1-4]
	\arrow[from=3-2, to=3-4]
	\arrow["{|\dom|}"{pos=0.7}, crossing over, from=2-3, to=4-3]
	\arrow[from=4-1, to=3-2]
	\arrow["{|\codom|}"', from=4-1, to=4-3]
	\arrow[from=4-3, to=3-4]
\end{tikzcd}\hspace{-3pt}\]
We wish to show that the front face is cartesian.
We first argue that the four slanted arrows are equivalences.
The lower-right one is the identity.
For $k=1,n-1$, the slanted map
\[
\abs{\Zig_{[k]}\pr{\cat C,\cat W}}\to\abs{\Zig_{[-1,k,-1]}\pr{\cat C,\cat W}}
\]
inserts identity weak equivalences at both endpoints.
Restriction to the central $[k]$-shaped string gives a homotopy inverse after realization, as is seen from the natural transformations induced by the two outer weak equivalences.
The remaining slanted map factors as
\begin{align*}
\abs{\Zig_{[n]}\pr{\cat C,\cat W}}
&\to\abs{\Zig_{[-1,n,-1]}\pr{\cat C,\cat W}}\\
&\to\abs{\Zig_{[-1,1,-1,n-1,-1]}\pr{\cat C,\cat W}}\\
&\to\abs{\Zig_{[-1,1,-1][-1,n-1,-1]}\pr{\cat C,\cat W}}.
\end{align*}
The first map is an equivalence by the preceding argument, the second by the two-sided-fraction hypothesis, and the third by composing the two consecutive left-pointing arrows at the marked junction.

Consequently, it suffices to show that the back face is cartesian. By Quillen's Theorem B (Proposition \ref{prop:ThmB}), this will follow once we show that each morphism $w\from y\to y'$ in $\cat W$ induces an equivalence
\[
\abs{\Zig_{[-1,1,-1]}\pr{\cat C,\cat W}_{y'}}\xrightarrow{\simeq}\abs{\Zig_{[-1,1,-1]}\pr{\cat C,\cat W}_{y}}.
\]
For this, we consider the commutative diagram 
\[\begin{tikzcd}
	{\Zig_{[-1,1,-1]}(\cat{C},\cat{W})_{y'}} && {\Zig_{[-1,1,-1]}(\cat{C},\cat{W})_{y}} \\
	& {\cat{W}.}
	\arrow[from=1-1, to=1-3]
	\arrow["\dom"', from=1-1, to=2-2]
	\arrow["\dom", from=1-3, to=2-2]
\end{tikzcd}\]
The slanted arrows are cocartesian fibrations.
For each $x\in\cat W$, the induced map between the fibers over $x\in\cat W$ is a weak homotopy equivalence by Theorem \ref{thm:fraction} and Corollary \ref{cor:ham_equivalence}.
Thus, Corollary \ref{cor:fiberwise_pullback} shows that the horizontal map is a weak homotopy equivalence, as required.

The assertion about completeness now follows from Remark \ref{rem:saturated}.
\end{proof}

\section{\label{sec:criteria}Criteria and examples for fractions}

This section gives criteria and examples for supporting fractions.
In the first two subsections, we provide sufficient conditions for relative $\infty$-categories to support fractions (Subsections \ref{subsec:general} and \ref{subsec:gabriel_zisman}).
We then provide a few sources of relative $\infty$-categories that support fractions.
The first source is relative $\infty$-categories satisfying quasicategorical calculus of fractions in the sense of Carranza--Kapulkin--Lindsey (Subsection \ref{subsec:quasi}); their calculus already extends the classical Gabriel--Zisman calculus of fractions to the $\infty$-categorical setting.
At the 1-categorical level, a comparison between the two styles has been observed by Dwyer--Kan themselves \cite[7.2]{DK80_2}.
The second source is $\infty$-categories of fibrant objects (Subsection \ref{subsec:fibrant}); again, the fact that their 1-categorical counterparts admit a calculus of fractions in the Dwyer--Kan sense has long been known to experts. 
\begin{rem}
Some of the results in this section are stated only for left or right fractions.
Their dual versions for right or left fractions are of course true, but we will not state them to reduce redundancy.
\end{rem}

\subsection{\label{subsec:general}Functorial criteria for fractions}

We start with an $\infty$-categorical generalization of \cite[Propositions 8.1, 8.2]{DK80_2}.

\begin{defn}
We write $\msquare$ for the relative poset with underlying poset $[1]\times[1]$ and with weak equivalences as depicted in the symbol.
We also denote by
\[
	\mspan , \quad \mcospan \quad \subseteq \quad \msquare
\]
the two relative subposets obtained by deleting the final object and initial object, respectively.
\end{defn}

\begin{prop}
\label{prop:8.1}Let $\pr{\cat C,\cat W}$ be a relative $\infty$-category such that $\cat W$ satisfies the two-out-of-three property.

\begin{enumerate}
\item If the functor
\[
\Fun_{\rel}\pr{\msquare,\pr{\cat C,\cat W}}\to\Fun_{\rel}\pr{\mspan,\pr{\cat C,\cat W}}
\]
admits a section, then $\pr{\cat C,\cat W}$ supports left fractions.
\item If the functor
\[
\Fun_{\rel}\pr{\msquare,\pr{\cat C,\cat W}}\to\Fun_{\rel}\pr{\mcospan,\pr{\cat C,\cat W}}
\]
admits a section, then $\pr{\cat C,\cat W}$ supports right fractions.
\item Suppose there are wide subcategories $\cat U,\cat V\subset\cat W$ satisfying the following conditions:

\begin{enumerate}
\item Every morphism in $\cat W$ can be functorially factored as a map in $\cat U$ followed by a map in $\cat V$.
\item The pair $(\cat{C},\cat{U})$ satisfies the hypothesis of (1).
\item The pair $(\cat{C},\cat{V})$ satisfies the hypothesis of (2).
\end{enumerate}
Then $\pr{\cat C,\cat W}$ supports two-sided fractions.

\end{enumerate}
\end{prop}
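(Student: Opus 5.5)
The plan is to prove (1) by writing down an explicit homotopy inverse after realization, and to get (2) by duality and (3) by combining the two constructions. Write
\[
\iota\from\Zig_{[i+j,-1]}\pr{\cat C,\cat W}_{x,y}\to\Zig_{[i,-1,j,-1]}\pr{\cat C,\cat W}_{x,y}
\]
for the comparison map; it inserts an identity weak equivalence after the first $i$ arrows. Fix a section $\sigma$ of $\Fun_{\rel}\pr{\msquare,\pr{\cat C,\cat W}}\to\Fun_{\rel}\pr{\mspan,\pr{\cat C,\cat W}}$. A span $a\xleftarrow{\sim}a'\to b'$ is then sent functorially to a square whose vertical arrows $a'\xrightarrow{\sim}a$ and $b'\xrightarrow{\sim}c$ are weak equivalences and whose new bottom arrow is $a\to c$.

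\emph{Constructing the inverse.} Start from a zig-zag
\[
x\to\cdots\to a\xleftarrow{\sim}a'\to a'_1\to\cdots\to a'_j=b\xleftarrow{\sim}y.
\]
Apply $\sigma$ $j$ times in succession, each time to the span formed by the current weak equivalence and the next forward arrow. This pushes the weak equivalence to the right across all $j$ arrows and yields
\[
x\to\cdots\to a\to c_1\to\cdots\to c_j\xleftarrow{\sim}b\xleftarrow{\sim}y.
\]
Composing the two final weak equivalences, which uses only that $\cat W$ is a subcategory, defines a functor
\[
r\from\Zig_{[i,-1,j,-1]}\pr{\cat C,\cat W}_{x,y}\to\Zig_{[i+j,-1]}\pr{\cat C,\cat W}_{x,y}.
\]
Functoriality of $r$ comes from functoriality of $\sigma$ and from its compatibility with precomposition by pointwise weak equivalences.

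\emph{Comparing the composites with the identity.} It then suffices to connect $r\iota$ and $\iota r$ to the respective identities by zig-zags of natural transformations, since these become homotopies after realization. Such transformations must fix $x$ and $y$ and be pointwise in $\cat W$. The vertical arrows of the iterated squares supply the components: the maps $a'_k\xrightarrow{\sim}c_k$ relate the original diagram to an intermediate diagram of shape $[i,-1,j,-1]$, namely
\[
x\to\cdots\to a\xleftarrow{=}a\to c_1\to\cdots\to c_j\xleftarrow{\sim}y,
\]
and that intermediate diagram lies in the image of $\iota$. For $r\iota$, the input span has an identity weak equivalence. Here the square produced by $\sigma$ has bottom arrow $a\to c$ and right leg $b'\xrightarrow{\sim}c$, which together compare it with the original arrow $a\to b'$. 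In both comparisons, the remaining components, such as composites like $b\to c_j$ or the comparison $b'\to c$, have to be shown to lie in $\cat W$. This is exactly where two-out-of-three enters: each of these components sits in a triangle whose other two sides are weak equivalences. Once all pointwise components are in $\cat W$, realization turns the transformations into homotopies $\abs{r\iota}\simeq\id$ and $\abs{\iota r}\simeq\id$.

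\emph{The other parts.} Part (2) is the dual argument: use the cospan section and push the weak equivalence leftward. For (3), take the middle weak equivalence $a\xleftarrow{\sim}a'$ of a $[-1,i,-1,j,-1]$-zig-zag and factor it functorially as $a'\xrightarrow{u}m\xrightarrow{v}a$ with $u\in\cat U$ and $v\in\cat V$. Push the $\cat V$-part leftward across the $i$ arrows using the right-fraction section for $(\cat C,\cat V)$, and push the $\cat U$-part rightward across the $j$ arrows using the left-fraction section for $(\cat C,\cat U)$. Finally, absorb both into the outer weak equivalences, which is possible since $\cat U,\cat V\subset\cat W$. The comparison transformations are built exactly as in (1), with two-out-of-three for $\cat W$ again ensuring they are pointwise weak.

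\emph{Main obstacle.} I expect the hard step to be the naturality and coherence of the comparison transformations rather than their pointwise existence. The section $\sigma$ is only a functor, not compatible with identities or composition of spans, so the iterated application has to be organized as a single functor out of $\Zig_{[i,-1,j,-1]}\pr{\cat C,\cat W}$. I would first try to handle this by describing $r$ as restriction along a functor built from $\Fun_{\rel}\pr{\msquare^{\times j\text{-grid}},\pr{\cat C,\cat W}}$, so that every comparison transformation is restriction along a natural transformation of relative posets and hence automatically coherent.
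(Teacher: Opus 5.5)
Your overall strategy is the paper's: build a functor in the opposite direction by iterated functorial square completion (plus, in (3), the functorial factorization), then connect both composites to the identity by zig-zags of pointwise-weak natural transformations. The paper writes out (3) and calls (1) and (2) similar, whereas you write out (1). In (1) your comparisons do work as single transformations $\id\to\iota r$ and $\id\to r\iota$, whose nontrivial components are legs of the completed squares.

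Two-out-of-three is not used where you say. The components $b\to c_j$ and $b'\to c$ are legs of squares in the image of $\sigma$, so they lie in $\cat W$ by definition of $\Fun_{\rel}$. What genuinely needs two-out-of-three is that $r$ is a functor on $\Zig_{[i,-1,j,-1]}\pr{\cat C,\cat W}_{x,y}$ at all. The section $\sigma$ is a functor on arbitrary natural transformations of spans. For a pointwise-weak map of spans, the induced component $c\to\tilde c$ at the new corner has composite with the weak leg $b'\to c$ equal to the weak composite $b'\to\tilde b'\to\tilde c$. It is therefore weak by two-out-of-three, and this has to be fed inductively into the next completion. Your phrase ``compatibility with precomposition by pointwise weak equivalences'' should be replaced by this argument.

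The genuine gap is in (3), which is not ``exactly as in (1)''; as stated, that step fails. Write $\theta$ for the comparison and $F$ for your inverse. The vertical maps point in opposite directions on the two sides ($a'_k\to a_k$ on the left, $b_k\to b'_k$ on the right), so $\theta F$ is not joined to $\id$ by a single transformation. The paper uses a zig-zag $\id\to H\leftarrow\theta F$, where $H$ agrees with the input to the left of $v$ and with the output to the right of it.

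For $F\theta$ it is worse. The chosen factorization of an identity is $z\xrightarrow{u}c\xrightarrow{v}z$ with $vu=\id_z$, but in general $uv\neq\id_c$. So the spans and cospans being completed do not have identity legs, and the trick from (1) does not apply. The missing idea is:
\begin{enumerate}
\item Compare this factorization with the trivial factorization of $\id_z$ via the transformations whose middle components are $u$ and $v$.
\item Propagate that comparison through the later completions using functoriality of the sections, together with two-out-of-three.
\item Compare each completion along an identity with the trivial completion.
\end{enumerate}
Replacing these choices one at a time gives the required zig-zag between $F\theta$ and $\id$. Your grid-restriction idea is a reasonable way to make the ``all choices are functorial'' coherence precise.
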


\begin{proof}
The proofs of all parts are similar, so we will prove (3).
We must show that, for each $i,j>0$ and each pair of objects $x,y\in\cat C$, the functor
\[
\theta\from\Zig_{[-1,i+j,-1]}\pr{\cat C,\cat W}_{x,y}\to\Zig_{[-1,i,-1,j,-1]}\pr{\cat C,\cat W}_{x,y}
\]
is a weak homotopy equivalence.
To this end, we define a functor in the opposite direction, which maps an object depicted in the top row to the bottom row in the diagram below: 
\[\begin{tikzcd}
	x & {a_0} & \cdots & {a_i} & {b_0} & \cdots & {b_j} & y \\
	x & {a'_0} & \cdots & {a'_i} & {b_0'} & \cdots & {b'_j} & y
	\arrow["\sim"', from=1-2, to=1-1]
	\arrow[from=1-2, to=1-3]
	\arrow[from=1-3, to=1-4]
	\arrow["\sim"', from=1-5, to=1-4]
	\arrow[from=1-5, to=1-6]
	\arrow["u", from=1-5, to=2-5]
	\arrow[from=1-6, to=1-7]
	\arrow["u", from=1-7, to=2-7]
	\arrow["\sim"', from=1-8, to=1-7]
	\arrow[equal, from=1-8, to=2-8]
	\arrow[equal, from=2-1, to=1-1]
	\arrow[from=2-2, to=1-2]
	\arrow["\sim"', from=2-2, to=2-1]
	\arrow[from=2-2, to=2-3]
	\arrow[from=2-3, to=1-3]
	\arrow[from=2-3, to=2-4]
	\arrow["v", from=2-4, to=1-4]
	\arrow[equal, from=2-5, to=2-4]
	\arrow[from=2-5, to=2-6]
	\arrow[from=2-6, to=2-7]
	\arrow["\sim"', from=2-8, to=2-7]
\end{tikzcd}\]
The diagram needs some explanation: We first use (a) to factor the map $a_{i}\leftarrow b_{0}$ as $vu$, where $u$ and $v$ are morphisms in $\cat U$ and $\cat V$, respectively.
We then recursively apply (b) and (c) to fill in the rest of the diagram.
The two-out-of-three property ensures that the functorial factorization and the chosen sections preserve natural transformations that are pointwise in $\cat W$: at each factorization or square-completion step, the new component composes with a known weak equivalence to a weak equivalence, and is therefore itself weak.
Since all the choices are functorial, the construction defines a functor
\[
F\from\Zig_{[-1,i,-1,j,-1]}\pr{\cat C,\cat W}_{x,y}\to\Zig_{[-1,i+j,-1]}\pr{\cat C,\cat W}_{x,y}.
\]

We next construct the two inverse homotopies.
First, the displayed diagram can be split through the intermediate row
\[\begin{tikzcd}[column sep = small]
	x & {a_0} & \cdots & {a_i} & {b'_0} & \cdots & {b'_j} & y
	\arrow["\sim"', from=1-2, to=1-1]
	\arrow[from=1-2, to=1-3]
	\arrow[from=1-3, to=1-4]
	\arrow["\sim"', "v", from=1-5, to=1-4]
	\arrow[from=1-5, to=1-6]
	\arrow[from=1-6, to=1-7]
	\arrow["\sim"', from=1-8, to=1-7]
\end{tikzcd}\]
which agrees with the top row to the left of $v$ and with the bottom row to the right of it.
This gives a zig-zag of natural transformations
\[
\id\longrightarrow H\longleftarrow\theta F
\]
whose components belong to $\cat W$, and hence a homotopy $\abs{\theta}\abs{F}\simeq\id$ after realization.

For the other composite, consider the restriction of the construction to the image of $\theta$.
The newly inserted weak equivalence is then an identity.
If its chosen factorization is
\[
z\xrightarrow{u}c\xrightarrow{v}z,
\qquad vu=\id_z,
\]
then the maps $u$ and $v$ give natural transformations, pointwise in $\cat W$, between this factorization and the trivial factorization of $\id_z$.
Likewise, a chosen square completion along an identity weak equivalence is connected to the trivial square completion by a natural transformation whose only non-identity component is the newly created weak equivalence.
Because every stage of the construction is functorial, these comparisons can be propagated through the later stages.
Replacing the chosen factorization and square completions one at a time by the trivial ones therefore gives a zig-zag of natural transformations between $F\theta$ and $\id$.
It follows that $\abs{F}$ is an inverse of $\abs{\theta}$.
\end{proof}

\begin{rem}
More generally, the two-out-of-three hypothesis in Proposition \ref{prop:8.1} can be omitted if the relevant sections, and in part (3) also the functorial factorization, preserve natural transformations that are pointwise in $\cat W$.
\end{rem}

\begin{example}
Every model category with functorial factorizations satisfies the hypothesis of Proposition \ref{prop:8.1} (3), by taking $\cat U$ and $\cat V$ to be the classes of acyclic cofibrations and acyclic fibrations, respectively.
Consequently, every such model category supports two-sided fractions.
In fact, every model category supports two-sided fractions, without any assumption on functorial factorizations \cite[7.2(iii) and 8.1]{DK80_3}.
See also \cite[Lemma 8.2]{MG21} and \cite{BK} for analogous results for model $\infty$-categories and partial model categories, respectively, and Subsection \ref{subsec:fibrant} for the corresponding one-sided result for Brown's categories of fibrant objects and their $\infty$-categorical analogues.
\end{example}

\begin{example}
Let $\pr{\cat C,\cat W}$ be a relative $\infty$-category.
If $\cat W$ satisfies the two-out-of-three property, $\cat C$ admits pullbacks, and morphisms in $\cat W$ are stable under pullback, then Proposition \ref{prop:8.1} shows that $\pr{\cat C,\cat W}$ supports right fractions.
In particular, Corollary \ref{cor:hammock_anima_is_anima_of_loc} and Theorem \ref{thm:fraction} give
\[
\Map_{\cat C[\cat W^{-1}]}\pr{x,y}\simeq\abs{\Zig_{[-1,1]}\pr{\cat C,\cat W}_{x,y}}.
\]
This was obtained by a different method by Cisinski \cite[Theorem 7.2.8, Remark 7.2.10, and Theorem 7.2.16]{HCHA}.
\end{example}

\subsection{\label{subsec:gabriel_zisman}A Gabriel--Zisman criterion}

Dwyer--Kan's theory of fractions is predated by Gabriel--Zisman's \cite{GZ67}, which describes hom-sets of $1$-categorical localizations.
As observed by Dwyer--Kan \cite[7.2]{DK80_2} themselves, their theory subsumes Gabriel--Zisman's.
The goal of this subsection is to establish an $\infty$-categorical analog of this.
To state the criterion, we need to introduce a bit of notation.

\begin{defn}
Let $\pr{\cat C,\cat W}$ be a relative $\infty$-category.
For a relative $\infty$-category $K$, write
\[
\Fun_{\rel}\pr{K,\pr{\cat C,\cat W}}{}^{\cat W}\subset\Fun_{\rel}\pr{K,\pr{\cat C,\cat W}}
\]
for the wide subcategory whose morphisms are the natural transformations that are pointwise in $\cat W$.
For each pair of morphisms $\smash[b]{a'\xleftarrow[u]{\sim}a\xrightarrow[f]{}b}$ in $\cat C$, define $\Sq_{\cat W}\pr{u,f}$ by the pullback square
\[\begin{tikzcd}
	{\Sq_{\cat W}\pr{u,f}} & {\Fun_{\rel}\pr{\msquare,\pr{\cat C,\cat W}}{}^{\cat W}} \\
	{\{a'\xleftarrow[u]{\sim}a\xrightarrow[f]{}b\}} & {\Fun_{\rel}\pr{\mspan,\pr{\cat C,\cat W}}{}^{\cat W}.}
	\arrow[from=1-1, to=1-2]
	\arrow["\lrcorner"{description, pos=0.1}, draw=none, from=1-1, to=2-2]
	\arrow[from=1-1, to=2-1]
	\arrow[from=1-2, to=2-2]
	\arrow[from=2-1, to=2-2]
\end{tikzcd}\]
Thus, its objects are commutative squares of the form
\[\begin{tikzcd}
	a & b \\
	{a'} & {b',}
	\arrow["f", from=1-1, to=1-2]
	\arrow["u"', "\sim", from=1-1, to=2-1]
	\arrow["w"', "\sim", from=1-2, to=2-2]
	\arrow[from=2-1, to=2-2]
\end{tikzcd}\]
where $w\in\cat W$, and its morphisms are maps between such squares whose component at $b'$ belongs to $\cat W$.
\end{defn}

\begin{prop}
\label{prop:span_hCLF}Let $\pr{\cat C,\cat W}$ be a relative $\infty$-category.
Suppose that, for each pair of morphisms $a'\xleftarrow[\liftlabel{u}]{\sim}a\xrightarrow[\liftlabel{f}]{}b$ in $\cat C$, the $\infty$-category $\Sq_{\cat W}\pr{u,f}$ is weakly contractible.
Then $\pr{\cat C,\cat W}$ supports left fractions.
\end{prop}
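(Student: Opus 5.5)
We must show that for $i,j\ge0$ and $x,y$ the functor
\[
\theta\from\Zig_{[i+j,-1]}\pr{\cat C,\cat W}_{x,y}\to\Zig_{[i,-1,j,-1]}\pr{\cat C,\cat W}_{x,y}
\]
is a weak homotopy equivalence. The plan is to reduce first to $j=1$ by fiberwise arguments, as in Lemma \ref{lem:6.2_1}, and then to use Quillen's Theorem A on the comma categories of the basic map, identifying them with the squares categories $\Sq_{\cat W}\pr{u,f}$.

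\emph{Reduction to $j=1$.} Write both sides as pullbacks over $\cat W$ at the vertex where the last $[j-1,-1]$ segment starts:
\[
\ps{\Zig_{[i+1]}\pr{\cat C,\cat W}}x\times_{\cat W}\Zig_{[j-1,-1]}\pr{\cat C,\cat W}_y\to\ps{\Zig_{[i,-1,1]}\pr{\cat C,\cat W}}x\times_{\cat W}\Zig_{[j-1,-1]}\pr{\cat C,\cat W}_y .
\]
The two left factors are cocartesian fibrations over $\cat W$ via the right endpoint, since their last arrow points right. Corollary \ref{cor:fiberwise_pullback} then reduces us to checking, fiberwise over each $b\in\cat W$, that
\[
\ps{\Zig_{[i+1]}}x_{b}\to\ps{\Zig_{[i,-1,1]}}x_{b}
\]
is a weak equivalence. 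Splitting off the initial $[i]$ string, which is a cocartesian fibration over its endpoint $a$, we reduce in the same way to the basic case $i=0$. There we must show that
\[
\theta_0\from\cat C_{x/}\times_{\cat C}\{b\}\simeq\Zig_{[1]}\pr{\cat C,\cat W}_{x,b}\to\Zig_{[-1,1]}\pr{\cat C,\cat W}_{x,b}
\]
is a weak homotopy equivalence. (This uses the same fiberwise bookkeeping as Lemma \ref{lem:left_right_double}, which I take as routine.) Here $\Zig_{[1]}\pr{\cat C,\cat W}_{x,b}$ is just the anima $\Map\pr{x,b}$.

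\emph{Main step.} The reduction as written loses the final weak equivalence $b\xleftarrow{\sim} y$, and that arrow is what the hypothesis needs. So I would keep the last $-1$ and reduce instead to the case $i=0$, $j=1$ with $x,y$ fixed:
\[
\theta\from\Zig_{[1,-1]}\pr{\cat C,\cat W}_{x,y}\to\Zig_{[-1,1,-1]}\pr{\cat C,\cat W}_{x,y}.
\]
Apply Quillen's Theorem A in its comma-category form. Fix an object $x\xleftarrow{u}a\xrightarrow{f}b\xleftarrow{w}y$ of the target. An object of the comma category $\theta_{/}$ over it consists of an object $x\to b'\xleftarrow{\sim}y$ of the source together with a pointwise-$\cat W$ transformation into it from the given zig-zag. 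Such a transformation amounts to the identity at $x$ and $y$, a weak map $a\to x$ which must be $u$, and a weak map $b\to b'$. This is exactly a square completing the span $x\xleftarrow{u}a\xrightarrow{f}b$ with weak right edge, together with the compatibility with $y$. That extra datum is absorbed by the left fibration $\cat W_{y/}$ and causes no harm. So the comma category is equivalent to $\Sq_{\cat W}\pr{u,f}$, which is weakly contractible by hypothesis, and Theorem A gives the claim.

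The remaining step is to reassemble general $i,j$ from this case, inducting on the number of right arrows with the fiberwise pullback argument above, as in Lemma \ref{lem:6.2_1}.

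The main obstacle is making the identification of the comma category with $\Sq_{\cat W}\pr{u,f}$ precise $\infty$-categorically, since morphisms in $\Zig$-categories are only pointwise-$\cat W$ transformations. I would first try to write $\theta$ as the base change of $\Fun_{\rel}\pr{\msquare,\pr{\cat C,\cat W}}^{\cat W}\to\Fun_{\rel}\pr{\mspan,\pr{\cat C,\cat W}}^{\cat W}$ along the evaluation, and then apply Theorem A fiberwise.
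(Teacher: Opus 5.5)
Your base case is correct. For $(i,j)=(0,1)$, the comma category of $\theta$ under $x\xleftarrow{u}a\xrightarrow{f}b\xleftarrow{w}y$ is $\Sq_{\cat W}\pr{u,f}$, with the leg at $y$ absorbed by $\pr{\cat W_{y/}}_{w/}\simeq\cat W_{b/}$. This is exactly the paper's identification $T_1\simeq\Sq_{\cat W}\pr{v,b_0\to b_1}$.

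The genuine gap is the passage to general $(i,j)$, which is where most of the work lies, and the mechanism you invoke cannot supply it. Your first reduction does not merely lose information; it reduces to a false statement. For $\cat C=\cat W=[1]$ the hypothesis holds, since each $\Sq_{\cat W}\pr{u,f}$ has an initial object. Yet $\Zig_{[1]}\pr{\cat C,\cat W}_{1,0}=\emptyset$, while $\Zig_{[-1,1]}\pr{\cat C,\cat W}_{1,0}$ is a point.

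The same problem defeats your final ``reassembly''. To get down to $(0,1)$ you must cut at $a_i$ or at some $b_k$ (cuts at $a_k$ with $k<i$ are legitimate but only lower $i$ to $1$).
\begin{itemize}
\item A cut at $b_k$ with $k\geq 1$ detaches the collapsed arrow from $b_j\xleftarrow{\sim}y$. It leads to a false fiberwise comparison of the kind above.
\item A cut at $a_i$ pairs the cartesian fibration $\Zig_{[j,-1]}\pr{\cat C,\cat W}_y\to\cat W$ with the cocartesian fibration $\Zig_{[-1,j,-1]}\pr{\cat C,\cat W}_y\to\cat W$. Corollary~\ref{cor:fiberwise_pullback} then does not apply.
\end{itemize}
Lemma~\ref{lem:6.2_1} is no model here either, since it consumes the fraction hypothesis for all $(i,j)$ rather than generating it from one case.

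What is missing is a direct analysis of the comma category for arbitrary $i$ and $j>0$. Cutting at $b_0$ and $b_j$ writes it as
\[
\pr{\ps{\Zig'_{[i,-1]}\pr{\cat C,\cat W}}x}_{\zeta_0/}\times_{\cat W_{b_0/}}\Zig_{[j]}\pr{\cat C,\cat W}_{\zeta_1/}.
\]
The first factor has an initial object lying over $v$: the chain $x\to\cdots\to a_i\xleftarrow{=}a_i$, receiving $\zeta_0$ via identities and $v\colon b_0\to a_i$. The second factor is a cartesian, hence proper, fibration over $\cat W_{b_0/}$. So the comma category contracts onto its fiber $T_j$ over $v$, which eliminates $i$.

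For $j$ you need a genuine induction. Restriction to the first square gives a cartesian fibration $T_j\to T_1$. Its fibers are completion categories of the same kind for the shorter tail, with $v$ replaced by the newly created weak equivalence at $b_1$. Quillen's Theorem B (Proposition~\ref{prop:ThmB}) together with $T_1\simeq\Sq_{\cat W}\pr{v,b_0\to b_1}$ then gives weak contractibility of $T_j$.

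Finally, you must also treat $j=0$ separately. For example, composing the two consecutive weak equivalences gives a homotopy inverse of $\abs{\theta}$.
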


\begin{example}
The hypothesis of Proposition \ref{prop:span_hCLF} is satisfied by any relative $1$-category $\pr{\cat C,\cat W}$ which admits a calculus of left fractions in the sense of Gabriel--Zisman and for which $\cat W$ satisfies the two-out-of-three property.
Indeed, each $\Sq_{\cat W}\pr{u,f}$ is filtered in this case.
\end{example}

\begin{example}
Suppose that $\cat W$ satisfies the two-out-of-three property, that $\cat C$ admits pushouts, and that morphisms in $\cat W$ are stable under pushout in $\cat C$.
Then the hypothesis of Proposition \ref{prop:span_hCLF} is satisfied.
Indeed, the pushout square is an initial object of each $\Sq_{\cat W}\pr{u,f}$.
\end{example}

The proof of Proposition \ref{prop:span_hCLF} relies on the following observation:

\begin{rem}
\label{rem:square_completion_fiber}In the situation of the proposition, the $\infty$-category $\Sq_{\cat W}\pr{u,f}$ can be identified with the fiber over $u$ of the domain functor
\[
\dom\from\Zig_{[1]}\pr{\cat C,\cat W}_{f/}\to\cat W_{a/}.
\]
To see this, consider the following commutative diagram:
\[
\begin{tikzcd}[column sep=small]
	{\Sq_{\cat W}\pr{u,f}} & {\Zig_{[1]}\pr{\cat C,\cat W}_{f/}} & {\Fun_{\rel}\pr{\msquare,\pr{\cat C,\cat W}}{}^{\cat W}} \\
	{\{u\}} & {\cat W_{a/}} & {\Fun_{\rel}\pr{\mspan,\pr{\cat C,\cat W}}{}^{\cat W}} \\
	& {\{f\}} & {\Fun_{\rel}\pr{\marrow,\pr{\cat C,\cat W}}^{\cat W}.}
	\arrow[from=1-1, to=1-2]
	\arrow[from=1-1, to=2-1]
	\arrow[from=1-2, to=1-3]
	\arrow["\dom", from=1-2, to=2-2]
	\arrow[from=1-3, to=2-3]
	\arrow[from=2-1, to=2-2]
	\arrow[from=2-2, to=2-3]
	\arrow[from=2-2, to=3-2]
	\arrow[from=2-3, to=3-3]
	\arrow[from=3-2, to=3-3]
\end{tikzcd}
\]
The lower-right square is cartesian by inspection, while the right outer rectangle is cartesian by identifying the upper right corner of the diagram with the arrow category of $\Zig_{[1]}\pr{\cat C,\cat W}$. Since the upper outer rectangle is cartesian by definition, the pasting law of cartesian squares implies that the upper-left square is cartesian, as claimed.
\end{rem}

\begin{proof}
[Proof of Proposition \ref{prop:span_hCLF}]
Let $x,y\in\cat C$ be a pair of objects, and let $i,j\geq0$.
We must show that the map
\[
\theta\from\Zig_{[i+j,-1]}\pr{\cat C,\cat W}_{x,y}\to\Zig_{[i,-1,j,-1]}\pr{\cat C,\cat W}_{x,y}
\]
is a weak homotopy equivalence.
If $j=0$, composing the two left-pointing arrows on the right gives a homotopy inverse of $\abs{\theta}$.
Thus, we may assume that $j>0$.
In this case, we show $\theta$ is in fact final.

Let $\zeta\in\Zig_{[i,-1,j,-1]}\pr{\cat C,\cat W}_{x,y}$ be an arbitrary object, depicted as
\[
x\to a_{1}\to\cdots\to a_{i}\xleftarrow[v]{\sim}b_{0}\to\cdots\to b_{j}\xleftarrow[w]{\sim}y.
\]
We must show that the relative slice category $\pr{\Zig_{[i+j,-1]}\pr{\cat C,\cat W}_{x,y}}_{\zeta/}$ of $\theta$ is weakly contractible.
We prove this by induction on $j$, uniformly in $\zeta$.
For this, observe that $\Zig_{[i,-1,j,-1]}\pr{\cat C,\cat W}_{x,y}$ can be decomposed as
\begin{align*}
\Zig_{[i,-1,j,-1]}\pr{\cat C,\cat W}_{x,y} & \simeq\ps{\Zig_{[i,-1]}\pr{\cat C,\cat W}}x\times_{\cat W}\Zig_{[j]}\pr{\cat C,\cat W}\times_{\cat W}\Zig_{[-1]}\pr{\cat C,\cat W}_{y}\\
 & \simeq\ps{\Zig_{[i,-1]}\pr{\cat C,\cat W}}x\times_{\cat W}\Zig_{[j]}\pr{\cat C,\cat W}\times_{\cat W}\cat W_{y/}.
\end{align*}
Write
\[
\zeta_0=\pr{x\to a_{1}\to\cdots\to a_{i}\xleftarrow[v]{\sim}b_{0}}
\qquad\text{and}\qquad
\zeta_1=\pr{b_{0}\to\cdots\to b_{j}}.
\]
An object of the relative slice is a pointwise weak equivalence from $\zeta$ to a zig-zag in the image of $\theta$, which we may describe by cutting it at $b_0$ and $b_j$.
The preceding decomposition therefore gives
\begin{align*}
\pr{\Zig_{[i+j,-1]}\pr{\cat C,\cat W}_{x,y}}_{\zeta/}
\simeq & \pr{\ps{\Zig'_{[i,-1]}\pr{\cat C,\cat W}}x}_{\zeta_0/}\times_{\cat W_{b_{0}/}}\pr{\Zig_{[j]}\pr{\cat C,\cat W}_{\zeta_1/}}\times_{\cat W_{b_{j}/}}\pr{\cat W_{y/}}_{w/}\\
\simeq & \pr{\ps{\Zig'_{[i,-1]}\pr{\cat C,\cat W}}x}_{\zeta_0/}\times_{\cat W_{b_{0}/}}\pr{\Zig_{[j]}\pr{\cat C,\cat W}_{\zeta_1/}},
\end{align*}
where $\ps{\Zig'_{[i,-1]}\pr{\cat C,\cat W}}x\subset\ps{\Zig{}_{[i,-1]}\pr{\cat C,\cat W}}x$ denotes the essential image of $\ps{\Zig{}_{[i]}\pr{\cat C,\cat W}}x$, i.e., the full subcategory of those zig-zags whose last arrow is an equivalence.
Here, the first factor records the part to the left of $b_0$ and involves $\Zig'_{[i,-1]}$ because $\theta$ inserts an identity as the middle weak equivalence, while the second records the deformation of the chain $b_0\to\cdots\to b_j$.
The final factor in the first line disappears by the equivalence $\pr{\cat W_{y/}}_{w/}\simeq\cat W_{b_j/}$.
The domain projection
\[
\Zig_{[j]}\pr{\cat C,\cat W}_{\zeta_1/}\to\cat W_{b_{0}/}
\]
is a cartesian fibration and hence proper by the dual of \cite[Proposition 4.1.2.15]{HTT}.
Moreover, the $\infty$-category $\pr{\ps{\Zig'_{[i,-1]}\pr{\cat C,\cat W}}x}_{\zeta_0/}$ has an initial object depicted as
\[\begin{tikzcd}
	x & {a_1} & \cdots & {a_i} & {b_0} \\
	x & {a_1} & \cdots & {a_i} & {a_i.}
	\arrow[from=1-1, to=1-2]
	\arrow[equal, from=1-1, to=2-1]
	\arrow[from=1-2, to=1-3]
	\arrow[equal, from=1-2, to=2-2]
	\arrow[from=1-3, to=1-4]
	\arrow[equal, from=1-4, to=2-4]
	\arrow["\sim"', "v", from=1-5, to=1-4]
	\arrow["\sim", "v"', from=1-5, to=2-5]
	\arrow[from=2-1, to=2-2]
	\arrow[from=2-2, to=2-3]
	\arrow[from=2-3, to=2-4]
	\arrow[equal, from=2-5, to=2-4]
\end{tikzcd}\]
To verify initiality, let $\zeta_0\to\eta$ be an arbitrary object of the slice.
A morphism from the displayed object to $\zeta_0\to\eta$ is forced on every vertex except the last one.
The remaining component is uniquely obtained by lifting across the final arrow of $\eta$, which is an equivalence; it belongs to $\cat W$ because $\cat W$ contains all equivalences and is closed under composition.
For $1\leq k\leq j$, set
\[
T_k=
\{v\}\times_{\cat W_{b_{0}/}}
\pr{\Zig_{[k]}\pr{\cat C,\cat W}_{b_{0}\to\cdots\to b_{k}/}}.
\]
The displayed initial object lies over $v\in\cat W_{b_0/}$.
Since the domain projection is proper, the base change of the inclusion of this initial object is the initial functor
\begin{align*}
T_j\hookrightarrow
\pr{\ps{\Zig'_{[i,-1]}\pr{\cat C,\cat W}}x}_{\zeta_0/}
\times_{\cat W_{b_{0}/}}
\pr{\Zig_{[j]}\pr{\cat C,\cat W}_{\zeta_1/}}.
\end{align*}
Hence, it suffices to show that $T_j$ is weakly contractible.

Remark \ref{rem:square_completion_fiber} gives an equivalence
\[
T_1\simeq\Sq_{\cat W}\pr{v,b_0\to b_1},
\]
which is weakly contractible by hypothesis.
This proves the claim when $j=1$.
For $j>1$, restriction to the first arrow gives a cartesian fibration
\[
p\from T_j\to T_1.
\]
A cartesian lift is obtained by leaving the remaining objects unchanged and precomposing the first arrow in the remaining tail.
The fibers of $p$ are the analogous completion categories for the remaining tail, and hence are weakly contractible by induction.
Since $T_1$ is weakly contractible, Quillen's Theorem B (Proposition \ref{prop:ThmB}) shows that $T_j$ is weakly contractible.
Thus $\theta$ is final, as required.
\end{proof}

\subsection{\label{subsec:quasi}Comparison with quasicategorical fractions}

We next compare Proposition \ref{prop:span_hCLF} with the quasicategorical calculus of fractions developed by Carranza--Kapulkin--Lindsey \cite{CKL25}.
For relative $1$-categories, their definition reduces to Gabriel--Zisman's classical definition.
The main result of \cite{CKL25} asserts that a relative quasicategory admitting their calculus of fractions admits an explicit combinatorial model of localization (as a simplicial set).
Using this explicit model, they give a description of the mapping animae of localization in terms of fractions.

The main result of this subsection shows that their extension axioms imply the hypothesis of Proposition \ref{prop:span_hCLF}: More precisely, they imply that each $\infty$-category $\Sq_{\cat W}\pr{u,f}$ is filtered, and hence weakly contractible.
It follows that every relative $\infty$-category admitting their calculus supports left fractions (Theorem \ref{thm:CLF}).
Combining this with Theorem \ref{thm:fraction} gives a shorter, perhaps more conceptual proof of the aforementioned mapping-anima description of localizations (Remark \ref{rem:map_CLF}).

We start by recalling the definition of calculi in the sense of \cite{CKL25}.

\begin{defn}
\cite[Definition 4.4]{CKL25}\label{def:CLF} For each pair of integers $n\geq2$ and $0<k\leq n$, we define a relative poset $\cat P_{n,k}$ as follows: Its underlying poset is the poset of subsets of $[n]$ that contain $k$, ordered by inclusion.
A morphism $S\to T$ is a weak equivalence if and only if they share the same maxima.

We also define a relative poset $\cat P'_{n,k}$ similarly, by considering the set of \textit{proper} subsets of $[n]$ that contain $k$.

A relative $\infty$-category $\pr{\cat C,\cat W}$ is said to admit \textit{CLF} (\textit{calculus of left fractions}) if for every $n\geq2$ and $0<k\leq n$, every relative functor $\cat P'_{n,k}\to\pr{\cat C,\cat W}$ extends to $\cat P_{n,k}$ (up to equivalence).
\end{defn}

\begin{example}
We can depict $\cat P_{2,1}$ as 
\[\begin{tikzcd}
	\bullet & \bullet \\
	\bullet & \bullet.
	\arrow["\sim", from=1-1, to=1-2]
	\arrow[from=1-1, to=2-1]
	\arrow[from=1-2, to=2-2]
	\arrow["\sim"', from=2-1, to=2-2]
\end{tikzcd}\]
More generally, we have $\cat P_{n,n-1}\simeq\pr{[1]^{n},[1]^{n-1}\times\{0,1\}}$, where $\{0,1\}$ is the discrete category with two objects.
\end{example}

\begin{rem}
In \cite{CKL25}, the definition of CLF is stated using quasicategories equipped with a designated subset of morphisms, which are required to be ``weakly closed under composition'' and contain all identity morphisms.
If we interpret $\infty$-categories as quasicategories, then the assumption on weak closure adds extra generality in the definition.
This does not, however, seem to make much difference in practice.

Also, in \cite{CKL25}, the definition of CLF requires that every relative functor $\cat P'_{n,k}\to\pr{\cat C,\cat W}$ admits a strict extension to $\cat P_{n,k}$ as a map of simplicial sets.
In the setting at hand, this requirement comes for free from Definition \ref{def:CLF} because we assume that $\cat W$ is a wide subcategory of $\cat C$.
\end{rem}

Here is the main result of this subsection:

\begin{thm}
\label{thm:CLF}Every relative $\infty$-category admitting CLF supports left fractions.
\end{thm}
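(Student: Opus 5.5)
The plan is to verify the hypothesis of Proposition \ref{prop:span_hCLF}. Concretely, for every span $a'\xleftarrow{u}a\xrightarrow{f}b$ with $u\in\cat W$, we will show that $\Sq_{\cat W}\pr{u,f}$ is filtered, hence weakly contractible. Proposition \ref{prop:span_hCLF} then gives that $\pr{\cat C,\cat W}$ supports left fractions.

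To show filteredness, we use the lifting characterization of filtered $\infty$-categories in the form of Lemma \ref{lem:filtered}: an $\infty$-category $\cat E$ is filtered if and only if every map $\partial\Delta^{m}\to\cat E$ (for $m\geq0$, where $\partial\Delta^0=\emptyset$) extends to $\Delta^{m}$. The extension is required only up to the combinatorial conventions of quasicategories, which is why this subsection is model dependent. So we fix a map $\partial\Delta^{m}\to\Sq_{\cat W}\pr{u,f}$ and unwind it. Using the pullback definition of $\Sq_{\cat W}\pr{u,f}$, it amounts to the following data:
\begin{itemize}
\item a family of squares with fixed top-left span $(u,f)$;
\item new objects $b'_S$ indexed by the nonempty proper faces $S\subsetneq[m]$;
\item maps between these objects whose components are in $\cat W$;
\item coherences between all of the above.
\end{itemize}
The key observation is that this whole configuration is a relative functor from some relative poset built from subsets, with $a,a',b$ in the bottom part, and that it is exactly (or is a retract/subdivision of) a relative functor $\cat P'_{n,k}\to\pr{\cat C,\cat W}$. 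Here $n=m+1$ or $m+2$ with a suitable $k$. The vertex $k$ plays the role of the fixed data $b$, and the condition ``same maxima'' encodes that the maps into the new corners $b'$ lie in $\cat W$. The CLF extension to $\cat P_{n,k}$ then supplies the missing top cell, i.e., a new square with a cone of $\cat W$-maps from all the $b'_S$. Reading this extension back through the pullback gives the filler $\Delta^m\to\Sq_{\cat W}\pr{u,f}$.

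Two cases need separate care.
\begin{itemize}
\item For $m=0$ (nonemptiness), the statement is directly the case $n=2$, $k=1$ of CLF, namely the square completion depicted for $\cat P_{2,1}$.
\item Fixed vertices must stay fixed: we precompose the CLF extension with the degeneracy/collapse map that identifies the subsets corresponding to $a,a',b$. We then check, using that $\cat W$ is wide and that the extension is only up to equivalence, that the extension can be strictified to restrict exactly to the given boundary. Since $\cat W$ contains all equivalences, this inner-fibration-type adjustment is routine.
\end{itemize}

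The main obstacle is the combinatorial identification in the middle step. One has to write down precisely which relative poset a boundary map $\partial\Delta^m\to\Sq_{\cat W}\pr{u,f}$ corresponds to, through the iterated pullback and the relative functor category $\Fun_{\rel}\pr{\msquare,-}^{\cat W}$, and show that it is $\cat P'_{n,k}$ up to an inclusion along which restriction is harmless. My first attempt would be:
\begin{enumerate}
\item Use the identification $\Fun(\Delta^m,\Fun(\msquare,\cat C))\simeq\Fun(\Delta^m\times\msquare,\cat C)$.
\item Note that the relevant fixed part collapses $\Delta^m\times\mspan$ to a point.
\item Compare the resulting quotient of $\Delta^m\times[1]^2$ with the subset poset of $[m+2]$ containing a distinguished element, via the standard cone/join description $\Delta^m\star\Delta^0$. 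This should match $\cat P_{n,k}$ after the usual anodyne-type rearrangement used by Carranza--Kapulkin--Lindsey.
\end{enumerate}
If the direct identification is awkward, I would instead prove filteredness by induction, using only the cases $\cat P_{n,n-1}\simeq([1]^n,[1]^{n-1}\times\{0,1\})$, which look exactly like cubes of squares.
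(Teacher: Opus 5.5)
Your outer strategy is the paper's: verify Proposition \ref{prop:span_hCLF} by showing that each $\Sq_{\cat W}(u,f)$ is filtered, and obtain each filler from a CLF extension. The proposal breaks, however, at both places where real work is needed.

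\textbf{The filteredness criterion.} The criterion you quote is not Lemma \ref{lem:filtered}. Lifting against all $\partial\Delta^m\subset\Delta^m$ characterizes contractible Kan complexes, and $\Sq_{\cat W}(u,f)$ fails it even under CLF. Take $\cat C=\cat W=[1]$, which admits CLF, and $u=f=\id_0$. Then $\Sq_{\cat W}(u,f)$ is equivalent to $[1]$, and the map $\partial\Delta^1\to[1]$ with $0\mapsto 1$, $1\mapsto 0$ does not extend. The correct criterion uses the subdivisions $\partial D(n)\subset D(n)=\partial D(n)^{\triangleright}$: every $\partial D(n)$-diagram must admit a cocone. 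Your description of the data (objects $b'_S$ indexed by faces, a new corner receiving $\cat W$-maps from all of them) is really this version, so set the problem up that way throughout.

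\textbf{The combinatorial identification.} This is the more serious gap. The step you defer is the heart of the proof, and the mechanism you suggest does not supply it.
\begin{enumerate}
\item A diagram $F\colon\partial D(n)\to\Sq_{\cat W}(u,f)$ amounts to a functor $F'\colon\Lambda\star\partial D(n)\to\cat C$. This is not itself a $\cat P'_{N,k}$-diagram.
\item Pulling back along the collapse $\chi_\partial\colon\{\emptyset\}\star\partial D(n+1)\to\Lambda\star\partial D(n)$ of Construction \ref{const:chi} does give a relative functor on $\cat P'_{n+2,n+1}$. Here $k=n+1$ corresponds to $a$, not $b$. CLF then extends it to $\cat P_{n+2,n+1}\cong\{\emptyset\}\star D(n+1)$.
\item But this filler is a cocone over $F'\chi_\partial$, not over $F'$. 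All nonempty $S\not\ni n+1$ collapse to $a'$, and the filler's maps out of them need not agree.
\item You cannot repair this by restricting along a section of $\chi$. For $n\geq1$ none exists: a section would have to send $1\in\Lambda$ to a nonempty $S\not\ni n+1$ contained in every $J\in\partial D(n)$, which is impossible once $\{0\},\{1\}\in\partial D(n)$.
\item The ``strictification'' you mention addresses an unrelated triviality, not this problem.
\end{enumerate}

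\textbf{The missing idea.} This is Lemma \ref{lem:chi_final}: $\chi_\partial$ is final. By Quillen's Theorem A, the only nontrivial comma category is the one over $1$, and it is contracted via $S\mapsto S\setminus\{n+1\}$. Consequently, restriction along $\chi_\partial$ identifies cocones over $F'$ with cocones over $F'\chi_\partial$, so the CLF filler descends to an extension $\Lambda\star D(n)\to\cat C$.

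You must still check that the descended maps $2\to[n]$ and $J\to[n]$ lie in $\cat W$. They do, because they come from inclusions $S\subset T$ with $n+1\in S$, which are weak equivalences of $\cat P_{n+2,n+1}$ (same maximum). Only then does the extension define a cocone $D(n)\to\Sq_{\cat W}(u,f)$.
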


We will prove Theorem \ref{thm:CLF} from the following more general result:

\begin{thm}
\label{thm:CLFgeneral}Let $\pr{\cat C,\cat W}$ be a relative $\infty$-category.
Suppose that, for every $n\geq2$, every relative functor $\cat P'_{n,n-1}\to\pr{\cat C,\cat W}$ extends to $\cat P_{n,n-1}$ (up to equivalence).
Then $\pr{\cat C,\cat W}$ supports left fractions. 
\end{thm}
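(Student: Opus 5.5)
We will verify the hypothesis of Proposition \ref{prop:span_hCLF}: for every span $a'\xleftarrow[u]{\sim}a\xrightarrow[f]{}b$, the $\infty$-category $\Sq_{\cat W}\pr{u,f}$ is weakly contractible. Our route is to show that $\Sq_{\cat W}\pr{u,f}$ is filtered, since filtered $\infty$-categories are weakly contractible. Filteredness means that for every finite simplicial set $K$ (it suffices to take $K=\partial\Delta^{m}$, $m\geq0$, together with the empty case), every diagram $K\to\Sq_{\cat W}\pr{u,f}$ extends to the cone $K^{\triangleright}$. We use the criterion of Lemma \ref{lem:filtered}, which in the quasicategorical model reduces filteredness to extending maps $\partial\Delta^{m}\to\Sq_{\cat W}\pr{u,f}$ to $\Delta^{m+1}$ with the new vertex last.

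The key step is to translate such extension problems into extension problems of the form $\cat P'_{n,n-1}\to\pr{\cat C,\cat W}$. An object of $\Sq_{\cat W}\pr{u,f}$ is a square with fixed left edge $u$ and top edge $f$, a free vertex $b'$, and a weak right edge $b\to b'$; morphisms are maps $b'_0\to b'_1$ in $\cat W$ compatible with everything. By Remark \ref{rem:square_completion_fiber}, $\Sq_{\cat W}\pr{u,f}$ is the fiber over $u$ of $\dom\from\Zig_{[1]}\pr{\cat C,\cat W}_{f/}\to\cat W_{a/}$. Hence an $m$-simplex of $\Sq_{\cat W}\pr{u,f}$ is a relative functor from a poset of the shape $[1]\times\pr{[0]\star[m]}$-type, glued along the fixed span. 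We therefore aim to identify $\Delta^{m}\to\Sq_{\cat W}\pr{u,f}$ with a relative functor from the relative poset $\cat P_{n,n-1}\simeq\pr{[1]^{n},[1]^{n-1}\times\{0,1\}}$ restricted to a suitable subposet, with $n=m+2$ or so. Concretely, we index the vertices by subsets of $[n]$ containing $n-1$. Then $\{n-1\}\subset\{n-1,n\}$ encodes $u$, the chains of subsets not containing $n$ encode $f$ followed by the simplex of $b'$'s in $\cat W$ (maxima $n-1$ preserved, hence weak), and those containing $n$ encode $a'$ and the bottom edges. The boundary $\partial\Delta^{m}$ plus the cone point correspond exactly to the proper subsets, i.e.\ $\cat P'_{n,n-1}$, while the missing top simplex corresponds to the full set $[n]$.

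With this dictionary fixed, the proof proceeds as follows. First, we check that under the equivalence the weak-equivalence markings match. Edges $S\to T$ with the same maximum are weak, which gives exactly the maps $b'_i\to b'_j$ required to lie in $\cat W$ and the edge $u$. Second, we show that a map $\partial\Delta^{m}\to\Sq_{\cat W}\pr{u,f}$ is the same as a relative functor $\cat P'_{n,n-1}\to\pr{\cat C,\cat W}$ whose restriction to the span is $\pr{u,f}$. Third, we apply the hypothesis to extend it to $\cat P_{n,n-1}$ up to equivalence, and read off the extension to $\Delta^{m+1}$ in the fiber. The extension is only up to equivalence, so a small adjustment is needed: since $\cat W$ contains all equivalences, we can transport along equivalences to land in the strict fiber over $\pr{u,f}$ (the fibers are invariant under equivalence because the defining maps are isofibrations in the quasicategory model). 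The empty case $m=-1$, nonemptiness, is the case $n=2$, which completes the span to a square.

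The main obstacle is the combinatorial bookkeeping in the second step: matching the simplices of the fiber of a functor category over a slice with the poset $\cat P'_{n,n-1}$, including the verification that the horn-type condition of Lemma \ref{lem:filtered} corresponds precisely to the proper subsets. We would first settle the cases $m=0,1$ by hand, matching them to $\cat P_{2,1}$ and $\cat P_{3,2}$, and then write the general identification via the isomorphism $\cat P_{n,n-1}\cong[1]\times[1]^{n-1}$ with the $[1]^{n-1}$-cube viewed as the cone on $\partial$ of a simplex after restricting to the relevant subposet. Finally, Theorem \ref{thm:CLF} follows immediately, since CLF includes the case $k=n-1$.
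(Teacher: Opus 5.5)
\textbf{There is a genuine gap.} Your outer reduction matches the paper's: Proposition \ref{prop:span_hCLF}, then filteredness of $\Sq_{\cat W}\pr{u,f}$ via Lemma \ref{lem:filtered}, then the hypothesis on $\cat P'_{n,n-1}$. But the step you defer as bookkeeping is false as stated, and it is where the real content of the proof lies.

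Lemma \ref{lem:filtered} is not about $\partial\Delta^m$. It asks that functors out of the \emph{poset} $\partial D(n)$ of proper nonempty subsets of $\{0,\dots,n\}$ extend to $D(n)=\partial D(n)^{\triangleright}$. Your version, extending over all of $\Delta^{m+1}$, would also fill the face $\Delta^m$, which a filtered category need not allow. This matters because $\cat P'_{n,n-1}$ is a poset, so a relative functor out of it prescribes all composites coherently, whereas $\partial\Delta^m$ is not the nerve of a category.

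More seriously, even with the poset criterion there is no isomorphism of shapes. A functor $F\colon\partial D(n)\to\Sq_{\cat W}\pr{u,f}$ is a diagram $F'\colon\Lambda\star\partial D(n)\to\cat C$. This has $2^{n+1}+1$ vertices and two minimal elements above its bottom, whereas $\cat P'_{n+2,n+1}$ has $2^{n+2}-1$ vertices and $n+2$ such elements. Already for two objects of $\Sq_{\cat W}\pr{u,f}$, your hand check against $\cat P_{3,2}$ would have to fill seven vertices from the five vertices $a,a',b,b'_0,b'_1$.

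Your dictionary also has the markings backwards. The edge $\{n-1\}\subset\{n-1,n\}$ changes the maximum, so it is not weak and cannot carry $u$. Conversely, all edges among subsets not containing $n$ are weak, so $f$ cannot be placed there. And with the $b'$'s on that side and the new vertex $[n]$ on the other, the hypothesis gives no reason for $b'_J\to b'_{\mathrm{new}}$ to lie in $\cat W$.

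The missing idea is a non-injective comparison functor together with a finality argument. The paper uses $\chi\colon\{\emptyset\}\star D(n+1)\to\Lambda\star D(n)$ (Construction \ref{const:chi}), defined by:
\begin{itemize}
\item $\emptyset\mapsto 0$;
\item every nonempty $S\subseteq\{0,\dots,n\}$ goes to the single vertex $1$;
\item $\{n+1\}\mapsto 2$;
\item $S\supsetneq\{n+1\}$ goes to $S\setminus\{n+1\}$.
\end{itemize}
Here $0,1,2\in\Lambda$ carry $a,a',b$. Via $S\mapsto\partial_{n+1}(S)\cup\{n+1\}$, the source of $\chi$ is $\cat P_{n+2,n+1}$, with boundary $\cat P'_{n+2,n+1}$. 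Its weak edges are exactly those whose endpoints both lie over $\{0,1\}$, or both over $\{2\}\star D(n)$.

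The hypothesis then extends $F'\circ\chi_{\partial}$. However, that only yields a cocone on the pulled-back diagram, in which $a'$ appears $2^{n+1}-1$ times. To get an extension of $F'$ itself, you need $\chi_{\partial}$ to be final, so that the square of Lemma \ref{lem:chi_final} is a pushout. The only nontrivial comma category is the one at the vertex $1$, and it retracts onto the contractible $D(n)$. In the $\infty$-categorical setting this descent cannot be replaced by checking commutativity by hand. By contrast, the transport-along-equivalences issue you raise is harmless.
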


\begin{rem}
\label{rem:map_CLF}Combining Theorem \ref{thm:CLFgeneral} with Corollary \ref{cor:hammock_anima_is_anima_of_loc} and Theorem \ref{thm:fraction}, we find that for a relative $\infty$-category $\pr{\cat C,\cat W}$ satisfying the hypothesis of Theorem \ref{thm:CLFgeneral}, we have
\[
\Map_{\cat C[\cat W^{-1}]}\pr{x,y}\simeq\abs{\Zig_{[1,-1]}\pr{\cat C,\cat W}_{x,y}}.
\]
In the special case of relative $\infty$-categories admitting CLF, this equivalence was obtained in \cite[Corollary 10.10]{CKL25} by using an explicit combinatorial argument.
\end{rem}

The remainder of this subsection is devoted to the proof of Theorem \ref{thm:CLFgeneral}.

\begin{notation}
For each $n\geq0$, we write $D\pr n$ for the poset of nonempty subsets of $\{0,\dots,n\}$, ordered by inclusion.
We also write $\partial D\pr n\subset D\pr n$ for the full subposet consisting of proper, nonempty subsets of $\{0,\dots,n\}$. 
\end{notation}

\begin{lem}
\label{lem:filtered}\cite[Lemma 11.3]{CKL25} Let $\cat X$ be an $\infty$-category.
The following conditions are equivalent:

\begin{enumerate}
\item $\cat X$ is filtered.
\item For every $n\geq0$, the functor $\Fun\pr{D\pr n,\cat X}\to\Fun\pr{\partial D\pr n,\cat X}$ is essentially surjective.
\end{enumerate}
\end{lem}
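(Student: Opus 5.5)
The plan rests on the observation that $D\pr n$ is the right cone on $\partial D\pr n$. The set $\{0,\dots,n\}$ is the unique maximal element of $D\pr n$, and it is not in $\partial D\pr n$, so $D\pr n\simeq\pr{\partial D\pr n}^{\triangleright}$. Condition (2) therefore says that every diagram $\partial D\pr n\to\cat X$ extends, up to equivalence, to a cocone. Moreover, $\partial D\pr n$ is the poset of nondegenerate simplices of $\partial\Delta^{n}$, so its nerve is the barycentric subdivision $\operatorname{sd}\partial\Delta^{n}$, which is a finite simplicial set.

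For (1)$\implies$(2), I would use the characterization of filteredness: $\cat X$ is filtered if and only if every diagram $K\to\cat X$ indexed by a finite simplicial set $K$ extends to $K^{\triangleright}$ \cite[Proposition 5.3.1.13]{HTT}. Applying this to $K=\operatorname{sd}\partial\Delta^{n}$ gives the required extension to $D\pr n$, so this direction is immediate.

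For (2)$\implies$(1), I would again aim to extend every finite diagram $p\from K\to\cat X$ to a cocone, and proceed in two steps.

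\emph{Reduction to finite posets.} Replacing $K$ by $\operatorname{sd}^{2}K$, I may assume $K$ is the nerve of a finite poset $P$, namely the poset of nondegenerate simplices of a regular simplicial set. The justification is that the last-vertex map $\operatorname{sd}K\to K$ is a localization, in particular final. Hence a cocone on the composite $\operatorname{sd}K\to K\to\cat X$ induces, up to equivalence, a cocone on $p$, and it is enough to find cocones on diagrams pulled back along iterated last-vertex maps.

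\emph{Induction on cells.} I then build the cocone on $p\from P\to\cat X$ by induction on the cells of $\operatorname{sd}K$, that is, attaching one nondegenerate simplex $\sigma$ of $K$ at a time. Suppose a cocone with vertex $c$ has been built on the diagram restricted to $\operatorname{sd}\pr{K'}$, and $\sigma\from\Delta^{n}\to K$ is attached along $\partial\Delta^{n}\subset K'$. The new data to fill form a diagram indexed by $D\pr n$ together with its compatibility with the existing cocone over $\partial D\pr n$. I would combine the diagram $\partial D\pr n\to\cat X$ with the cone point $c$, and apply (2) in a suitable $n'$ to produce a common cocone. This step may require a further subdivision, so that the gluing pattern is again of the form $\partial D\pr{n'}\subset D\pr{n'}$.

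The main obstacle is this inductive step. Condition (2) only provides cocones on the specific shapes $\partial D\pr n$, whereas the gluing problem asks for extensions relative to an already chosen partial cocone, which is a lifting problem in an undercategory of $\cat X$. To handle it, I would first show that condition (2) passes to the slices $\cat X_{q/}$ for diagrams $q$ of shape $\partial D\pr m$. The idea is that a $\partial D\pr n$-diagram in $\cat X_{q/}$ is a $\partial D\pr m\star\partial D\pr n$-diagram in $\cat X$, and this join can be re-subdivided into a poset of the form $\partial D\pr{m+n+1}$ up to a final map. With that in hand, the inductive filling reduces to (2) in the appropriate slice, and the combinatorics of identifying these joins with $\partial D\pr{m+n+1}$ is where I expect the real work to lie.
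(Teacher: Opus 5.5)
Your argument for (1)$\implies$(2) is correct. $D\pr n\cong\pr{\partial D\pr n}^{\triangleright}$, and since restriction along a monomorphism is an isofibration, essential surjectivity is the same as strict extension. The paper gives no proof of its own here and simply cites \cite[Lemma 11.3]{CKL25}.

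The gap is in (2)$\implies$(1). The inductive step is never carried out, and the tool you propose for it cannot work.
\begin{itemize}
\item A final map induces an equivalence on realizations. But $\abs{\partial D\pr m\star\partial D\pr n}\simeq S^{m-1}\star S^{n-1}\simeq S^{m+n-1}$, whereas $\abs{\partial D\pr{m+n+1}}\simeq S^{m+n}$. So no final map can relate these posets.
\item This fails already for $m=0$. There the slice is $\cat X$ itself, and your identification would relate $\partial D\pr n$ and $\partial D\pr{n+1}$ by a final map.
\item Even granting condition (2) in slices under $\partial D\pr m$-shaped diagrams, you do not explain how that would solve the relative problem. Its given data is a cocone on the whole previously built part, not on a sphere.
\end{itemize}

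The missing idea is that no slices are needed: instead of extending the old cocone, move its vertex. Take $K$ to be the nerve of a finite poset, as your reduction allows. Then $K=K'\cup_{\partial\Delta^m}\Delta^m$ with injective attaching map. Let $\bar p'\from K'\star\Delta^0\to\cat X$ be a cocone on $p|_{K'}$ with vertex $c$.
\begin{enumerate}
\item The simplex $p|_{\Delta^m}$ and $\bar p'|_{\partial\Delta^m\star\Delta^0}$ glue to a map $s$ on $\partial\Delta^{m+1}=\Delta^m\cup_{\partial\Delta^m}\pr{\partial\Delta^m\star\Delta^0}$.
\item A cocone on $s$ is a map $\partial\Delta^{m+1}\star\Delta^0=\Lambda^{m+2}_{m+2}\to\cat X$ with some vertex $c'$. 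It contains an edge $c\to c'$, a map on $\partial\Delta^m\star\Delta^1$ restricting to $\bar p'$ on $\partial\Delta^m\star\Delta^{\{0\}}$, and a cocone on $p|_{\Delta^m}$ with vertex $c'$.
\item The inclusion $\pr{\partial\Delta^m\star\Delta^1}\cup\pr{K'\star\Delta^{\{0\}}}\subseteq K'\star\Delta^1$ is inner anodyne \cite[Lemma 2.1.2.3]{HTT}. So these data extend over $K'\star\Delta^1$.
\item Restricting to $K'\star\Delta^{\{1\}}$ gives a cocone on $p|_{K'}$ with vertex $c'$. It agrees on $\partial\Delta^m\star\Delta^{\{1\}}$ with the cocone on the new simplex, and gluing the two gives a cocone on $p$.
\end{enumerate}

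Everything therefore reduces to cocones on diagrams $\partial\Delta^k\to\cat X$. These come from (2) by precomposing with the last-vertex map $\N\pr{\partial D\pr k}=\operatorname{sd}\partial\Delta^k\to\partial\Delta^k$. That map is final, being glued from the localizations $T\mapsto\max T$ on the faces of $\partial\Delta^k$. This is the same kind of finality fact you already invoke in your reduction step.
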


\begin{notation}
We write $\Lambda=\Lambda^{2}_{0}$ for the walking span, i.e., the poset with three elements $0,1,2$, with relation generated by $0\leq1$ and $0\leq2$.
\end{notation}

\begin{construction}
\label{const:chi}Let $n\geq0$.
We define a functor
\[
\chi\from\{\emptyset\}\star D\pr{n+1}\to\Lambda\star D\pr n
\]
by
\[
\chi\pr S=\begin{cases}
0 & \text{if }S=\emptyset,\\
1 & \text{if }S\neq\emptyset\text{ and }n+1\not\in S,\\
2 & \text{if }S=\{n+1\},\\
S\setminus\{n+1\} & \text{if }S\supsetneq\{n+1\}.
\end{cases}
\]
We write $\chi_{\partial}\from\{\emptyset\}\star\partial D\pr{n+1}\to\Lambda\star\partial D\pr n$ for the restriction of $\chi$.
\end{construction}

\begin{lem}
\label{lem:chi_final}Let $n\geq0$.
The functor
\[
\chi_{\partial}\from\{\emptyset\}\star\partial D\pr{n+1}\to\Lambda\star\partial D\pr n
\]
is final.
Consequently, the square
\[
\begin{tikzcd}
	{\{\emptyset \}\star \partial D(n+1)} & {\Lambda \star \partial D(n)} \\
	{\{\emptyset \}\star D(n+1)} & {\Lambda \star D(n)}
	\arrow["{\chi_{\partial}}", from=1-1, to=1-2]
	\arrow[hook, from=1-1, to=2-1]
	\arrow[hook, from=1-2, to=2-2]
	\arrow["\chi", from=2-1, to=2-2]
\end{tikzcd}
\]
is a pushout of $\infty$-categories.
\end{lem}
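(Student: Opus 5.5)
The plan is to verify finality of $\chi_{\partial}$ directly with Quillen's Theorem A, by showing that for each object $y\in\Lambda\star\partial D\pr n$ the comma poset
\[
y/\chi_{\partial}=\{S\in\{\emptyset\}\star\partial D\pr{n+1}\mid y\leq\chi\pr S\}
\]
is weakly contractible. The pushout statement will then follow formally from cone decompositions.

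\emph{Finality.} All categories involved are posets, so each comma category is a subposet, and I will show each one has an initial object or admits an explicit contraction. There are four cases, according to $y$.
\begin{itemize}
\item If $y=0$, every $S$ qualifies, and $\emptyset$ is initial.
\item If $y=2$, the condition $\chi\pr S\geq2$ forces $S=\{n+1\}$ or $S\supsetneq\{n+1\}$. So the comma poset consists of the proper subsets of $\{0,\dots,n+1\}$ containing $n+1$, and $\{n+1\}$ is initial.
\item If $y=T$ is a proper nonempty subset of $\{0,\dots,n\}$, the condition forces $S\supsetneq\{n+1\}$ with $S\setminus\{n+1\}\supseteq T$. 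Properness of $S$ is exactly properness of $S\setminus\{n+1\}$ in $\{0,\dots,n\}$. Hence the comma poset has the initial object $T\cup\{n+1\}$.
\item If $y=1$, the comma poset consists of all proper nonempty $S\subset\{0,\dots,n+1\}$ with $S\neq\{n+1\}$. Here I will use the zig-zag of natural transformations $S\supseteq S\cap\{0,\dots,n\}\subseteq\{0,\dots,n\}$. The middle term is nonempty because $S\neq\{n+1\}$, and the right term is a proper subset of $\{0,\dots,n+1\}$ lying in the comma poset. This zig-zag connects the identity functor to a constant functor, so the poset is weakly contractible.
\end{itemize}

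\emph{Pushout.} Since $D\pr m$ is obtained from $\partial D\pr m$ by adjoining the terminal object $\{0,\dots,m\}$, we have $D\pr m\simeq\partial D\pr m^{\triangleright}$. Joins are compatible with this, so the vertical inclusions in the square are identified with $K\hookrightarrow K^{\triangleright}$ and $L\hookrightarrow L^{\triangleright}$, where $K=\{\emptyset\}\star\partial D\pr{n+1}$ and $L=\Lambda\star\partial D\pr n$. Under these identifications $\chi$ is $\chi_{\partial}^{\triangleright}$, because $\chi$ sends $\{0,\dots,n+1\}$ to $\{0,\dots,n\}$. It then suffices to show that for every $\infty$-category $\cat E$ the restriction map
\[
\Fun\pr{L^{\triangleright},\cat E}\to\Fun\pr{L,\cat E}\times_{\Fun\pr{K,\cat E}}\Fun\pr{K^{\triangleright},\cat E}
\]
is an equivalence. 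Unwinding, this says that for each $F\from L\to\cat E$, restriction along $\chi_{\partial}$ induces an equivalence from cocones on $F$ to cocones on $F\chi_{\partial}$. This is precisely the characterization of final functors, as in \cite[Proposition 4.1.1.8]{HTT} or its variant in terms of undercategories of cocones.

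\emph{Main obstacle.} The case analysis is routine. The main care is in the last step: identifying the cocone comparison with the finality criterion in a model-independent way, and checking that the join identifications of $D\pr m$ with $\partial D\pr m^{\triangleright}$ are compatible with $\chi$.
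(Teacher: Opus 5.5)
Your proposal is correct and follows the paper's proof essentially step for step: Quillen's Theorem A with the same four cases and the same initial objects $\emptyset$, $\{n+1\}$, $T\cup\{n+1\}$; for $y=1$, the same retraction $S\mapsto S\setminus\{n+1\}$ onto $D\pr n$, which you compose directly with the contraction of $D\pr n$ to its terminal object $\{0,\dots,n\}$; and then the identifications $\{\emptyset\}\star D\pr{n+1}\simeq K^{\triangleright}$, $\Lambda\star D\pr n\simeq L^{\triangleright}$ together with \cite[Proposition 4.1.1.8]{HTT}. The paper treats the step you flag at the same level of detail, and it can be filled as follows: both sides are cartesian fibrations over $\Fun\pr{L,\cat E}$ (transport by precomposing cocones), and the comparison preserves cartesian morphisms, so the fiberwise equivalences $\cat E_{F/}\simeq\cat E_{F\chi_{\partial}/}$ assemble to an equivalence.
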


\begin{proof}
Write
\[
\cat A=\{\emptyset\}\star\partial D\pr{n+1}
\qquad\text{and}\qquad
\cat B=\Lambda\star\partial D\pr n.
\]
By Quillen's Theorem A, it suffices to show that for each $b\in\cat B$, the comma category $\cat A\times_{\cat B}\cat B_{b/}$ is weakly contractible. We may identify this comma category with the poset
\[
\cat A_b=\{S\subsetneq[n+1]\mid b\leq\chi_{\partial}\pr S\}.
\]
If $b=0$, then $\cat A_b=\cat A$ has initial object $\emptyset$.
If $b=2$, then $\cat A_b$ has initial object $\{n+1\}$.
If $b=J\in\partial D\pr n$, then $\cat A_b$ has initial object $J\cup\{n+1\}$.

It remains to consider $b=1$.
In this case,
\[
\cat A_1=\{S\subsetneq[n+1]\mid S\neq\emptyset,\{n+1\}\}.
\]
The functor
\[
r\from\cat A_1\to D\pr n,
\qquad
r\pr S=S\setminus\{n+1\},
\]
is a retraction of the inclusion $i\from D\pr n\to\cat A_1$, and the inclusions $ir\pr S\subseteq S$ define a natural transformation $ir\to\id_{\cat A_1}$.
It follows that $\abs{\cat A_1}\simeq\abs{D\pr n}$ is contractible.
Thus, $\cat A_b$ is weakly contractible for every $b\in\cat B$, and so $\chi_{\partial}$ is final.

For the final claim, note that there are canonical identifications
\[
\{\emptyset\}\star D\pr{n+1}\simeq\cat A^{\triangleright}
\qquad\text{and}\qquad
\Lambda\star D\pr n\simeq\cat B^{\triangleright}.
\]
By the characterization of finality in terms of cocones \cite[Proposition 4.1.1.8]{HTT}, restriction along $\chi_{\partial}$ induces an equivalence $\cat E_{F/}\xrightarrow{\sim}\cat E_{F\chi_{\partial}/}$ for every $\infty$-category $\cat E$ and every functor $F\from\cat B\to\cat E$. In particular, the canonical square
\[
\begin{tikzcd}
	\cat A & \cat B \\
	\cat A^{\triangleright} & \cat B^{\triangleright}
	\arrow["{\chi_{\partial}}", from=1-1, to=1-2]
	\arrow[hook, from=1-1, to=2-1]
	\arrow[hook, from=1-2, to=2-2]
	\arrow[from=2-1, to=2-2]
\end{tikzcd}
\]
is a pushout, which proves the final assertion.
\end{proof}

\begin{proof}
[Proof of Theorem \ref{thm:CLFgeneral}]
We will use Proposition \ref{prop:span_hCLF}.
Let $a'\xleftarrow[v]{\sim}a\xrightarrow[f]{}b$ be a pair of morphisms in $\cat C$, where $v$ is a weak equivalence.
We wish to show that the $\infty$-category $\Sq_{\cat W}\pr{v,f}$ is weakly contractible.
According to Lemma \ref{lem:filtered}, it suffices to show that for every $n\geq0$, every functor $F\from\partial D\pr n\to\Sq_{\cat W}\pr{v,f}$ extends to $D\pr n$.

The functor $F$ determines a map $F'\from\Lambda\star\partial D\pr n\to\cat C$: The three objects of $\Lambda$ are mapped to $a,a'$, and $b$, respectively, the map $0\to1$ is mapped to $v$, and each $J\in\partial D\pr n$ is mapped to the lower-right vertex of the square $F\pr J$.
In particular, $F'$ carries the maps $2\to J$ and $J\to K$ for $J\subset K$ to weak equivalences.
Precomposing with the functor $\chi_{\partial}$ of Construction \ref{const:chi}, we obtain a diagram 
\[
F'\circ\chi_{\partial}\from\{\emptyset\}\star\partial D\pr{n+1}\to\cat C.
\]
Now we can identify the posets $\{\emptyset\}\star\partial D\pr{n+1}$ and $\{\emptyset\}\star D\pr{n+1}$ with the underlying posets of $\cat P'_{n+2,n+1}$ and $\cat P_{n+2,n+1}$.
Explicitly, the identification is given by the functor $S\mapsto\partial_{n+1}\pr S\cup\{n+1\}$, where $\partial_{n+1}\from[n+1]\to[n+2]$ is the injective poset map that skips $n+1$.
Under this identification, the functor $F'\circ\chi_{\partial}$ carries every weak equivalence of $\cat P'_{n+2,n+1}$ to a weak equivalence.
Thus, our hypothesis gives a dashed filler in the diagram 
\[\begin{tikzcd}
	{\{\emptyset \}\star \partial D(n+1)} & {\Lambda \star \partial D(n)} & \\
	{\{\emptyset \}\star D(n+1)} \\
	&& {\cat{C}}
	\arrow["{\chi_\partial}", from=1-1, to=1-2]
	\arrow[hook, from=1-1, to=2-1]
	\arrow["{F'}", from=1-2, to=3-3]
	\arrow[dashed, from=2-1, to=3-3]
\end{tikzcd}\]
that carries every morphism of the form $S\subset T$ with $n+1\in S$ to a weak equivalence.
Lemma \ref{lem:chi_final} then gives an extension $\Lambda\star D\pr n\to\cat C$ of $F'$.
Its map $2\to[n]$ is induced by $\{n+1\}\subset[n+1]$, and its maps $J\to[n]$ are induced by $J\cup\{n+1\}\subset[n+1]$.
All these maps are weak equivalences by the preceding property of the dashed filler, so this extension determines a map
\[
D\pr n\to\Sq_{\cat W}\pr{v,f}
\]
extending $F$.
\end{proof}

\begin{rem}
\label{rem:CLFgeneral_converse_false}In the situation of Theorem \ref{thm:CLFgeneral}, suppose in addition that $\pr{\cat C,\cat W}$ has the two-out-of-three property.
Then for every object $c\in\cat C$, the $\infty$-category $\cat W_{c/}$ is filtered.
(This is an easy consequence of Lemma \ref{lem:filtered}.)
Thus, the equivalence
\[
\Map_{\cat C[\cat W^{-1}]}\pr{x,y}\simeq\colim_{y\to y'\in\cat W_{y/}}\Map_{\cat C}\pr{x,y'}
\]
of Remark \ref{rem:fraction_naturality} shows that if $\cat C$ has finite colimits, then the localization functor $\gamma\from\cat C\to\cat C[\cat W^{-1}]$ preserves them.
(In fact, $\cat C[\cat W^{-1}]$ is also finitely cocomplete; see the argument of \cite[Theorem 11.7]{CKL25}.) 

In particular, the converse of Theorem \ref{thm:CLFgeneral} is \textit{false}, as there are many relative $\infty$-categories that support left fractions but whose localization functors do not preserve all finite colimits that exist in the underlying $\infty$-category (such as many naturally occurring Brown's categories of cofibrant objects).
\end{rem}

\subsection{\label{subsec:fibrant}\texorpdfstring{$\infty$-Categories}{∞-Categories} of fibrant objects}

Brown's \textit{categories of fibrant objects} \cite{Bro73} are relative categories equipped with another class of maps, called fibrations, subject to some axioms.
Their utility comes from the fact that their ($\infty$-categorical) localizations are finitely complete, and that pullbacks and products in the localization can be constructed before passing to the localization \cite[Proposition 7.5.6]{HCHA}. 

Crucial in the development of categories of fibrant objects is the property that every morphism in their localization can be written as a right fraction \cite[Theorem 1]{Bro73}.
By now, it is well-known that they support right fractions.\footnote{An oft-cited reference for this fact (assuming a functorial factorization) is \cite[\S 3.6.2]{NSS15}, but the proof of loc. cit. lacks a subtle but crucial ingredient: For right fractions, Lemma 3.63 needs to be proved for $j=0$ too, but the proof is not supplied and is non-trivial.}

In \cite{HCHA}, Cisinski introduced \textit{$\infty$-categories of fibrant objects}, an $\infty$-categorical generalization of Brown's definition.
He went on to observe that they support many existing results for Brown's 1-categorical counterpart.
In this subsection, we will continue this trend by showing that $\infty$-categories of fibrant objects support right fractions (Theorem \ref{thm:fibrationcats}).

For convenience, we start by recalling the definition of $\infty$-categories of fibrant objects.

\begin{defn}
\cite[Definition 7.5.7]{HCHA}\label{def:oofibrant} An \textit{$\infty$-category of fibrant objects} is an $\infty$-category $\cat C$ equipped with two wide subcategories $\cat W,\cat F\subset\cat C$, whose morphisms are called \textit{weak equivalences} and \textit{fibrations}, satisfying the following conditions:

\begin{enumerate}
\item $\cat C$ has a terminal object $\ast\in\cat C$.
\item $\cat W$ has the two-out-of-three property.
\item For every $x\in\cat C$, the map $x\to\ast$ is a fibration.
\item For any fibration $x\to z$ and every morphism $y\to z$ in $\cat C$, the pullback $x\times_{z}y$ exists.
\item Fibrations and acyclic fibrations (morphisms in $\cat W\cap\cat F$) are stable under pullback.
\item Every morphism in $\cat C$ can be factored as a weak equivalence followed by a fibration.
\end{enumerate}
\end{defn}

\begin{rem}
Functorial factorizations are available in many examples of $\infty$-categories of fibrant objects, but are not part of Definition \ref{def:oofibrant}.
Theorem \ref{thm:fibrationcats} likewise makes no functoriality assumption.
This generality is relevant, for example, to the use of fibration categories in homotopy type theory \cite{AKL15,KS19}.
\end{rem}

Here is the main result of this subsection.

\begin{thm}
\label{thm:fibrationcats}Let $\pr{\cat C,\cat W,\cat F}$ be an $\infty$-category of fibrant objects.
Then $\pr{\cat C,\cat W}$ supports right fractions.
\end{thm}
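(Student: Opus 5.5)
We verify Definition~\ref{def:calculus}(2) directly: for $i,j\geq0$ and $x,y\in\cat C$, the functor
\[
\theta\from\Zig_{[-1,i+j]}\pr{\cat C,\cat W}_{x,y}\to\Zig_{[-1,i,-1,j]}\pr{\cat C,\cat W}_{x,y}
\]
is a weak homotopy equivalence. There is no functorial factorization, so Proposition~\ref{prop:8.1} is unavailable. We use the dual of Proposition~\ref{prop:span_hCLF}. That is, we show that for every cospan $a\xrightarrow{f}b\xleftarrow[u]{\sim}b'$ the $\infty$-category $\Sq^{\op}_{\cat W}\pr{u,f}$ of completing squares
\[
a'\xrightarrow{\sim}a,\qquad a'\to b',
\]
with morphisms pointwise in $\cat W$ at $a'$, is weakly contractible. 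Its proof dualizes verbatim: it reduces $\theta$ to a finality statement and then, by induction on $j$ using Quillen's Theorem~B (Proposition~\ref{prop:ThmB}), to contractibility of these square categories. We therefore only need to establish this contractibility for an $\infty$-category of fibrant objects.

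\emph{Step 1: reduce to fibrations.} Using axiom (6), factor $u=p\circ w$ with $w\from b'\xrightarrow{\sim}b''$ a weak equivalence and $p\from b''\to b$ a fibration. By two-out-of-three, $p$ is an acyclic fibration. Postcomposition with $w$ gives a comparison $\Sq^{\op}_{\cat W}\pr{u,f}\to\Sq^{\op}_{\cat W}\pr{p,f}$, and we want it to be a weak equivalence. This is the same kind of invariance as in Lemma~\ref{lem:zigpartialfiber}. Both categories are fibers of a codomain-type projection $\Zig_{[1]}\pr{\cat C,\cat W}^{\op}_{/f}\to\cat W_{/b}$, which is a (co)cartesian fibration. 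So it suffices to show the following uniformly: the fiber over any weak equivalence into $b$ is contractible, and transport along $w$ is an equivalence. We show both by proving contractibility directly for every $u$.

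\emph{Step 2: contractibility when $u$ is an acyclic fibration.} By axiom (4) the pullback $P=a\times_b b'$ exists, and by axiom (5) the projection $P\to a$ is an acyclic fibration, so $P$ defines an object of $\Sq^{\op}_{\cat W}\pr{u,f}$. We claim it is terminal, or at least weakly terminal in a coherent sense. Any square $\pr{a'\to a,\ a'\to b'}$ maps uniquely to $P$ by the universal property, and the map $a'\to P$ is a weak equivalence by two-out-of-three, since $a'\to P\to a$ is one. Hence $P$ is terminal and the category is contractible.

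\emph{Step 3: the general case.} For general $u$, factor $u=p\circ w$ as in Step 1. Send a square $\pr{a'\to a,\ a'\to b'}$ to the square $\pr{a'\to a,\ a'\to b'\to b''}$. In the other direction, a square over $b''$ yields one over $b'$ via the pullback $a'\times_{b''}b'$, which we form after factoring $w$ once more as a weak equivalence followed by a fibration; it has a section from $b'$ by axiom~(6) applied to the diagonal-like map. Here lies the main obstacle: without functorial factorizations, this ``inverse'' cannot be made functorial. We would try first to avoid it by proving, via Lemma~\ref{lem:filtered}, that $\Sq^{\op}_{\cat W}\pr{u,f}$ is cofiltered. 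Given a diagram $\partial D\pr n\to\Sq^{\op}_{\cat W}\pr{u,f}$, take the limit of the $a'$-vertices over $a\times_b b''$, built by iterated pullbacks along fibrations after replacing each map by a fibration using (6) and Brown's factorization lemma. Such finite limits of fibrant diagrams exist by (4) and preserve weak equivalences by (5) together with two-out-of-three. This produces a cone on $D\pr n$, object by object, so no functorial choices are needed. The delicate point is coherence of the cone in the $\infty$-categorical setting; we would handle it by working with Reedy-fibrant replacement of the finite poset diagram, for which Cisinski's results on $\infty$-categories of fibrant objects (e.g.~the lemma cited as Lemma~\ref{lem:4.3.15}) supply the needed limits.
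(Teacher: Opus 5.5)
\textbf{There is a genuine gap: the hypothesis of the dual of Proposition~\ref{prop:span_hCLF} fails for $\infty$-categories of fibrant objects.} So the reduction your proof rests on is unavailable. Objects of $\Sq^{\op}_{\cat W}(u,f)$ are squares that commute in $\cat C$, and when $u$ is not a fibration there may be none at all.

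Take the $1$-category of topological spaces with Serre fibrations and weak homotopy equivalences, with $b=[0,1]$, $a=\{1\}$, $b'=\{0\}$, and $f$, $u$ the endpoint inclusions. A completion needs $a'\neq\emptyset$, because $a'\to\{1\}$ is a weak equivalence. On such an $a'$ the constant maps to $0$ and to $1$ would have to coincide, so $\Sq^{\op}_{\cat W}(u,f)=\emptyset$.

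Worse, take $i=1$, $j=0$ and the zig-zag $\{1\}\xleftarrow{\id}\{1\}\xrightarrow{f}[0,1]\xleftarrow{u}\{0\}$. The same computation shows the slice of $\theta$ over this zig-zag is empty, so $\theta$ is not initial. The dualized argument of Proposition~\ref{prop:span_hCLF} proves precisely that $\theta$ is initial, so no repair of the square-completion step can save this route.

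The example also locates the failures in your other steps:
\begin{itemize}
\item In Step 1, the comparison $\Sq^{\op}_{\cat W}(u,f)\to\Sq^{\op}_{\cat W}(p,f)$ runs from an empty category to one with a terminal object, so the invariance you need is false. Your plan to obtain it by proving contractibility for every $u$ is also circular.
\item In Step 3, the cofiltered claim fails already at $n=0$, which asks for nonemptiness.
\item A homotopy pullback built from a path object only yields squares commuting up to a path-object homotopy, which is not commutativity in $\cat C$.
\end{itemize}
Only your Step 2, the case where $u$ is an acyclic fibration, is correct.

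\textbf{The missing idea} is a way to trade an arbitrary weak equivalence $u\colon b_0\to a_i$ for acyclic fibrations without losing the chain $b_0\to\cdots\to b_j$. It must also prove a weak homotopy equivalence without any cofinality statement.

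The paper factors the graph $(\id,u)\colon b_0\to b_0\times a_i$ as $b_0\xrightarrow[s]{\sim}z\epi b_0\times a_i$. Both projections $z\to b_0$ and $z\to a_i$ are then acyclic fibrations. One pulls back $a_0\to\cdots\to a_{i-1}$ along $z\to a_i$ and continues via $z\to b_0$, which gives a functor $\psi$ into $\Zig_{[-1,i+j]}(\cat C,\cat W)_{x,y}$.

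The lack of functorial factorizations is absorbed by pulling back along $\comp\colon\Fact(\cat C)\to\Zig_{[1]}(\cat C,\cat W)$. This is a cartesian fibration with weakly contractible fibers; that is where Lemma~\ref{lem:4.3.15} and Reedy fibrant replacement are used. The conclusion then comes from pointwise-weak natural transformations giving $\abs{\pi'}\simeq\abs{\theta}\abs{\psi}$ and $\abs{\pi}\simeq\abs{\psi}\abs{\widetilde{\theta}}$, together with two-out-of-six, not from finality.
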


Our proof of Theorem \ref{thm:fibrationcats} draws heavily on the argument of \cite{BM11}, in which the authors prove an analogous claim for Waldhausen (1-)categories.
First, we need the following lemma.

\begin{lem}
\cite[Lemma 4.3.15]{HCHA}\label{lem:4.3.15} Let $\cat X$ be a nonempty $\infty$-category.
Suppose that, for every nonempty finite poset $P$ and every functor $\phi\from P\to\cat X$, there is a zig-zag of natural transformations from $\phi$ to a constant functor.
Then $\cat X$ is weakly contractible.
\end{lem}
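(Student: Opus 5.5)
The plan is to show that the anima $\abs{\cat X}$ is contractible by proving that every map from a sphere into it is nullhomotopic. The key step is to represent such maps, up to homotopy, by functors out of finite posets, so that the hypothesis applies. Following the paper's convention for this subsection, I would model $\cat X$ as a quasicategory. Then $\abs{\cat X}$ is modeled by the Kan complex $\operatorname{Ex}^{\infty}\cat X=\colim_k\operatorname{Ex}^k\cat X$.

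First, reduce to spheres. Since $\cat X$ is nonempty, $\abs{\cat X}$ is nonempty. It then suffices to show that for every $n\geq0$, every unbased map $S^n\to\abs{\cat X}$ is freely nullhomotopic. For $n=0$ this says $\abs{\cat X}$ is connected. For $n=1$ it says every based loop is freely null, so its conjugacy class in $\pi_1$ is trivial, hence $\pi_1=0$. For $n\geq2$, simple connectivity identifies free and based homotopy classes, so $\pi_n=0$. Whitehead's theorem then gives contractibility.

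Second, represent spheres by finite posets. Model $S^n$ by the finite simplicial set $K=\partial\Delta^{n+1}$ (for $n=0$, two points). The simplicial set $K$ is finite, and $\operatorname{Ex}^{\infty}$ is a filtered colimit, so any map $K\to\operatorname{Ex}^{\infty}\cat X$ factors through some $\operatorname{Ex}^{k}\cat X$. By adjunction this is a map $\operatorname{sd}^kK\to\cat X$, and we may take $k\geq1$. Because $K$ is the nerve of a finite poset, each $\operatorname{sd}^kK$ is again the nerve of a finite poset $P$: iterate the poset of nonempty chains. Since $\cat X$ is a quasicategory, the map is the same as a functor $\phi\colon P\to\cat X$. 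Moreover, the last-vertex map $\operatorname{sd}^kK\to K$ is a weak equivalence compatible with $\operatorname{Ex}^\infty$. Therefore the realization $\abs{\phi}\colon\abs{P}\simeq S^n\to\abs{\cat X}$ recovers the original sphere up to homotopy.

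Third, apply the hypothesis. It gives a zig-zag of natural transformations from $\phi$ to a constant functor. Each natural transformation is a functor $P\times[1]\to\cat X$, and its realization is a homotopy $\abs{P}\times\abs{[1]}\to\abs{\cat X}$. Concatenating these homotopies, the map $\abs{\phi}$ is freely homotopic to the realization of a constant functor, which factors through a point. This proves the sphere condition from the first step.

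I expect the main obstacle to be the second step, namely justifying that every homotopy class $S^n\to\abs{\cat X}$ is realized by a functor out of a finite poset. This requires the finiteness of $\partial\Delta^{n+1}$ (compactness against the filtered colimit defining $\operatorname{Ex}^\infty$), the fact that iterated subdivisions of nerves of posets are nerves of posets, and care in identifying the adjunct map $\operatorname{sd}^k K\to\cat X$ with the original map after realization. This is the point where the argument genuinely depends on the combinatorics of quasicategories, as the paper warns.
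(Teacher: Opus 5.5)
The paper does not prove this lemma; it cites it from Cisinski's book and only remarks that the existing proof relies on the combinatorics of quasicategories. Your argument is an argument of exactly that kind: model $\abs{\cat X}$ by $\operatorname{Ex}^\infty\cat X$, factor a map from a finite model of $S^n$ through some $\operatorname{Ex}^k\cat X$, pass to the adjoint $\operatorname{sd}^kK\to\cat X$, and apply the hypothesis. Its overall structure is sound.

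One assertion in your second step is false as written. For $n\geq1$, the simplicial set $K=\partial\Delta^{n+1}$ is \emph{not} the nerve of a poset. Its $1$-skeleton is that of $\Delta^{n+1}$, so the only candidate poset is the total order $[n+1]$, and its nerve contains the top simplex that $K$ lacks. So your induction ``iterate the poset of nonempty chains'' cannot start at $K$. The conclusion you need still holds for $k\geq1$, which you already arranged. Since $K$ is a simplicial subset of $\Delta^{n+1}$, $\operatorname{sd}K$ is the nerve of its poset of nondegenerate simplices, i.e.\ of the proper nonempty subsets of $[n+1]$; this is the poset denoted $\partial D(n+1)$ in the paper. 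From there on, $\operatorname{sd}$ of the nerve of a finite poset $Q$ is the nerve of the finite poset of nonempty chains in $Q$. Alternatively, take the nerve of $\partial D(n+1)$ as your model of $S^n$ from the outset, or quote the general fact that $\operatorname{sd}^2$ of any simplicial set is a poset nerve.

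The point you flag as the main obstacle is true and standard: identifying $\abs{\phi}$ with the original sphere via the last-vertex map. You can, however, avoid it entirely. Take a natural transformation $h\colon\operatorname{sd}^kK\times\Delta^1\to\cat X$ and precompose with
\[
\operatorname{sd}^k(K\times\Delta^1)\to\operatorname{sd}^kK\times\operatorname{sd}^k\Delta^1\to\operatorname{sd}^kK\times\Delta^1.
\]
Here the first map is induced by the two projections, and the second is the iterated last-vertex map on the second factor. The adjoint of the result is a simplicial homotopy $K\times\Delta^1\to\operatorname{Ex}^k\cat X$ whose ends are the adjoints of the two ends of $h$. The adjoint of a constant functor is constant, because $\operatorname{Ex}^k\Delta^0=\Delta^0$. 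Hence the zig-zag supplied by the hypothesis yields directly a chain of simplicial homotopies in the Kan complex $\operatorname{Ex}^\infty\cat X$, running from your map $K\to\operatorname{Ex}^\infty\cat X$ to a constant map. This is all your first step needs.
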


We use this lemma to show that, even though $\infty$-categories of fibrant objects lack functorial factorizations in general, factorizations of maps are ``essentially unique.''
\begin{construction}
Let $\cat C$ be an $\infty$-category of fibrant objects.
We write $\Fact\pr{\cat C}\subset\Zig_{[2]}\pr{\cat C,\cat W}$ for the full subcategory spanned by the objects of the form
\[
x\xrightarrow[\sim]{f}y\stackrel{p}{\epi}z,
\]
where $f$ is a weak equivalence and $p$ is a fibration.
Composition determines a functor $\comp\from\Fact\pr{\cat C}\to\Zig_{[1]}\pr{\cat C, \cat W}.$
\end{construction}
\begin{prop}
	\label{prop:Fact}Let $\cat C$ be an $\infty$-category of fibrant objects. The functor $\comp\from \Fact(\cat{C})\to \Zig_{[1]}(\cat{C},\cat{W})$ is a cartesian fibration with weakly contractible fibers.
\end{prop}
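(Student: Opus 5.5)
The plan has two parts: first check the cartesian fibration property directly, then show each fiber is weakly contractible using Lemma \ref{lem:4.3.15}.

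\emph{Cartesian fibration.} Morphisms in $\Zig_{[1]}\pr{\cat C,\cat W}$ are squares whose vertical components lie in $\cat W$. Take an object $x\xrightarrow[\sim]{f}y\epi z$ of $\Fact\pr{\cat C}$ and a morphism $\pr{a\xrightarrow{g}b}\to\pr{x\xrightarrow{pf}z}$ with components $u\colon a\to x$ and $v\colon b\to z$ in $\cat W$. Define the lift by pulling back the fibration $p$ along $v$, which is possible by axiom (4). This gives a fibration $y\times_zb\epi b$. The induced map $a\to y\times_zb$, determined by $fu$ and $g$, is a weak equivalence by two-out-of-three. The reason is that $y\times_zb\to y$ is a pullback of $v$ along a fibration, and hence is a weak equivalence. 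This last fact is the usual consequence of the axioms: factor $v$ as a weak equivalence followed by a fibration and use Ken Brown-type reasoning, or cite Cisinski's treatment in \cite[\S 7.5]{HCHA}. The resulting square is $p$-cartesian because the pullback is universal: maps into $y\times_z b$ over $b$ are exactly maps into $y$ compatible with $v$. I would check that mapping animae decompose accordingly, using that $\Fact\pr{\cat C}$ is a full subcategory of $\Zig_{[2]}\pr{\cat C,\cat W}$.

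\emph{Fibers.} For $g\colon a\to b$, the fiber $\Fact\pr{\cat C}_g$ consists of factorizations $a\xrightarrow{\sim}y\epi b$. Its morphisms are maps $y\to y'$ under $a$ and over $b$; these are automatically in $\cat W$ by two-out-of-three. The fiber is nonempty by axiom (6).

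\emph{Main obstacle.} I expect the hard step to be verifying the hypothesis of Lemma \ref{lem:4.3.15}. Given a finite nonempty poset $P$ and $\phi\colon P\to\Fact\pr{\cat C}_g$, I need a zig-zag of natural transformations from $\phi$ to a constant functor. My first attempt would imitate \cite{BM11}. Form the limit $L=\lim_{P}\phi$ of the fibrations $\phi\pr{i}\epi b$, computed in $\cat C_{/b}$. Because $P$ is finite, this can be built from iterated pullbacks along fibrations, say by a Reedy-style induction over $P$. But $P$ is not Reedy-fibrant, so I would first replace $\phi$. The plan is to build $\phi'$ with a natural transformation $\phi\to\phi'$ such that the matching maps are fibrations. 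Then take $L$ as the limit of $\phi'$, and factor $a\to L$ as $a\xrightarrow{\sim}y_0\epi L$. The maps $y_0\to\phi'\pr i$ then give a natural transformation from the constant functor $\underline{y_0}$ to $\phi'$, which yields the zig-zag $\phi\to\phi'\leftarrow\underline{y_0}$.

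Several points must be checked along the way. The matching-object fibration replacement has to be done inductively on the elements of $P$; I would use a linear extension and, at each step, factor the map to the pullback of the already-built pieces. Every object must remain under $a$ with $a\to\cdot$ a weak equivalence, and this follows by two-out-of-three once the map to $b$ stays a fibration. Finally, the limit over $b$ must be a fibration over $b$; this is ensured by the Reedy-fibrancy condition together with axiom (5), and it is the delicate part.
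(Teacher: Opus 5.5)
Your proposal is correct. For the weak contractibility of the fibers it follows the paper's argument. You identify the fiber over $g\colon a\to b$ with the category of factorizations $a\xrightarrow{\sim}y\epi b$ and verify the hypothesis of Lemma \ref{lem:4.3.15}: replace $\phi$ by a Reedy fibrant $\phi'$ in $\cat C_{/b}$, take $L=\lim\phi'$, and factor $a\to L$ to get $\phi\to\phi'\leftarrow\underline{y_0}$. The fact that $L$ exists and is fibrant over $b$ (your ``delicate part'') is what the paper takes from \cite[Proposition 7.4.8]{HCHA}.

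The one difference in this part is that you would build the replacement by hand instead of citing \cite{HCHA} around Theorem 7.4.20. If you do, two details need care:
\begin{itemize}
\item The induction must run downward from the maximal elements of $P$, since matching objects are limits over strict up-sets.
\item $a\to\phi'(i)$ is a weak equivalence because $\phi(i)\to\phi'(i)$ is one, which you get by factoring as a weak equivalence followed by a fibration. This is a separate fact from $\phi'(i)\to b$ being a fibration, not a consequence of it as you suggest.
\end{itemize}
Making this inductive construction coherent in the $\infty$-categorical setting is exactly what the citation supplies.

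The genuine difference is in the cartesian-fibration step.
\begin{itemize}
\item You lift an arbitrary morphism $(u,v)$ of $\Zig_{[1]}\pr{\cat C,\cat W}$ in one stroke by pulling back $p$ along $v$. Cartesianness then comes from the universal property of the pullback, with two-out-of-three ensuring that the $\cat W$-membership conditions on mapping animae match up at the middle vertex. For the pullback of $v$ being a weak equivalence, your factorization/Ken Brown sketch is too vague to stand on its own; cite \cite[Proposition 7.4.16]{HCHA}, as the paper does.
\item The paper instead fixes the domain $x$. It identifies $\ps{\Fact\pr{\cat C}}{x}\to\ps{\Zig_{[1]}\pr{\cat C,\cat W}}{x}$ with a restriction of the target fibration $\ev_1\colon\Fun\pr{[1],\cat C_{x/}}\to\cat C_{x/}$. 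It then glues over $x$, using that $\ev_0$ is a cartesian fibration on both sides whose transport (precomposition) preserves the fiberwise cartesian edges, via \cite[Lemma A.1.8]{HMS22}.
\end{itemize}
Your route is more direct and avoids the gluing lemma, at the price of checking the mapping-anima pullback condition by hand. The paper's route is more modular: it reduces everything to standard behavior of target fibrations plus a general criterion for relative cartesian fibrations.
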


\begin{proof}
For each $x\in \cat{C}$, write $\ps{\Fact(\cat{C})}{x}$ for the fiber of $\ev_0\from \Fact(\cat{C})\to \cat{W}$ over $x$.
We first show that $\comp$ induces a cartesian fibration
	\[
		\ps{\Fact(\cat{C})}{x} \to \ps{\Zig_{[1]}(\cat C, \cat W)}{x}
	\]
with weakly contractible fibers.
We can identify this functor with the restriction of $\ev_1\from \Fun([1],\cat{C}_{x/})\to \cat{C}_{x/}$.
Consequently, this functor is a cartesian fibration, with cartesian lifts given by pullbacks. 
This is well-defined, as the pullback of a weak equivalence along a fibration is a weak equivalence by \cite[Proposition 7.4.16]{HCHA}; the two-out-of-three property then shows that the resulting factorization again belongs to $\ps{\Fact(\cat{C})}{x}$.
To show that its fibers are weakly contractible, fix an arbitrary object $\pr{u\from x\to z}\in \ps{\Zig_{[1]}(\cat C, \cat W)}{x}$.
The fiber of $u$ is the full subcategory $\cat K\subset\pr{\cat C_{x/}}_{/u}\simeq\pr{\cat C_{/z}}_{u/}$ spanned by the objects of the form 
\[\begin{tikzcd}
	& x & \\
	y && z,
	\arrow["f"',"\sim", from=1-2, to=2-1]
	\arrow["u", from=1-2, to=2-3]
	\arrow["p"', two heads, from=2-1, to=2-3]
\end{tikzcd}\]
where $f$ is a weak equivalence and $p$ is a fibration.
We must show that $\cat K$ is weakly contractible. 

By the definition of $\infty$-category of fibrant objects, the $\infty$-category $\cat K$ is nonempty.
Thus, by Lemma \ref{lem:4.3.15}, it will suffice to prove the following: For every finite nonempty poset $P$ and every functor $\phi\from P\to\cat K$, one can find a zig-zag of natural transformations connecting $\phi$ and a constant functor.
For this, regard $\phi$ as a diagram $\overline{\phi}\from P\to\cat C_{/z}$ equipped with a natural transformation $\alpha\from\delta\pr u\to\overline{\phi}$, where $\delta\pr u$ denotes the constant diagram at $u$.
Replacing $\overline{\phi}$ by its Reedy fibrant replacement in $\cat C_{/z}$, we may assume that $\overline{\phi}$ is Reedy fibrant.
(See around \cite[Theorem 7.4.20]{HCHA} for material on Reedy fibrancy.)
The replacement has values fibrant over $z$ and the replacement map is a pointwise weak equivalence.
The maps from $u$ are weak equivalences by definition, and the transition maps of the original diagram are weak equivalences by two-out-of-three in their defining triangles under $u$.
Naturality and two-out-of-three therefore show that the replacement again defines a diagram in $\cat K$, connected to $\phi$ by a pointwise weak natural transformation.
Then by \cite[Proposition 7.4.8]{HCHA}, the limit $\lim\overline{\phi}\in\cat C_{/z}$ exists and is fibrant (i.e., it is a fibration with target $z$).
We then factor the map $u\to\lim\overline{\phi}$ as $u\xrightarrow{g}v\xrightarrow{q}\lim\overline{\phi}$, where the images of $g$ and $q$ in $\cat C$ are weak equivalences and fibrations, respectively.
The induced factorization $\delta\pr u\to\delta\pr v\to\overline{\phi}$ connects $\phi$ to a constant diagram.
The map $v\to z$ is a fibration, and each map $v\to\overline{\phi}\pr p$ is a weak equivalence by two-out-of-three, since both $u\to v$ and $u\to\overline{\phi}\pr p$ are.
Thus $v\in\cat K$, and we have verified the hypothesis of Lemma \ref{lem:4.3.15}, as required.

To complete the proof, note that the evaluation functors $\ev_0\from \Fact(\cat{C})\to \cat{W}$ and $\ev_0\from \Zig_{[1]}(\cat{C},\cat{W}) \to \cat{W}$ are cartesian fibrations, with cartesian transport given by precomposition.
This transport preserves the cartesian morphisms of the fiberwise functors considered above, since those morphisms are obtained by pulling back the final fibration.
It now follows from \cite[Lemma A.1.8]{HMS22} that $\comp$ is a cartesian fibration.
\end{proof}

We can now prove Theorem \ref{thm:fibrationcats}.

\begin{proof}
[Proof of Theorem \ref{thm:fibrationcats}] We must show that, for every pair of objects $x,y\in\cat C$ and every pair of integers $i,j\geq0$, the map
\[
\theta\from\Zig_{[-1,i+j]}\pr{\cat C,\cat W}_{x,y}\to\Zig_{[-1,i,-1,j]}\pr{\cat C,\cat W}_{x,y}
\]
is a weak homotopy equivalence.
If $i=0$, the map $\abs{\theta}$ has a homotopy inverse, given by composing the two consecutive weak equivalences.
We may therefore assume that $i>0$.

Consider the functor $\varphi\from\Zig_{[-1,i,-1,j]}\pr{\cat C,\cat W}_{x,y}\to\Zig_{[1]}\pr{\cat C,\cat W}$ which sends
\begin{equation}\label{eq:Fibration_Category_Zigzag}
x\xleftarrow{\sim}a_0\to\cdots\to a_i\xleftarrow[u]{\sim}b_0\to\cdots\to b_j=y
\end{equation}
to the graph $\pr{\id_{b_0},u}\from b_0\to b_0\times a_i$.
This is well-defined because products preserve weak equivalences between fibrant objects, by \cite[Proposition 7.4.16]{HCHA}.
Define $\widetilde{\Zig}_{[-1,i,-1,j]}\pr{\cat C,\cat W}_{x,y}$ and $\widetilde{\Zig}_{[-1,i+j]}\pr{\cat C,\cat W}_{x,y}$ by the pullback squares
\[\begin{tikzcd}
	{\widetilde{\Zig}_{[-1,i+j]}(\cat C,\cat W)_{x,y}} & {\widetilde{\Zig}_{[-1,i,-1,j]}(\cat C,\cat W)_{x,y}} & {\Fact(\cat C)} \\
	{\Zig_{[-1,i+j]}(\cat C,\cat W)_{x,y}} & {\Zig_{[-1,i,-1,j]}(\cat C,\cat W)_{x,y}} & {\Zig_{[1]}(\cat C,\cat W).}
	\arrow["{\widetilde{\theta}}", from=1-1, to=1-2]
	\arrow["\pi"', from=1-1, to=2-1]
	\arrow["\lrcorner"{description, pos=0.1}, draw=none, from=1-1, to=2-2]
	\arrow[from=1-2, to=1-3]
	\arrow["{\pi'}"', from=1-2, to=2-2]
	\arrow["\lrcorner"{description, pos=0.1}, draw=none, from=1-2, to=2-3]
	\arrow["{\comp}", from=1-3, to=2-3]
	\arrow["\theta"', from=2-1, to=2-2]
	\arrow["\varphi"', from=2-2, to=2-3]
\end{tikzcd}\]
Proposition \ref{prop:Fact} shows that $\pi'$ is a cartesian fibration with weakly contractible fibers, and hence a weak homotopy equivalence; the same is true of its base change $\pi$.
Since weak homotopy equivalences have the two-out-of-six property, it suffices to construct a dashed map that makes both triangles in the following diagram commute:
\[\begin{tikzcd}
	{|\widetilde{\Zig}_{[-1,i]}(\cat{C},\cat{W})_{x,y}|} & {|\widetilde{\Zig}_{[-1,i,-1]}(\cat{C},\cat{W})_{x,y}|} \\
	{|\Zig_{[-1,i]}(\cat{C},\cat{W})_{x,y}|} & {|\Zig_{[-1,i,-1]}(\cat{C},\cat{W})_{x,y}|.}
	\arrow["{|\widetilde{\theta}|}", from=1-1, to=1-2]
	\arrow["{|\pi|}"', "\sim", from=1-1, to=2-1]
	\arrow[dashed, from=1-2, to=2-1]
	\arrow["{|\pi'|}", "\sim"', from=1-2, to=2-2]
	\arrow["{|\theta|}"', from=2-1, to=2-2]
\end{tikzcd}\]

To define this, note that an object of $\widetilde{\Zig}_{[-1,i,-1,j]}\pr{\cat C,\cat W}_{x,y}$ consists of a zig-zag as in \eqref{eq:Fibration_Category_Zigzag}, together with a factorization
\[
b_0\xrightarrow[s]{\sim}z\epi b_0\times a_i
\]
of the graph of $u$.
The two projections $z\to b_0$ and $z\to a_i$ are acyclic fibrations.
Indeed, they are fibrations, the first is a retraction of $s$, and the composite of the second with $s$ is $u$. We then define a functor
\[
\psi\from\widetilde{\Zig}_{[-1,i,-1,j]}\pr{\cat C,\cat W}_{x,y}\to
\Zig_{[-1,i+j]}\pr{\cat C,\cat W}_{x,y}
\]
by sending this object to
\[
x\xleftarrow{\sim}a_0\times_{a_i}z\to\cdots\to
a_{i-1}\times_{a_i}z\to b_0\to\cdots\to b_j=y.
\]
Here the map to $b_0$ is induced by the projection $z\to b_0$.
The pullbacks exist because $z\to a_i$ is a fibration, and their projections to the corresponding $a_k$ are acyclic fibrations.
Moreover, a pointwise weak equivalence between inputs induces a pointwise weak equivalence between the resulting zig-zags, by the two-out-of-three property.
Thus $\psi$ is well-defined.

The following diagram gives a zig-zag of pointwise weak natural transformations between $\pi'$ and $\theta\psi$:
\[\begin{tikzcd}[column sep=small]
	x & a_0 \arrow[l, "\sim"'] \arrow[r] & \cdots \arrow[r] & a_{i-1} \arrow[r] & a_i & b_0 \arrow[l, "u"', "\sim"] \arrow[r] & \dots \\
	x \arrow[u, equal] \arrow[d, equal] & a_0\times_{a_i}z \arrow[l, "\sim"'] \arrow[r] \arrow[u, "\sim"] \arrow[d, equal] & \cdots \arrow[r] \arrow[u, "\sim"] \arrow[d, equal] & a_{i-1}\times_{a_i}z \arrow[r] \arrow[u, "\sim"] \arrow[d, equal] & z \arrow[u, "\sim"] \arrow[d, "\sim"'] & b_0 \arrow[l, "s"', "\sim"] \arrow[u, equal] \arrow[d, equal] \arrow[r] & \dots \\
	x & a_0\times_{a_i}z \arrow[l, "\sim"'] \arrow[r] & \cdots \arrow[r] & a_{i-1}\times_{a_i}z \arrow[r] & b_0 & b_0 \arrow[l, "\id"', "\sim"] \arrow[r] & \dots.
	\arrow["\lrcorner"{anchor=center, pos=0.125, rotate=90}, draw=none, from=2-2, to=1-3]
	\arrow["\lrcorner"{anchor=center, pos=0.125, rotate=90}, draw=none, from=2-3, to=1-4]
	\arrow["\lrcorner"{anchor=center, pos=0.125, rotate=90}, draw=none, from=2-4, to=1-5]
\end{tikzcd}\]
This results in a homotopy $\abs{\pi'}\simeq\abs{\theta}\abs{\psi}$. For the other homotopy, note that the map $u$ is the identity on the image of $\widetilde{\theta}$. The map $s$ is then a common section of the two projections $z\rightrightarrows b_0$.
The induced sections of the pullback projections give the pointwise weak natural transformation displayed below:
\[\begin{tikzcd}[column sep=small]
	x \arrow[d, equal] & a_0 \arrow[l, "\sim"'] \arrow[r] \arrow[d, "\sim"'] & \cdots \arrow[r] \arrow[d, "\sim"'] & a_{i-1} \arrow[r] \arrow[d, "\sim"'] & b_0 \arrow[r] \arrow[d, equal] & \cdots \\
	x & a_0\times_{b_0}z \arrow[l, "\sim"'] \arrow[r] & \cdots \arrow[r] & a_{i-1}\times_{b_0}z \arrow[r] & b_0 \arrow[r] & \cdots.
\end{tikzcd}\]
Thus the diagram defines a pointwise weak natural transformation $\pi\to\psi\widetilde{\theta}$, and it follows that $\abs{\pi}\simeq\abs{\psi}\widetilde{\abs{\theta}}$. Since $\pi$ and $\pi'$ are weak homotopy equivalences, both $\theta\psi$ and $\psi\widetilde{\theta}$ are weak homotopy equivalences, and by the two-out-of-six property the same follows for $\theta$.
\end{proof}

\begin{rem}
The proof of Theorem \ref{thm:fibrationcats} simplifies if $\cat C$ admits a functorial factorization.
In this case, one can factor the graph of $u$ functorially and define $\psi$ and the comparison transformations directly, without passing to the categories $\widetilde{\Zig}$.
In particular, there is no need to appeal to Lemma \ref{lem:4.3.15} in this case.
\end{rem}

\section{\label{sec:DKcomparison}Comparison with Dwyer--Kan's hammock localization}

In this section, we explain the precise sense in which our hammock localization is compatible with Dwyer--Kan's hammock localization (Theorem \ref{thm:comparison}).

\begin{convention}
\label{conv:qcat_ham}Throughout this section, we will use quasicategories as our model of $\infty$-categories.
(This is somewhat forced on us, because the use of simplicial sets is essential in Dwyer--Kan's hammock localization.)
We will not notationally distinguish between categories and their nerves. 

A possible source of confusion is the fact that both animae and quasicategories can be modeled by simplicial sets.
Unless stated otherwise, we regard the category $\sSet$ of simplicial sets as equipped with the Kan--Quillen model structure (which models $\An$) and write $L_{\An}\from\sSet\to\An$ for the localization.
The localization functor for the Joyal model structure (which models $\infty$-categories) is denoted by $L_{\Cat_{\infty}}:\sSet\to\Cat_{\infty}$.
Note that the diagram
\[\begin{tikzcd}
	\sSet & \sSet \\
	{\Cat_\infty} & \An
	\arrow["\id", from=1-1, to=1-2]
	\arrow["{L_{\Cat_\infty}}"', from=1-1, to=2-1]
	\arrow["{L_\An}", from=1-2, to=2-2]
	\arrow["{|-|}"', from=2-1, to=2-2]
\end{tikzcd}\]
commutes up to equivalence.
\end{convention}

\begin{convention}
\label{conv:Zig}Following Convention \ref{conv:qcat_ham}, let $\cat C$ be a quasicategory and let $\cat W\subset\cat C$ be a wide subquasicategory.
For $Z\in\Zig$, we will use the following explicit simplicial-set model of $\Zig_{Z}\pr{\cat C,\cat W}$.
The definition is by induction on the number of arrows in $Z$, by setting $\Zig_{[0]}\pr{\cat C,\cat W}=\cat W$ and 
\begin{align*}
\Zig_{[Z,1]}\pr{\cat C,\cat W} & =\Zig_{Z}\pr{\cat C,\cat W}\times_{\cat W}\cat W\times_{\Fun\pr{\{0\},\cat C}}\Fun\pr{[1],\cat C}\times_{\Fun\pr{\{1\},\cat C}}\cat W,\\
\Zig_{[Z,-1]}\pr{\cat C,\cat W} & =\Zig_{Z}\pr{\cat C,\cat W}\times_{\cat W}\cat W\times_{\Fun\pr{\{1\},\cat W}}\Fun\pr{[1],\cat W}\times_{\Fun\pr{\{0\},\cat W}}\cat W.
\end{align*}
Here, $\Fun$ denotes the internal hom of $\sSet$, and all the limits are formed in $\sSet$.
Since the Joyal model structure is cartesian \cite[Corollary 3.6.4]{JT07}, the evaluation maps $\Fun\pr{[1],\cat C}\to\Fun\pr{\{i\},\cat C}$ are Joyal fibrations, and hence the displayed limits are homotopy limits.
After applying $L_{\Cat_{\infty}}$, this construction agrees with Definition \ref{def:Z_CW}.
\end{convention}

To state the main result of this section, we need a few definitions.
We start by recalling Dwyer--Kan's hammock localization.

\begin{defn}
\cite[2.1 and Proposition 5.5]{DK80_2}\label{def:hammock_DK} Let $\pr{\cat C,\cat W}$ be a relative ($1$-)category.
For each pair of objects $x,y\in\cat C$, we define a simplicial set $L^{H}\pr{\cat C,\cat W}\pr{x,y}$ by
\[
L^{H}\pr{\cat C,\cat W}\pr{x,y}=\colim_{Z\in\Zig^{\op}_{\partial}}\Zig_{Z}\pr{\cat C,\cat W}_{x,y},
\]
where the colimit is taken in the category $\sSet$.
By concatenating zig-zags, we can assemble these simplicial sets into a simplicial category $L^{H}\pr{\cat C,\cat W}$, called the \textbf{hammock localization}.
\end{defn}

To compare $L^{H}\pr{\cat C,\cat W}$ with our construction, we must convert simplicial categories to Segal spaces.
For this, we use the following construction:

\begin{defn}
\label{def:N}We write $\sCat$ for the category of simplicial categories, i.e., categories enriched over $\sSet$.
We define a functor
\[
\mathbb{N}\from\sCat\to\Fun\pr{\Del^{\op},\sSet}
\]
by
\[
\mathbb{N}\pr{\cat C}_{n}=\coprod_{x_{0},\dots,x_{n}\in\cat C}\cat C\pr{x_{0},x_{1}}\times\cdots\times\cat C\pr{x_{n-1},x_{n}}.
\]
Note that the simplicial anima $L_{\An}\mathbb{N}\pr{\cat C}_{\bullet}$ is Segal, because the functor $L_{\An}\from\sSet\to\An$ preserves products. 
\end{defn}

\begin{rem}
Recall that by the work of Joyal--Tierney \cite{JT07} and Lurie \cite[Theorem 2.2.5.1]{HTT}, both $\sCat$ and $\Fun\pr{\Del^{\op},\sSet}$ are models of $\infty$-categories, in that $\Cat_{\infty}$ is a localization of these $1$-categories.
The functor $\mathbb{N}$ induces a self-equivalence $\Cat_{\infty}\xrightarrow{\simeq}\Cat_{\infty}$ when localized at appropriate maps (Theorem \ref{thm:N}).
Since we never need this fact, we only record a proof of this in Appendix \ref{app:scats_to_sspace}.
\end{rem}

Next, we give a concrete model of $\cat Z_{\cat C,\cat W}$.

\begin{defn}
Let $\pr{\cat C,\cat W}$ be a relative category.
We define $\mathbb{Z}_{\cat C,\cat W}\in\Fun\pr{\Del^{\op},\sSet}$ to be the left Kan extension of the functor
\begin{align*}
\pr{\int\Zig^{\bullet}_{\partial}}^{\op} & \to\sSet,\\
\pr{Z_{1},\dots,Z_{n}} & \mapsto\Zig_{Z_{1}\cdots Z_{n}}\pr{\cat C,\cat W}.
\end{align*}
\end{defn}

We can now state the main result of this section.

\begin{thm}
\label{thm:comparison}Let $\pr{\cat C,\cat W}$ be a relative category.
Then:

\begin{enumerate}
\item The composite
\[
\Del^{\op}\xrightarrow{\mathbb{Z}_{\cat C,\cat W}}\sSet\xrightarrow{L_{\An}}\An
\]
is equivalent to $\cat Z_{\cat C,\cat W}$.
\item Concatenation of zig-zags determines a map
\[
\theta\from\mathbb{N}\pr{L^{H}\pr{\cat C,\cat W}}\to\mathbb{Z}_{\cat C,\cat W}.
\]
The image of this map in $\Fun\pr{\Del^{\op},\An}$ is a fully faithful and essentially surjective map of Segal animae.
\end{enumerate}
\end{thm}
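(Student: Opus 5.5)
For part (1), I will compare the two left Kan extensions directly. $\cat Z_{\cat C,\cat W}$ is the left Kan extension along $\pi^{\op}$ of $(Z_1,\dots,Z_n)\mapsto\abs{\Zig_{Z_1\cdots Z_n}\pr{\cat C,\cat W}}$. $\mathbb{Z}_{\cat C,\cat W}$ is the $1$-categorical left Kan extension, in $\sSet$, of the same diagram before realization.

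Since $\pi^{\op}$ is a cocartesian fibration, both Kan extensions are computed fiberwise. So $(\mathbb{Z}_{\cat C,\cat W})_n$ is the strict colimit over $(\Zig_\partial^n)^{\op}$ of the simplicial sets $\Zig_{Z_1\cdots Z_n}\pr{\cat C,\cat W}$. These are nerves, since $\cat C$ is a $1$-category. The task is then to show that this strict colimit in $\sSet$ is a homotopy colimit for the Kan--Quillen model structure.

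My plan is to show the diagram $(Z_1,\dots,Z_n)\mapsto\Zig_{Z_1\cdots Z_n}\pr{\cat C,\cat W}$ is Reedy (or projectively) cofibrant enough. Following Dwyer--Kan's analysis of $\mathbf{II}$, the degenerate-versus-nondegenerate structure of zig-zag words should give a Reedy structure on $\Zig_\partial$. I expect its latching maps to be monomorphisms, because a zig-zag that factors through a surjection of shapes does so uniquely (the reduced word is unique). Monomorphic latching maps give Reedy cofibrancy for the $\op$-indexed diagram, so the strict colimit computes $L_{\An}$ of the colimit.

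If the Reedy approach becomes awkward, my fallback is to note that $\Zig_\partial^{\op}$ has a cofinal structure making the colimit a filtered union of inclusions after restricting to reduced words, as in \cite[\S5]{DK80_2}. This verification is the main obstacle. Once it is done, (1) follows, because $L_{\An}$ of a nerve is $\abs{-}$ (Convention~\ref{conv:qcat_ham}).

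For part (2), $\theta$ on $n$-simplices sends a tuple of elements of $L^H(x_{i-1},x_i)$ to the concatenated zig-zag, using that $\Zig_{Z_1\cdots Z_n}$ restricted to fixed junction objects is a product of the $\Zig_{Z_i}(\cdot)_{x_{i-1},x_i}$. Both sides are Segal after $L_{\An}$: the source by Definition~\ref{def:N}, the target by (1) and Theorem~\ref{thm:Z_Segal}(1).

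Essential surjectivity holds because $\pi_0$ of the $0$-th level $\abs{\cat W}$ is a quotient of $\mathrm{ob}\,\cat C$, which is the $0$-th level of the source.

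For full faithfulness, I will compare fibers over $(x,y)$. The mapping anima of the source is $L_{\An}L^H(x,y)$, which by the same homotopy-colimit argument as in (1) is $\Hamm_{\cat C,\cat W}(x,y)$. By Theorem~\ref{thm:Z_Segal}(2) (Proposition~\ref{prop:hammock}), this maps by an equivalence to $\cat Z_{\cat C,\cat W}(x,y)$. It remains to check that the composite is the map induced by $\theta$ on $1$-simplices, which is clear from the definitions.
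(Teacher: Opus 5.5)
Your route is the paper's. You reduce part (1), and the identification of the mapping animae of $L_{\An}\mathbb{N}\pr{L^{H}\pr{\cat C,\cat W}}$ in part (2), to showing that certain strict colimits are homotopy colimits for the Kan--Quillen model structure. These are the colimits over $\pr{\Zig_\partial^n}^{\op}$, and over $\Zig_\partial^{\op}$ with endpoints fixed. You use the Reedy structure on $\Zig_\partial$ whose positive and negative parts are the injective and surjective maps. The rest of part (2) matches the paper: the construction of $\theta$, essential surjectivity, and the reduction of full faithfulness to Proposition~\ref{prop:hammock}.

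The gap is the step ``monomorphic latching maps give Reedy cofibrancy, so the strict colimit computes $L_{\An}$ of the colimit.'' Reedy cofibrancy alone does not make a strict colimit a homotopy colimit. You also need the colimit functor on $\Fun\pr{\pr{\Zig_\partial^n}^{\op},\sSet}$ to be left Quillen for the Reedy model structure. Equivalently, the indexing Reedy category $\cat R=\pr{\Zig_\partial^n}^{\op}$ must have fibrant constants: for each object $x$, the category $\partial\cat R^{-}_{x/}$ must be empty or connected. This is not automatic. Take the span category whose apex has degree $1$ and whose two legs lie in $\cat R^{-}$, so that $\partial\cat R^{-}$ at the apex is discrete on two objects. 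Every diagram of simplicial sets on it is Reedy cofibrant, since all latching objects are empty. Yet the strict pushout of $\Delta^0\leftarrow\partial\Delta^1\to\Delta^0$ is a point rather than a circle.

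For zig-zags, the condition says that for every $Z\in\Zig_\partial$, the category of proper boundary-preserving injections $Y\hookrightarrow Z$ is empty or connected. This is the combinatorial input the paper supplies in Lemma~\ref{lem:Zigpartial_cof_const}. Let $Z^{\mathrm{red}}\subset Z$ be the full subcategory on the two endpoints and the vertices where the arrow direction changes. Every injective morphism into $Z$ in $\Zig_\partial$ hits all vertices of $Z^{\mathrm{red}}$. Otherwise an omitted direction-change vertex would lie between two consecutive image vertices that admit no morphism in either direction. Hence $Z^{\mathrm{red}}\hookrightarrow Z$ is initial in that category when $Z^{\mathrm{red}}\neq Z$, and the category is empty when $Z^{\mathrm{red}}=Z$. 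The dual of Lemma~\ref{lem:finprod_fibconst} then transfers the property to the product Reedy structure on $\Zig_\partial^n$.

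You need this for both colimits you use, including the one identifying $L_{\An}L^{H}\pr{\cat C,\cat W}\pr{x,y}$ with $\Hamm_{\cat C,\cat W}\pr{x,y}$. You placed the main obstacle in the monomorphicity of the latching maps, which the paper dispatches in a few lines; the connectivity check is the actual missing ingredient. Your filtered-union fallback is too vague to assess, and it becomes unnecessary once this check is added.
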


The proof of Theorem \ref{thm:comparison} relies crucially on the following recognition results for $\infty$-categorical colimits as $1$-categorical colimits. 
\begin{prop}
\label{prop:infinity_1_colimit}Let $\pr{\cat C,\cat W}$ be a relative category.
The functor $L_{\An}\from\sSet\to\An$ preserves the following colimits:

\begin{enumerate}[label=(\alph*)]

	\item For every $n\geq0$, the colimit $\colim_{\pr{Z_{1},\dots,Z_{n}}\in(\Zig^{n}_{\partial})^{\op}}\Zig_{Z_{1}\cdots Z_{n}}\pr{\cat C,\cat W}$.

	\item For every pair of objects $x,y\in\cat C$, the colimit $\colim_{Z\in\pr{\Zig_{\partial}}^{\op}}\Zig_{Z}\pr{\cat C,\cat W}_{x,y}$.

\end{enumerate}
\end{prop}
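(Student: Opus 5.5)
The plan is to show that each of these $1$-categorical colimits of simplicial sets is a homotopy colimit. The mechanism will be a Reedy-type cofibrancy statement for the diagrams involved, which makes the strict colimit compute the homotopy colimit. Concretely, I would exhibit the indexing category $(\Zig^n_\partial)^{\op}$, or rather a suitable replacement of it, as having a Reedy structure. I would then show that the diagram $(Z_1,\dots,Z_n)\mapsto \Zig_{Z_1\cdots Z_n}(\cat C,\cat W)$ is Reedy cofibrant: its latching maps should be monomorphisms of simplicial sets. In the Kan--Quillen model structure, colimits of Reedy cofibrant diagrams over a Reedy category with cofibrant constants are homotopy colimits. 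Combined with the fact that $L_{\An}$ carries homotopy colimits to $\infty$-colimits, this gives the claim. Since $\cat C$ is a $1$-category, $\Zig_Z(\cat C,\cat W)$ is simply the nerve of the ordinary category of zig-zags with weak-equivalence morphisms. This matches Convention \ref{conv:Zig}, because for nerves of $1$-categories the strict and homotopy pullbacks agree.

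The first step is to reduce $\Zig_\partial$ to a Reedy-friendly subcategory. Following Dwyer--Kan's treatment of $\mathbf{II}$, I would restrict to reduced zig-zags (no two consecutive arrows in the same direction, no identity arrows) and to the subcategories of boundary-preserving injections and surjections. Injections would be the ``degree-raising'' maps and surjections the ``degree-lowering'' ones, with degree given by the number of objects. I expect $\Zig_\partial$ to be an (elegant) Reedy category in this sense, with every morphism factoring uniquely as a surjection followed by an injection, in analogy with $\Del$. Then $\Zig_\partial^n$ is Reedy as well.

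Next, I would check Reedy cofibrancy of $X(Z)=N\Zig_Z(\cat C,\cat W)$ as a functor on $\Zig_\partial^{\op}$. The latching object at $Z$ is built from the $X(Z')$ for surjections $Z\twoheadrightarrow Z'$. These surjections act by inserting identities, that is, degeneracies, and the maps $X(Z')\to X(Z)$ are injective on nerves. The key claim is that the latching map is a monomorphism, i.e.\ a simplex of $X(Z)$ is degenerate in at most one minimal way. This is an Eilenberg--Zilber-type uniqueness statement: a zig-zag diagram, or a string of weak equivalences between such diagrams, determines the maximal set of positions at which all its arrows are identities, and hence a unique minimal surjection through which it factors. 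By elegance, this should follow from the fact that pushouts of pairs of surjections exist in $\Zig_\partial$ and are preserved by $X$ turning into pullbacks. I also need $X$ to send these squares to pullbacks, which holds because precomposition functors preserve limits. For (a) the same argument applies coordinatewise to the product Reedy category. For (b), the fibers $\Zig_Z(\cat C,\cat W)_{x,y}$ are sub-simplicial sets, and the same argument goes through. Alternatively, (b) follows from (a) with $n=1$ by base change, since strict fibers over a discrete set $\{(x,y)\}\subset \mathrm{ob}\,\cat C^2$ commute with colimits in $\sSet$ and decompose the colimit as a coproduct.

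I expect the main obstacle to be verifying that the Reedy structure really captures $\Zig_\partial$. Morphisms of $\Zig_\partial$ can also collapse arrows in non-surjective ways, or send left arrows to identities. If a clean Reedy structure fails, the fallback is the criterion that a colimit over a category is a homotopy colimit when the diagram is a left Kan extension of a Reedy cofibrant diagram along a homotopy-final functor. Here I would use Lemma \ref{lem:colim}-style comparisons with the reduced subcategory.
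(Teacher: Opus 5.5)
Your outline matches the paper's proof in three places:
\begin{itemize}
\item the Reedy structure on $\Zig_\partial$ and $\Zig^n_\partial$, with degree-lowering maps the surjections on objects and degree-raising maps the injections;
\item Reedy cofibrancy of $\pr{Z_1,\dots,Z_n}\mapsto\Zig_{Z_1\cdots Z_n}\pr{\cat C,\cat W}$, since the latching object is the union of the images of the monomorphisms induced by proper surjections;
\item left Quillen-ness of the strict colimit.
\end{itemize}

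\textbf{The gap.} You never verify the hypothesis that makes the last step work. The criterion for $\colim\colon\Fun\pr{(\Zig^n_\partial)^{\op},\sSet}\to\sSet$ to be left Quillen requires the \emph{indexing} category $(\Zig^n_\partial)^{\op}$ to have fibrant constants. Equivalently, $\Zig^n_\partial$ must have cofibrant constants: for each object, the category of non-identity boundary-preserving injections into it must be connected or empty. If your ``Reedy category with cofibrant constants'' refers to the indexing category, it is the wrong condition, since $\Del^{\op}$ has cofibrant constants.

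The right condition is not automatic. It fails for $\Del$, where $\partial\Del^+_{/[1]}$ consists of the two vertex inclusions with no maps between them. Correspondingly, $\Delta^1/\partial\Delta^1$, viewed as a levelwise discrete and Reedy cofibrant simplicial object of $\sSet$, has strict colimit a point but homotopy colimit $S^1$.

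Boundary preservation is what rescues $\Zig_\partial$; this is Lemma \ref{lem:Zigpartial_cof_const}. Let $Z^{\mathrm{red}}\subset Z$ be the full subcategory on the endpoints and the vertices where the arrow direction changes. Every boundary-preserving injection $Y\to Z$ must hit these vertices. An omitted direction-change vertex would lie strictly between two consecutive image vertices, and these admit no morphism between them in either direction. So $Z^{\mathrm{red}}\hookrightarrow Z$ is initial among non-identity injections into $Z$ when $Z^{\mathrm{red}}\neq Z$, and there are none otherwise. The case of $\Zig^n_\partial$ then follows from the dual of Lemma \ref{lem:finprod_fibconst}.

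\textbf{Smaller points.} The ``main obstacle'' you anticipate is not one. Every morphism of $\Zig$ factors uniquely as the quotient onto its image (a full subcategory that is again a zig-zag), followed by the inclusion of that image. Collapsing arrows, or sending left-pointing arrows to identities, is just the surjective part. So no restriction to reduced zig-zags is needed. Such a restriction would change the indexing category and would require a cofinality argument you do not give.

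Your alternative derivation of (b) from (a) by base change also fails. The strict colimit does not split as a coproduct over pairs $\pr{x,y}$, because morphisms in $\Zig_Z\pr{\cat C,\cat W}$ move the endpoints along weak equivalences. Moreover, a strict fiber of a map that is not a Kan fibration need not be a homotopy fiber. Even the $\infty$-categorical statement that fibers commute with this colimit is the nontrivial Proposition \ref{prop:hammock}. Your primary route for (b) is the right one: rerun the latching argument, which applies because the surjections preserve endpoints, together with the constants condition above.
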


Assuming Proposition \ref{prop:infinity_1_colimit} for now, we can prove Theorem \ref{thm:comparison} as follows:

\begin{proof}
[Proof of Theorem \ref{thm:comparison}]
Part (1) follows from part (a) of Proposition \ref{prop:infinity_1_colimit}. 

For part (2), we first construct the map $\theta$.
By definition, we have
\[
\hspace{-26pt}
\mathbb{N}\pr{L^{H}\pr{\cat C,\cat W}}_{n}=\coprod_{x_{0},\dots,x_{n}\in\cat C}\colim_{\pr{Z_{1},\dots,Z_{n}}\in\pr{\Zig^{n}_{\partial}}^{\op}}\Zig_{Z_{1}}\pr{\cat C,\cat W}_{x_{0},x_{1}}\times\dots\times\Zig_{Z_{n}}\pr{\cat C,\cat W}_{x_{n-1},x_{n}}
\hspace{-26pt}
\]
and
\[
\pr{\mathbb{Z}_{\cat C,\cat W}}_{n}=\colim_{\pr{Z_{1},\dots,Z_{n}}\in\pr{\Zig^{n}_{\partial}}^{\op}}\Zig_{Z_{1}\cdots Z_{n}}\pr{\cat C,\cat W}.
\]
Therefore, the inclusions
\[
\Zig_{Z_{1}}\pr{\cat C,\cat W}_{x_{0},x_{1}}\times\dots\times\Zig_{Z_{n}}\pr{\cat C,\cat W}_{x_{n-1},x_{n}}\hookrightarrow\Zig_{Z_{1}\cdots Z_{n}}\pr{\cat C,\cat W}
\]
determine a map of simplicial sets $\theta_{n}\from\mathbb{N}\pr{L^{H}\pr{\cat C,\cat W}}_{n}\to\pr{\mathbb{Z}_{\cat C,\cat W}}_{n}$.
The maps $\{\theta_{n}\}_{n\geq0}$ are natural in $[n]\in\Del^{\op}$, so we obtain a map of bisimplicial sets 
\[
\theta\from\mathbb{N}\pr{L^{H}\pr{\cat C,\cat W}}\to\mathbb{Z}_{\cat C,\cat W}.
\]

To complete the proof, we must show that $\theta$ induces a fully faithful and essentially surjective map of simplicial animae.
Essential surjectivity is clear, because the map $\theta$ in the $0$th degree is given by $\ob\cat W\hookrightarrow\cat W$.
Full faithfulness follows from part (2) of Theorem \ref{thm:Z_Segal} and part (b) of Proposition \ref{prop:infinity_1_colimit}.
The proof is now complete.
\end{proof}

\begin{rem}
When $\pr{\cat C,\cat W}$ is the underlying relative category of a model category, Dwyer--Kan moreover identify the mapping simplicial sets of $L^{H}\pr{\cat C,\cat W}$ with the homotopy function complexes obtained from cosimplicial and simplicial resolutions \cite[Propositions 4.4 and 4.5]{DK80_3}.
For a simplicial model category, the mapping simplicial set $L^{H}\pr{\cat C,\cat W}\pr{X,Y}$ from a cofibrant object $X$ to a fibrant object $Y$ is weakly equivalent to the simplicial mapping object from $X$ to $Y$ \cite[Corollary 4.7]{DK80_3}.
\end{rem}

The remainder of this section is devoted to the proof of Proposition \ref{prop:infinity_1_colimit}.
Our main tool will be a formalism of Reedy categories, which we now recall.

\begin{defn}
Given a model category $\mathbf{M}$, we write $\mathbf{M}_{\infty}$ for its localization at weak equivalences.
This is the \textit{underlying $\infty$-category} of $\mathbf{M}$.
\end{defn}

\begin{defn}
\cite[Definition 15.1.2]{Hirschhorn} A \textit{Reedy category} is a category $\cat R$ equipped with a function $\deg\from\ob\cat R\to\mathbb{Z}_{\geq0}$ and two wide subcategories $\cat R^{+},\cat R^{-}\subset\cat R$, satisfying the following three conditions:

\begin{enumerate}
\item Every non-identity morphism in $\cat R^{+}$ raises degree.
\item Every non-identity morphism in $\cat R^{-}$ decreases degree.
\item Every morphism $f$ in $\cat R$ can be written uniquely as $f=f^{+}f^{-}$, where $f^{+}$ and $f^{-}$ are morphisms of $\cat R^{+}$ and $\cat R^{-}$, respectively.
\end{enumerate}
For each object $x\in\cat R$, we write $\partial\cat R^{-}_{x/}\subset\cat R^{-}_{x/}$ for the full subcategory spanned by the objects other than $\id_{x}$.
We define $\partial\cat R^{+}_{/x}$ similarly.
\end{defn}

\begin{example}
The category $\Zig$ becomes a Reedy category with respect to the following data: The degree of an object $[S,T]\in\Zig$ is $\abs S+\abs T$.
A morphism of $\Zig$ belongs to $\Zig^{+}$ if it induces an injection between the sets of objects, and to $\Zig^{-}$ if it induces a surjection between the sets of objects.
Indeed, every morphism factors uniquely as the quotient onto its image on objects followed by the inclusion of that image.
Note that this Reedy structure restricts to Reedy structures on $\Del$ and $\Zig_{\partial}$.
\end{example}

\begin{example}
If $\cat R$ and $\cat S$ are Reedy categories, then $\cat R\times\cat S$ also becomes a Reedy category by setting $\deg\pr{r,s}=\deg r+\deg s$, $\pr{\cat R\times\cat S}^{+}=\cat R^{+}\times\cat S^{+}$, and $\pr{\cat R\times\cat S}^{-}=\cat R^{-}\times\cat S^{-}$.
Unless stated otherwise, we always interpret finite products of Reedy categories to be equipped with this Reedy structure.
\end{example}

Given a Reedy category $\cat R$ and a model category $\mathbf{M}$, the diagram category $\Fun\pr{\cat R,\mathbf{M}}$ admits a model structure, called the \textit{Reedy model structure} \cite[Theorem 15.3.4]{Hirschhorn}.
Explicitly, let $\alpha\from F\to G$ be a morphism in $\Fun\pr{\cat R,\mathbf{M}}$.
Then:

\begin{itemize}
\item $\alpha$ is a weak equivalence if and only if the map $Fx\to Gx$ is a weak equivalence for every $x\in\cat R$.
\item $\alpha$ is a cofibration if and only if the map $L_{x}G\amalg_{L_{x}F}Fx\to Gx$ is a cofibration for each $x\in\cat R$.
Here $L_{x}F=\colim_{y\in\partial\cat R^{+}_{/x}}Fy$ denotes the \textit{latching object} of $F$ at $x$.
\item $\alpha$ is a fibration if and only if the map $Fx\to Gx\times_{M_{x}G}M_{x}F$ is a fibration for each $x\in\cat R$.
Here $M_{x}F=\lim_{y\in\partial\cat R^{-}_{x/}}Fy$ denotes the \textit{matching object} of $F$ at $x$.
\end{itemize}

The functor
\[
\Fun\pr{\cat R,\mathbf{M}}_{\infty}\to\Fun\pr{\cat R,\mathbf{M}_{\infty}}
\]
is an equivalence \cite[Theorem 7.9.8]{HCHA}.
Thus, we have a $1$-categorical model of $\Fun\pr{\cat R,\mathbf{M}_{\infty}}$.
We will use this fact and the theory of derived adjunction \cite[Theorem 7.5.30]{HCHA} to compute $\infty$-categorical colimits as $1$-categorical colimits.
For this, we need the following additional definition on Reedy categories:

\begin{defn}
We say that a Reedy category $\cat R$ has \textit{fibrant constants} if for every (cocomplete) model category $\mathbf{M}$, the colimit functor
\[
\colim\from\Fun\pr{\cat R,\mathbf{M}}\to\mathbf{M}
\]
is left Quillen.
This is equivalent to the condition that, for every object $x\in\cat R$, the category $\partial\cat R^{-}_{x/}$ be either connected or empty.
(See \cite[Proposition 15.10.2 and Theorem 15.10.8]{Hirschhorn}.)

The dual notion of having \textit{cofibrant constants} is defined similarly.
\end{defn}

The following lemmas record closure properties and examples for these conditions.

\begin{lem}
\label{lem:finprod_fibconst}Let $\cat R$ and $\cat S$ be Reedy categories.
If $\cat R$ and $\cat S$ have fibrant constants, then so does $\cat R\times\cat S$.
\end{lem}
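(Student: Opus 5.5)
We use the combinatorial criterion quoted in the definition: a Reedy category has fibrant constants iff for every object $x$ the category $\partial\cat R^{-}_{x/}$ is connected or empty. So fix $(r,s)\in\cat R\times\cat S$. By the definition of the product Reedy structure, $(\cat R\times\cat S)^{-}_{(r,s)/}\simeq\cat R^{-}_{r/}\times\cat S^{-}_{s/}$. Under this identification, $\partial(\cat R\times\cat S)^{-}_{(r,s)/}$ is the full subcategory obtained by deleting the single object $(\id_r,\id_s)$. We must show this category is empty or connected.

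Write $A=\cat R^{-}_{r/}$ and $B=\cat S^{-}_{s/}$. Each has an initial object (the identity), and the boundary categories are $A'=A\setminus\{\id_r\}$ and $B'=B\setminus\{\id_s\}$; by hypothesis each is empty or connected. We argue by cases.

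\emph{Both $A'$ and $B'$ empty.} Then $A\times B$ is a single point, and the boundary is empty.

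\emph{Exactly one empty, say $B'=\emptyset$.} Then $B$ is a point, and the boundary is $A'\times\{\id_s\}\cong A'$, which is connected.

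\emph{Both nonempty.} The boundary is $(A\times B)\setminus\{(\id,\id)\}$, which contains $A'\times B$ and $A\times B'$. Each of these is connected, being a product of a connected category with a category having an initial object. They intersect in $A'\times B'$, which is nonempty, so their union is connected. This union is the whole boundary, since any object other than $(\id,\id)$ has a non-identity coordinate. (A morphism into $\id_r$ in $\cat R^-$ from a non-identity object cannot exist by degree reasons, but this is not needed.)

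The only step needing care is connectivity of $A'\times B$. Given $(a,b)$ and $(a',b')$ in $A'\times B$, connect $(a,b)$ to $(a,\id_s)$ via the morphism $(\id_a,\ \id_s\to b)$; this morphism exists because $\id_s$ is initial in $B$. Then connect $(a,\id_s)$ to $(a',\id_s)$ through a zig-zag in $A'\times\{\id_s\}$, and finally to $(a',b')$. This also uses that $(\cat R\times\cat S)^-$-slices really are products of slices, which holds because morphisms of $\cat R^-\times\cat S^-$ are pairs.

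We expect no real obstacle here.
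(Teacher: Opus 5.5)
Your proof is correct and follows the same route as the paper: both identify $\partial(\cat R\times\cat S)^{-}_{(r,s)/}$ with $(A'\times B)\cup(A\times B')$ and use that $A$ and $B$ are connected, since each has an initial object. Your case analysis and the explicit zig-zag argument for $A'\times B$ just spell out details the paper leaves implicit.
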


\begin{proof}
For every $\pr{x,y}\in\cat R\times\cat S$, we have 
\[
\partial\pr{\cat R\times\cat S}^{-}_{\pr{x,y}/}=\pr{\pr{\partial\cat R^{-}_{x/}}\times\cat S^{-}_{y/}}\cup\pr{\cat R^{-}_{x/}\times\pr{\partial\cat S^{-}_{y/}}}.
\]
The right-hand side is empty if $\partial\cat R^{-}_{x/}=\partial\cat S^{-}_{y/}=\emptyset$, and is connected otherwise since $\cat S^{-}_{y/}$ and $\cat R^{-}_{x/}$ are connected.
\end{proof}

\begin{lem}
\label{lem:Zigpartial_cof_const}For every $n\geq0$, the Reedy category $\Zig^{n}_{\partial}$ has cofibrant constants.
\end{lem}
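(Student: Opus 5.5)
The plan is to verify the combinatorial criterion dual to the one recalled for fibrant constants. A Reedy category $\cat R$ has cofibrant constants if and only if, for every $x\in\cat R$, the latching category $\partial\cat R^{+}_{/x}$ is either connected or empty.

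\emph{Reduction to $n=1$.} The dual of Lemma \ref{lem:finprod_fibconst} holds by the same argument. Indeed,
\[
\partial\pr{\cat R\times\cat S}^{+}_{/\pr{x,y}}=\pr{\partial\cat R^{+}_{/x}\times\cat S^{+}_{/y}}\cup\pr{\cat R^{+}_{/x}\times\partial\cat S^{+}_{/y}},
\]
and $\cat R^{+}_{/x}$, $\cat S^{+}_{/y}$ are connected because they have terminal objects. So it suffices to treat $\Zig_{\partial}$. The case $n=0$ is trivial, since the terminal category has cofibrant constants.

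\emph{Describing the latching category.} Fix $Z\in\Zig_{\partial}$. A morphism in $\Zig^{+}_{\partial}$ with target $Z$ is an injective, order-preserving, boundary-preserving relative functor $Y\to Z$. Such a functor is determined by its image $S\subseteq\ob Z$, and $S$ must contain both endpoints of $Z$.

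The structure of $Z$ constrains which $S$ occur. Since $Z$ is freely generated, for $a<b$ there is a morphism $a\to b$ if and only if every generating arrow between $a$ and $b$ points right. Likewise there is a morphism $b\to a$ if and only if every such arrow points left, and in that case it is automatically weak. A right-pointing composite is weak only when it is an identity. Hence $S$ arises from some $Y$ if and only if, between any two consecutive elements of $S$, all arrows of $Z$ point in the same direction. In that case $Y$ is uniquely determined up to unique isomorphism: its arrows point in those directions, and weak equivalences go to weak equivalences.

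Let $T_{0}\subseteq\ob Z$ consist of the two endpoints together with all turning vertices, i.e. interior vertices where the arrow direction changes. Then the admissible $S$ are exactly the subsets with $T_{0}\subseteq S\subseteq\ob Z$. Morphisms in the slice over $Z$ are unique when they exist, and exist exactly for inclusions of images. So $\partial\pr{\Zig^{+}_{\partial}}_{/Z}$ is equivalent to the poset
\[
\{S\mid T_{0}\subseteq S\subsetneq\ob Z\}.
\]

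\emph{Conclusion.} If $T_{0}=\ob Z$, which includes the case $Z=[0]$, this poset is empty. Otherwise $T_{0}$ is an initial object, so the poset is connected. Thus $\Zig_{\partial}$ has cofibrant constants, and by the product reduction so does $\Zig^{n}_{\partial}$.

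The main point requiring care is the middle step: checking that a boundary-preserving relative injection is the same thing as a subset containing the endpoints and turning vertices. This relies on the fact that right-pointing composites of non-identity arrows are not weak, so they cannot be the image of a left-pointing arrow of $Y$.
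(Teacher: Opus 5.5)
Your proof is correct and follows the paper's argument. You reduce to $n=1$ via the dual of Lemma~\ref{lem:finprod_fibconst}, observe that the endpoints and turning vertices (your $T_{0}$, the paper's $Z^{\mathrm{red}}$) lie in the image of every boundary-preserving injection into $Z$, and conclude that this reduction is initial in $\partial\pr{\Zig_{\partial}}^{+}_{/Z}$ when $Z^{\mathrm{red}}\neq Z$ and that the latching category is empty when $Z^{\mathrm{red}}=Z$. Your closing point that right-pointing composites are not weak is not needed: order preservation already sends each left-pointing arrow of $Y$ to a left-pointing, hence weak, composite in $Z$.
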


\begin{proof}
By the dual of Lemma \ref{lem:finprod_fibconst}, it suffices to prove the claim for $n=1$.
In other words, it suffices to show that for every $Z\in\Zig_{\partial}$, the poset $\partial\pr{\Zig_{\partial}}^{+}_{/Z}$ is either empty or connected.
Let $Z^{\mathrm{red}}\subset Z$ be the full subcategory spanned by the boundary vertices and the vertices at which the direction of the arrows changes.
Thus, each maximal string of arrows in $Z$ pointing in the same direction is replaced by its composite, and the inclusion $Z^{\mathrm{red}}\hookrightarrow Z$ is a morphism in $\Zig_{\partial}^{+}$.
Every injective morphism $Y\to Z$ in $\Zig_{\partial}$ contains all vertices of $Z^{\mathrm{red}}$ in its image: An omitted direction-change vertex would lie between two consecutive vertices in the image, but there is no morphism in $Z$ between these two vertices in either direction.
Consequently, if $Z^{\mathrm{red}}\neq Z$, then $Z^{\mathrm{red}}\hookrightarrow Z$ is an initial object of $\partial\pr{\Zig_{\partial}}^{+}_{/Z}$.
If $Z^{\mathrm{red}}=Z$, then every injective boundary-preserving morphism to $Z$ is surjective and hence an identity, so $\partial\pr{\Zig_{\partial}}^{+}_{/Z}$ is empty.
\end{proof}

We now arrive at the proof of Proposition \ref{prop:infinity_1_colimit}.

\begin{proof}
[Proof of Proposition \ref{prop:infinity_1_colimit}]
By Lemma \ref{lem:Zigpartial_cof_const}, it suffices to show that the functors
\begin{align*}
\pr{\Zig^{n}_{\partial}}^{\op} & \to\sSet,\,\pr{Z_{1},\dots,Z_{n}}\mapsto\Zig_{Z_{1}\cdots Z_{n}}\pr{\cat C,\cat W},\\
\Zig^{\op}_{\partial} & \to\sSet,\,Z\mapsto\Zig_{Z}\pr{\cat C,\cat W}_{x,y},
\end{align*}
are Reedy cofibrant.
For the first one, note that its latching object at $\pr{Z_{1},\dots,Z_{n}}\in\Zig^{n}_{\partial}$ is simply the union of the images of the maps $\Zig_{Z'_{1}\cdots Z'_{n}}\pr{\cat C,\cat W}\to\Zig_{Z_{1}\cdots Z_{n}}\pr{\cat C,\cat W}$, where $\pr{Z_{i}\to Z'_{i}}^{n}_{i=1}$ ranges over all objects of $\partial\pr{\Zig^{n}_{\partial}}^{-}_{\pr{Z_{1},\dots,Z_{n}}/}$.
Indeed, each of these maps is an inclusion of simplicial sets, whose image consists of the diagrams in which the arrows collapsed by the quotient maps are identities, and intersections of such images correspond to the common quotient generated by the quotient maps.
The latching map is therefore a monomorphism, hence a cofibration in the Kan--Quillen model structure.
The same argument applies to the second functor, since the quotient maps preserve the endpoints.
\end{proof}

\begin{rem}
With Convention \ref{conv:Zig}, Definition \ref{def:hammock_DK} makes sense even when $\pr{\cat C,\cat W}$ is a relative quasicategory.
In other words, we can associate to each relative $\infty$-category a simplicial category $L^{H}\pr{\cat C,\cat W}$.
Everything in this section remains true under the relaxed assumption that $\pr{\cat C,\cat W}$ is a relative quasicategory.
Thus, the simplicial category $L^{H}\pr{\cat C,\cat W}$ models the localization of $\cat C$ at $\cat W$.
\end{rem}

\begin{rem}
Dwyer--Kan's hammock localization has a variant, whose hom-simplicial sets are nerves of categories.
Explicitly, for a relative category $\pr{\cat C,\cat W}$, the hom-simplicial-set from $x$ to $y$ is the nerve of the Grothendieck construction of the functor $\Zig^{\op}_{\partial}\to\Cat$, $Z\mapsto\Zig_{Z}\pr{\cat C,\cat W}_{x,y}$.
(See \cite[35.6]{DHKS04}.)
We can also generalize this to a construction associating to relative quasicategories a simplicial category, using Lurie's relative nerve construction \cite[3.2.5]{HTT}.
The resulting simplicial category has the advantage that its hom-simplicial sets are quasicategories.
Details are left to the reader.
\end{rem}

\section{\label{sec:MGcomparison}Comparison with Mazel-Gee's hammock localization}

In \cite{MG18}, Mazel-Gee constructed another model of hammock localization, using bisimplicial animae.
In this section, we briefly recall his construction and explain its relation to our simplicial anima of zig-zags.
The main result of this section says that the simplicial anima obtained from Mazel-Gee's hammock localization by taking colimits levelwise agrees, up to a fully faithful and essentially surjective map of Segal animae, with our construction $\cat Z_{\cat C,\cat W}$ (Proposition \ref{prop:MGcomparison}).
Together with Corollary \ref{cor:hammock_anima_is_anima_of_loc}, this ties up a loose end left in \cite{MG18}, namely that the $\infty$-category associated to Mazel-Gee's hammock localization is equivalent to $\cat C[\cat W^{-1}]$ (Corollary \ref{cor:MGhamloc_is_localization}).

We start with a review of Mazel-Gee's construction.
The key idea behind Mazel-Gee's construction is to use $\infty$-categories enriched over simplicial animae, just like Dwyer--Kan used categories enriched over simplicial sets.
For his setting, such an enrichment is provided by bisimplicial animae:

\begin{defn}
\cite[Definitions 2.6, 2.12]{MG18}\label{def:sAn} Write $\sAn=\Fun\pr{\Del^{\op},\An}$ for the $\infty$-category of simplicial animae.
A \textit{Segal simplicial anima} is a simplicial object $X\in\Fun\pr{\Del^{\op},\sAn}$ satisfying the Segal condition.
A Segal simplicial anima $X$ is called a \textit{$\sAn$-enriched $\infty$-category} if further $X_{0}\in\sAn$ is a constant simplicial object.

We write $\Cat_{\sAn}\subset\Seg\pr{\sAn}\subset\Fun\pr{\Del^{\op},\sAn}$ for the full subcategories of $\sAn$-enriched $\infty$-categories and Segal simplicial animae, respectively.
\end{defn}

\begin{rem}
\cite[Definition 2.18, Remark 2.17]{MG18} The inclusion $\Cat_{\sAn}\hookrightarrow\Seg\pr{\sAn}$ admits a right adjoint, which Mazel-Gee denotes by $\sp$.
Explicitly, it is given by the pullback
\[\begin{tikzcd}
	{\sp(X)} & X \\
	{\cosk_0(\delta(X_{0,0}))} & {\cosk_0(X),}
	\arrow[from=1-1, to=1-2]
	\arrow[from=1-1, to=2-1]
	\arrow["\lrcorner"{description, pos=0}, draw=none, from=1-1, to=2-2]
	\arrow[from=1-2, to=2-2]
	\arrow[from=2-1, to=2-2]
\end{tikzcd}\]
where $\cosk_{0}\pr X=X^{\bullet+1}_{0}$ and $\delta\pr{X_{0,0}}\in\sAn$ denotes the constant simplicial object at $X_{0,0}$.
The counit $\sp\pr X\to X$ does not change the mapping simplicial animae.
\end{rem}

Postcomposing with the colimit functor $\colim_{\Del^{\op}}\from\sAn\to\An$ gives a functor
\[
\pr{\colim_{\Del^{\op}}}_{\ast}\from\Fun\pr{\Del^{\op},\sAn}\to\sAn.
\]
While this functor does not preserve Segal objects in general, it does preserve some Segal objects:

\begin{prop}
\cite[Lemma 2.13]{MG18} For every $\sAn$-enriched $\infty$-category $X$, the simplicial anima $\pr{\colim_{\Del^{\op}}}_{\ast}\pr X$ is Segal.
\end{prop}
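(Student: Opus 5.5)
We must show that for an $\sAn$-enriched $\infty$-category $X$, the simplicial anima $Y=\pr{\colim_{\Del^{\op}}}_{\ast}\pr X$, with $Y_n=\colim_{\Del^{\op}}X_{n,\bullet}$, is Segal. The plan is to write each Segal map as a finite iterated pullback over $X_0$, and then check that $\colim_{\Del^{\op}}$ commutes with these particular pullbacks because $X_0$ is constant.

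First, by the Segal condition for $X$ in $\sAn$, which is checked levelwise in the second simplicial variable, we have for each $n$
\[
X_{n,\bullet}\simeq X_{1,\bullet}\times_{X_{0,\bullet}}X_{1,\bullet}\times_{X_{0,\bullet}}\cdots\times_{X_{0,\bullet}}X_{1,\bullet}
\]
in $\sAn$. By hypothesis $X_{0,\bullet}\simeq\delta\pr{A}$ is constant, where $A=X_{0,0}$. Since $\Del^{\op}$ is weakly contractible (it has a terminal object after passing to opposites, or simply $\abs{\Del}\simeq\ast$), we get $\colim_{\Del^{\op}}X_{0,\bullet}\simeq A$. It therefore suffices to show that $\colim_{\Del^{\op}}$ carries the iterated fiber product above to the corresponding iterated fiber product over $A$.

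For this, the key tool is straightening. A simplicial anima $P$ equipped with a map to the constant object $\delta\pr A$ is equivalently a functor $A\to\sAn$, namely $a\mapsto P\times_{\delta A}\{a\}$, using that $\Fun\pr{\Del^{\op},\An}_{/\delta A}\simeq\Fun\pr{\Del^{\op},\An_{/A}}\simeq\Fun\pr{A,\sAn}$. Colimits in $\An_{/A}$ are computed fiberwise because colimits in $\An$ are universal. Hence $\colim_{\Del^{\op}}$ of $P$ over $A$ has fibers $\colim_{\Del^{\op}}\pr{P_a}$. The fiber products over the constant $X_0$ are computed fiberwise, so over each point $\pr{a_0,\dots,a_n}\in A^{n+1}$ the fiber of $X_{n,\bullet}$ is a product $\prod_i X_{1,\bullet}\pr{a_{i-1},a_i}$ in $\sAn$.

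It then remains to see that $\colim_{\Del^{\op}}$ preserves finite products of simplicial animae. This holds because $\Del^{\op}$ is sifted: the diagonal is cofinal and colimits in $\An$ commute with products in each variable. Combining this with the fiberwise description, the fiber of $Y_n\to A^{n+1}$ is $\prod_i\colim X_{1,\bullet}\pr{a_{i-1},a_i}$, which is exactly the fiber of $Y_1\times_A\cdots\times_A Y_1$. Since the two animae agree fiberwise over $A^{n+1}$ compatibly with the Segal map, the Segal map is an equivalence.

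The main obstacle is to carefully justify that the constant-base condition reduces pullbacks over $X_0$ to fiberwise products compatibly with $\colim_{\Del^{\op}}$. Without constancy, geometric realization does not commute with pullbacks, so this is where the hypothesis is used.
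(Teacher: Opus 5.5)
Your proof is correct and complete. The paper gives no proof of this statement; it simply quotes \cite[Lemma 2.13]{MG18}. Your mechanism is nevertheless the one the paper itself uses in the proof of Proposition~\ref{prop:MGcomparison}(2):
\begin{itemize}
\item Because $X_{0,\bullet}\simeq\delta(A)$ is constant, each $X_{n,\bullet}$ lives over the fixed anima $A^{n+1}$.
\item Pullback along a point of $A^{n+1}$ commutes with $\colim_{\Del^{\op}}$, by universality of colimits in $\An$.
\item On the fiber over $(a_0,\dots,a_n)$, the Segal map becomes the canonical comparison map
\[
\colim_{\Del^{\op}}\prod_i X_{1,\bullet}(a_{i-1},a_i)\to\prod_i\colim_{\Del^{\op}}X_{1,\bullet}(a_{i-1},a_i),
\]
which is an equivalence because $\Del^{\op}$ is sifted.
\end{itemize}

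Two small remarks. First, the detour through $\Fun(A,\sAn)$ is harmless but unnecessary. Universality of colimits alone gives $(\colim_n P_n)\times_A\{a\}\simeq\colim_n(P_n\times_A\{a\})$, since each $P_n\to A$ factors through the colimit cocone.

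Second, the compatibility you single out as the main obstacle is already settled by your own setup. The components $Y_n\to Y_1$ of the Segal map are realizations of the maps $X_{n,\bullet}\to X_{1,\bullet}$. Under the Segal equivalence for $X$, these restrict on fibers to the product projections. Hence the induced map on fibers is exactly the comparison map above.
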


\begin{defn}
Let $X$ be an $\sAn$-enriched $\infty$-category.
We call $\ac\circ\pr{\colim_{\Del^{\op}}}_{\ast}\pr X$ the \textit{$\infty$-category associated to $X$}.
\end{defn}

He then defines the $\infty$-category of (pre-)hammock localization as follows:

\begin{defn}
\cite[Construction 5.2, Definition 5.4]{MG18} The \textit{pre-hammock localization} functor $\mathscr{L}^{H}_{\pre}\from\RelCat_{\infty}\to\Fun\pr{\Del^{\op},\sAn}$ is defined by 
\[
\mathscr{L}^{H}_{\pre}\pr{\cat C,\cat W}_{n}=\colim_{\pr{Z_{1},\dots,Z_{n}}\in\pr{\Zig^{n}_{\partial}}^{\op}}\N\pr{\Zig_{Z_{1}\cdots Z_{n}}\pr{\cat C,\cat W}},
\]
where $\N\from\Cat_{\infty}\to\sAn$ denotes the Rezk nerve.
(The simplicial structure of $\mathscr{L}^{H}_{\pre}\pr{\cat C,\cat W}$ is as discussed in Definition \ref{def:Z_CW}.) 

The functor $\mathscr{L}^{H}_{\pre}$ actually takes values in $\Seg\pr{\sAn}$ \cite[Lemma 5.3]{MG18}.
With this in mind, we define the \textit{hammock localization} functor $\mathscr{L}^{H}$ by
\[
\mathscr{L}^{H}\pr{\cat C,\cat W}=\sp\circ\mathscr{L}^{H}_{\pre}\pr{\cat C,\cat W}.
\]
\end{defn}

We are now ready to compare Mazel-Gee's model of hammock localization with our simplicial anima $\cat Z_{\cat C,\cat W}$.

\begin{prop}
\label{prop:MGcomparison}Let $\pr{\cat C,\cat W}$ be a relative $\infty$-category. 
\begin{enumerate}
\item There is a natural equivalence
\[
\pr{\colim_{\Del^{\op}}}_{\ast}\circ\mathscr{L}^{H}_{\mathrm{pre}}\pr{\cat C,\cat W}\simeq\cat Z_{\cat C,\cat W}.
\]
\item The map
\[
\pr{\colim_{\Del^{\op}}}_{\ast}\circ\mathscr{L}^{H}\pr{\cat C,\cat W}\to\pr{\colim_{\Del^{\op}}}_{\ast}\circ\mathscr{L}^{H}_{\mathrm{pre}}\pr{\cat C,\cat W}\simeq\cat Z_{\cat C,\cat W}
\]
is a fully faithful and essentially surjective map of Segal animae.
\end{enumerate}
\end{prop}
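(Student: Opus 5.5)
For part (1), I will compute levelwise. By definition, $\mathscr{L}^{H}_{\pre}\pr{\cat C,\cat W}_{n}$ is a colimit in $\sAn$ over $\pr{\Zig^{n}_{\partial}}^{\op}$ of Rezk nerves. The functor $\colim_{\Del^{\op}}\from\sAn\to\An$ is a left adjoint, so it commutes with this colimit. The key input is then the standard fact that for an $\infty$-category $\cat D$, the colimit over $\Del^{\op}$ of its Rezk nerve $\N\pr{\cat D}$ is the realization $\abs{\cat D}$. This holds because $\N$ is right adjoint to $\ac$, and $\abs{-}\circ\ac\simeq\colim_{\Del^{\op}}$ on simplicial animae: both sides are left adjoint to the constant-diagram functor from $\An$, since the Rezk nerve of an anima is constant. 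Applying this, we get
\[
\colim_{\Del^{\op}}\mathscr{L}^{H}_{\pre}\pr{\cat C,\cat W}_{n}\simeq\colim_{\pr{Z_{1},\dots,Z_{n}}\in\pr{\Zig^{n}_{\partial}}^{\op}}\abs{\Zig_{Z_{1}\cdots Z_{n}}\pr{\cat C,\cat W}},
\]
which is $\pr{\cat Z_{\cat C,\cat W}}_{n}$ by the fiberwise formula in Definition \ref{def:Z_CW}. To obtain an equivalence of simplicial objects, and not just a levelwise one, I will phrase both sides as left Kan extensions along $\pr{\int\Zig^{\bullet}_{\partial}}^{\op}\to\Del^{\op}$. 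Postcomposition with the colimit-preserving functor $\colim_{\Del^{\op}}$ commutes with left Kan extension, and the identification $\colim_{\Del^{\op}}\N\simeq\abs{-}$ is natural in the $\infty$-category. Naturality in $\pr{\cat C,\cat W}$ follows from the same argument.

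For part (2), both sides are Segal. The source is Segal by \cite[Lemma 2.13]{MG18}, since $\mathscr{L}^{H}$ takes values in $\Cat_{\sAn}$. The target is Segal by Proposition \ref{prop:segal}. So it suffices to check essential surjectivity and full faithfulness.

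\emph{Essential surjectivity.} In degree $0$, the source is $\colim_{\Del^{\op}}$ of the constant object at $\mathscr{L}^{H}_{\pre}\pr{\cat C,\cat W}_{0,0}\simeq\pr{\N\cat W}_{0}=\cat W^{\simeq}$, which is $\cat W^{\simeq}$. The target is $\abs{\cat W}$. The map $\cat W^{\simeq}\to\abs{\cat W}$ is surjective on $\pi_{0}$, which gives essential surjectivity.

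\emph{Full faithfulness.} This is the main obstacle, because mapping animae of $\pr{\colim_{\Del^{\op}}}_{\ast}X$ for an $\sAn$-enriched $X$ are not literally the levelwise colimits of the mapping objects of $X_{\bullet}$ without an argument. Since $X_{0}$ is constant with value $X_{0,0}$, the fiber of $X_{1}\to X_{0}\times X_{0}$ over $\pr{x,y}$ is a simplicial anima $X\pr{x,y}$. Colimits in $\An$ are universal and $\colim_{\Del^{\op}}$ of a constant diagram is its value, since $\Del^{\op}$ is weakly contractible. Hence taking the colimit commutes with taking this fiber, and the mapping anima of the source at $\pr{x,y}$ is $\colim_{\Del^{\op}}\mathscr{L}^{H}\pr{\cat C,\cat W}\pr{x,y}$. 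The remark on $\sp$ says the counit does not change mapping simplicial animae. Therefore this mapping anima is the fiber of $\mathscr{L}^{H}_{\pre}\pr{\cat C,\cat W}_{1}\to\N\pr{\cat W}\times\N\pr{\cat W}$ over the constant point $\pr{x,y}$, and then $\colim_{\Del^{\op}}$. The functor $\Zig_{Z}\pr{\cat C,\cat W}\to\cat W\times\cat W$ is a product of (co)cartesian fibrations, so the Rezk nerve commutes with taking this fiber levelwise. This yields $\colim_{Z}\abs{\Zig_{Z}\pr{\cat C,\cat W}_{x,y}}=\Hamm_{\cat C,\cat W}\pr{x,y}$. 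One subtlety: the fiber of $\N\pr{\Zig_{Z}}$ over the constant point of $\N\pr{\cat W}^{2}$ is $\N$ of the strict fiber. This holds because Rezk nerves preserve limits. Finally, one checks that the composite map to $\cat Z_{\cat C,\cat W}\pr{x,y}$ is the comparison map of Theorem \ref{thm:Z_Segal}(2), which is an equivalence. I expect this final identification of maps to be the delicate bookkeeping step.
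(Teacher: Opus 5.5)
Your proposal is correct and follows the paper's proof essentially step for step. Part (1) uses $\colim_{\Del^{\op}}\simeq\abs{-}\circ\ac$ by comparing right adjoints, though you should state explicitly that you also need $\ac\circ\N\simeq\id$, i.e.\ full faithfulness of the Rezk nerve. Part (2) uses $\abs{\cat W^{\simeq}}\to\abs{\cat W}$ for essential surjectivity, and for full faithfulness it uses constancy of $\mathscr{L}^{H}\pr{\cat C,\cat W}_{0}$ to commute the fiber over $\pr{x,y}$ with $\colim_{\Del^{\op}}$, invariance of mapping simplicial animae under $\sp$, and Theorem \ref{thm:Z_Segal}(2). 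Your appeal to (co)cartesian fibrations there is unnecessary: $\N$ preserves limits and colimits in $\sAn$ are universal, so the fiber of $\colim_{Z}\N\pr{\Zig_{Z}\pr{\cat C,\cat W}}$ over $\pr{x,y}$ is $\colim_{Z}\N\pr{\Zig_{Z}\pr{\cat C,\cat W}_{x,y}}$ without any fibration hypothesis.
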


\begin{proof}
For (1), it suffices to show that the composite
\[
\Cat_{\infty}\xrightarrow{\N}\sAn\xrightarrow{\colim_{\Del^{\op}}}\An
\]
is equivalent to $\abs -\from\Cat_{\infty}\to\An$.
To see this, let $\ac\from\sAn\to\Cat_{\infty}$ denote the left adjoint of $\N$.
Then $\colim_{\Del^{\op}}\simeq\abs -\circ\ac$ because they have equivalent right adjoints, and $\ac\circ\N\simeq\id$ because $\N$ is fully faithful.
Thus $\colim_{\Del^{\op}}\circ\N\simeq\abs -\circ\ac\circ\N\simeq\abs -$, as required.

Next, we turn to (2).
For essential surjectivity, note that the map $\mathscr{L}^{H}\pr{\cat C,\cat W}_{0}\to\mathscr{L}^{H}_{\mathrm{pre}}\pr{\cat C,\cat W}_{0}$ can be identified with $\N\pr{\cat W^{\simeq}}\to\N\pr{\cat W}$.
Upon passing to colimits, we can identify this map with $\abs{\cat W^{\simeq}}\to\abs{\cat W}$, which is essentially surjective.
To prove full faithfulness, we first compute the mapping animae of the left-hand side. By the preceding remark, passage from $\mathscr{L}^{H}_{\mathrm{pre}}$ to $\mathscr{L}^{H}$ does not change the mapping simplicial animae. We therefore have a fiber sequence
\[
\colim_{Z\in\Zig^{\op}_{\partial}}\N\pr{\Zig_{Z}\pr{\cat C,\cat W}_{x,y}}\to\mathscr{L}^{H}\pr{\cat C,\cat W}_{1}\to\mathscr{L}^{H}\pr{\cat C,\cat W}_{0}\times\mathscr{L}^{H}\pr{\cat C,\cat W}_{0}
\]
for each $x,y\in\cat C$.
Since $\mathscr{L}^{H}\pr{\cat C,\cat W}_{0}\simeq\delta\pr{\cat W^{\simeq}}$ is constant, the source-target map exhibits $\mathscr{L}^{H}\pr{\cat C,\cat W}_{1}$ as a simplicial object over the fixed anima $\cat W^{\simeq}\times\cat W^{\simeq}$.
Pullback along $\pr{x,y}\colon\ast\to\cat W^{\simeq}\times\cat W^{\simeq}$ preserves colimits, so realization commutes with the fiber over $\pr{x,y}$.
Consequently, full faithfulness amounts to the map
\[
\colim_{Z\in\Zig_{\partial}^{\op}}\abs{\Zig_{Z}\pr{\cat C,\cat W}_{x,y}}\to\pr{\colim_{Z\in\Zig_{\partial}^{\op}}\abs{\Zig_{Z}\pr{\cat C,\cat W}}}_{x,y}
\]
being an equivalence.
We already proved this in Theorem \ref{thm:Z_Segal}.
\end{proof}

\begin{cor}
\label{cor:MGhamloc_is_localization}Let $\pr{\cat C,\cat W}$ be a relative $\infty$-category.
The $\infty$-category associated to Mazel-Gee's hammock localization $\mathscr{L}^{H}\pr{\cat C,\cat W}$ is equivalent to $\cat C[\cat W^{-1}]$.
\end{cor}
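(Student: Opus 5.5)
The plan is to chain together Proposition~\ref{prop:MGcomparison}, Corollary~\ref{cor:hammock_anima_is_anima_of_loc}, and the fact that $\ac$ inverts fully faithful and essentially surjective maps of Segal animae. By definition, the $\infty$-category associated to $\mathscr{L}^{H}\pr{\cat C,\cat W}$ is $\ac\pr{\pr{\colim_{\Del^{\op}}}_{\ast}\mathscr{L}^{H}\pr{\cat C,\cat W}}$. This makes sense because $\mathscr{L}^{H}\pr{\cat C,\cat W}=\sp\pr{\mathscr{L}^{H}_{\pre}\pr{\cat C,\cat W}}$ is an $\sAn$-enriched $\infty$-category, so $\pr{\colim_{\Del^{\op}}}_{\ast}$ of it is Segal by \cite[Lemma 2.13]{MG18}.

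First, I will apply $\ac$ to the comparison map of Proposition~\ref{prop:MGcomparison}(2),
\[
\pr{\colim_{\Del^{\op}}}_{\ast}\mathscr{L}^{H}\pr{\cat C,\cat W}\to\cat Z_{\cat C,\cat W}.
\]
This map is a fully faithful and essentially surjective map of Segal animae. For Segal animae, $\ac$ is the Rezk completion, and this functor preserves mapping animae \cite[Theorem 7.7]{Rez01}. It also preserves essential surjectivity, because the objects of $\ac\pr X$ are reached from $X_0$ via the unit. Hence $\ac$ of this map is a fully faithful and essentially surjective functor of $\infty$-categories, and therefore an equivalence. Equivalently, a Dwyer--Kan equivalence of Segal animae becomes an equivalence after completion.

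Second, I will compose this equivalence with the equivalence $\ac\pr{\cat Z_{\cat C,\cat W}}\simeq\cat C[\cat W^{-1}]$ from Corollary~\ref{cor:hammock_anima_is_anima_of_loc}. That corollary itself rests on Theorem~\ref{thm:Z_Segal}(3) and Mazel-Gee's Theorem~\ref{thm:univ_Rezk}. Composing the two gives the claimed equivalence.

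The only step needing care is the first: that $\ac$ sends fully faithful, essentially surjective maps of Segal animae to equivalences. I would cite Rezk's result that such maps, which he calls Dwyer--Kan equivalences, are exactly the maps inverted by completion. Alternatively, I would argue directly as in the proof of Lemma~\ref{lem:Segalification_NrelZ}, checking fully faithfulness and essential surjectivity after completion. No further obstacle is expected.
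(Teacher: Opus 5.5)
Your proposal is correct and follows essentially the same route as the paper, which deduces the corollary directly from Proposition~\ref{prop:MGcomparison} and Corollary~\ref{cor:hammock_anima_is_anima_of_loc}. You additionally spell out the step the paper leaves implicit: $\ac$ inverts fully faithful and essentially surjective maps of Segal animae, by \cite[Theorem 7.7]{Rez01}.
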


\begin{proof}
This follows from Proposition \ref{prop:MGcomparison} and Corollary \ref{cor:hammock_anima_is_anima_of_loc}.
\end{proof}

\appendix

\section{\label{app:abstract_nonsense}Abstract nonsense}

In this appendix, we collect several standard results about $\infty$-categories that we need in the paper.

\subsection{Quillen's Theorem B}
\begin{prop}
\cite[Lemma A.1.1]{KSW26}, \cite[Proposition A.3]{Ara26}\label{prop:ThmB} Let $p\from\cat A\to\cat B$ be a cartesian fibration of $\infty$-categories.
Suppose that, for every morphism $b\to b'$ in $\cat B$, the induced map $\abs{\cat A_{b'}}\to\abs{\cat A_{b}}$ is an equivalence.
Then every pullback square of $\infty$-categories 
\[\begin{tikzcd}
	{\mathcal{A}'} & {\mathcal{A}} \\
	{\mathcal{B}'} & {\mathcal{B}}
	\arrow[from=1-1, to=1-2]
	\arrow[from=1-1, to=2-1]
	\arrow[from=1-2, to=2-2]
	\arrow[from=2-1, to=2-2]
\end{tikzcd}\]
remains a pullback after applying $\abs -\from\Cat_{\infty}\to\An$.
\end{prop}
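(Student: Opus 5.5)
We must prove Quillen's Theorem B for a cartesian fibration $p\colon\cat A\to\cat B$ whose fibers are transported by equivalences after realization: every pullback square along $p$ stays a pullback after applying $\abs{-}$. The plan is to reduce to a statement about straightening. Let $F\colon\cat B^{\op}\to\Cat_{\infty}$ classify $p$, so that $\cat A\simeq\int F$. Realization of a cartesian fibration is the colimit of the realized fibers, $\abs{\cat A}\simeq\colim_{\cat B^{\op}}\abs{F}$. The hypothesis says that $\abs{F}\colon\cat B^{\op}\to\An$ inverts all morphisms. It therefore factors through $\abs{\cat B}^{\op}\simeq\abs{\cat B}$ and classifies a left (indeed Kan) fibration of animae $E\to\abs{\cat B}$, whose fiber over $b$ is $\abs{\cat A_b}$.

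The first key step is to identify $\abs{\cat A}\to\abs{\cat B}$ with $E\to\abs{\cat B}$. In $\An$, the colimit of a functor $G\colon\abs{\cat B}\to\An$ pulled back along $\cat B^{\op}\to\abs{\cat B}$ is computed by the total space. This is because $\cat B^{\op}\to\abs{\cat B}$ is a localization, hence final on realizations, and because colimits of local systems are total spaces. Concretely, there is a fiberwise comparison $\abs{\cat A}\to E$ over $\abs{\cat B}$. To see that it is an equivalence, I would use universality of colimits in $\An$: pulling $E$ back along the unit $\cat B\to\abs{\cat B}$ (viewed via unstraightening) recovers the local system $\abs{F}$, and summing up recovers $\colim\abs{F}\simeq\abs{\cat A}$. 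The upshot is that the square with corners $\cat A_b$, $\cat A$, $\{b\}$, $\cat B$ goes to a pullback after realization, for every object $b$.

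Now take a general pullback $\cat A'=\cat B'\times_{\cat B}\cat A$. Its projection $\cat A'\to\cat B'$ is again cartesian, and it is classified by $F$ restricted along $g\colon\cat B'\to\cat B$, so it still satisfies the hypothesis. Applying the first step to both fibrations gives $\abs{\cat A'}\simeq E'$, where $E'\to\abs{\cat B'}$ classifies $\abs{F}\circ g$. This is precisely $\abs{\cat B'}\times_{\abs{\cat B}}E$, since the straightening of pulled-back local systems is precomposition. Combining these identifications shows that $\abs{\cat A'}\simeq\abs{\cat B'}\times_{\abs{\cat B}}\abs{\cat A}$, compatibly with the maps in the square.

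The main obstacle is the first step: making rigorous that $\abs{\int F}\to\abs{\cat B}$ is the unstraightening of the inverted diagram $\abs F$. I would handle it by writing $\abs{\int F}\simeq\colim_{\cat B^{\op}}\abs{F}$, which holds since the Grothendieck construction computes lax colimits and realization turns them into colimits. I would then invoke the equivalence $\Fun(\abs{\cat B},\An)\simeq\An_{/\abs{\cat B}}$, under which colimits along $\abs{\cat B}$ correspond to total spaces and fibers correspond to evaluations. Finality of $\cat B^{\op}\to\abs{\cat B}^{\op}$ finishes the argument.
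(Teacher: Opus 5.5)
The paper does not prove this proposition; it cites it from \cite{KSW26} and \cite{Ara26}. Your argument is correct and self-contained, and it takes a different route from the technique the paper uses for the neighbouring results (Corollary~\ref{cor:ThmB}, Lemma~\ref{lem:two-sided-groth}).

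You work with colimits throughout. You use $\abs{\cat A}\simeq\colim_{\cat B^{\op}}\abs{F}$ and finality of the localization $\cat B^{\op}\to\abs{\cat B}^{\op}$. You then use $\Fun\pr{\abs{\cat B},\An}\simeq\An_{/\abs{\cat B}}$, under which colimits are total spaces and fibers are values. Together these identify $\abs{\cat A}\to\abs{\cat B}$ with the total space of the local system $G$, where $G\pr b\simeq\abs{\cat A_b}$.

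The paper's appendix style would go instead through fiberwise groupoidification $\cat A\to\cat A^{\mathrm{gp}}$, which is a localization and commutes with base change. The hypothesis makes the right fibration $\cat A^{\mathrm{gp}}\to\cat B$ a base change of a Kan fibration $E\to\abs{\cat B}$. Then $\cat B'\times_{\abs{\cat B}}E\to\abs{\cat B'}\times_{\abs{\cat B}}E$ is a base change of the final functor $\cat B'\to\abs{\cat B'}$ along a left fibration, hence final by smoothness.

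The two routes trade off as follows. The groupoidification route builds the comparison as an honest map of pullback squares, so compatibility comes for free, but it needs smooth base change. Yours needs only the colimit formula for realizations of (co)cartesian fibrations and straightening over animae.

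The one point you gloss over is ``compatibly with the maps in the square.'' As written, it requires naturality in the base of $\abs{\int F}\simeq\colim\abs{F}$. You can avoid this entirely by checking fiberwise:
\begin{itemize}
\item The canonical map $\abs{\cat A'}\to\abs{\cat B'}\times_{\abs{\cat B}}\abs{\cat A}$ lies over $\abs{\cat B'}$, and every point of $\abs{\cat B'}$ lies in the component of some object $b'$.
\item Over $b'$, your first step, applied to both $\cat A'\to\cat B'$ and $\cat A\to\cat B$, shows that both fibers receive equivalences from $\abs{\cat A'_{b'}}\simeq\abs{\cat A_{g\pr{b'}}}$.
\item These equivalences are compatible with the comparison map, because $\cat A'_{b'}\to\cat A'\to\cat A$ is the fiber inclusion of $\cat A_{g\pr{b'}}$.
\end{itemize}
By two-out-of-three, the comparison is an equivalence on every fiber, hence an equivalence. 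This also means you never need the separate identification $E'\simeq\abs{\cat B'}\times_{\abs{\cat B}}E$.
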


\begin{cor}
\label{cor:ThmB}Let $p\from\cat A\to\cat B$ be a functor of $\infty$-categories.
Suppose that, for every map $b\to b'$ in $\cat B$, the induced map
\[
\abs{\cat A_{b'/}}\to\abs{\cat A_{b/}}
\]
is an equivalence.
Then for every cocartesian fibration $q\from\cat E\to\cat B$, the square 
\[\begin{tikzcd}
	{|\mathcal{E}\times_{\mathcal{B}}\mathcal{A}|} & {|\mathcal{A}|} \\
	{|\mathcal{E}|} & {|\mathcal{B}|}
	\arrow[from=1-1, to=1-2]
	\arrow[from=1-1, to=2-1]
	\arrow[from=1-2, to=2-2]
	\arrow[from=2-1, to=2-2]
\end{tikzcd}\]
is a pullback.
\end{cor}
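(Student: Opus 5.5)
We deduce Corollary \ref{cor:ThmB} from Proposition \ref{prop:ThmB} by replacing $p$ with a cartesian fibration. Factor $p$ through the free cartesian fibration
\[
\cat A\xrightarrow{j}\widetilde{\cat A}=\cat A\times_{\cat B}\Fun\pr{[1],\cat B}\xrightarrow{\widetilde p=\ev_1}\cat B,
\]
where the fiber product is formed using $\ev_0$. Then $\widetilde p$ is a cartesian fibration whose fiber over $b$ is the comma category $\cat A_{b/}=\cat A\times_{\cat B}\cat B_{b/}$. Cartesian transport along $b\to b'$ is precomposition, so it is exactly the map $\cat A_{b'/}\to\cat A_{b/}$ of the hypothesis. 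Hence $\widetilde p$ satisfies the hypothesis of Proposition \ref{prop:ThmB}.

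Applying the proposition to the pullback of $\widetilde p$ along $q$, the square with corners $\abs{\cat E\times_{\cat B}\widetilde{\cat A}}$, $\abs{\widetilde{\cat A}}$, $\abs{\cat E}$, $\abs{\cat B}$ is cartesian. It remains to compare with $\cat A$. The inclusion $j$ (identity arrows) is right adjoint to the projection $\widetilde{\cat A}\to\cat A$, so $\abs j$ is an equivalence.

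The main step is to show that $\cat E\times_{\cat B}\cat A\to\cat E\times_{\cat B}\widetilde{\cat A}$ is also a weak homotopy equivalence. An object of the target is a triple $\pr{e,a,\,p\pr a\to q\pr e}$. Since $q$ is a cocartesian fibration, we can push $a$'s image forward: the pair $\pr{e',a}$ with $e'$ the cocartesian pushforward of $e$ along the arrow would go the wrong direction. Instead I would use the adjunction $r\from\widetilde{\cat A}\rightleftarrows\cat A$ pulled back along $q$ and argue that $\cat E\times_{\cat B}\cat A\to\cat E\times_{\cat B}\widetilde{\cat A}$ admits an adjoint. Concretely, $\pr{e,a,u\from p\pr a\to q\pr e}$ maps to the cocartesian lift $e_a$ of $q\pr e$ back? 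That direction fails, so instead write $\cat E\times_{\cat B}\widetilde{\cat A}\simeq\widetilde{\cat E}^{\mathrm{op}\text{-}\mathrm{style}}$: use the dual free cocartesian fibration $\cat E\simeq\cat E'=\Fun\pr{[1],\cat B}\times_{\cat B}\cat E$ and note that
\[
\cat E\times_{\cat B}\widetilde{\cat A}\simeq\cat A\times_{\cat B}\Fun\pr{[1],\cat B}\times_{\cat B}\cat E,
\]
which projects to $\cat A\times_{\cat B}\cat E'$. Here $\cat E'$ is the free cocartesian fibration on $\cat E$ (objects $\pr{b\to q\pr e}$), and $\cat E\to\cat E'$ has a left adjoint given by cocartesian pushforward, because $q$ is cocartesian. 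Base change along $\cat A\to\cat B$ (via $\ev_0$, which is a cartesian fibration on $\cat E'$ and hence its fiberwise adjunction is preserved by pullback) yields an adjunction between $\cat A\times_{\cat B}\cat E$ and $\cat A\times_{\cat B}\cat E'\simeq\cat E\times_{\cat B}\widetilde{\cat A}$. Its inclusion component is our comparison map, so the comparison is a weak homotopy equivalence.

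The obstacle is making sure this relative adjunction survives base change. I would verify it by checking that the unit and counit lie over identities of $\cat A$: the left adjoint sends $\pr{a,b\to q\pr e}$ to $\pr{a,u_!e}$ with $u\from p\pr a\to q\pr e$ replaced by the pushforward. Granting this, the square in the statement is equivalent to the cartesian square obtained from Proposition \ref{prop:ThmB}, which proves the corollary.
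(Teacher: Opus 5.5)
Your architecture matches the paper's. You replace $p$ by a cartesian fibration over $\cat B$ with fibers $\cat A_{b/}$, apply Proposition \ref{prop:ThmB}, and transfer back along $j$ and its base change. (The paper further groupoidifies to the free right fibration, but the free cartesian fibration works equally well with Proposition \ref{prop:ThmB} as stated.)

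The gap is in the step you call the main step, and it comes from an orientation error. As displayed, $\cat A\times_{\cat B}\Fun\pr{[1],\cat B}$ formed with $\ev_0$ and projected by $\ev_1$ has objects $\pr{a,\,p\pr a\to b}$. That is the free \emph{cocartesian} fibration, with fibers $\cat A_{/b}$, and for it $j$ is a left adjoint. The free cartesian fibration is $\Fun\pr{[1],\cat B}\times_{\ev_1,\cat B}\cat A\xrightarrow{\ev_0}\cat B$, with objects $\pr{a,\,u\from b\to p\pr a}$.

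This is not cosmetic: for your literal $\widetilde{\cat A}$ the main step is false. Take $\cat B=[1]$, $\cat A=\{0\}$, and $\cat E=\{1\}$, which is a cocartesian fibration over $[1]$ with empty fiber over $0$. Then $\cat E\times_{\cat B}\cat A=\emptyset$ but $\cat E\times_{\cat B}\widetilde{\cat A}\simeq\ast$. This is exactly why the pushforward kept pointing the wrong way, and your workaround does not repair it:
\begin{itemize}
\item for objects $\pr{e,\,b\to q\pr e}$ of your $\cat E'$ there is nothing to push $e$ forward along;
\item the claim that the adjunction survives base change because $\ev_0$ is cartesian is unsupported;
\item the final formula pushes $e$ along an arrow $p\pr a\to q\pr e$ that does not start at $q\pr e$;
\item the conclusion is simply granted.
\end{itemize}

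With the correct orientation your instinct works directly, without $\cat E'$. Objects of $\cat E\times_{\cat B}\widetilde{\cat A}$ are triples $\pr{e,a,u\from q\pr e\to p\pr a}$. The assignment $\pr{e,a,u}\mapsto\pr{u_!e,a}$ is left adjoint to the base-changed inclusion $J\pr{e,a}=\pr{e,a,\id_{p\pr a}}$. Indeed, composing with the map $\pr{e,a,u}\to J\pr{u_!e,a}$ given by the cocartesian edge $e\to u_!e$ induces, by the universal property of that edge,
\[
\Map\pr{\pr{u_!e,a},\pr{e',a'}}\simeq\Map\pr{e,e'}\times_{\Map\pr{q\pr e,p\pr{a'}}}\Map\pr{a,a'}\simeq\Map\pr{\pr{e,a,u},J\pr{e',a'}}.
\]
Hence $J$ is a right adjoint and $\abs J$ is an equivalence.

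The paper argues more abstractly. The comparison functor out of $\cat A$ is final (a right adjoint, followed in its case by a localization), and cocartesian fibrations are smooth \cite[Proposition 4.1.2.15]{HTT}, so its base change along $q$ is again final. Your explicit adjoint is a hands-on instance of that smoothness for this particular final functor; either argument closes the gap.
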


\begin{proof}
Let $p'\from\cat A'\to\cat B$ denote the free right fibration generated by $p$, so that $p$ factors as $\cat A\xrightarrow{i}\cat A'\xrightarrow{p'}\cat B$.
Explicitly, $p'$ is the fiberwise groupoidification of the free cartesian fibration, which is given by the projection $\cat B\comma p\to\cat B$ of the comma $\infty$-category of $p$ \cite[Theorem 4.5]{GHN17}.
Note that $i$ is final, because the map $\cat A\to\cat B\comma p$ is final (being a right adjoint) and the map $\cat B\comma p\to\cat A'$ is final (being a localization \cite[\href{https://kerodon.net/tag/02N9}{Tag 02N9}]{kerodon}).

We now consider the following diagram:
\[\begin{tikzcd}
	{|\mathcal{E}\times_{\mathcal{B}}\mathcal{A}|} & {|\mathcal{A}|} \\
	{|\mathcal{E}\times_{\mathcal{B}}\mathcal{A}'|} & {|\mathcal{A}'|} \\
	{|\mathcal{E}|} & {|\mathcal{B}|.}
	\arrow[from=1-1, to=1-2]
	\arrow["{|q^*i|}", from=1-1, to=2-1]
	\arrow["{|i|}", from=1-2, to=2-2]
	\arrow[from=2-1, to=2-2]
	\arrow[from=2-1, to=3-1]
	\arrow[from=2-2, to=3-2]
	\arrow["{|q|}"', from=3-1, to=3-2]
\end{tikzcd}\]
Since $q$ is a cocartesian fibration, it is smooth \cite[Proposition 4.1.2.15]{HTT}.
Thus, $q^{*}i$ is final.
It follows that $\abs i$ and $\abs{q^{*}i}$ are equivalences.
The fiber of $p'$ over $b\in\cat B$ is $\abs{\cat A_{b/}}$, so the hypothesis implies that transport in $p'$ induces equivalences between its fibers.
Proposition \ref{prop:ThmB} therefore shows that the bottom square is cartesian.
Thus, the outer rectangle is cartesian, as desired.
\end{proof}

\subsection{Weighted colimits}

\begin{lem}
\label{lem:two-sided-groth}Let $\cat X\xrightarrow{p}\cat B\xleftarrow{q}\cat Y$ be functors of $\infty$-categories.
Suppose that $p$ is a cocartesian fibration and $q$ is a cartesian fibration, corresponding to $X\from\cat B\to\Cat_{\infty}$ and $Y\from\cat B^{\op}\to\Cat_{\infty}$, respectively.
Then we have
\[
\abs{\cat X\times_{\cat B}\cat Y}\simeq\int^{b\in\cat B}\abs{X\pr b}\times\abs{Y\pr b},
\]
where the right-hand side denotes a coend \cite{Hau22}.
\end{lem}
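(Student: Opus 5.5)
Since $p$ is a cocartesian fibration classified by $X$, we first write $\cat X$ as the lax colimit (oplax Grothendieck construction) of $X$:
\[
\cat X\simeq\int^{b\in\cat B}\cat B_{b/}\times X\pr b,
\]
a coend in $\Cat_{\infty}$ over $\cat B$. This is the standard presentation of the unstraightening, and it holds as a coend of $\infty$-categories over $\cat B$ via the projections $\cat B_{b/}\to\cat B$. We then base change along the cartesian fibration $q$. A cartesian fibration is a Conduché (exponentiable) fibration, so base change along $q$ preserves colimits in $\Cat_{\infty/\cat B}$, and in particular coends. Hence
\[
\cat X\times_{\cat B}\cat Y\simeq\int^{b\in\cat B}\pr{\cat B_{b/}\times_{\cat B}\cat Y}\times X\pr b.
\]

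Next we apply $\abs -$, which preserves colimits, hence coends, and finite products. This gives
\[
\abs{\cat X\times_{\cat B}\cat Y}\simeq\int^{b}\abs{\cat B_{b/}\times_{\cat B}\cat Y}\times\abs{X\pr b}.
\]
It remains to identify $\abs{\cat B_{b/}\times_{\cat B}\cat Y}$ with $\abs{Y\pr b}$, naturally in $b\in\cat B^{\op}$. The fiber inclusion $Y\pr b\simeq\{b\}\times_{\cat B}\cat Y\hookrightarrow\cat B_{b/}\times_{\cat B}\cat Y$ is the base change of the initial object inclusion $\{\id_b\}\to\cat B_{b/}$, which is initial. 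Since $q$ is a cartesian fibration, it is proper (dual of \cite[Proposition 4.1.2.15]{HTT}), so this base change is again initial, and therefore a weak homotopy equivalence. Alternatively, the cartesian transport from $\cat B_{b/}\times_{\cat B}\cat Y$ to the fiber over $b$ gives a right adjoint retraction, which yields the same conclusion.

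The main obstacle is naturality. We need the identifications $\abs{\cat B_{b/}\times_{\cat B}\cat Y}\simeq\abs{Y\pr b}$ to assemble into an equivalence of functors $\cat B^{\op}\to\An$, so that they are compatible with the coend. We would handle this by realizing the retraction above, given by cartesian transport to the source, as a map of cartesian fibrations over $\cat B$, namely $\cat B^{[1]}\times_{\cat B}\cat Y\to\cat Y$ via the source. Its fiberwise realization recovers $\abs{Y\pr -}$, and the functoriality then comes from straightening.

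As a sanity check, or as an alternative route if the coend presentation of $\cat X$ is awkward to cite, we can also argue as follows. The left side is the colimit of the constant functor on $\cat X\times_{\cat B}\cat Y$. We left Kan extend this along the map to $\cat B$, which by properness and smoothness is computed fiberwise, and then use the coend formula for colimits over the twisted arrow category.
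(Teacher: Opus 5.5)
Your argument is correct, but it takes a different route from the paper's.

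\emph{The paper's route.} It first reduces to the case where $p$ and $q$ are left and right fibrations. It does this by replacing them with their fiberwise groupoid completions: these are localizations, hence initial and final, and they remain so after base change along the other fibration by smoothness and properness. It then identifies $\abs{\cat X\times_{\cat B}\cat Y}$ with $\colim\pr{\cat Y\xrightarrow{q}\cat B\xrightarrow{X}\An}$ via \cite[Corollary 3.3.4.6]{HTT}. Finally, it converts this colimit of a restricted functor into the coend via \cite[Proposition 4.3]{Hau22}.

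\emph{Your route.} You stay at the level of $\infty$-categories. You use the oplax-colimit presentation $\cat X\simeq\int^{b}\cat B_{b/}\times X\pr b$ of the unstraightening from \cite{GHN17}, and exponentiability of $q$ to commute the pullback past the coend. Only then do you realize, identifying the weight $\abs{\cat B_{b/}\times_{\cat B}\cat Y}\simeq\abs{Y\pr b}$ by properness of $q$, or via the counit $\cat B\comma q\to\cat Y$ of the free cartesian fibration, which is a fiberwise right adjoint.

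\emph{What each buys.} Your version needs no reduction to left/right fibrations. It even yields the unrealized equivalence $\cat X\times_{\cat B}\cat Y\simeq\int^{b}\pr{\cat B_{b/}\times_{\cat B}\cat Y}\times X\pr b$ in $\Cat_{\infty}$. The cost is the naturality bookkeeping you flag: you must match the functor $b\mapsto\cat B_{b/}\times_{\cat B}\cat Y$ obtained by base-changing the weight with the straightening of $\cat B\comma q\to\cat B$. In the paper, all naturality is absorbed into Haugseng's theorem.

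One caveat about your last paragraph: left Kan extension along $\cat X\times_{\cat B}\cat Y\to\cat B$ is not computed fiberwise, because the $\cat Y$-direction is cartesian. Already for a cartesian fibration $\cat E\to[1]$, the slice over $1$ is all of $\cat E$. Its realization is that of the fiber over $0$, since that fiber inclusion is a left adjoint, not that of the fiber over $1$. The correct version of that sketch Kan extends along the cocartesian projection $\cat X\times_{\cat B}\cat Y\to\cat Y$, which is smooth and therefore fiberwise, obtaining $\abs{X\pr{q\pr -}}$. It then applies the coend formula for this colimit over $\cat Y$. That is essentially the paper's argument.
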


\begin{proof}
Fiberwise groupoid completion gives localizations of the total categories \cite[\href{https://kerodon.net/tag/02LW}{Tag 02LW}]{kerodon}, which are both initial and final \cite[\href{https://kerodon.net/tag/02N9}{Tag 02N9}]{kerodon}.
Their pullbacks along the other fibration remain initial or final by smoothness of cocartesian fibrations and properness of cartesian fibrations.
Consequently, these replacements do not change the realization of the pullback, and we may assume that $p$ and $q$ are left and right fibrations, respectively.
By \cite[Corollary 3.3.4.6]{HTT}, the left-hand side can be identified with $\colim\pr{\cat Y\xrightarrow{q}\cat B\xrightarrow{X}\An}$, and this is equivalent to the right-hand side by \cite[Proposition 4.3]{Hau22}.
\end{proof}

\begin{cor}
\label{cor:fiberwise_pullback}Let $u\from\cat X\to\cat X'$ and $v\from\cat Y\to\cat Y'$ be functors over an $\infty$-category $\cat B$ which induce equivalences on the realizations of all fibers.
Suppose that $\cat X$ and $\cat X'$ are both cartesian or both cocartesian fibrations over $\cat B$, and likewise for $\cat Y$ and $\cat Y'$.
Then the induced map
\[
\abs{\cat X\times_{\cat B}\cat Y}\to\abs{\cat X'\times_{\cat B}\cat Y'}
\]
is an equivalence.
\end{cor}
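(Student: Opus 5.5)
We will deduce the corollary from Lemma \ref{lem:two-sided-groth}, treating the possible types of fibrations case by case.

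\emph{Case 1: $\cat X,\cat X'$ are cocartesian and $\cat Y,\cat Y'$ are cartesian over $\cat B$.} Write $X,X'\from\cat B\to\Cat_{\infty}$ and $Y,Y'\from\cat B^{\op}\to\Cat_{\infty}$ for the classifying functors. The maps $u$ and $v$ need not preserve (co)cartesian morphisms, so they need not induce natural transformations $X\to X'$ and $Y\to Y'$. We therefore pass first to fiberwise groupoid completions. As in the proof of Lemma \ref{lem:two-sided-groth}, the localization $\cat X\to\cat X^{\mathrm{gpd}}$ onto the left fibration classified by $\abs{X\pr -}$ is initial and final, and it remains so after base change along $\cat Y\to\cat B$, since cartesian fibrations are proper. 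The analogous statements hold for $\cat Y$, $\cat X'$, and $\cat Y'$.

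Any functor over $\cat B$ between the total spaces induces a map of left fibrations over $\cat B$ between their groupoid completions. The reason is that the left fibration $\cat X^{\mathrm{gpd}}\to\cat B$ is the initial left fibration under $\cat X$, being the localization of $\cat X$ at its fiberwise morphisms, and $u$ carries fiberwise morphisms to fiberwise morphisms. So $u$ induces a natural transformation $\abs{X\pr -}\to\abs{X'\pr -}$ of functors $\cat B\to\An$. It is a pointwise equivalence by hypothesis, and likewise for $v$. We will have to check that this construction is compatible with the identification in Lemma \ref{lem:two-sided-groth}. Given that, the realization $\abs{\cat X\times_{\cat B}\cat Y}$ is identified with the coend $\int^{b}\abs{X\pr b}\times\abs{Y\pr b}$, naturally in the pair, and the induced map of coends is an equivalence because the integrands agree pointwise.

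\emph{Case 2: all four are cartesian, or all four are cocartesian.} Lemma \ref{lem:two-sided-groth} does not apply directly. Assume all four are cocartesian; the cartesian case is dual. Groupoid-complete fiberwise as before, using that cocartesian fibrations are smooth, so that final maps stay final under base change. This reduces the claim to left fibrations $\cat X\to\cat B$ and $\cat Y\to\cat B$, which are classified by functors $F$ and $G$ into $\An$.

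Then $\cat X\times_{\cat B}\cat Y$ is the left fibration classified by $F\times G$, so its realization is $\colim_{\cat B}F\times G$. A map that is a pointwise equivalence gives an equivalence of colimits.

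\emph{Main obstacle.} The main difficulty is naturality: the comparison map must really be the map induced by $u\times v$ on pullbacks. I would handle this by observing that fiberwise groupoid completion is a functorial localization of the relevant slice categories, that is, of $\Cat_{\infty/\cat B}$ restricted to (co)cartesian fibrations and arbitrary functors. Pullback then commutes with it up to the final/initial maps used above. After that, everything is a statement about left and right fibrations, where straightening is fully functorial.
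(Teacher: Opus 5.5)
Your proposal is correct and is essentially the paper's proof. Both reduce to left/right fibrations by fiberwise groupoid completion, justified by smoothness of cocartesian and properness of cartesian fibrations, and then use Lemma \ref{lem:two-sided-groth} in the mixed-variance case and the colimit of the pointwise product in the same-variance case.

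The naturality issue you flag is easier than you fear, and the paper passes over it as well. After completion, the induced maps $\cat X^{\mathrm{gpd}}\to\cat X'^{\mathrm{gpd}}$ and $\cat Y^{\mathrm{gpd}}\to\cat Y'^{\mathrm{gpd}}$ are fiberwise equivalences of left or right fibrations, hence equivalences over $\cat B$. So the map of completed pullbacks is already an equivalence of $\infty$-categories. The commuting square with the weak homotopy equivalences $\cat X\times_{\cat B}\cat Y\to\cat X^{\mathrm{gpd}}\times_{\cat B}\cat Y^{\mathrm{gpd}}$ then finishes the argument without any coend or colimit formula.
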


\begin{proof}
By the argument in the proof of Lemma \ref{lem:two-sided-groth}, fiberwise localization does not change the realizations of the pullbacks in question.
We may therefore assume that the four fibrations are left or right fibrations; the functors $u$ and $v$ then automatically preserve cocartesian or cartesian morphisms.
If the two pairs have opposite variance, the claim follows from Lemma \ref{lem:two-sided-groth}. If they have the same variance, the two pullbacks are the corresponding unstraightenings of the pointwise products of the fiber diagrams, so the claim follows by taking colimits after realization.
\end{proof}

\subsection{Universality of the relative Rezk nerve}

The following theorem is due to Mazel-Gee \cite{MG19}.
See also \cite{AC26} for a short proof, and \cite{Ara23} for another short proof and a generalization in the quasicategorical setting.

\begin{thm}
\label{thm:univ_Rezk}Let $\pr{\cat C,\cat W}$ be a relative $\infty$-category.
The map $\N^{\rel}\pr{\cat C,\cat W}\to\N\pr{\cat C[\cat W^{-1}]}$ induces an equivalence
\[
\ac\circ\N^{\rel}\pr{\cat C,\cat W}\xrightarrow{\simeq}\ac\circ\N\pr{\cat C[\cat W^{-1}]}\simeq\cat C[\cat W^{-1}].
\]
\end{thm}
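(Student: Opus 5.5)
The statement to prove is Theorem \ref{thm:univ_Rezk}: $\ac$ applied to $\N^{\rel}\pr{\cat C,\cat W}\to\N\pr{\cat C[\cat W^{-1}]}$ is an equivalence. I would argue by comparing universal properties, testing against every complete Segal anima. Write $\ac\dashv\N$ with $\N\from\Cat_{\infty}\to\sAn$ fully faithful. For an $\infty$-category $\cat D$ we have
\[
\Map_{\sAn}\pr{\N^{\rel}\pr{\cat C,\cat W},\N\cat D}\simeq\Map_{\Cat_{\infty}}\pr{\ac\N^{\rel}\pr{\cat C,\cat W},\cat D}.
\]
It therefore suffices to identify the left-hand side, naturally in $\cat D$, with the anima $\Fun\pr{\cat C,\cat D}^{\simeq}_{\cat W}$ of functors sending $\cat W$ to equivalences. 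The identification should be compatible with restriction along $\N\cat C\to\N^{\rel}\pr{\cat C,\cat W}$.

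\textbf{Step 1.} Write $\N^{\rel}\pr{\cat C,\cat W}_n=\abs{\Fun\pr{[n],\cat C}\times_{\cat C^{n+1}}\cat W^{n+1}}$. This is the realization of the $\infty$-category $\Fun\pr{[n]\otimes,\ldots}$ of relative functors from $[n]\times$(a walking $\cat W$-direction), that is, $\Fun\pr{[n],\cat C}^{\cat W}$, whose morphisms are natural transformations that are pointwise in $\cat W$. Since $\N\cat D_n=\Fun\pr{[n],\cat D}^{\simeq}$ is an anima, a map $\abs{\Fun\pr{[n],\cat C}^{\cat W}}\to\Fun\pr{[n],\cat D}^{\simeq}$ is the same as a functor $\Fun\pr{[n],\cat C}^{\cat W}\to\Fun\pr{[n],\cat D}^{\simeq}$. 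Use the "$\Fun\pr{[n],-}$ as cotensor" description: a simplicial map $\N^{\rel}\to\N\cat D$ amounts to a compatible family of functors of the form $\Fun\pr{[n],\cat C}^{\cat W}\times$ (something) $\to\cat D$. This reduces the problem to showing that $\N^{\rel}\pr{\cat C,\cat W}$ is the colimit, in $\sAn$, of the simplicial $\infty$-category $[n]\mapsto\Fun\pr{[n],\cat C}^{\cat W}$ followed by realization.

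\textbf{Step 2.} Use the Rezk-nerve adjunction levelwise. The simplicial object $[n]\mapsto\Fun\pr{[n],\cat C}^{\cat W}$ in $\Cat_{\infty}$ has diagonal realization $\ac\pr{\text{levelwise }\abs-}$. By a Fubini/diagonal argument this is the colimit over the category of elements, and it computes $\cat C$ with the $\cat W$-direction inverted. Concretely, $\ac\N^{\rel}\simeq\colim_{[m]\in\Del^{\op}}\cat C_m$, where $\cat C_m=\ac\pr{[n]\mapsto \Fun\pr{[n],\cat C}\times_{\cat C^{n+1}}\pr{\cat W^{n+1}}_m}$. Here $\pr{\cat W^{n+1}}_m$ denotes $m$-simplices, using $\abs{\cat E}\simeq\colim_m\pr{\N\cat E}_m$. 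For each $m$, the object $\cat C_m$ is the $\infty$-category $\Fun\pr{[m],\cat C}^{\text{pt }\cat W}$ of $[m]$-strings of pointwise-$\cat W$ transformations, viewed via its first-direction nerve. We obtain a simplicial $\infty$-category $[m]\mapsto\cat C\times_{\ldots}$ whose geometric realization is the localization $\cat C[\cat W^{-1}]$. This is the standard fact that the localization is the realization of the "path object" simplicial category, equivalently the colimit of $[m]\mapsto \Fun\pr{[m],\cat C}\times_{\cat C^{m+1}}\ldots$ restricted to $\cat W$-strings.

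\textbf{Step 3.} Verify this last realization claim directly. The simplicial $\infty$-category $\cat C_\bullet$ has degeneracies $\cat C\to\cat C_m$ that admit retractions, and face maps that are localizations at the $\cat W$-morphisms. Mapping into $\cat D$ and using that $\Fun\pr{[m],-}$ converts pointwise-$\cat W$ strings into homotopies turns the realization into the totalization $\lim_m\Fun\pr{\cat C_m,\cat D}^{\simeq}$. One then checks that this is $\Fun\pr{\cat C,\cat D}^{\simeq}_{\cat W}$: a functor $\cat C_1\to\cat D$ restricting to $F$ on both ends provides a natural equivalence $F\ev_0\simeq F\ev_1$ on $\Fun\pr{[1],\cat W}$, which forces $F$ to invert $\cat W$. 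Conversely, the higher coherences are contractible because $\Fun\pr{[m],\cat W}$ with pointwise maps is handled by a cosimplicial contraction.

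The main obstacle is Step 3, namely making the cosimplicial coherence argument rigorous rather than merely degree-wise. There I would first try to show that the augmented simplicial object $\cat C_\bullet\to\cat C[\cat W^{-1}]$ becomes split after applying $\Fun\pr{-,\cat D}^{\simeq}$.
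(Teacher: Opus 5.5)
The paper does not prove this theorem itself; it cites Mazel-Gee \cite{MG19}, with short proofs in \cite{AC26,Ara23}. So your proposal has to be complete on its own, and it is not.

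Your Step 1 only restates the definition. Step 2, however, is a correct reduction. Naturally in $[m]$ and $[n]$, one has $\Fun([m],\Fun([n],\cat C)^{\cat W})^{\simeq}\simeq\Fun([n],\cat C_m)^{\simeq}$, where $\cat C_m\subseteq\Fun([m],\cat C)$ is the \emph{full} subcategory spanned by strings of $\cat W$-maps (not $\Fun([m],\cat W)$, as Step 3 suggests). Hence $\N^{\rel}(\cat C,\cat W)\simeq\colim_{[m]\in\Del^{\op}}\N(\cat C_m)$ and $\ac\N^{\rel}(\cat C,\cat W)\simeq\colim_m\cat C_m$.

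From there on, the whole content of the theorem is the identification $\lim_m\Fun(\cat C_m,\cat D)^{\simeq}\simeq\Fun(\cat C,\cat D)^{\simeq}_{\cat W}$, and this is exactly what is missing. Calling it a ``standard fact'' does not discharge it. Your check in degrees $\leq 1$ only shows that $G_0$ inverts $\cat W$ for a compatible family $(G_m)$. The sentence about higher coherences being contractible ``by a cosimplicial contraction'' is not an argument. The splitting you propose also has no evident source. In general $\cat C_\bullet\to\cat C[\cat W^{-1}]$ has no extra degeneracies: they would have to prepend an object functorially to every $\cat W$-string and, in degree $-1$, give a section of $\gamma$. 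You also do not say what extra codegeneracies on $\Fun(\cat C_\bullet,\cat D)^{\simeq}$ would be.

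The missing idea is that the diagram becomes homotopically constant once you take the pointwise-$\cat W$ maps into account.
\begin{itemize}
\item Let $\cat W_m\subseteq\cat C_m$ consist of the natural transformations with components in $\cat W$. The constant-string functor $c_m\colon\cat C\to\cat C_m$ and $\ev_0\colon\cat C_m\to\cat C$ are relative functors with $\ev_0 c_m=\id$. The maps $a_0\to a_i$ form a natural transformation $c_m\ev_0\to\id$ with components in $\cat W_m$.
\item So $c_m$ induces $\cat C[\cat W^{-1}]\simeq\cat C_m[\cat W_m^{-1}]$. Since $\alpha^*c_m=c_n$ for every $\alpha\colon[n]\to[m]$, the cosimplicial anima $[m]\mapsto\Fun(\cat C_m[\cat W_m^{-1}],\cat D)^{\simeq}$ has all structure maps equivalences. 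Because $\Del$ is weakly contractible, its totalization is $\Fun(\cat C[\cat W^{-1}],\cat D)^{\simeq}$.
\item This cosimplicial anima is levelwise a union of components of $\Fun(\cat C_\bullet,\cat D)^{\simeq}$. It therefore remains to show that every compatible family $(G_m)$ lands in it.
\item The totalization provides $\phi\colon G_0\ev_0\simeq G_1\simeq G_0\ev_1$ on $\cat C_1$. Naturality of $\phi$ at the morphism $(\id_x,w)\colon\id_x\to w$ of $\cat C_1$ gives $G_0(w)\circ\phi_{\id_x}\simeq\phi_w$, so $G_0$ inverts $\cat W$.
\item Then $G_m\simeq G_0\ev_0$ inverts $\cat W_m$, because $\ev_0(\cat W_m)\subseteq\cat W$.
\end{itemize}
This yields the required identification, natural in $\cat D$ and compatible with restriction to $\Fun(\cat C,\cat D)^{\simeq}$, as your reduction demands.
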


\section{\label{app:scats_to_sspace}From simplicial categories to simplicial spaces}

In this appendix, we record a proof that the functor $\mathbb{N}$ of Definition \ref{def:N} induces an equivalence $\Cat_{\infty}\xrightarrow{\simeq}\Cat_{\infty}$. 
\begin{thm}
\label{thm:N}The functor
\[
\mathbb{N}\from\sCat\to\Fun\pr{\Del^{\op},\sSet}
\]
induces an equivalence of $\infty$-categories when localizing $\sCat$ and $\Fun\pr{\Del^{\op},\sSet}$ at the weak equivalences for the Bergner model structure and the complete Segal space model structure, respectively.
\end{thm}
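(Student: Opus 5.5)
The plan is to compare both sides with the standard models through their known Quillen-equivalence descriptions. Since the Bergner model structure and the complete Segal space model structure both present $\Cat_{\infty}$, it suffices to show that $\mathbb{N}$ sends Bergner weak equivalences to complete Segal equivalences, and that the induced functor $\Cat_{\infty}\to\Cat_{\infty}$ is an equivalence.

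First, restrict to fibrant simplicial categories, i.e. those with Kan mapping complexes; every simplicial category is weakly equivalent to one, for instance via $\mathrm{Ex}^{\infty}$ applied to the hom-sets. For such $\cat C$, the bisimplicial set $\mathbb{N}(\cat C)$ is levelwise a coproduct of products of Kan complexes. By Definition \ref{def:N}, $L_{\An}\mathbb{N}(\cat C)$ is Segal, with $0$th anima the discrete set $\ob\cat C$ and mapping animae $\cat C(x,y)$. A Dwyer--Kan equivalence $F\colon\cat C\to\cat D$ then induces a map of Segal animae that is fully faithful, since mapping animae are hom Kan complexes, and essentially surjective, since $\pi_0$ of the homotopy category is preserved. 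Such maps are exactly the complete Segal equivalences, because completion is the localization at fully faithful, essentially surjective maps of Segal animae (Rezk). So $\mathbb{N}$ descends to a functor $\overline{\mathbb{N}}\colon\Cat_{\infty}\to\Cat_{\infty}$. For non-fibrant $\cat C$, replace the homs by $\mathrm{Ex}^{\infty}$. This changes $\mathbb{N}$ only by a levelwise weak equivalence, because products and coproducts of simplicial sets preserve Kan--Quillen weak equivalences.

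Second, to identify $\overline{\mathbb{N}}$, compare it with the known equivalence. I would use Bergner's/Joyal--Tierney's zig-zag, or more directly the fact, now standard, that the completion of $\mathbb{N}(\cat C)$ is the Rezk nerve of the $\infty$-category presented by $\cat C$. Concretely, the Segal anima $L_{\An}\mathbb{N}(\cat C)$ is a Segal space with discrete space of objects. Lurie's equivalence $\mathfrak{C}\dashv N^{\mathrm{hc}}$ identifies the mapping animae of the homotopy coherent nerve with $\cat C(x,y)$, with compatible composition, and its objects are the same. Hence there is a natural map of Segal animae from $L_{\An}\mathbb{N}(\cat C)$ to the Rezk nerve of $N^{\mathrm{hc}}(\cat C)$ that is fully faithful and essentially surjective, hence an equivalence after completion. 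Producing this map naturally is the main obstacle. I would construct it through the intermediate simplicial space $[n]\mapsto\Map(\Delta^{n},N^{\mathrm{hc}}\cat C)$ restricted to vertices, i.e. the Segal space of "$\infty$-category with discrete object set". Alternatively, I would exhibit both sides as the unique Segal space with object set $\ob\cat C$ and given composition data, appealing to the fact that Segal spaces with discrete objects are equivalent to $\An$-enriched categories, for which strict simplicial categories are known to be a model (Haugseng). Granting this comparison, $\overline{\mathbb{N}}$ is naturally equivalent to the composite of known equivalences, so it is an equivalence.
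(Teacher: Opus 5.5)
\textbf{Verdict: the proposal has a genuine gap.} The step it leaves unproved, that $\overline{\mathbb{N}}$ is an equivalence, is the entire content of the theorem.

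Your first step is sound. $L_{\An}\mathbb{N}(\cat C)$ is a Segal anima with discrete anima of objects $\ob\cat C$ and mapping animae $\cat C(x,y)$. So Rezk's theorem shows that $\mathbb{N}$ carries Dwyer--Kan equivalences to complete Segal equivalences, and $\mathbb{N}$ descends to a functor $\overline{\mathbb{N}}$.

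The gap is in the second step:
\begin{itemize}
\item The ``now standard'' fact you invoke, that the completion of $\mathbb{N}(\cat C)$ is, naturally in $\cat C$, the Rezk nerve of the $\infty$-category presented by $\cat C$, is a restatement of what is to be proved.
\item It cannot be recovered from objectwise data. Agreement of objects, equivalences $\cat C(x,y)\simeq\Map_{N^{\mathrm{hc}}(\cat C)}(x,y)$, and composition compatible up to homotopy do not produce a map of simplicial animae. Without an actual natural map, there is nothing to which the ``fully faithful and essentially surjective'' criterion can be applied. You flag this as the main obstacle and then grant it.
\item To build a point-set map into your intermediate Segal space, you would need simplicial functors $\mathfrak{C}[\Delta^m\times\Delta^n]\to\cat C$. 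These must collapse each $\Delta^m\times\{j\}$ to the identity of $x_j$ and depend on a tuple in $\prod_i\cat C(x_{i-1},x_i)_m$, compatibly in both $[m]$ and $[n]$. You would then have to check that the resulting map $\cat C(x,y)\to\Map_{N^{\mathrm{hc}}(\cat C)}(x,y)$ is an equivalence. None of this is supplied.
\item The Haugseng alternative is likewise only a pointer. It requires identifying his rectification comparison with $L_{\An}\circ\mathbb{N}$, and combining it with the Gepner--Haugseng comparison between $\An$-enriched $\infty$-categories and complete Segal spaces.
\end{itemize}

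\textbf{Repair by citation.} The quickest fix is the Bergner zig-zag you mention in passing, made precise. Bergner's simplicial nerve $R$ is exactly $\mathbb{N}$. It is a right Quillen equivalence from $\sCat$ to Segal precategories with her projective-type model structure. The inclusion of Segal precategories into complete Segal spaces is a left Quillen equivalence out of the other structure. That structure has the same weak equivalences, its cofibrations are the monomorphisms, and so every object is cofibrant. Hence $\mathbb{N}$ on fibrant simplicial categories computes the composite derived equivalence. Your first step then handles non-fibrant $\cat C$.

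\textbf{The paper's route.} The paper avoids any mapping-space comparison:
\begin{itemize}
\item It writes $\mathbb{N}=\sigma\circ\Phi$. Here $\Phi(\cat C)_m$ is the nerve of the ordinary category $\cat C_m$ whose morphisms are the $m$-simplices of the hom complexes, and $\sigma$ swaps the two simplicial directions.
\item By Joyal--Tierney, $\ac\circ L_{\An}$ applied to $\sigma\Phi(\cat C)$ is the colimit over $\Del^{\op}$, in $\Cat_{\infty}$, of the categories $\cat C_m$.
\item \cite[Proposition 1.3.4.14]{HA} identifies $\colim_m\cat C_m$ with the $\infty$-category presented by $\cat C$.
\end{itemize}
So $\ac\circ L_{\An}\circ\mathbb{N}$ is itself the localization functor of $\sCat$. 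This yields both your first step and the equivalence at once.
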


\begin{proof}
For each simplicial category $\cat C$ and $n\geq0$, define an ordinary category $\cat C_{n}$ as follows: Its objects are the objects of $\cat C$, and its morphisms $x\to y$ are the $n$-simplices of $\cat C\pr{x,y}$.
We write $\Phi\from\sCat\to\Fun\pr{\Del^{\op},\sSet}$ for the functor sending $\cat C$ to the simplicial object whose $n$th object is the nerve of $\cat C_{n}$. 

We then consider the following diagram:
\[\begin{tikzcd}
	\sCat & {\Fun(\Del^{\op},\sSet)} && {\Fun(\Del^{\op},\Cat_{\infty})} \\
	& {\Fun(\Del^{\op},\sSet)} & {\Fun(\Del^{\op},\An)} & {\Cat_{\infty},}
	\arrow["\Phi", from=1-1, to=1-2]
	\arrow["{\mathbb{N}}"', from=1-1, to=2-2]
	\arrow["{\Fun(\Del^{\op},L_{\Cat_\infty})}", from=1-2, to=1-4]
	\arrow["\sigma", from=1-2, to=2-2]
	\arrow["\colim", from=1-4, to=2-4]
	\arrow["{\Fun(\Del^{\op},L_{\An})}"'{yshift=-2pt}, from=2-2, to=2-3]
	\arrow["{\mathrm{ac}}"', from=2-3, to=2-4]
\end{tikzcd}\]
where $\sigma$ is the functor that switches the simplicial coordinates.
The left triangle commutes by definition, and the right rectangle commutes by \cite[Theorems 4.1, 4.5, and 4.11]{JT07}.
Consequently, it will suffice to show that the composite
\[
\colim\circ\Fun\pr{\Del^{\op},L_{\Cat_{\infty}}}\circ\Phi\from\sCat\to\Cat_{\infty}
\]
is a localization.
This follows from \cite[Proposition 1.3.4.14]{HA}, which says that $\cat C\simeq\hocolim_{[n]\in\Del^{\op}}\cat C_{n}$ (combined with the Quillen equivalence between the Bergner and Joyal model structures \cite[Theorem 2.2.5.1]{HTT}).
\end{proof}

\subsection*{Acknowledgment}

K.A. was supported by JSPS KAKENHI Grant Number 24KJ1443.
B.C. is an associate member of the CRC 1785 Generalised Motivic Methods in Geometry.

\bibliographystyle{amsalpha}
\bibliography{refs}

\end{document}

%% file: preamble.tex
\pdfoutput=1
\usepackage{lmodern}
\usepackage[T1]{fontenc}
\usepackage{amsthm}
\usepackage{amssymb}
\usepackage[unicode=true,pdfusetitle,
 bookmarks=true,bookmarksnumbered=false,bookmarksopen=false,
 breaklinks=false,pdfborder={0 0 0},pdfborderstyle={},backref=false,colorlinks=false]
 {hyperref}
\theoremstyle{plain}
\newtheorem{thm}{Theorem}[section]
\theoremstyle{definition}
\newtheorem{defn}[thm]{Definition}
\theoremstyle{definition}
\newtheorem{convention}[thm]{Convention}
\theoremstyle{remark}
\newtheorem{rem}[thm]{Remark}
\theoremstyle{definition}
\newtheorem{notation}[thm]{Notation}
\theoremstyle{definition}
\newtheorem{example}[thm]{Example}
\theoremstyle{definition}
\newtheorem{construction}[thm]{Construction}
\theoremstyle{plain}
\newtheorem{prop}[thm]{Proposition}
\theoremstyle{plain}

\theoremstyle{plain}
\newtheorem{cor}[thm]{Corollary}
\theoremstyle{plain}
\newtheorem{lem}[thm]{Lemma}
\theoremstyle{plain}

\theoremstyle{plain}
\newtheorem{customthm}{Theorem}

\newtheorem{customcor}[customthm]{Corollary}

\makeatletter
\let\c@equation\c@thm

\makeatother
\usepackage{tikz-cd}
\usepackage{mathtools}
\usepackage{adjustbox}
\usepackage{enumitem}
\tikzcdset{scale cd/.style={every label/.append style={scale=#1},
    cells={nodes={scale=#1}}}}
\usepackage{eucal}
\usepackage{mleftright}
\mleftright
\usepackage{mathrsfs}

\newcommand{\liftlabel}[2][0.75pt]{\raisebox{#1}[\height][\dimexpr\depth-#1\relax]{$\scriptstyle #2$}}

\newcommand{\notehelper}[3]{\textcolor{#3}{$\blacksquare$}\marginpar{\ifodd\thepage\raggedright\else\raggedleft\fi\color{#3}\tiny \textbf{#2:} #1}}

\usetikzlibrary{calc}
\usetikzlibrary{decorations.pathmorphing}
\usetikzlibrary{spath3}
\usetikzlibrary{nfold}

\tikzset{curve/.style={settings={#1},to path={(\tikztostart)
    .. controls ($(\tikztostart)!\pv{pos}!(\tikztotarget)!\pv{height}!270:(\tikztotarget)$)
    and ($(\tikztostart)!1-\pv{pos}!(\tikztotarget)!\pv{height}!270:(\tikztotarget)$)
    .. (\tikztotarget)\tikztonodes}},
    settings/.code={\tikzset{quiver/.cd,#1}
        \def\pv##1{\pgfkeysvalueof{/tikz/quiver/##1}}},
    quiver/.cd,pos/.initial=0.35,height/.initial=0}

\tikzset{between/.style n args={2}{/tikz/execute at end to={
    \tikzset{spath/split at keep middle={current}{#1}{#2}}
}}}

\tikzset{tail reversed/.code={\pgfsetarrowsstart{tikzcd to}}}
\tikzset{2tail/.code={\pgfsetarrowsstart{Implies[reversed]}}}
\tikzset{2tail reversed/.code={\pgfsetarrowsstart{Implies}}}
\tikzset{no body/.style={/tikz/dash pattern=on 0 off 1mm}}

\newcommand{\cat}[1]{\mathcal{#1}}
\newcommand{\pr}[1]{\left(#1\right)}
\newcommand{\abs}[1]{\left|#1\right|}

\newcommand{\ps}[2]{{\vphantom{}{}_{#2}}{#1}}

\newcommand{\Cat}{\mathsf{Cat}}
\newcommand{\An}{\mathsf{An}}
\newcommand{\sAn}{\mathsf{sAn}}
\newcommand{\RelCat}{\mathsf{RelCat}}

\newcommand{\sSet}{\mathbf{sSet}}
\newcommand{\sCat}{\mathbf{Cat}_{\mathbf{\Delta}}}

\renewcommand{\lim}{\operatorname{lim}}
\newcommand{\colim}{\operatorname{colim}}
\newcommand{\hocolim}{\operatorname{hocolim}}
\newcommand{\Fun}{\operatorname{Fun}}
\newcommand{\Map}{\operatorname{Map}}

\newcommand{\PSh}{\operatorname{PSh}}
\newcommand{\N}{\operatorname{N}}
\newcommand{\dom}{\operatorname{dom}}
\newcommand{\codom}{\operatorname{codom}}
\newcommand{\Fact}{\operatorname{Fact}}
\newcommand{\cosk}{\operatorname{cosk}}
\newcommand{\Sq}{\operatorname{Sq}}

\newcommand{\Hamm}{\mathrm{Hamm}}
\newcommand{\Zig}{\mathrm{Zig}}
\newcommand{\op}{\mathrm{op}}
\newcommand{\id}{\mathrm{id}}
\newcommand{\ac}{\mathrm{ac}}
\newcommand{\Seg}{\mathrm{Seg}}
\newcommand{\ev}{\mathrm{ev}}
\newcommand{\ob}{\mathrm{ob}}
\newcommand{\pre}{\mathrm{pre}}
\renewcommand{\sp}{\mathrm{sp}}
\newcommand{\comp}{\mathrm{comp}}

\newcommand{\Del}{\mathbf{\Delta}}
\newcommand{\rel}{\mathrm{rel}}
\newcommand{\from}{\colon}

\newcommand{\epi}{\twoheadrightarrow}
\newcommand{\comma}{\downarrow}

\newsavebox{\mspanbox}
\newsavebox{\mcospanbox}
\newsavebox{\msquarebox}
\newsavebox{\marrowbox}
\newsavebox{\mscratchbox}
\newlength{\msqnatht}
\newlength{\marrownatht}
\newlength{\mtargetht}
\newlength{\marrowht}

\AtBeginDocument{%
  \sbox{\mspanbox}{%
    \resizebox{!}{2.5ex}{%
      $\begin{tikzcd}[ampersand replacement=\&]
        \bullet \& \bullet \\
        \bullet
        \arrow[from=1-1, to=1-2]
        \arrow["\sim"', from=1-1, to=2-1]
      \end{tikzcd}$%
    }%
  }%
  \sbox{\mcospanbox}{%
    \resizebox{!}{2.5ex}{%
      $\begin{tikzcd}[ampersand replacement=\&]
		\& \bullet \\
		\bullet \& \bullet
		\arrow["\sim", from=1-2, to=2-2]
		\arrow[from=2-1, to=2-2]
      \end{tikzcd}$%
    }%
  }%
  \sbox{\msquarebox}{%
    \resizebox{!}{2.5ex}{%
      $\begin{tikzcd}[ampersand replacement=\&]
		 \bullet \& \bullet \\
		 \bullet \& \bullet
		 \arrow[from=1-1, to=1-2]
		 \arrow["\sim"', from=1-1, to=2-1]
		 \arrow["\sim", from=1-2, to=2-2]
		 \arrow[from=2-1, to=2-2]
      \end{tikzcd}$%
    }%
  }%
  \sbox{\mscratchbox}{%
    $\begin{tikzcd}[ampersand replacement=\&]
		 \bullet \& \bullet \\
		 \bullet \& \bullet
		 \arrow[from=1-1, to=1-2]
		 \arrow["\sim"', from=1-1, to=2-1]
		 \arrow["\sim", from=1-2, to=2-2]
		 \arrow[from=2-1, to=2-2]
     \end{tikzcd}$%
  }%
  \setlength{\msqnatht}{\ht\mscratchbox}%
  \sbox{\mscratchbox}{%
    $\begin{tikzcd}[ampersand replacement=\&]
       \bullet \& \bullet
       \arrow[from=1-1, to=1-2]
     \end{tikzcd}$%
  }%
  \setlength{\marrownatht}{\ht\mscratchbox}%
  \setlength{\mtargetht}{2.5ex}%
  \pgfmathsetlength{\marrowht}{\mtargetht*(\marrownatht/\msqnatht)}%
  \sbox{\marrowbox}{%
    \resizebox{!}{\marrowht}{%
      $\begin{tikzcd}[ampersand replacement=\&]
         \bullet \& \bullet
         \arrow[from=1-1, to=1-2]
       \end{tikzcd}$%
    }%
  }%
}

\newcommand{\mspan}{\mathord{\raisebox{0.3ex}{\usebox{\mspanbox}}}}
\newcommand{\mcospan}{\mathord{\raisebox{0.3ex}{\usebox{\mcospanbox}}}}
\newcommand{\msquare}{\mathord{\raisebox{0.3ex}{\usebox{\msquarebox}}}}
\newcommand{\marrow}{\mathord{\vcenter{\hbox{\usebox{\marrowbox}}}}}